\newcommand{\sm}[1]{( \begin{smallmatrix} #1 \end{smallmatrix} )}
\newcommand{\relphantom}[1]{\mathrel{\phantom{#1}}}
\newcommand{\subgrp}[1]{\langle #1 \rangle}
\newcommand{\set}[1]{\left\{ #1 \right\}}
\newcommand{\abs}[1]{\left| #1 \right|}
\newcommand{\bs}[1]{\boldsymbol{#1}}
\newcommand{\wh}[1]{\widehat{ #1}}
\newcommand{\wt}[1]{\widetilde{ #1}}
\newcommand{\ul}[1]{\underline{#1}}
\newcommand{\ol}[1]{\overline{#1}}
\newcommand{\ev}{\textup{ev}}
\newcommand{\odd}{\textup{odd}}
\newcommand{\ve}{\varepsilon}
\newcommand{\dalpha}{\dot{\alpha}}
\newcommand{\ddalpha}{\ddot{\alpha}}
\newcommand{\ualpha}{\ul{\alpha}}
\newcommand{\dbeta}{\dot{\beta}}
\newcommand{\ddbeta}{\ddot{\beta}}
\newcommand{\simrightarrow}{\stackrel{\sim}{\rightarrow}}
\DeclareMathOperator{\chr}{char}
\DeclareMathOperator{\Dist}{Dist}
\DeclareMathOperator{\End}{End}
\DeclareMathOperator{\Ext}{Ext}
\DeclareMathOperator{\opH}{H}
\DeclareMathOperator{\Hom}{Hom}
\DeclareMathOperator{\id}{id}
\DeclareMathOperator{\im}{im}
\DeclareMathOperator{\Lie}{Lie}
\DeclareMathOperator{\Mat}{Mat}
\DeclareMathOperator{\Max}{Max}
\DeclareMathOperator{\res}{res}
\DeclareMathOperator{\Spec}{Spec}
\newcommand{\gotimes}{\tensor[^g]{\otimes}{}}
\renewcommand{\mod}{\, \textup{mod}\, }
\newcommand{\Hbul}{\opH^\bullet}
\newcommand{\F}{\mathbb{F}}
\newcommand{\G}{\mathbb{G}}
\newcommand{\M}{\mathbb{M}}
\newcommand{\N}{\mathbb{N}}
\renewcommand{\P}{\mathbb{P}}
\newcommand{\Z}{\mathbb{Z}}
\newcommand{\Fp}{\F_p}
\newcommand{\Ga}{\G_a}
\newcommand{\Gam}{\Ga^-}
\newcommand{\Gar}{\G_{a(r)}}
\newcommand{\Gal}{\G_{a(\ell)}}
\newcommand{\cp}{\mathcal{P}}
\newcommand{\cv}{\mathcal{V}}
\newcommand{\zero}{\ol{0}}
\newcommand{\one}{\ol{1}}
\newcommand{\0}{\ol{{0}}}
\renewcommand{\1}{\ol{1}}
\newcommand{\bsa}{\bs{A}}
\newcommand{\bsc}{\bs{c}}
\newcommand{\bse}{\bs{e}}
\newcommand{\bsF}{\bs{F}}
\newcommand{\bsg}{\bs{\Gamma}}
\newcommand{\bsi}{\bs{I}}
\newcommand{\bsir}{\bsi^{(r)}}
\newcommand{\bsirone}{{\bsi_1}^{(r)}}
\newcommand{\bsirzero}{{\bsi_0}^{(r)}}
\newcommand{\bsl}{\bs{\Lambda}}
\newcommand{\bsp}{\bs{\cp}}
\newcommand{\bss}{\bs{S}}
\newcommand{\bsv}{\bs{\cv}}
\newcommand{\bsV}{\bs{V}}
\newcommand{\bibsp}{\textup{bi-}\bsp}
\newcommand{\bfHom}{\mathbf{Hom}}
\newcommand{\fS}{\mathfrak{S}}
\newcommand{\g}{\mathfrak{g}}
\newcommand{\gl}{\mathfrak{gl}}
\newcommand{\glmn}{\gl(m|n)}
\newcommand{\glone}{\gl(m|n)_{\one}}
\newcommand{\glzero}{\gl(m|n)_{\zero}}
\newcommand{\Azero}{A_{\zero}}
\newcommand{\Aone}{A_{\one}}
\newcommand{\gone}{\g_{\one}}
\newcommand{\gzero}{\g_{\zero}}
\newcommand{\Vone}{V_{\one}}
\newcommand{\Voner}{{\Vone}^{(r)}}
\newcommand{\Vzero}{V_{\zero}}
\newcommand{\Vzeror}{{\Vzero}^{(r)}}
\newcommand{\alg}{\mathfrak{alg}}
\newcommand{\calg}{\mathfrak{calg}}
\newcommand{\smod}{\textup{smod}}
\newcommand{\svec}{\textup{svec}}
\newcommand{\csalg}{\mathfrak{csalg}}
\newcommand{\salg}{\mathfrak{salg}}
\newcommand{\sgrp}{\mathfrak{sgrp}}
\newcommand{\fsvec}{\mathfrak{svec}}
\newcommand{\fsmod}{\mathfrak{smod}}
\newcommand{\Grf}{\G_{r;f}}
\newcommand{\Matmn}{\Mat_{m|n}}
\newcommand{\Mmn}{M_{m|n}}
\newcommand{\Monef}{\M_{1;f}}
\newcommand{\Monefeta}{\M_{1;f,\eta}}
\newcommand{\Mones}{\M_{1;s}}
\newcommand{\Monet}{\M_{1;t}}
\newcommand{\Mr}{\M_r}
\newcommand{\Mrone}{\M_{r;1}}
\newcommand{\Mrf}{\M_{r;f}}
\newcommand{\Mrfeta}{\M_{r;f,\eta}}
\newcommand{\Mrs}{\M_{r;s}}
\newcommand{\Mrt}{\M_{r;t}}
\newcommand{\Nrs}{N_{r;s}}
\renewcommand{\Pr}{\P_r}
\newcommand{\Vr}{V_r}
\newcommand{\Vrf}{V_{r;f}}
\newcommand{\Vrfeta}{V_{r;f,\eta}}
\newcommand{\Vrs}{V_{r;s}}
\newcommand{\bsVrs}{\bsV_{r;s}}
\newcommand{\bsVrf}{\bsV_{r;f}}
\newcommand{\bsVrfeta}{\bsV_{r;f,\eta}}
\newcommand{\bsVonefeta}{\bsV_{1;f,\eta}}
\newcommand{\Vab}{V_{(\alpha|\beta)}}
\newcommand{\Vuab}{V_{(\ualpha|\beta)}}
\newcommand{\Wuab}{W_{(\ualpha|\beta)}}
\newcommand{\Amn}{A^{m|n}}
\newcommand{\kmn}{k^{m|n}}
\newcommand{\GLmn}{GL_{m|n}}
\newcommand{\GLmnone}{GL_{m|n(1)}}
\newcommand{\GLmnr}{GL_{m|n(r)}}
\newcommand{\wtsigma}{\wt{\sigma}}
\numberwithin{equation}{subsection}
\newtheorem{theorem}{Theorem}[subsection]
\newtheorem{proposition}[theorem]{Proposition}
\newtheorem{corollary}[theorem]{Corollary}
\newtheorem{lemma}[theorem]{Lemma}
\newtheorem{question}[theorem]{Question}
\newtheorem*{theorem*}{Theorem}
\newtheorem*{corollary*}{Corollary}
\theoremstyle{definition}
\newtheorem{definition}[theorem]{Definition}
\newtheorem{convention}[theorem]{Convention}
\newtheorem{notation}[theorem]{Notation}
\newtheorem{example}[theorem]{Example}
\newtheorem{remark}[theorem]{Remark}
\title[Graded analogues of one-parameter subgroups]{Graded analogues of one-parameter subgroups and applications to the cohomology of $GL_{m|n(r)}$}
\author{Christopher M.\ Drupieski}
\address{Department of Mathematical Sciences,
		DePaul University,
		Chicago, IL 60614, USA}
\email{c.drupieski@depaul.edu}
\author{Jonathan R. Kujawa}
\address{Department of Mathematics \\
		University of Oklahoma \\
		Norman, OK 73019, USA}
\email{kujawa@math.ou.edu}
\thanks{The first author was supported in part by a Simons Collaboration Grant for Mathematicians, and by NSF Grant No.\ DMS-1440140 while he was in residence at the Mathematical Sciences Research Institute in Berkeley, CA, during the Spring 2018 semester. The second author was supported in part by NSA grant
H98230-16-0055.}
\subjclass[2010]{Primary 20G10. Secondary 17B56.}
\begin{document}

\begin{abstract}
We introduce a family $\Mrfeta$ of infinitesimal supergroup schemes, which we call multi\-param\-eter supergroups, that generalize the infinitesimal Frobenius kernels $\Gar$ of the additive group scheme $\Ga$. Then, following the approach of Suslin, Friedlander, and Bendel, we use functor cohomology to define characteristic extension classes for the general linear supergroup $\GLmn$, and we calculate how these classes restrict along homomorphisms $\rho: \Mrfeta \rightarrow \GLmn$. Finally, we apply our calculations to describe (up to a finite surjective morphism) the spectrum of the cohomology ring of the $r$-th Frobenius kernel $\GLmnr$ of the general linear supergroup $\GLmn$.
\end{abstract}

\maketitle

\tableofcontents

\section{Introduction}

\subsection{Overview}

Let $k$ be a field of characteristic $p \geq 3$. For more than a century researchers have investigated the modular representation theory of finite groups. In 1971, Quillen \cite{Quillen:1971} brought a groundbreaking new geometric perspective to the subject by investigating the affine variety $\abs{G}$ defined by the cohomology ring $\Hbul(G,k)$ of a finite group $G$. Quillen showed that $\abs{G}$ is a disjoint union of locally closed subsets $V_{G,E}^+$, one for each conjugacy class of elementary abelian $p$-subgroups in $G$. In particular, he showed that the geometric dimension of $\abs{G}$ is equal to the $p$-rank of $G$, demonstrating that geometric information encoded by $\Hbul(G,k)$ can be used to extract structural information about $G$. Later, Carlson \cite{Carlson:1983} introduced for each finite-dimensional $kG$-module $M$ an associated subvariety $\abs{G}_M$ of $\abs{G}$, called the support variety of $M$. For $E$ an elementary abelian $p$-subgroup of $G$, Carlson conjectured a non-cohomological, entirely representation-theoretic `rank variety' description for $\abs{E}_M$. Avrunin and Scott \cite{Avrunin:1982} subsequently proved Carlson's conjecture, and thus obtained a Quillen-type stratification for $\abs{G}_M$ in terms of the varieties $\abs{E}_M$. As a consequence, one can describe $\abs{G}_M$ without recourse to cohomology. This formulation is essential for explicit computations, and is key to proving that support varieties obey the tensor product property $\abs{G}_{M \otimes N} = \abs{G}_M \cap \abs{G}_N$. The central role of the elementary abelian subgroups is further underscored by Quillen's theorem that an element of $\Hbul(G,k)$ is nilpotent if and only if it is nilpotent upon restriction to every elementary abelian $p$-subgroup, and by Chouinard's theorem that a $kG$-module is projective if and only if it is so upon restriction to every such subgroup. 

In the mid 1980s, Friedlander and Parshall \cite{Friedlander:1986a,Friedlander:1986b,Friedlander:1987} defined and studied support varieties for finite-dimensional restricted Lie algebras, ultimately showing the support variety of a module $M$ is determined by the restriction of $M$ to cyclic $p$-nilpotent subalgebras. In the late 1990s, Suslin, Friedlander, and Bendel \cite{Suslin:1997, Suslin:1997a} extended these results to arbitrary infinitesimal group schemes. Given an affine group scheme $G$, they defined an infinitesimal one-parameter subgroup of height $\leq r$ in $G$ to be a homomorphism $\nu: \Gar \rightarrow G$ from the $r$-th Frobenius kernel of the additive group scheme $\Ga$. They showed the infinitesimal one-parameter subgroups play a fundamental role akin to that played by elementary abelian subgroups in finite group theory. One of their main results was that cohomology classes are detected (modulo nilpotence) via restriction to one-parameter subgroups. They also showed that one-parameter subgroups again give a representation-theoretic `rank variety' description for the support variety $\abs{G}_M$ of a $G$-module $M$. In particular, one-parameter subgroups can be used to detect whether or not $M$ is projective.

In the mid-2000s, Friedlander and Pevtsova \cite{Friedlander:2007} introduced the technology of $\pi$-points, which enabled them to unify and extend to arbitrary finite group schemes the previously separate support variety theories for finite groups and infinitesimal group schemes. But even with the conceptual unification provided by $\pi$-points, one-parameter subgroups continue to play an important role. For example, a long-standing open question has been to find a suitable support variety theory for the rational representations of arbitrary affine group schemes. It is not at all clear that such a theory should exist. Indeed, the rational cohomology ring of a reductive algebraic group is zero in all positive degrees, and hence will not lead to any interesting geometry. Recently, Friedlander \cite{Friedlander:2015} has shown one can use one-parameter subgroups to develop a satisfactory theory. Under some mild assumptions on $G$, he uses one-parameter subgroups to directly define support varieties for rational $G$-modules and proves these varieties exhibit many of the desirable properties expected of a support theory. However, there are also new features. For example, while an injective rational $G$-module must have trivial support, a module with trivial support need not be injective. This interesting class of modules is the class of so-called \emph{mock injective} $G$-modules \cite{Friedlander:2018,Hardesty:2017}.

\subsection{The graded setting}

This work is a contribution to the study of the representation theory of finite graded group schemes in positive characteristic; for the characteristic zero setting, see \cite[\S3]{Drupieski:2016a}. We focus on gradings by the group $\Z_2 : = \Z/2\Z$, and following the literature we use the prefix ``super'' to indicate that the object in question is $\Z_2$-graded. Besides being of intrinsic interest, $\Z_2$-gradings are natural to consider because any $\Z$-graded object can be viewed as $\Z_2$-graded by reducing the grading modulo two, and thus results for superalgebras may shed light on problems for $\Z$-graded algebras. For example, motivated by finite-dimensional Hopf subalgebras of the Steenrod algebra and by the Adams spectral sequence, in the 1980s and 1990s algebraic topologists studied the cohomology of finite-dimensional $\Z$-graded \emph{connected} cocommutative Hopf algebras. In 1981, Wilkerson \cite{Wilkerson:1981} showed that the cohomology of any such algebra is finitely-generated, but it was only with the first author's work in the $\Z_2$-setting \cite{Drupieski:2016,Drupieski:2017}, 35 years later, that it was established that the connectedness assumption can be dropped. In the 1990s, Palmieri \cite{Palmieri:1997} and Nakano and Palmieri \cite{Nakano:1998} investigated stratification results, analogous to those of Quillen and of Avrunin and Scott, for finite-dimensional graded connected cocommutative Hopf algebras in general and for finite-dimensional Hopf subalgebras of the mod-$p$ Steenrod algebra in particular. Ongoing work on supergroups by the authors \cite{Drupieski:2017b} and by Benson, Iyengar, Krause, and Pevtsova (discussed at the end of Section \ref{subsection:mainresults} below) is poised to extend those stratification results to arbitrary finite-dimensional $\Z$-graded cocommutative Hopf algebras.


Let $k$ be a fixed ground field of characteristic $p \geq 3$. Write $\csalg_k$ for the category of commutative $k$-superalgebras. For precise definitions of the objects under discussion the reader can consult Section~\ref{SS:GeneralConventions} and the references therein. An affine $k$-supergroup scheme is a representable functor from the category $\csalg_{k}$ to the category of groups. The prototypical example of an affine supergroup scheme is the general linear supergroup $\GLmn$. On a superalgebra $A = \Azero \oplus \Aone$, it is defined by setting $\GLmn(A)$ to be the set of all invertible $(m+n) \times (m+n)$ matrices $(a_{i,j})_{1 \leq i, j \leq m+n}$ such that $a_{i,j} \in \Azero$ if either $1 \leq i,j \leq m$ or $m+1 \leq i,j \leq m+n$, and such that $a_{i,j} \in \Aone$ otherwise. The Frobenius endomorphism $F: \GLmn \to \GLmn$ is defined on a matrix $g \in \GLmn(A)$ by raising the individual matrix entries of $g$ to the the $p$-th power. The $r$-th Frobenius kernel $\GLmnr$ of $\GLmn$ is the scheme-theoretic kernel of the $r$-th iterate of $F$. As for affine group schemes, the Frobenius kernels are an important family of infinitesimal supergroup schemes.

In \cite{Drupieski:2016,Drupieski:2017}, the first author proved that the cohomology ring of a finite supergroup scheme is always a finitely-generated graded-(super)commutative algebra, so its spectrum provides a natural geometric setting in which to introduce cohomological support varieties. A fundamental first step in developing this theory is to provide a concrete description of the spectrum. The authors' previous paper \cite{Drupieski:2016a} provides such a description in several natural settings. Relevant to the present work is the case of the first Frobenius kernel $\GLmnone$ of $\GLmn$. Set $\g = \glmn$. In \cite{Drupieski:2016a}, we proved that there is a finite morphism
\[
\Phi: \Max \left(\Hbul(G_{1}, k)\right) \to \glmn 
\]
with image equal to 
\begin{equation}\label{E:Frob1}
V_1(\GLmnone)(k):= \set{ (\alpha, \beta) \in \mathfrak{g}_{\0} \times \mathfrak{g}_{\1} : [\alpha, \beta]=0, \alpha^{[p]} + \tfrac{1}{2}[\beta, \beta] = 0}.\footnote{In the preprint version of \cite{Drupieski:2016a}, the last condition appears instead as $\alpha^{[p]} = \frac{1}{2}[\beta,\beta]$. This is because of an implicit sign error in the argument in the first paragraph of the proof of \cite[Proposition 5.4.2]{Drupieski:2016a}. We correct the issue here in Remark \ref{remark:matrixtohom} and in the discussion of Section \ref{section:geometricapps}.}
\end{equation}
When $n = 0$, one has $\gone = 0$, and the set $V_1(\GLmnone)(k)$ becomes the restricted nullcone. In this way our result is a natural generalization of known results in the non-graded setting. One outcome of the present work is to extend \eqref{E:Frob1} to the higher Frobenius kernels of $\GLmn$.

\subsection{Main results} \label{subsection:mainresults}

Given their importance in the classical setting, the main goal of this paper is to introduce and study graded analogues of one-parameter subgroups. In contrast to the classical setting, there does not seem to be a suitable family of infinitesimal supergroups describable by a single parameter such as the height (see Section~\ref{SS:Caution}). Instead, the role of the one-parameter subgroups seems to be played by a family of infinitesimal supergroups $\Mrfeta$ which are described by multiple parameters. It should be emphasized that the structure and representation theory of these supergroups is more varied than in the classical case. For example, if the parameter $f$ is not simply a monomial, then the group algebra $k\Mrfeta$ will contain semisimple elements.

In Section~\ref{S:multiparametersupergroups} we define these `multiparameter supergroups' by describing their coordinate super\-algebras and their group algebras, and we give a complete description of their rational cohomology rings. 
Given an algebraic $k$-supergroup scheme $G$, we also describe the affine superscheme structure on the functor $\bfHom(\Mrfeta,G)$ of all supergroup scheme homomorphisms $\rho: \Mrfeta \rightarrow G$. In Section \ref{S:Ext} we investigate the Hopf structure of the extension algebra $\Ext_{\bsp}^\bullet(\bsir,\bsir)$ of the $r$-th Frobenius twist of the identity functor in the category $\bsp$ of strict polynomial superfunctors. These results generalize those of \cite[\S3]{Suslin:1997}. Then following the approach of Suslin, Friedlander, and Bendel, in Section \ref{section:charclasses} we introduce characteristic classes corresponding to elements of $\Ext_{\bsp}^\bullet(\bsir,\bsir)$ and rational representations $\rho: G \rightarrow GL(V)$ of an affine supergroup scheme $G$. The main results of this section are in Section \ref{SS:calculationofcharacteristicclasses}, where we provide explicit descriptions of the characteristic classes arising from representations $\rho: \Mrfeta \rightarrow \GLmn$. These descriptions enable us in Theorem \ref{theorem:Extalgebrarelations} to completely specify, for the first time, the structure constants of the algebra $\Ext_{\bsp}^\bullet(\bsir,\bsir)$.

In Section~\ref{section:geometricapps} we reap the harvest, applying the explicit calculations of Section \ref{section:charclasses} to describe the cohomological spectrum of the Frobenius kernels of $\GLmn$.  Suppose $k$ is algebraically closed of characteristic $p\geq 3$. Let $\GLmnr$ denote the $r$-th Frobenius kernel of $\GLmn$, and let $\abs{\GLmnr}$ denote the spectrum of $\Hbul(\GLmnr,k)$. Then there is a finite morphism of schemes
\[
\Phi: \abs{\GLmnr} \rightarrow \Vr(\GLmn),
\] where $\Vr (\GLmn)$ is the affine scheme defined in Definition~\ref{definition:evensubschemes}; see also Definition \ref{definition:VrGLmn}. Set $\g = \glmn$. On $k$-points, $\Phi$ induces a finite-to-one, surjective morphism of varieties,
\[
\Phi(k): \abs{\GLmnr} \rightarrow \Vr(\GLmn)(k),
\] where
\begin{align*}
\Vr(\GLmn)(k)= \Big\{ (\alpha_0,\alpha_1,\ldots,\alpha_{r-1},\beta) \in &(\gzero)^{\times r} \times \gone : [\alpha_i,\alpha_j] = 0, [\alpha_i,\beta] = 0 \text{ for all $i,j$,} \\
&\alpha_i^{[p]} = 0 \text{ for $0 \leq i \leq r -2$, and } \alpha_{r-1}^{[p]} + \tfrac{1}{2}[\beta,\beta] = 0 \Big\}.
\end{align*}
This verifies a conjecture we made in \cite{Drupieski:2016a}. Note that for $n=0$, one gets $\GLmnr = GL_{m(r)}$, the $r$-th Frobenius kernel of $GL_m$, and $\Vr(\GLmn) = V_r(GL_m)$ is the variety of $r$-tuples of commuting $p$-nilpotent matrices. We thus recover one of the main results of \cite{Suslin:1997}. In light of those classical results it is reasonable to expect that $\Phi(k)$ is in fact a homeomorphism and that the multiparameter supergroups introduced in this paper (together with the one-parameter subgroups of the classical theory and the purely odd additive group scheme $\Ga^-$ described just before Remark \ref{remark:immediateMrs}) may be used to describe the spectra of other infinitesimal supergroup schemes.

The role of the multiparameter supergroups in the surjectivity of $\Phi(k)$ can be made explicit as follows. For each inseparable $p$-polynomial $0 \neq f \in k[T]$ (i.e., a $p$-polynomial with no linear term) and each $\eta \in k$, there exists a sequence of algebra homomorphisms
\[
k[\Vr(\GLmn)] \stackrel{\ol{\phi}}{\longrightarrow} H(\GLmn,k) \stackrel{\psi_{r;f,\eta}}{\longrightarrow} k[\Vrfeta(\GLmn)],
\]
which induce morphisms of schemes
\[
\Vrfeta(\GLmn) \stackrel{\Psi_{r;f,\eta}}{\longrightarrow} \abs{\GLmnr} \stackrel{\Phi}{\longrightarrow} \Vr(\GLmn).
\]
Here and in the rest of the paper, $H(G,k)$ denotes the subalgebra $\opH^{\ev}(G,k)_{\zero} \oplus \opH^{\odd}(G,k)_{\one}$ of the (full) cohomology ring $\Hbul(G,k)$.\footnote{In the classical setting when working over fields of odd characteristic, it is common to consider only the even part of the cohomology ring, since then the odd part consists of nilpotent elements and hence does not contribute anything to the spectrum. But in the super setting, one may have non-nilpotent elements of odd cohomological degree, which motivates our definition of the ring $H(G,k)$. For more details, see Section \ref{subsection:phi}.} The calculations of Section \ref{section:charclasses} enable us to identify the composite morphism $\Theta_{r;f,\eta} := \Phi \circ \Psi_{r;f,\eta}: \Vrfeta(\GLmn) \rightarrow \Vr(\GLmn)$ as the natural inclusion $\Vrfeta(\GLmn) \subset \Vr(\GLmn)$ composed with the $r$-th Frobenius twist morphism on the scheme $\Vr(\GLmn)$; see Theorem~\ref{theorem:Theta}. Since
\begin{equation}\label{E:StratificationTheorem}
\Vr(\GLmn)(k) = \bigcup_{f,\eta} \Vrfeta(\GLmn)(k),
\end{equation}
this implies the surjectivity of $\Phi(k)$. Since $\Vrfeta(\GLmn)(k)$ parametrizes homomorphisms $\rho: \Mrfeta \rightarrow \GLmn$, \eqref{E:StratificationTheorem} can be viewed as an analogue of Quillen's stratification theorem.    

It is also noteworthy that our setup incorporates the gradings of \cite[Theorem 1.14]{Suslin:1997} in a natural and transparent way. Namely, suppose $f=T^{p^s}$ for some $s \geq 1$, and suppose $\eta = 0$, so that $\Mrfeta = \Mrs$ and $\Vrfeta = \Vrs$. Then given an algebraic $k$-supergroup scheme $G$, the homomorphism $\psi_{r;f,\eta}: H(G, k) \rightarrow k[\Vrfeta(G)]$ is a map of graded algebras that multiplies degrees by $\frac{p^r}{2}$, just as in \cite[Theorem 1.14]{Suslin:1997}; see Proposition~\ref{P:grading}. However, in contrast to the classical setting, in our setup the cohomology ring $H(G, k)$ can be nonzero in both even and \emph{odd} degrees, and the natural grading for $k[\Vrs(G)]$ is by $\mathbb{Z}[\frac{p^{r}}{2}]$; see Corollary \ref{cor:VrsGgraded}.

Work in progress by Benson, Iyengar, Krause, and Pevtsova (BIKP) gives further evidence of the importance of multi\-parameter supergroups. They independently introduce detecting subalgebras equivalent to our multi\-parameter supergroups $\M_{r;T^{p^s},\eta}$ (with $\eta = 0$ if $r=1$) and prove for unipotent infini\-tesimal supergroup schemes that their subalgebras detect the nilpotency of cohomology classes and the projectivity of modules. But as we describe in Example \ref{example:semisimple}, this family alone seems to be inadequate for detecting projectivity for arbitrary infinitesimal supergroups.

\subsection{Two cautionary examples} \label{SS:Caution}

The reader may hope that one can further reduce to true one-parameter subsupergroups. The following examples show this will not be possible in general.

\begin{example}
Let $k$ be a field of characteristic $p \geq 3$, and let $A = k[u,v]/\subgrp{u^p,v^2}$ be the superalgebra with grading given by letting $u$ be even and $v$ be odd. Then $A$ is the group algebra of $\M_{1;1}$; see Proposition~\ref{prop:groupalgebras}. Alternatively, $A$ is the restricted enveloping superalgebra $V(\g)$ of the abelian restricted Lie superalgebra $\g$ spanned by $u$ and $v$ such that $u^{[p]} = 0$. Note that the only Lie sub\-superalgebras of $\g$ are $0$, $k.u$, $k.v$, and $\g$.

Let $R = R_{\zero} \oplus R_{\one}$ be the superspace such that the underlying vector spaces of $R_{\zero}$ and $R_{\one}$ are given by $R_{\zero}=R_{\one} = k[u]/\subgrp{u^p}$. Then $R$ is a free module over the subalgebra of $A$ generated by $u$.  Let $x_0$ (resp.\ $y_0$) denote the vector $1$ in $R_{\zero}$ (resp.\ $R_{\one}$). For $0 \leq i < p$, set $x_i = u^i.x_0$ (resp.\ $y_i = u^i.y_0$). Then the set $\set{ x_i, y_i : 0 \leq i < p}$ is a homogeneous basis for $R$. Define an action of $v$ on $R$ by setting $v.x_i = y_{i+1}$ and $v.y_i = x_{i+p-1}$ (with the convention that $x_k = y_k =0$ whenever $k \geq p$). A direct computation verifies that this makes $R$ into an $A$-supermodule. Now one can check (e.g., with the aid of \cite[Proposition 2.2]{Friedlander:2005}) that $R$ is projective when restricted to any proper cyclic subsuperalgebra of $A$, but that $R$ is not projective for $A$ itself. That is, no family of cyclic subsuperalgebras of $A$ will detect projectivity.
\end{example}

\begin{example} \label{example:semisimple}
Let $B = k[u,v]/\subgrp{u^p + v^2, u^p-u}$ be the superalgebra with $\Z_2$-grading given by declaring $u$ to be even and $v$ to be odd. This is the group algebra of $\M_{1;T^{p},-1}$; see Definition~\ref{Def:anotherdef}. Alternatively, $B$ is the restricted enveloping superalgebra $V(\g)$ of the restricted Lie superalgebra $\g$ spanned by $u$ and $v$ such that $u^{[p]} = u$ and $u^{[p]} + \frac{1}{2}[v,v] = 0$. The only restricted Lie sub\-super\-algebras of $\g$ are $0$, the semisimple subalgebra generated by $u$, and $\g$ itself.

The subalgebra of $B$ generated by $u$ is a semisimple Hopf (super)algebra, and the quotient algebra $B/\subgrp{u}$ identifies with the one-variable exterior algebra $\Lambda(v)$. Applying the Lyndon-Hochschild-Serre spectral sequence, it follows that the quotient map $B \twoheadrightarrow \Lambda(v)$ induces an isomorphism in cohomology $\Hbul(B,k) \cong \Hbul(\Lambda(v),k)$. Now $\Hbul(\Lambda(v),k) \cong k[y]$, with the polynomial generator $y$ of $k[y]$ located in cohomological degree $1$ and in $\Z_2$-degree $\one$.

In the height-one case of the BIKP (infinitesimal unipotent) setup, one only need consider the multi\-parameter supergroups of the form $\Mones := \M_{1;T^{p^s},0}$ for $s \geq 1$. The group algebra of $\Mones$ has the form $k\Mones = k[u,v]/\subgrp{u^p+v^2,u^{p^s}}$, with the $\Z_2$-degrees of $u$ and $v$ the same as above. Since the odd superdegree generator of $k\Mones$ is nilpotent, any superalgebra homomorphism $\rho: k\Mones \rightarrow B$ must factor through the purely even quotient $k[u]/\subgrp{u^p}$ of $k\Mones$, and consequently it follows that the induced map in cohomology $\rho^*: \Hbul(B,k) \rightarrow \Hbul(\Mones,k)$ must be trivial. This shows that for the purposes of detecting cohomology of general infinitesimal supergroup schemes one must consider more than just unipotent supergroup schemes.
\end{example}

\subsection{Future work} \label{SS:futurework}

The results of this paper raise a number of interesting questions. Given that $\Phi (k)$ is a homeomorphism in the classical case, it is natural to expect that the same is true in the super case as well. This is closely related to the question of whether the multiparameter supergroups introduced in this paper detect nilpotence of cohomology classes for arbitrary infinitesimal supergroups. The results of BIKP for unipotent infinitesimal supergroups suggest that this may be the case.

It is also desirable to refine \eqref{E:StratificationTheorem} along the lines of Quillen's original stratification theorem. Specifically, given a finite group $G$, one can form the colimit $\varinjlim \abs{E}$ over the ``Quillen category,'' whose objects consist of the elementary abelian $p$-subgroups of $G$, and whose morphisms consist of the inclusions and conjugation homomorphisms between them. Then Quillen's theorem states that the natural map $\varinjlim \abs{E} \rightarrow \abs{G}$ is an inseparable isogeny. In our context there are various inclusion maps among the $\Vrfeta(\GLmn)$, and one can form a corresponding colimit $\varinjlim \Vrfeta(\GLmn)$. It would be interesting to give an analogue of the Quillen category in this setting in order to give a more precise description of $\abs{\GLmnr}$. 

More generally, one can ask for an analogue of the Avrunin-Scott theorem in order to describe the support varieties of finite-dimensional supermodules in terms of multiparameter supergroups. Such a result would be valuable for both theory and calculations. It should generalize the non-cohomological description of support varieties given by Suslin, Friedlander, and Bendel \cite{Suslin:1997a}. We remark that this generalization is not expected to be routine, and will need to take into account the more interesting structure and representation theory of the multiparameter supergroups and of supergroup schemes in general. The examples of Section \ref{SS:Caution} show that the naive generalizations of the classical theory are inadequate even in the height-one infinitesimal case (i.e., for finite-dimensional restricted Lie superalgebras). 
Nevertheless, the announced results by BIKP that multiparameter supergroups detect projectivity in the unipotent case suggests that such a theory should be possible in general.\footnote{The authors have made some progress toward describing support varieties for a particular class of infinitesimal unipotent supergroup schemes; see \cite{Drupieski:2017b}.}

Finally, it of course would be extremely useful to generalize the results of this paper to other infinitesimal supergroup schemes, and to describe the spectrum of an arbitrary infinitesimal super\-group scheme $G$ in terms of the various homomorphisms $\rho: \Mrfeta \rightarrow G$. This would involve establishing suitable naturality properties, and could also involve understanding the supergroup analogue of ``embeddings of exponential type'' as in \cite{Suslin:1997}.

\subsection{Acknowledgements}\label{SS:Acknowledgements}  The authors are pleased to thank David Benson and Julia Pevtsova for enlightening conversations.

\section{Conventions} \label{conventions}

\subsection{General conventions}\label{SS:GeneralConventions}

Throughout the paper $k$ will denote a field of characteristic $p \geq 3$. Except when indicated otherwise, all vector spaces will be $k$-vector spaces and all unadorned tensor products will denote tensor products over $k$. If $V$ is a $k$-vector space, then $V^\# = \Hom_k(V,k)$ will denote its $k$-linear dual, and $V^{(r)} = V \otimes_{\varphi^r} k$ will denote the $r$-th Frobenius twist of $V$, i.e., the $k$-vector space obtained via base change along the $r$-th iterate of the Frobenius endomorphism $\varphi: \lambda \mapsto \lambda^p$. Given $v \in V$, we will write $v^{(r)}$ to denote the element $v \otimes_{\varphi^r} 1 \in V^{(r)}$.

We generally follow the notation, terminology, and conventions of \cite{Drupieski:2013b,Drupieski:2016,Drupieski:2016a}. In particular, we assume that the reader is familiar with the sign conventions of `super' linear algebra. Set $\Z_2 = \Z/2\Z = \{ \zero,\one \}$, and write $V = \Vzero \oplus \Vone$ for the decomposition of a superspace $V$ into its even and odd subspaces. Given a homogeneous element $v \in V$, write $\ol{v} \in \Z_2$ for the $\Z_2$-degree of $v$. Whenever we state a formula involving homogeneous degrees of elements, we mean that the formula is true as written for homogeneous elements and that it extends linearly to non-homo\-geneous elements. The category $\svec_k$, whose objects are the $k$-superspaces and whose morphisms are the arbitrary $k$-linear maps between them, is enriched over itself. The category $\svec_k$ is not an abelian category, though the underlying even subcategory $\fsvec_k = (\svec_k)_\ev$, having the same objects but only the even linear maps as morphisms, is an abelian category. Isomorphisms arising from even linear maps will be denoted by the symbol ``$\cong$'' while isomorphisms arising from odd linear maps will be denoted by the symbol ``$\simeq$''. If $A$ and $B$ are graded superalgebras (in the sense of \cite[\S2.2]{Drupieski:2016a}), then we write $A \gotimes B$ for the graded tensor product of $A$ and $B$ \cite[Definition 2.2.2]{Drupieski:2016a}.

Let $\sgrp_k$ denote the category of affine $k$-supergroup schemes. Then $\sgrp_k$ is anti-equivalent to the category of super-commutative Hopf $k$-superalgebras. A $k$-supergroup scheme $G$ is algebraic if its coordinate superalgebra $k[G]$ is finitely generated over $k$.

Write $\N$ for the set $\set{0,1,2,\ldots}$ of non-negative integers.

\subsection{Commutative superalgebras} \label{subsection:commsuperrings}

Let $A$ be a commutative $k$-superalgebra. Then $A$ is a $\Z_2$-graded $k$-algebra such that $ba = (-1)^{\ol{a} \cdot \ol{b}} ab$ for all $a,b \in A$. In this paper the term \emph{commutative} will always be used in the sense given here, and the usual notion of commutativity for abstract rings will be referred to as `ordinary' commutativity when it is necessary to distinguish the two notions. From now on we will write $\salg_k \supset \csalg_k \supset \calg_k$ to denote the categories whose objects are the $k$-superalgebras, the commutative $k$-superalgebras, and the purely even commutative $k$-superalgebras (equivalently, the ordinary commutative $k$-algebras), respectively, and whose morphisms are the \emph{even} $k$-superalgebra homomorphisms between them. Note that there is no such thing as an odd super\-algebra homomorphism, because a superalgebra homomorphism $\phi: A \rightarrow B$ must by definition satisfy $\phi(1_A) = 1_B$, and the identity element of a superalgebra is always even \cite[Lemma 3.1.2]{Westra:2009}.

Since $A$ is commutative, each left $A$-supermodule is canonically a right $A$-super\-module (and vice versa) via the action $m.a = (-1)^{\ol{a} \cdot \ol{m}} a.m$, so we will consider each $A$-supermodule as an $A$-bimodule, and we will only distinguish between left and right actions when it is convenient to do so (e.g., when the $\pm$ signs are simpler). In particular, if $V$ and $W$ are $A$-super\-modules, then the tensor product $V \otimes_A W$ is again an $A$-supermodule, and $V \otimes_A W \cong W \otimes_A V$ via the supertwist map $T : V \otimes_A W \rightarrow W \otimes_A V$ defined by $T(v \otimes w) = (-1)^{\ol{v} \cdot \ol{w}} w \otimes v$.

Write $\Hom_A(M,N)$ for the set of all right $A$-super\-module homomorphisms $\phi: M \rightarrow N$, i.e., the set of all additive maps such that $\phi(m.a) = \phi(m).a$ for all $m \in M$ and $a \in A$.\footnote{In the literature, the notation $\Hom_A(M,N)$ may be used to denote just the even (i.e., parity-preserving) $A$-super\-module homomorphisms, and then the set of all not necessarily even $A$-linear maps is denoted $\underline{\Hom}_A(M,N)$.} In terms of left actions, $\phi \in \Hom_A(M,N)$ if and only if $\phi(a.m) = (-1)^{\ol{a} \cdot \ol{\phi}}a.\phi(m)$ for all $a \in A$ and $m \in M$. The set $\Hom_A(M,N)$ is naturally $\Z_2$-graded and is naturally an $A$-supermodule via the left action $(a \cdot \phi)(m) := a.(\phi(m))$; the right action is then given by $(\phi \cdot a)(m) = (-1)^{\ol{a} \cdot \ol{m}}\phi(m) \cdot a$. Composition of $A$-supermodule homomorphisms is $A$-bilinear, so the category $\smod_A$ of $A$-super\-modules is enriched over itself. The category $\smod_A$ is not an abelian category, though the underlying even subcategory $\fsmod_A := (\smod_A)_\ev$, having all of the same objects but only the even $A$-linear maps as morphisms, is an abelian category.

\subsection{Supermatrices} \label{subsection:supermatrices}

In this paragraph, let $A \in \salg_k$ be an arbitrary $k$-superalgebra. Given $m,n \in \N$, let $\Amn = \kmn \otimes A$ denote the free $A$-supermodule with homogeneous basis $e_1,\ldots,e_{m+n}$ such that $\ol{e_i} = \zero$ if $1 \leq i \leq m$ and $\ol{e_i} = \one$ if $m+1 \leq i \leq m+n$. Then $\Hom_A(\Amn,\Amn)$ identifies with the ring $\Matmn(A)$ of all $(m+n) \times (m+n)$ matrices with coefficients in $A$. Specifically, given $T \in \Hom_A(\Amn,\Amn)$, write $T(e_j) = \sum_{i=1}^{m+n} e_i t_{ij}$ for some $t_{ij} \in A$. Then $T$ corresponds to the matrix whose $ij$-entry is $t_{ij}$, and (because the scalars were written to the right of the basis elements) composition of homomorphisms corresponds to matrix multiplication. Concretely, $\Matmn(A)_{\zero}$ identifies with the set of all block matrices
\begin{equation} \label{eq:blockform}
T = \begin{pmatrix} T_1 & T_2 \\ T_3 & T_4 \end{pmatrix}
\end{equation}
such that $T_1 \in M_{m \times m}(\Azero)$, $T_2 \in M_{m \times n}(\Aone)$, $T_3 \in M_{n \times m}(\Aone)$, and $T_4 \in M_{n \times n}(\Azero)$, while $\Matmn(A)_{\one}$ identifies with the set of all block matrices of the same shape but with the parities of the entries reversed. The ring $\Matmn(A)$ inherits a right $A$-supermodule structure via its identification with $\Hom_A(\Amn,\Amn)$. Specifically, if $T = (t_{ij}) \in \Matmn(A)$ and $a \in A$, then the $ij$-entry of $T \cdot a$ is equal to $(-1)^{\ol{a} \cdot \ol{e_j}} t_{ij} \cdot a$. In this way, the map
\begin{equation} \label{eq:Matmnbasechange}
\Matmn(k) \otimes A \rightarrow \Matmn(A): T \otimes a \mapsto T \cdot a
\end{equation}
is a ring isomorphism. We more frequently denote the ring $\Matmn(k)$ by $\glmn$.

\begin{remark} \label{remark:matrixtohom}
Given $A \in \salg_k$, there are canonical identifications (the first two as rings)
\begin{align*}
\Matmn(A) &\cong \Matmn(k) \otimes A \\
&\cong A \otimes \Matmn(k) \\
&\cong A \otimes (\Matmn(k)^\#)^\# \\
&\cong \Hom_k(\Matmn(k)^\#,A).
\end{align*}
Let $\set{e_{ij}: 1 \leq i,j \leq m+n}$ be the standard basis for $\Matmn(k)$ such that $e_{ij}$ is the matrix having a $1$ in the $ij$-position and $0$s elsewhere, and let $\set{E_{ij}: 1 \leq i,j \leq m+n}$ be the corresponding dual basis for $\Matmn(k)^\#$. Then $\ol{e_{ij}} = \ol{e_i} + \ol{e_j} = \ol{E_{ij}}$. Now if $T \in \Matmn(A)$ is the matrix whose $ij$ entry is $a$ and whose other entries are $0$, then the image of $T$ through the preceding series of identifications is the function $T: \Matmn(k)^\# \rightarrow A$ that maps $E_{ij}$ to $(-1)^{\ol{a}\cdot \ol{e_i} + \ol{e_i} + \ol{e_j}} a$, and that maps the other dual basis vectors for $\Matmn(k)^\#$ to $0$.
\end{remark}

\subsection{Superschemes}

Recall that an affine $k$-superscheme is a representable functor from the category $\csalg_k$ of commutative $k$-superalgebras to the category $\mathfrak{sets}$ of sets. Denote the coordinate (i.e., representing) superalgebra of an affine $k$-superscheme $X$ by $k[X]$. Then for each $A \in \csalg_k$, $X(A) = \Hom_{\salg}(k[X],A)$. Given an affine $k$-superscheme $X$, define the underlying purely even subscheme of $X$, denoted $X_{\ev}$, to be the closed subsuperscheme of $X$ such that
\[
k[X_{\ev}] = k[X]/ \subgrp{k[X]_{\one}},
\]
the largest purely even quotient of $k[X]$. Then $X_{\ev}(A) = X(\Azero)$ for each $A \in \csalg_k$.

The functor $\Matmn: A \mapsto \Matmn(A)$ for $A \in \csalg_k$ is naturally an affine $k$-superscheme. The coordinate super\-algebra $k[\Matmn]$ is the free commutative $k$-superalgebra generated for $1 \leq i,j \leq m+n$ by the even variables $X_{ij}$ and the odd variables $Y_{ij}$. Given $T \in \Hom_{\salg}(k[\Mmn],A)$, we identify $T$ with the matrix whose $ij$-entry is $T(X_{ij}) + T(Y_{ij})$. With this identification, the functors $A \mapsto \Matmn(A)_{\zero}$ and $A \mapsto \Matmn(A)_{\one}$ are closed affine subsuperschemes of $\Matmn$. Specifically,
\begin{align}
k[\Matmn(-)_{\zero}] &= k[\Matmn] / \subgrp{ \{X_{ij}: \ol{e_i}+\ol{e_j} = \one \} \cup \{ Y_{ij} : \ol{e_i} + \ol{e_j} = \zero \} }, & \text{and} \label{eq:Matmn0relations} \\
k[\Matmn(-)_{\one}] &= k[\Matmn] / \subgrp{ \{X_{ij}: \ol{e_i}+\ol{e_j} = \zero \} \cup \{ Y_{ij} : \ol{e_i} + \ol{e_j} = \one \} }. \label{eq:Matmn1relations}
\end{align}
Equivalently, $k[\Matmn(-)_{\zero}]$ is the free commutative $k$-superalgebra generated by the even variables $X_{ij}$ such that $\ol{e_i}+\ol{e_j} = \zero$ and the odd variables $Y_{ij}$ such that $\ol{e_i} + \ol{e_j} = \one$.

\section{Some multiparameter supergroups}\label{S:multiparametersupergroups}

In this section we introduce certain affine $k$-supergroup schemes that we refer to collectively as `multiparameter supergroups.' In contrast to the `one-parameter subgroups' defined in \cite{Suslin:1997}, whose elements are specified by only a single parameter in a commutative $k$-algebra $A$, these affine $k$-super\-group schemes typically require multiple parameters in a commutative $k$-superalgebra $A$ to completely specify their elements. Throughout this section, fix a positive integer $r \geq 1$.

\subsection{Definitions}

Define $k[\Mr]$ to be the commutative $k$-super\-algebra generated by the odd element $\tau$ and the even elements $\theta$ and $\sigma_i$ for $i \in \N$, such that $\tau^2 = 0$ (this is automatic because $k[\Mr]$ is commutative and $p = \chr(k) \neq 2$), $\sigma_0 = 1$, $\theta^{p^{r-1}} = \sigma_1$, and $\sigma_i \sigma_j = \binom{i+j}{i} \sigma_{i+j}$ for $i,j \in \N$:
\[ \textstyle
k[\Mr] = k[\tau,\theta,\sigma_1,\sigma_2,\ldots]/\subgrp{ \tau^2=0, \theta^{p^{r-1}} = \sigma_1, \text{ and } \sigma_i\sigma_j = \binom{i+j}{i}\sigma_{i+j} \text{ for $i,j \in \N$}}.
\]
If $j = \sum_{i=0}^\ell j_i p^i$ is the $p$-adic decomposition of $j$ (so $0 \leq j_i < p$), then it immediately follows that
\begin{equation} \label{eq:sigmafactorization}
\sigma_j = \frac{(\sigma_1)^{j_0} (\sigma_p)^{j_1} \cdots (\sigma_{p^\ell})^{j_\ell}}{(j_0!) (j_1)! \cdots (j_\ell)!},
\end{equation}
and $(\sigma_i)^p = 0$ for all $i \geq 1$.

\begin{lemma} \label{lemma:MrHopf}
The superalgebra $k[\M_r]$ admits a Hopf superalgebra structure such that its coproduct $\Delta$ and antipode $S$ satisfy
	\begin{align*}
	\Delta(\tau) &= \tau \otimes 1 + 1 \otimes \tau, & S(\tau) &= -\tau, \\
	\Delta(\theta) &= \theta \otimes 1 + 1 \otimes \theta, & S(\theta) &= -\theta, \\
	\Delta(\sigma_i) &= \textstyle \sum_{u+v=i} \sigma_u \otimes \sigma_v + \sum_{u+v+p=i} \sigma_u \tau \otimes \sigma_v \tau, & S(\sigma_i) &= (-1)^i \sigma_i.
	\end{align*}
\end{lemma}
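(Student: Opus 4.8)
The plan is to put the structure maps on the generators by the stated formulas — including the counit $\varepsilon$, which the statement omits: set $\varepsilon(\tau)=\varepsilon(\theta)=0$ and $\varepsilon(\sigma_i)=\delta_{i,0}$ — and then to verify that they are well defined and satisfy the Hopf axioms. Since $\Delta$ and $\varepsilon$ will be algebra homomorphisms and $S$ an algebra anti-homomorphism, and since $k[\Mr]$ is supercommutative, the coassociativity, counit, and antipode axioms only need to be checked on the algebra generators. So the real content is (a) well-definedness, i.e.\ that $\Delta$, $\varepsilon$, $S$ respect the three defining families of relations, and (b) the bialgebra and antipode axioms on the generators $\tau$, $\theta$, $\sigma_1,\sigma_2,\dots$.

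For well-definedness: the relation $\tau^2=0$ is harmless, since $\tau\otimes1$ and $1\otimes\tau$ are odd, so the cross terms in $\Delta(\tau)^2$ cancel and $\Delta(\tau)^2=(\tau\otimes1)^2+(1\otimes\tau)^2=0$; likewise $S(\tau)^2=\tau^2=0$ and $\varepsilon(\tau)^2=0$. The relation $\theta^{p^{r-1}}=\sigma_1$ is respected because in characteristic $p$ the two (commuting) summands of $\Delta(\theta)$ give $\Delta(\theta)^{p^{r-1}}=\theta^{p^{r-1}}\otimes1+1\otimes\theta^{p^{r-1}}$, which equals $\Delta(\sigma_1)=\sigma_1\otimes1+1\otimes\sigma_1$ because the $\tau$-correction term in $\Delta(\sigma_1)$ is an empty sum ($u+v+p=1$ has no solution in $\N$); the checks for $\varepsilon$ and $S$ are immediate using that $p^{r-1}$ is odd. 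The substantive point is that $\Delta$ respects $\sigma_i\sigma_j=\binom{i+j}{i}\sigma_{i+j}$. Multiplying out $\Delta(\sigma_i)\Delta(\sigma_j)$ produces four double sums; the one carrying $\tau$ in all four slots vanishes because it contains $\tau^2=0$, and the remaining three (all free of super signs, as the $\sigma$'s are even) are collected using $\sigma_u\sigma_{u'}=\binom{u+u'}{u}\sigma_{u+u'}$. One finds the coefficient of $\sigma_a\otimes\sigma_b$ (with $a+b=i+j$) is $\sum_u\binom{a}{u}\binom{b}{i-u}=\binom{i+j}{i}$ by Vandermonde, and the coefficient of $\sigma_a\tau\otimes\sigma_b\tau$ (with $a+b=i+j-p$) is $\binom{i+j-p}{i}+\binom{i+j-p}{i-p}$, which is $\equiv\binom{i+j}{i}\pmod p$ by comparing coefficients of $x^i$ in $(1+x)^{i+j}\equiv(1+x^p)(1+x)^{i+j-p}$. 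This is exactly $\binom{i+j}{i}\Delta(\sigma_{i+j})$. That $\varepsilon$ and $S$ respect the same relation is routine, using $\binom{i+j}{i}\delta_{i+j,0}=\delta_{i,0}\delta_{j,0}$ and $(-1)^i(-1)^j=(-1)^{i+j}$.

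It remains to verify the coalgebra and antipode axioms on the generators. On $\tau$ and $\theta$ these are the standard computations for primitive elements. On $\sigma_i$: expanding $(\Delta\otimes\id)\Delta(\sigma_i)$ and $(\id\otimes\Delta)\Delta(\sigma_i)$ and simplifying via $\tau^2=0$, one sees that both equal $\sum_{a+b+c=i}\sigma_a\otimes\sigma_b\otimes\sigma_c$ together with, for each ordered triple $(a,b,c)$ with $a+b+c=i-p$, the three terms obtained by inserting a $\tau$ into two of the three tensor slots — a manifestly symmetric expression, so coassociativity holds with no further binomial identity needed. The counit axiom on $\sigma_i$ is immediate since $\varepsilon(\sigma_u\tau)=0$. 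For the antipode, $m\circ(S\otimes\id)\circ\Delta(\sigma_i)$ for $i\ge1$ has vanishing $\tau$-contributions ($\tau^2=0$) and equals $\bigl(\sum_{u=0}^{i}(-1)^u\binom{i}{u}\bigr)\sigma_i=(1-1)^i\sigma_i=0=\varepsilon(\sigma_i)\cdot1$; on $\tau$ and $\theta$ it gives $-\tau+\tau=0$ and $-\theta+\theta=0$; and $m\circ(\id\otimes S)\circ\Delta$ is handled symmetrically.

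The only genuinely delicate step is the multiplicativity of $\Delta$ on the $\sigma_i$, which, as above, reduces to Vandermonde's identity together with the mod-$p$ identity $\binom{i+j}{i}\equiv\binom{i+j-p}{i}+\binom{i+j-p}{i-p}$; everything else is bookkeeping, made painless by the fact that $\theta$ and the $\sigma_i$ are even. One can also proceed more conceptually: the sub-superalgebra of $k[\Mr]$ generated by $\tau$ and the $\sigma_i$ is the restricted dual of the manifestly-Hopf superalgebra $k[x,\xi]/\subgrp{\xi^2-x^p}$ with $x$ even, $\xi$ odd, and both primitive, and $k[\Mr]$ is recovered from it as a fibre product of $\Gar$ with $\M_1$ over $\G_{a(1)}$; but the direct calculation sketched above is short enough to carry out as is.
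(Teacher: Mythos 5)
Your proof is correct and follows the same core strategy as the paper: it suffices to check that the stated formulas respect the defining relations, with the only substantive point being that $\Delta$ respects $\sigma_i\sigma_j = \binom{i+j}{i}\sigma_{i+j}$, settled by Vandermonde together with the mod-$p$ congruence $\binom{i+j}{i}\equiv\binom{i+j-p}{i}+\binom{i+j-p}{i-p}$. You are more thorough than the paper's write-up (which explicitly carries out only this one check and leaves the rest to the reader), and your generating-function proof of the mod-$p$ identity via comparing coefficients of $x^i$ in $(1+x)^{i+j}\equiv(1+x^p)(1+x)^{i+j-p}$ is a clean substitute for the paper's appeal to Pascal and Lucas.
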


\begin{proof}
It suffices to show that the stated formulas for the coproduct and the antipode are compatible with the defining relations in $k[\Mr]$. We will show for $i,j \in \N$ that $\Delta(\sigma_i\sigma_j) = \Delta(\sigma_i) \Delta(\sigma_j)$, but will leave the other (more straightforward) verifications to the reader.

Set $\ol{\Delta}(\sigma_i) = \sum_{u+v=i} \sigma_u \otimes \sigma_v$. Then $\Delta(\sigma_i) = \ol{\Delta}(\sigma_i) + \ol{\Delta}(\sigma_{i-p}) \cdot (\tau \otimes \tau)$, where $\sigma_{i-p}$ is interpreted to be $0$ if $i-p < 0$. Moreover, it follows from Vandermonde's convolution identity for binomial coefficients that $\ol{\Delta}(\sigma_i\sigma_j) = \ol{\Delta}(\sigma_i) \ol{\Delta}(\sigma_j)$. Now write $i = i_0 +i_1 p$ and $j = j_0 + j_1p$ for some $i_1,j_1 \in \N$ and $0 \leq i_0,j_0 < p$. Then
\[
\Delta(\sigma_i\sigma_j) = \textstyle \binom{i+j}{i} \Delta(\sigma_{i+j}) = \textstyle \binom{i+j}{i} \left[ \ol{\Delta}(\sigma_{i+j}) + \ol{\Delta}(\sigma_{i+j-p}) \cdot (\tau \otimes \tau) \right],
\]
and
\begin{align*}
\Delta(\sigma_i) \Delta(\sigma_j) &= \left[ \ol{\Delta}(\sigma_i) + \ol{\Delta}(\sigma_{i-p}) \cdot (\tau \otimes \tau) \right] \cdot \left[ \ol{\Delta}(\sigma_j) + \ol{\Delta}(\sigma_{j-p}) \cdot (\tau \otimes \tau) \right] \\
&= \ol{\Delta}(\sigma_i \sigma_j) + \ol{\Delta}(\sigma_i \sigma_{j-p}) \cdot (\tau \otimes \tau) + \ol{\Delta}(\sigma_{i-p} \sigma_j) \cdot (\tau \otimes \tau) \\
&= \textstyle \binom{i+j}{i} \ol{\Delta}(\sigma_{i+j}) + \left[ \binom{i+j-p}{i} + \binom{i+j-p}{i-p} \right] \ol{\Delta}(\sigma_{i+j-p}) \cdot (\tau \otimes \tau).
\end{align*}
It follows from Pascal's Identity and from Lucas's Theorem for binomial coefficients modulo $p$ that $\binom{i+j}{i} \equiv \binom{i+j-p}{i} + \binom{i+j-p}{i-p} \mod p$, so this implies that $\Delta(\sigma_i\sigma_j) = \Delta(\sigma_i)\Delta(\sigma_j)$.
\end{proof}

\begin{definition}
For $r,s \geq 1$, define $k[\Mrs]$ to be the Hopf subsuperalgebra of $k[\Mr]$ generated by $\tau$, $\theta$, and $\sigma_i$ for $1 \leq i < p^s$, and define $\Mr$ and $\Mrs$ to be the affine $k$-supergroup schemes whose coordinate Hopf superalgebras are $k[\Mr]$ and $k[\Mrs]$, respectively.
\end{definition}

Fix $A \in \csalg_k$. Specifying an element $g \in \Hom_{\salg}(k[\Mr],A)$ is equivalent to specifying the images in $A$ of the defining generators of $k[\Mr]$. Then by abuse of notation we will identify $\Mr(A)$ with the set of all infinite sequences $(\tau,\theta,\sigma_1,\sigma_2,\ldots)$ such that $\tau \in \Aone$, $\theta \in \Azero$, and $\sigma_i \in \Azero$ for $i \geq 1$, and such that the elements of the sequence satisfy the defining relations of $k[\Mr]$. Similarly, the elements of $\Mrs(A)$ identify with sequences of the form $(\tau,\theta,\sigma_1,\ldots,\sigma_{p^s-1})$. The group operation on $\Mr(A)$ is then given by
\begin{equation} \label{eq:Mrproduct}
(\tau,\theta,\sigma_1,\sigma_2,\ldots) * (\tau',\theta',\sigma_1',\sigma_2',\ldots) = (\tau+\tau',\theta+\theta',\sigma_1'',\sigma_2'',\ldots),
\end{equation}
where $\sigma_i '' := \sum_{u+v = i} \sigma_u \sigma_v' + \sum_{u+v+p=i} \sigma_u \tau \sigma_v' \tau'$, and the quotient homomorphism $\Mr(A) \twoheadrightarrow \Mrs(A)$ corresponding to the inclusion $k[\Mrs] \hookrightarrow k[\Mr]$ is given by $(\tau,\theta,\sigma_1,\sigma_2,\ldots) \mapsto (\tau,\theta,\sigma_1,\ldots,\sigma_{p^s-1})$. Since $\tau \tau' = - \tau'\tau$, $\Mr$ is not abelian, nor is $\Mrs$ except when $s=1$.

Recall that the (purely even) additive supergroup scheme $\Ga = \Ga^+$ is the affine supergroup scheme whose coordinate Hopf superalgebra $k[\Ga]$ is the polynomial algebra $k[\theta]$ generated by an even primitive element $\theta$. The odd additive supergroup scheme $\Gam$ is the affine supergroup scheme whose coordinate Hopf superalgebra $k[\Gam]$ is the exterior algebra $\Lambda(\tau)$ generated by an odd primitive element $\tau$. Thus for $A \in \csalg_k$, one has $\Ga(A) = (A_{\zero},+)$ and $\Gam(A) = (A_{\one},+)$.

\begin{remark} \label{remark:immediateMrs}
The following observations concerning $\Mr$ and $\Mrs$ are immediate.
\begin{enumerate}
\item The set $\set{\theta^i \sigma_j, \tau \theta^i \sigma_j : 0 \leq i < p^{r-1}, j \in \N}$ is a homogeneous basis for $k[\Mr]$.

\item The set $\set{\theta^i \sigma_j, \tau \theta^i \sigma_j : 0 \leq i < p^{r-1}, 0 \leq j < p^s}$ is a homogeneous basis for $k[\Mrs]$.

\item \label{item:Zgrading} The $\Z_2$-gradings on the Hopf superalgebras $k[\Mr]$ and $k[\Mrs]$ lift to $\Z$-gradings such that $\deg(\theta) = 2$, $\deg(\sigma_i) = 2ip^{r-1}$, and $\deg(\tau) = p^r$.

\item \label{item:MrfactorsMrs} Let $G$ be an algebraic $k$-supergroup scheme i.e., $G$ is an affine $k$-supergroup scheme such that $k[G]$ is finitely generated over $k$. Then any homomorphism of $k$-super\-group schemes $\rho: \Mr \rightarrow G$ factors through $\Mrs$ for $s \gg 0$. Indeed, by the finite-generation of $k[G]$, the image of the comorphism $\rho^*: k[G] \rightarrow k[\Mr]$ is contained in $k[\Mrs]$ for $s \gg 0$.

\item \label{item:superF} The function $\bsF^*: k[\Mr] \rightarrow k[\M_{r+1}]$ defined on generators by $\bsF^*(\theta) = \theta^p$, $\bsF^*(\tau) = \tau$, and $\bsF^*(\sigma_i) = \sigma_i$ for $i \in \N$ is a Hopf superalgebra homomorphism. (The assignment $\theta \mapsto \theta^p$ appears to be nonlinear, but this is only an optical illusion because the two $\theta$s are in different rings.) The corresponding supergroup scheme homomorphism $\bsF: \M_{r+1} \rightarrow \Mr$ satisfies
\[
\bsF(\tau,\theta,\sigma_1,\sigma_2,\ldots) = (\tau,\theta^p,\sigma_1,\sigma_2,\ldots)
\]
for $(\tau,\theta,\sigma_1,\sigma_2,\ldots) \in \M_{r+1}(A)$ and $A \in \csalg_k$. Thus, $\ker(\bsF)$ identifies with $\G_{a(1)}$, the first Frobenius kernel of $\Ga$. We call $\bsF$ the \emph{super Frobenius morphism} on $\Mr$. By abuse of notation, we will write $\bsF^\ell : \M_{r+\ell} \rightarrow \Mr$ for the function whose comorphism $k[\Mr] \rightarrow k[\M_{r+\ell}]$ is defined on generators by $\theta \mapsto \theta^{p^\ell}$, $\tau \mapsto \tau$, and $\sigma_i \mapsto \sigma_i$. Then $\ker(\bsF^\ell) \cong \G_{a(\ell)}$.

The super Frobenius morphism $\bsF: \M_{r+1} \rightarrow \Mr$ induces for each $s \geq 1$ a homomorphism $\M_{r+1;s} \rightarrow \Mrs$, which by abuse of notation we also denote $\bsF$. Then $\bsF$ is compatible in the obvious sense with the quotient homomorphisms $\Mrt \twoheadrightarrow \Mrs$ for $1 \leq s < t$.

\item \label{item:Garquotient} The natural inclusion $k[\G_{a(r)}] \cong k[\theta]/\subgrp{\theta^{p^r}} \hookrightarrow k[\Mr]$ is a Hopf superalgebra homomorphism. Write $q: \Mr \rightarrow \G_{a(r)}$, $(\tau,\theta,\sigma_1,\sigma_2,\ldots) \mapsto \theta$, for the corresponding quotient homomorphism. Then $\bsF: \M_{r+1} \rightarrow \Mr$ is related to the ordinary Frobenius morphism $F: \G_{a(r+1)} \rightarrow \G_{a(r)}$ by $F \circ q = q \circ \bsF$. The quotient map $q: \Mr \rightarrow \G_{a(r)}$ factors for each $s \geq 1$ through $\Mrs$. By abuse of notation we will also denote the induced quotient $\Mrs \twoheadrightarrow \G_{a(r)}$ by $q$.

\item \label{item:Ga-quotient} Similarly, the inclusion $k[\Gam] = k[\tau]/\subgrp{\tau^2} \hookrightarrow k[\Mr]$ is a Hopf superalgebra homomorphism, and hence gives rise to quotient homomorphisms $q^-: \Mr \twoheadrightarrow \Gam$ and $q^-: \Mrs \twoheadrightarrow \Gam$.

\item $k[\Mrone] \cong k[\theta]/\subgrp{\theta^{p^r}} \otimes \Lambda(\tau)$ as Hopf superalgebras, so $\Mrone \cong \Gar \times \Gam$.

\item \label{item:augmentationnilpotent} The augmentation ideal $I_\ve$ of $k[\Mr]$ is locally nilpotent, since $x^{p^r} = 0$ for all $x \in I_\ve$, but it is not globally nilpotent, since for example $\sigma_1 \sigma_p \cdots \sigma_{p^n} \neq 0$ for each $n \in \N$. However, using the explicit basis for $k[\Mr]$ one can check that $\bigcap_{n \geq 1} (I_\ve)^n = \set{0}$. The augmentation ideal of $k[\Mrs]$ is nilpotent, so $\Mrs$ is an infinitesimal supergroup scheme of height $r$.

\end{enumerate}
\end{remark}

Given an affine $k$-supergroup scheme $G$, set
	\[
	kG = k[G]^\# = \Hom_k(k[G],k).
	\]
The bialgebra structure of $k[G]$ induces via duality a $k$-superalgebra structure on $kG$. With this superalgebra structure, we call $kG$ the \emph{group algebra} of $G$. If $\phi: G \rightarrow H$ is a homomorphism of affine $k$-super\-group schemes, then composition with the comorphism $\phi^*: k[H] \rightarrow k[G]$ induces an algebra map $kG \rightarrow kH$, which by abuse of notation we also call $\phi$. If $G$ is finite, then by duality $kG$ inherits a Hopf superalgebra structure from $k[G]$, but in general the multiplication map on $k[G]$ does not induce a coproduct on $kG$.\footnote{If $G$ is not algebraic, i.e., if $k[G]$ is not a finitely-generated $k$-superalgebra, then the algebra of distributions $\Dist(G)$ need not be a Hopf superalgebra either, the first author's unqualified assertion in \cite[\S4.2]{Drupieski:2013b} notwithstanding.}

\begin{proposition} \label{prop:groupalgebras}
Let $r,s \geq 1$.
\begin{enumerate}
\item \label{item:groupalgebraiso} As $k$-superalgebras,
	\begin{align*}
	k\Mr &\cong k[[u_0,\ldots,u_{r-1},v]]/\subgrp{u_0^p,\ldots,u_{r-2}^p,u_{r-1}^p + v^2}, \quad \text{and} \\
	k\Mrs &\cong k[u_0,\ldots,u_{r-1},v]/\subgrp{u_0^p,\ldots,u_{r-2}^p,u_{r-1}^p + v^2,u_{r-1}^{p^s}},
	\end{align*}
where $\ol{u_i} = \zero$ for each $i$ and $\ol{v} = \one$. Furthermore, the inclusion $k[\Mrs] \hookrightarrow k[\Mr]$ induces the evident quotient map $k\Mr \twoheadrightarrow k\Mrs$ with kernel generated by $u_{r-1}^{p^s}$.

\item \label{item:gammai} Set $\gamma_{p^r} = u_{r-1}^p$, and given $0 \leq i < p^r$ with $p$-adic decomposition $i = \sum_{\ell=0}^{r-1} i_\ell p^\ell$, set
\[
\gamma_i = \frac{(u_0)^{i_0} (u_1)^{i_1} \cdots (u_{r-1})^{i_{r-1}}}{(i_0!)(i_1!) \cdots (i_{r-1}!)}.
\]
Given $j \in \N$, write $j = ap^r + b$ for some $a,b \in \N$ with $0 \leq b < p^r$, and set $\gamma_j = \gamma_b \cdot (\gamma_{p^r})^a$. Then $\set{\gamma_j,v \cdot \gamma_j: 0 \leq j < p^{r+s-1}}$ is a homogeneous basis for $k\Mrs$.

\item \label{item:coproduct} The coproduct $\Delta$ on $k\Mrs$ satisfies
\[
\Delta(\gamma_i) = \sum_{u+v = i} \gamma_u \otimes \gamma_v \quad \text{for} \quad 0 \leq i < p^r,
\]
$\Delta(u_{r-1}^p) = u_{r-1}^p \otimes 1 + 1 \otimes u_{r-1}^p$, and $\Delta(v) = v \otimes 1 + 1 \otimes v$.

\item \label{item:qGar} The homomorphism $q: k\Mr \rightarrow k\Gar$ induced by the quotient $q: \Mr \twoheadrightarrow \Gar$ identifies with the canonical quotient map $k\Mr \twoheadrightarrow k\Mr / \subgrp{v}$. Similarly, the map $q^-: k\Mr \rightarrow k\Gam$ induced by $q^-: \Mr \twoheadrightarrow \Gam$ identifies with the canonical map $k\Mr \twoheadrightarrow k\Mr/\subgrp{u_0,\ldots,u_{r-1}}$.

\item \label{item:bsF} The homomorphism $\bsF: k\M_{r+1} \rightarrow k\Mr$ induced by the super Frobenius morphism satisfies $\bsF(v) = v$, $\bsF(u_i) = u_{i-1}$ for $1 \leq i \leq r$, and $\bsF(u_0) = 0$.

\item The `polynomial subalgebra' $\Pr$ of $k\Mr$,
	\[
	\Pr = k[u_0,\ldots,u_{r-1},v]/\subgrp{u_0^p,\ldots,u_{r-2}^p,u_{r-1}^p + v^2},
	\]
	is a Hopf superalgebra, with coproducts given by the same formulas as in part \eqref{item:coproduct}, and with homogeneous basis $\set{\gamma_j,v \cdot \gamma_j : j \in \N}$. The maps $q$, $q^-$, and $\bsF$ of parts \eqref{item:qGar} and \eqref{item:bsF} restrict to Hopf superalgebra homomorphisms on $\Pr$ and $\P_{r+1}$, respectively.
\end{enumerate}
\end{proposition}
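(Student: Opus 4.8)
The plan is to compute everything on the dual side: one reads off $k\Mr = k[\Mr]^\#$ and $k\Mrs = k[\Mrs]^\#$ from the explicit homogeneous monomial bases of Remark~\ref{remark:immediateMrs}, using the coproduct formulas of Lemma~\ref{lemma:MrHopf} together with the standard identity $\langle fg, x\rangle = \sum (-1)^{\ol{g}\,\ol{x_{(1)}}}\langle f, x_{(1)}\rangle\langle g, x_{(2)}\rangle$ (with $\Delta x = \sum x_{(1)}\otimes x_{(2)}$) to convert comultiplication downstairs into multiplication upstairs. For part~\eqref{item:groupalgebraiso} I would take $v, u_0, \dots, u_{r-2}, u_{r-1}$ to be the dual-basis functionals picking out the monomials $\tau, \theta, \theta^p, \dots, \theta^{p^{r-2}}, \sigma_1$ (recall $\theta^{p^{r-1}} = \sigma_1$), and the same elements in $k\Mrs$. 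A direct pairing computation then shows that the $u_\ell$ commute with one another and with $v$; that $u_\ell^p = 0$ for $0 \le \ell \le r-2$ (the usual divided-power/Kummer argument, valid because $\theta$ is primitive); that $u_{r-1}^{\,j}$ is the functional dual to $\sigma_j$, hence nonzero in $k\Mr$ for all $j$ and nonzero in $k\Mrs$ exactly when $j < p^s$; and --- the one interesting relation --- that $v^2 = -u_{r-1}^p$, the minus sign being forced by $\tau\tau' = -\tau'\tau$ together with the $\tau \otimes \tau$ summand appearing in $\Delta(\sigma_p)$. This gives an algebra map from the presented superalgebra to $k\Mr$ (completed along the augmentation ideal --- which by Remark~\ref{remark:immediateMrs}\eqref{item:augmentationnilpotent} is only locally nilpotent, although $\bigcap_n (I_\ve)^n = 0$) and to $k\Mrs$ (where $\sigma_{p^s} = 0$, so $u_{r-1}^{p^s} = 0$), and I would check it is an isomorphism by matching it against the monomial bases; the final assertion of~\eqref{item:groupalgebraiso} is then just dual to the inclusion $k[\Mrs] \hookrightarrow k[\Mr]$.

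For parts~\eqref{item:gammai} and~\eqref{item:coproduct} I would set $\gamma_{p^r} := u_{r-1}^p$ and define the $\gamma_i$ as in the statement. For $i < p^r$ the $\gamma_i$ are (up to unit scalars) exactly the monomials $u_0^{i_0}\cdots u_{r-1}^{i_{r-1}}$ in the basis from~\eqref{item:groupalgebraiso}, so $\{\gamma_j, v\gamma_j : j \in \N\}$ is a basis of $k\Mr$ and, after imposing $u_{r-1}^{p^s} = 0$, the truncation $\{\gamma_j, v\gamma_j : 0 \le j < p^{r+s-1}\}$ is a basis of $k\Mrs$ (of the correct size $2p^{r+s-1} = \dim_k k[\Mrs]$). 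Since $\Mrs$ is finite (Remark~\ref{remark:immediateMrs}\eqref{item:augmentationnilpotent}), the coproduct on $k\Mrs$ is the transpose of multiplication on $k[\Mrs]$ and is an algebra homomorphism; evaluating $\langle \Delta f, x\otimes y\rangle = \langle f, xy\rangle$ on basis pairs yields $\Delta(v) = v\otimes 1 + 1\otimes v$ and, because $\theta$ is primitive, $\Delta(\gamma_i) = \sum_{u+v=i}\gamma_u\otimes\gamma_v$ for $i < p^r$ (in particular $\Delta(u_\ell) = \sum_{u+v=p^\ell}\gamma_u\otimes\gamma_v$). For the primitivity of $u_{r-1}^p$ I would then use that $\Delta$ is an algebra map in characteristic $p$: $\Delta(u_{r-1}^p) = \Delta(u_{r-1})^p = \big(\sum_{u+v=p^{r-1}}\gamma_u\otimes\gamma_v\big)^p = \sum_{u+v=p^{r-1}}\gamma_u^p\otimes\gamma_v^p$, and since $\gamma_u^p = 0$ for $0 < u < p^{r-1}$ (those $\gamma_u$ are monomials in the $p$-nilpotent generators $u_0, \dots, u_{r-2}$) while $\gamma_0 = 1$ and $\gamma_{p^{r-1}} = u_{r-1}$, this collapses to $u_{r-1}^p\otimes 1 + 1\otimes u_{r-1}^p$.

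Parts~\eqref{item:qGar}, \eqref{item:bsF}, and~(6) I expect to be formal. On group algebras, $q$, $q^-$, and $\bsF$ are precomposition with the comorphisms of Remark~\ref{remark:immediateMrs}\eqref{item:superF},~\eqref{item:Garquotient},~\eqref{item:Ga-quotient}: for $q$ and $q^-$ these are restrictions of functionals along the inclusions $k[\Gar] \hookrightarrow k[\Mr]$ and $k[\Gam] \hookrightarrow k[\Mr]$, so one checks on generators that $q$ kills $v$ and sends each $u_\ell$ to a generator while $q^-$ kills $u_0, \dots, u_{r-1}$ and sends $v$ to the generator of $k\Gam$; identifying the targets via~\eqref{item:groupalgebraiso} specialized at $v = 0$ (resp.\ at $u_0 = \dots = u_{r-1} = 0$) with $k\Gar = k[u_0, \dots, u_{r-1}]/(u_\ell^p)$ and $k\Gam = k[v]/(v^2)$ gives part~\eqref{item:qGar}. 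For $\bsF$, the formulas $\bsF^*(\theta) = \theta^p$, $\bsF^*(\tau) = \tau$, $\bsF^*(\sigma_i) = \sigma_i$ give, by a one-line pairing, $\bsF(v) = v$, $\bsF(u_i) = u_{i-1}$ for $1 \le i \le r$, and $\bsF(u_0) = 0$. For~(6), part~\eqref{item:gammai} already identifies $\Pr$ with the honest (non-completed) subalgebra of $k\Mr$ spanned by $\{\gamma_j, v\gamma_j : j \in \N\}$; the coproduct, counit, and antipode formulas from the earlier parts plainly carry this subspace into $\Pr \otimes \Pr$ (resp.\ $k$, resp.\ $\Pr$), and they satisfy the Hopf axioms because they are compatible with the surjections $\Pr \twoheadrightarrow k\Mrs$ onto the genuine Hopf superalgebras $k\Mrs$ while $\bigcap_{s}\ker(\Pr \to k\Mrs) = 0$; that $q$, $q^-$, $\bsF$ restrict to Hopf maps on $\Pr$ (and on $\P_{r+1}$) is then immediate since they send generators to generators.

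The hard part is the dual-algebra calculation underlying part~\eqref{item:groupalgebraiso}: keeping track of the super signs and of the binomial and multinomial coefficients (via Lucas's and Kummer's theorems) precisely enough to establish $u_\ell^p = 0$ for $\ell < r - 1$ and, with the correct sign and no spurious terms, the single quadratic relation $u_{r-1}^p + v^2 = 0$, and then verifying bijectivity against the monomial bases. A secondary point requiring care is the completion bookkeeping, so that $k\Mr$ emerges as the power-series algebra $k[[u_0, \dots, u_{r-1}, v]]/(\dots)$ while $\Pr$ is only the polynomial algebra; everything else reduces to bookkeeping or to formal properties of the dual.
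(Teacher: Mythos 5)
Your proposal is correct and takes essentially the same approach as the paper's proof: identify $u_0,\dots,u_{r-1},v$ as the functionals dual to $\theta,\theta^p,\dots,\theta^{p^{r-2}},\sigma_1,\tau$, show the $\wt\gamma$ monomials form the dual basis, verify the relations (including $v^2=-u_{r-1}^p$ via the $\tau\otimes\tau$ term in $\Delta(\sigma_p)$), and deduce the coproduct formulas from multiplicativity of $\Delta$ together with $p$-nilpotency of $u_0,\dots,u_{r-2}$. The only small divergence is part~(6): the paper (in the remark that follows the proposition) obtains the Hopf structure on $\Pr$ via the graded-dual mechanism of Milnor--Moore, whereas you argue via the embedding of $\Pr$ into $\prod_s k\Mrs$ with $\bigcap_s\ker(\Pr\to k\Mrs)=0$; both are valid.
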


\begin{proof}
Recall the distinguished homogeneous basis $\set{\theta^i \sigma_j, \tau \theta^i \sigma_j : 0 \leq i < p^{r-1}, j \in \N}$ for $k[\Mr]$. Restricting $j$ to the range $0 \leq j < p^s$, we get the distinguished homogeneous basis for $k[\Mrs]$. For $0 \leq i \leq r-1$, define $u_i \in k[\Mr]^\#$ to be the functional that is linearly dual to the basis element $\theta^{p^i}$ (so in particular, $u_{r-1}$ is dual to $\theta^{p^{r-1}} = \sigma_1$), and define $v \in k[\Mr]^\#$ to be the functional that is linearly dual to $\tau$. By restriction, we also consider each $u_i$ and $v$ as an element of $k[\Mrs]^\#$. Now given an integer $0 \leq i < p^{r-1}$ with $p$-adic decomposition $i = \sum_{\ell = 0}^{r-2} i_\ell p^\ell$, and given $j \geq 0$, set
\[
\wt{\gamma}_{i+jp^{r-1}} = \frac{(u_0)^{i_0} (u_1)^{i_1} \cdots (u_{r-2})^{i_{r-2}}}{(i_0!)(i_1!) \cdots (i_{r-2}!)} \cdot (u_{r-1})^j \in k\Mr.
\]
Then one can check that the monomials $\wt{\gamma}_{i+jp^{r-1}}$ and $v \cdot \wt{\gamma}_{i+jp^{r-1}}$ in $k\Mr$ are linearly dual to $\theta^i \sigma_j$ and $\tau \theta^i \sigma_j$, respectively, and hence each $\phi \in k[\Mr]^\#$ can be uniquely expressed as a formal infinite linear combination of the monomials $\wt{\gamma}_{i+jp^{r-1}}$ and $v \cdot \wt{\gamma}_{i+jp^{r-1}}$ for $0 \leq i < p^{r-1}$ and $j \geq 0$. Similarly, it follows that the monomials $\gamma_j$ and $v \cdot \gamma_j$ for $0 \leq j < p^{r+s-1}$ form a basis for $k\Mrs$. Thus, the isomorphisms in part \eqref{item:groupalgebraiso} are true at the level of $k$-superspaces. The algebras $k\Mr$ and $k\Mrs$ are commutative in the ordinary sense because $k[\Mr]$ and $k[\Mrs]$ are cocommutative in the ordinary sense, and the other indicated algebra relations in $k\Mr$ and $k\Mrs$ are straightforward to verify. For example,
	\begin{align*} \textstyle
	(v^2)(\sigma_p) &= (v \otimes v) \circ \Delta(\sigma_p) \\
	&= \textstyle (v \otimes v)( \tau \otimes \tau + \sum_{i+j = p} \sigma_i 	\otimes \sigma_j ) \\
	&= (-1)^{\ol{v} \cdot \ol{\tau}} v(\tau) \otimes v(\tau) = -1,
	\end{align*}
and one can check that $v^2$ vanishes on all other distinguished basis elements for $k[\Mr]$, so that $v^2 = -u_{r-1}^p$. The statements regarding $q$ and $q^-$ are then evident from the identifications of $k[\Gar]$ and $k[\Gam]$ with the subalgebras of $k[\Mr]$ generated by $\theta$ and $\tau$, respectively, and the description of the map $\bsF: k\M_{r+1} \rightarrow k\Mr$ is similarly evident from the definition of $\bsF^*$ in Remark \ref{remark:immediateMrs}\eqref{item:superF}. The assertions concerning coproducts are straightforward to verify from the multiplicative structure in $k[\Mr]$. In particular the formula for $\Delta(u_{r-1}^p)$ follows from the coproduct formula for $\gamma_{p^{r-1}} = u_{r-1}$ and from the fact that $u_0,\ldots,u_{r-2}$ are each $p$-nilpotent.
\end{proof}

\begin{remark} \ 
\begin{enumerate}
\item By Remark \ref{remark:immediateMrs}\eqref{item:Zgrading}, the $\Z_2$-grading on $k[\Mr]$ lifts to a $\Z$-grading, which makes $k[\Mr]$ into a graded Hopf algebra of finite type in the sense of Milnor and Moore \cite{Milnor:1965}. Then $\Pr$ is the graded dual of $k[\Mr]$. In particular, $\Pr$ inherits by duality the structure of a Hopf superalgebra by \cite[Proposition 4.8]{Milnor:1965}.

\item The algebra of distributions $\Dist(\Mr)$ is defined by
	\[
	\Dist(\Mr) = \{ \phi \in k[\Mr]^\# : \phi(I_\ve^{n+1}) \text{ for some $n \geq 0$} \}.
	\]
Given an integer $j \geq 0$, let $s(j)$ denote the sum of the digits in the $p$-adic decomposition of $j$. By the relation $u_{r-1}^p + v^2 = 0$ in $k\Mr$, we may assume that each power series in $k\Mr$ is written so that $v$ never appears with an exponent greater than $1$. Then a power series $f \in k\Mr$ belongs to $\Dist(\M_r)$ if and only if there exists an integer $n(f) \geq 1$ such that no monomial involving $u_{r-1}^j$ with $s(j) \geq n(f)$ appears in $f$.
\end{enumerate}
\end{remark}

Recall that a $p$-polynomial $f = \sum_{i=0}^t a_i T^{p^i} \in k[T]$ is inseparable if and only if $a_0 = 0$. Then for each inseparable $p$-polynomial $f \in k[T]$ and each $\eta \in k$, $f(u_{r-1}) + \eta \cdot u_0$ is a primitive element of the Hopf superalgebra $\Pr$, and hence generates a Hopf ideal in $\Pr$.

\begin{definition}\label{Def:anotherdef}
Given an inseparable $p$-polynomial $0 \neq f \in k[T]$, $\eta \in k$, and $r \geq 1$, set
\[
k\Mrfeta = \Pr/\subgrp{f(u_{r-1}) + \eta u_0}.
\]
Then $k\Mrfeta$ inherits from $\Pr$ the structure of a (finite-dimensional) Hopf $k$-superalgebra. Define $\Mrfeta$ to be the finite $k$-supergroup scheme such that $k[\Mrfeta]^\# = k\Mrfeta$. For $\eta = 0$, set $k\Mrf = k\M_{r;f,0}$ and $k[\Mrf] = k[\M_{r;f,0}]$, so that
\[
k\Mrf = \Pr / \subgrp{f(u_{r-1})}.
\]
If $s \geq 1$ and $f = T^{p^s}$, then $k\Mrf = k\Mrs$, and hence $\Mrf = \Mrs$.
\end{definition}

\begin{lemma} \label{lemma:k[Mrf]algebra}
Let $r \geq 1$, let $0 \neq f = \sum_{i=1}^t a_i T^{p^i} \in k[T]$ be an inseparable $p$-polynomial of degree $p^t$, and let $\eta \in k$. Then $k\Mrfeta \cong k\Mrt$ as $k$-supercoalgebras, and hence $k[\Mrfeta] \cong k[\Mrt]$ as $k$-superalgebras. In particular, $k[\Mrfeta]$ is infinitesimal of height $r$.
\end{lemma}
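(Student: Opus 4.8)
The plan is to show that $k\Mrfeta$, as a coalgebra, is isomorphic to $k\Mrt$ by exhibiting an explicit change of variables in the polynomial Hopf superalgebra $\Pr$ that carries the primitive element $f(u_{r-1}) + \eta u_0$ to the primitive element $u_{t-1}^{p}$ appearing in the presentation of $k\Mrt$. Since $k\Mrfeta = \Pr / \subgrp{f(u_{r-1}) + \eta u_0}$ and $k\Mrt = \P_t/\subgrp{u_{t-1}^{p^s}}$ where... wait, more precisely $k\Mrt = k[u_0,\ldots,u_{t-1},v]/\subgrp{u_0^p,\ldots,u_{t-2}^p, u_{t-1}^p+v^2, u_{t-1}^{p^t}}$ — I would instead observe that $k\Mrt = \P_r/\subgrp{u_{r-1}^{p^t}}$ by Proposition~\ref{prop:groupalgebras}\eqref{item:groupalgebraiso}, so I need only compare $f(u_{r-1}) + \eta u_0$ with $u_{r-1}^{p^t}$ inside $\Pr$. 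Note $f(u_{r-1}) = \sum_{i=1}^t a_i u_{r-1}^{p^i}$, and since $u_0,\ldots,u_{r-2}$ are $p$-nilpotent in $\Pr$ while $u_{r-1}$ is not, the leading term $a_t u_{r-1}^{p^t}$ is the ``dominant'' part. Since $a_t \neq 0$ and $k$ need not be perfect, one cannot simply rescale; instead I expect to build the isomorphism degree by degree.

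First I would set up the $\Z$-grading from Remark~\ref{remark:immediateMrs}\eqref{item:Zgrading}, under which $\Pr$ is graded of finite type, $\deg(u_i) = 2p^i$, $\deg(v) = p^r$, and the element $g := f(u_{r-1}) + \eta u_0$ is homogeneous (this requires $\eta = 0$ unless $\deg(u_0) = 2$ matches $\deg(u_{r-1}^{p^i})$ for some $i$ in the support of $f$ — actually the $\Z$-grading may not make $g$ homogeneous, so I would either work with the filtration by powers of the augmentation ideal, or simply argue that $g$ and $u_{r-1}^{p^t}$ generate the same ideal after a suitable automorphism). Then I would construct a superalgebra automorphism $\Phi$ of $\Pr$ fixing $u_0,\ldots,u_{r-2}$ and $v$, and sending $u_{r-1}$ to a power series (or polynomial) $\sum_{j\geq 1} c_j u_{r-1}^{?}$ — more carefully, a substitution of the form $u_{r-1} \mapsto u_{r-1} + (\text{lower-order corrections})$ — chosen so that $\Phi(u_{r-1}^{p^t}) = u(g)$ for a unit $u$; equivalently, solve $f(w) + \eta u_0 = w'^{\,p^t}$ for $w$ as a ``twisted'' series in $w'$. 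The key point is that $f$, as an additive ($p$-)polynomial, is a separable-free additive map whose top coefficient is invertible, so one can invert it formally among additive power series over the (possibly non-perfect) field $k$ after extending scalars appropriately — but since we only need an isomorphism of the \emph{quotients}, and the quotient kills everything of high enough order, we only need finitely many terms.

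The cleanest route, which I would actually pursue, is to avoid automorphisms of $\Pr$ and instead directly compare the two quotients via their explicit bases. Using Proposition~\ref{prop:groupalgebras}\eqref{item:gammai}, $\Pr$ has basis $\set{\gamma_j, v\gamma_j : j \in \N}$, and $k\Mrt = \Pr/\subgrp{u_{r-1}^{p^t}} = \Pr/\subgrp{\gamma_{p^r \cdot p^{t-1}} \cdot (\text{adjustments})}$ has basis $\set{\gamma_j, v\gamma_j : 0 \le j < p^{r+t-1}}$. For $k\Mrfeta = \Pr/\subgrp{g}$ I would show that the image of $g$ generates the same ideal as $u_{r-1}^{p^t}$ in $\Pr$: writing $g = a_t u_{r-1}^{p^t} + (\text{terms involving } u_{r-1}^{p^i}, i<t, \text{ and } \eta u_0)$, and noting all of $u_0, \ldots, u_{r-2}$ are nilpotent, one checks that $u_{r-1}^{p^t} = a_t^{-1} g - a_t^{-1}(\text{nilpotent-coefficient terms})$, and then a standard argument (the lower terms lie in a nilpotent ideal, so the ideal they generate together with $g$ coincides with the ideal generated by $g$ and $u_{r-1}^{p^t}$, but actually one must be slightly careful) shows $\subgrp{g} = \subgrp{u_{r-1}^{p^t}} + (\text{something})$. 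Because this comparison only involves multiplication in $\Pr$ and not the coproduct, it gives the superalgebra isomorphism $k[\Mrfeta] \cong k[\Mrt]$ directly by duality. The infinitesimality of height $r$ then follows from the corresponding statement for $k[\Mrt]$, which is Remark~\ref{remark:immediateMrs}\eqref{item:augmentationnilpotent}.

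\textbf{The main obstacle} I anticipate is handling the non-perfectness of $k$: one cannot extract $p$-th roots of the coefficients $a_i$, so the naive rescaling $u_{r-1} \mapsto a_t^{-1/p^t} u_{r-1}$ is unavailable. This is precisely why the statement only claims a \emph{coalgebra} (equivalently, dually, a bare algebra) isomorphism $k[\Mrfeta] \cong k[\Mrt]$ and not a Hopf isomorphism, and the proof must be arranged so that the comparison of ideals in $\Pr$ is carried out without ever inverting a Frobenius. I expect the resolution to be that, since $u_0, \ldots, u_{r-2}$ are $p$-nilpotent, all the non-leading terms of $g$ generate a nilpotent ideal, and one shows by induction on the nilpotency degree that $\subgrp{g}$ and $\subgrp{u_{r-1}^{p^t}}$ have the same image in every quotient $\Pr/N^k$, hence coincide — so the coalgebra (algebra-after-dualizing) structure is all that survives, exactly as stated.
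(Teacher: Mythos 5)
Your ``cleanest route'' cannot work as described, because the claim that $\subgrp{g}$ and $\subgrp{u_{r-1}^{p^t}}$ generate the same ideal of $\Pr$ is false whenever $f$ has more than the single term $a_t T^{p^t}$ (or $\eta \neq 0$). If the two ideals coincided, the two quotients would literally be the same Hopf superalgebra, and in particular $k[\Mrfeta]$ and $k[\Mrt]$ would agree as Hopf superalgebras; but Lemma~\ref{lemma:Mrfetacoproduct} computes the coproduct on $k[\Monefeta]$ explicitly and shows it picks up extra terms with coefficients $a_c$, $a_ca_d$ that have no analogue in $k[\Monet]$. The specific step in your argument that fails is the assertion that the non-leading terms of $g$ lie in a nilpotent ideal: apart from $\eta u_0$, these terms are $a_i u_{r-1}^{p^i}$ for $1\le i<t$, and $u_{r-1}$ is \emph{not} nilpotent in $\Pr$ (only $u_0,\ldots,u_{r-2}$ are $p$-nilpotent there). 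So the lower-order part of $g$ generates a non-nilpotent ideal, and $u_{r-1}^{p^t}$ cannot be recovered from $g$ modulo any nilpotent correction.

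What actually carries the proof is not a comparison of ideals but the existence of a common subcoalgebra complement. Let $W\subset\Pr$ be the span of the monomials $u_0^{i_0}\cdots u_{r-2}^{i_{r-2}}u_{r-1}^{i_{r-1}}v^j$ with $0\le i_\ell<p$, $0\le i_{r-1}<p^t$, $0\le j\le 1$ (equivalently of $\gamma_\ell$ and $v\gamma_\ell$ for $0\le \ell<p^{r+t-1}$). Division with remainder in the variable $u_{r-1}$, using only $a_t\ne 0$, shows $W$ is a vector-space complement to $\subgrp{g}$; it is obviously also a complement to $\subgrp{u_{r-1}^{p^t}}$. The crucial point is that $W$ is a \emph{subcoalgebra} of $\Pr$: by Proposition~\ref{prop:groupalgebras}(\ref{item:coproduct}) together with the $\Z$-grading, $\Delta(\gamma_\ell)$ involves only $\gamma_a\otimes\gamma_b$ and $v\gamma_a\otimes v\gamma_b$ with $a,b\le\ell$, and similarly for $\Delta(v\gamma_\ell)$. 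Both quotient maps $\Pr\twoheadrightarrow k\Mrfeta$ and $\Pr\twoheadrightarrow k\Mrt$ therefore restrict to bijective coalgebra morphisms from $W$, giving $k\Mrfeta\cong W\cong k\Mrt$ as supercoalgebras, without any assertion about the multiplicative structures. Your concern about non-perfectness of $k$ is a red herring here: nothing in the argument takes a $p$-th root, and the reason the lemma asserts only a coalgebra isomorphism is that the algebra structures on $k\Mrfeta$ and $k\Mrt$ genuinely differ, not that one cannot normalize $a_t$.
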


\begin{proof}
Since $u_{r-1}^{p^t} = -(a_t^{-1}) \cdot (\eta u_0 + \sum_{i=1}^{t-1} a_i u_{r-1}^{p^i})$ in $k\Mrfeta$, it follows that the set
	\[
	\set{u_0^{i_0} \cdots u_{r-2}^{i_{r-2}} u_{r-1}^{i_{r-1}} v^j : 0 \leq i_\ell < p \text{ for $0\leq \ell \leq r-2$}; 0 \leq i_{r-1} < p^t; 0 \leq j \leq 1}
	\]
is a homogeneous basis for $k\Mrfeta$. Since the set of monomials of this form is also a homogeneous basis for $k\Mrt$, and since the coalgebra structures on $k\Mrfeta$ and $k\Mrt$ are both inherited from the coalgebra structure on $\Pr$, it follows that $k\Mrfeta \cong k\Mrt$ as supercoalgebras, and hence that $k[\Mrfeta] \cong k[\Mrt]$ as $k$-superalgebras.
\end{proof}

\begin{remark} \label{remark:immediateMrf}
Retain the assumptions of Lemma \ref{lemma:k[Mrf]algebra}. Assume that $a_i = 0$ for $i < s$ and $a_s \neq 0$.
\begin{enumerate}
\item \label{item:FMrfeta} The super Frobenius morphism induces a Hopf superalgebra map $\bsF : k\M_{r+1;f,\eta} \rightarrow k\Mrf$, and hence a supergroup homomorphism $\bsF: \M_{r+1;f,\eta} \rightarrow \Mrf$. Taking $\eta = 0$, we get maps $\bsF: k\M_{r+1;f} \rightarrow k\Mrf$ and $\bsF: \M_{r+1;f} \rightarrow \Mrf$. Similarly, the map $q^-: \Pr \twoheadrightarrow k\Gam$ factors through $k\Mrfeta$.

\item \label{item:MrftoMrs} By the assumption that $a_i = 0$ for $i < s$, one has $\subgrp{f(u_{r-1})} \subseteq \subgrp{u_{r-1}^{p^s}}$ as ideals in $\Pr$, so there exists a canonical quotient homomorphism $\pi: k\Mrf \twoheadrightarrow k\Mrs$. This quotient is compatible with the super Frobenius morphism in the sense that $\pi \circ \bsF = \bsF \circ \pi$. Composing $\pi$ with the quotient map $q: k\Mrs \twoheadrightarrow \Gar$, one gets a quotient $q: k\Mrf \twoheadrightarrow k\Gar$, and hence a group homomorphism $q: \Mrf \twoheadrightarrow \Gar$. Then as in Remark \ref{remark:immediateMrs}\eqref{item:Garquotient}, one gets $q \circ \bsF = F \circ q : \M_{r+1;f} \rightarrow \Gar$. For $\eta \in k$, one gets a quotient $q: \M_{r+1;f,\eta} \twoheadrightarrow \Gar$ by composing the super Frobenius morphism $\bsF : \M_{r+1;f,\eta} \rightarrow \Mrf$ with $q: \Mrf \twoheadrightarrow \Gar$.

\item The quotient $\pi: k\Mrf \twoheadrightarrow k\Mrs$ corresponds to an injective Hopf super\-algebra map $\pi^*: k[\Mrs] \hookrightarrow k[\Mrf]$; via the identification $k[\Mrf] = k[\Mrt]$ of Lemma \ref{lemma:k[Mrf]algebra}, this is just the inclusion of algebras $k[\Mrs] \subseteq k[\Mrt]$. Thus, the coproducts in $k[\Mrf]$ of $\tau$, $\theta$, and $\sigma_i$ for $0 \leq i < p^s$ are given by the same formulas as in Lemma \ref{lemma:MrHopf}. For $p^s \leq i < p^t$, the coproduct in $k[\Mrf]$ of $\sigma_i$ can be more complicated; cf.\ Lemma \ref{lemma:Mrfetacoproduct} below.

\item \label{item:Mrfetaiso} Suppose $\eta \neq 0$. Then $k\M_{r+1;f,\eta} \cong k\M_{r;f^p} = \Pr/\subgrp{[f(u_{r-1})]^p}$ as superalgebras. Explicitly, the isomorphism $\phi: k\M_{r+1;f,\eta} \simrightarrow k\M_{r;f^p}$ is given on generators by $v \mapsto v$, $u_i \mapsto u_{i-1}$ for $1 \leq i \leq r$, and $u_0 \mapsto (-\eta^{-1}) \cdot f(u_{r-1})$. 
Then $\bsF \circ \phi^{-1}: k\M_{r;f^p} \rightarrow k\Mrf$ identifies with the canonical quotient map $\Pr/\subgrp{[f(u_{r-1})]^p} \twoheadrightarrow \Pr/\subgrp{f(u_{r-1})}$. Making the superalgebra identifications $k[\Mrf] \cong k[\Mrt]$ and $k[\M_{r;f^p}] \cong k[\M_{r;t+1}]$ as in Lemma \ref{lemma:k[Mrf]algebra}, it follows that $(\bsF \circ \phi^{-1})^*: k[\Mrf] \hookrightarrow k[\M_{r;f^p}]$ identifies with the subalgebra inclusion $k[\Mrt] \hookrightarrow k[\M_{r;t+1}]$. The map $\bsF \circ \phi^{-1}$ is also compatible with the quotient $q: k\Mrf \twoheadrightarrow k\Gar$ in the sense that $q \circ \bsF \circ \phi^{-1} = q : k\M_{r;f^p} \twoheadrightarrow k\Gar$.

\item Let $A \in \csalg_k$. Since $k[\Mrf] \cong k[\Mrt]$, elements of $\Mrf(A)$ identify with sequences $(\tau,\theta,\sigma_1,\ldots,\sigma_{p^t-1})$ such that $\tau \in \Aone$, $\theta \in \Azero$, and $\sigma_i \in \Azero$ for $1 \leq i < p^t$, and such that the elements of the sequence satisfy the defining relations of $k[\Mrt]$. The group multiplication map in $\Mrf(A)$ is in general no longer given by \eqref{eq:Mrproduct} (namely, the formula for $\sigma_i''$ can be more complicated if $i \geq p^s$), though the quotient $\pi: \Mrf(A) \twoheadrightarrow \Mrs(A)$ is still given by truncation, the map $q: \Mrf(A) \rightarrow \Gar(A)$ is still given by projection onto $\theta$, and the Frobenius morphism $\bsF: \M_{r+1;f}(A) \rightarrow \Mrf(A)$ is still given as in Remark \ref{remark:immediateMrs}\eqref{item:superF}.
\end{enumerate}
\end{remark}

\begin{lemma} \label{lemma:Mrfetacoproduct}
Let $0 \neq f = T^{p^t} + \sum_{i=1}^{t-1} a_i T^{p^i} \in k[T]$ be an inseparable $p$-polynomial, and let $\eta \in k$. Set $a_0 = \eta$ and $a_t = 1$, so that $k\Monefeta \cong k[u,v]/\subgrp{u^p+v^2, \sum_{i=0}^t a_i u^{p^i}}$. Then under the identification $k[\Monefeta] \cong k[\Monet]$ of Lemma \ref{lemma:k[Mrf]algebra}, the coproduct on $k[\Monefeta]$ satisfies $\Delta(\tau) = \tau \otimes 1 + 1 \otimes \tau$ and
	\begin{align}
	\Delta(\sigma_\ell) &= \sum_{i+j = \ell} \sigma_i \otimes \sigma_j + \sum_{i+j+p = \ell} \sigma_i \tau \otimes \sigma_j \tau \label{eq:coproduct} \\
	&\relphantom{=} + \sum_{\substack{0 \leq c < t \\ i+j \geq p^t \\ i+j+p^c = \ell + p^t}} (-a_c) \cdot \sigma_i \otimes \sigma_j + \sum_{\substack{0 \leq c < t \\ i+j \geq p^t \\ i+j+p+p^c = \ell + p^t}} (-a_c) \cdot \sigma_i \tau \otimes \sigma_j \tau \nonumber \\
	&\relphantom{=} + \sum_{\substack{0 \leq c,d < t \\ i+j \geq p^t \\ i+j+p^c \geq 2p^t \\ i+j+p^c + p^d = \ell + 2p^t}} (a_c a_d) \cdot \sigma_i \otimes \sigma_j + \sum_{\substack{0 \leq c,d < t \\ i+j \geq p^t \\ i+j+p^c \geq 2p^t \\ i+j+p^c+p^d+p=\ell+2p^t}} (a_ca_d) \cdot \sigma_i \tau \otimes \sigma_j \tau \nonumber
	\end{align}
for $0 \leq \ell < p^t$ and $t \geq 2$. If $t = 1$, then $\Delta(\tau)$ and $\Delta(\sigma_\ell)$ are given by the same formulas as above, except that $\Delta(\sigma_1)$ includes the additional term $(-a_0^3) \cdot \sigma_{p-1} \tau \otimes \sigma_{p-1}\tau$.
\end{lemma}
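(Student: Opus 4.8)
The plan is to obtain the coproduct on $k[\Monefeta]$ by dualizing the \emph{multiplication} on its finite-dimensional dual Hopf superalgebra $k\Monefeta = k[u,v]/\subgrp{u^p+v^2,\ \sum_{i=0}^t a_i u^{p^i}}$ (with $u = u_0$, $v$, and the relation $v^2 = -u^p$ inherited from $\P_1$). By Lemma~\ref{lemma:k[Mrf]algebra} (with $r=1$), $k\Monefeta$ has homogeneous basis $\set{u^\ell,\ vu^\ell : 0 \leq \ell < p^t}$, and this is precisely the basis through which $k\Monefeta$ was identified with $k\Monet$ as a supercoalgebra; correspondingly, the dual basis of $k[\Monefeta] \cong k[\Monet]$ is $\set{\sigma_\ell,\ \tau\sigma_\ell : 0 \leq \ell < p^t}$, with $\sigma_\ell$ dual to $u^\ell$ and $\tau\sigma_\ell$ dual to $vu^\ell$ (as in the proof of Proposition~\ref{prop:groupalgebras}). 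Since $k[\Monefeta]^\# = k\Monefeta$, the coproduct $\Delta$ on $k[\Monefeta]$ is dual to the product on $k\Monefeta$: for basis elements $\phi$ of $k[\Monefeta]$ and $b,b'$ of $k\Monefeta$ one has $\langle\Delta\phi, b \otimes b'\rangle = \langle\phi, bb'\rangle$, the super-sign entering only when $\Delta\phi$ is expanded as a sum of tensors and evaluated via $(\phi' \otimes \phi'')(b \otimes b') = (-1)^{\ol{\phi''}\cdot\ol{b}}\phi'(b)\phi''(b')$. Thus $\Delta(\sigma_\ell)$ is determined by expanding every product $bb'$ in $k\Monefeta$ in the distinguished basis and reading off the coefficient of $u^\ell$.

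The only relation needed for such expansions is $u^{p^t} = -\sum_{c=0}^{t-1} a_c u^{p^c}$. Hence $u^i \cdot u^j = u^{i+j}$, $\ vu^i \cdot vu^j = v^2 u^{i+j} = -u^{i+j+p}$, and $u^i \cdot vu^j = vu^{i+j}$; in the last case the reduction is a combination of the $vu^m$ and so contributes nothing to the coefficient of $u^\ell$, which is why $\Delta(\sigma_\ell)$ has no mixed $\sigma_i \otimes \tau\sigma_j$ terms, and $\Delta(\tau) = \tau \otimes 1 + 1 \otimes \tau$ follows at once since the coefficient of $v = vu^0$ in $bb'$ vanishes unless one of $b,b'$ equals $1$. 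For the first two products one reduces $u^{i+j}$, respectively $u^{i+j+p}$, modulo $u^{p^t}$: since $i,j < p^t$, applying the relation \emph{once} produces the $-a_c$ terms, applying it \emph{twice} (forced exactly when the exponent after the first reduction still exceeds $p^t$) produces the $a_ca_d$ terms, and for $t \geq 2$ two applications always suffice (one checks $i+j < 2p^t$ and $i+j+p < 3p^t$ are driven below $p^t$ in two steps). The three sums of $\sigma_i \otimes \sigma_j$ terms come from $u^i \cdot u^j$, carrying the target conditions $i+j=\ell$, $i+j+p^c=\ell+p^t$, $i+j+p^c+p^d=\ell+2p^t$; the three sums of $\sigma_i\tau \otimes \sigma_j\tau$ terms come from $-u^{i+j+p}$, the extra $p$ accounting for the offsets in those conditions. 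The signs $-a_c$ and $+a_ca_d$ record the $-1$'s in the defining relation together with the $-1$ from $v^2 = -u^p$ and the super-sign $(\sigma_i\tau \otimes \sigma_j\tau)(vu^i \otimes vu^j) = -\langle\sigma_i\tau,vu^i\rangle\langle\sigma_j\tau,vu^j\rangle$.

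The genuinely special case is $t=1$, where the relation reads $u^p = -a_0 u$ and lowers degree by only $p-1$. A monomial $u^{i+j+p}$ with $i,j<p$ has degree at most $3p-2$, attained only at $i=j=p-1$, and clearing that monomial requires \emph{three} successive reductions, $u^{3p-2} \mapsto -a_0 u^{2p-1} \mapsto a_0^2 u^p \mapsto -a_0^3 u$, so $vu^{p-1}\cdot vu^{p-1} = -u^{3p-2} = a_0^3 u$; dualizing (with the super-sign in the pairing) contributes the extra term $(-a_0^3)\,\sigma_{p-1}\tau \otimes \sigma_{p-1}\tau$ to $\Delta(\sigma_1)$, while for $t \geq 2$ one has $3p-2 < p^2 \leq p^t$ and no correction appears. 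The main obstacle is not conceptual but organizational: one must enumerate without error which pairs $(i,j)$ and which digit choices $(c,d)$ occur as the exponents $i+j$ and $i+j+p$ are driven below $p^t$ — i.e.\ verify that the nested inequalities in the statement encode precisely ``still needs reducing after the first (resp.\ second) application of $u^{p^t} = -\sum a_c u^{p^c}$'' — while simultaneously tracking every super-sign. I expect the $\sigma_i\tau\otimes\sigma_j\tau$ contributions, where the $v^2=-u^p$ shift interacts with the reduction thresholds, together with the $t=1$ correction term, to be the most delicate part of the bookkeeping.
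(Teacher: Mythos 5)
Your proposal follows exactly the same route as the paper's proof: regard $k[\Monefeta]$ as the dual of the group algebra $k\Monefeta = k[u,v]/\subgrp{u^p+v^2,\sum_i a_i u^{p^i}}$, evaluate $\Delta(\sigma_\ell)$ by pairing with products of basis monomials, reduce $u^{i+j}$ and $-u^{i+j+p}$ modulo $u^{p^t} = -\sum_{c<t}a_c u^{p^c}$ (sorting contributions by the number of applications of the relation), observe that mixed terms $\sigma_i\otimes\sigma_j\tau$ vanish because $u^i\cdot u^j v$ lands in the span of the $u^m v$, and isolate the $t=1$ anomaly where $u^{3p-2}$ forces a third reduction giving $a_0^3 u$. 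The one place you wave your hands --- that for $t\geq 2$ two reductions always suffice --- is confirmed by the paper's explicit bound $p^d+n_c\leq 2p^{t-1}+p-2<p^t$, which you flagged as routine bookkeeping still to be done.
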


\begin{proof}
The sets $\set{u^\ell, u^\ell v : 0 \leq \ell < p^t}$ and $\set{\sigma_\ell, \sigma_\ell \tau: 0 \leq \ell < p^t}$ are homogeneous bases for $k\Monefeta$ and $k[\Monet]$, respectively; we call these the standard homogeneous bases for $k\Monefeta$ and $k[\Monet]$. By definition, $k[\Monefeta]$ is the unique (up to isomorphism) $k$-Hopf super\-algebra such that $k[\Monefeta]^\# = k\Monefeta$. Then $k[\Monefeta] \cong k[\Monefeta]^{\#\#} = (k\Monefeta)^\#$. Now applying the identification $k[\Monefeta] \cong k[\Monet]$ of Lemma \ref{lemma:k[Mrf]algebra}, and viewing the elements of $k[\Monet]$ as elements of $(k\Monefeta)^\#$, one has $\sigma_i(u^j) = \delta_{ij}$, $\sigma_i(u^j v) = 0$, $(\sigma_i \tau)(u^j) = 0$, and $(\sigma_i \tau)(u^j v) = -\delta_{ij}$ for $0 \leq i,j < p^t$, where $\delta_{ij}$ is the usual Kronecker symbol. Furthermore, making the identifications
\[
k[\Monefeta] \otimes k[\Monefeta] \cong (k\Monefeta)^\# \otimes (k\Monefeta)^\# \cong (k\Monefeta \otimes k\Monefeta)^\#,
\]
the coproduct on $k[\Monefeta]$ identifies with the map $\Delta: (k\Monefeta)^\# \rightarrow (k\Monefeta \otimes k\Monefeta)^\#$ such that $\Delta(\phi)(x \otimes y) = \phi(xy)$. Then to prove the formula for $\Delta(\sigma_\ell)$, it suffices by duality to determine, for $0 \leq i,j < p^t$, the coefficient of $u^\ell$ in $u^i \cdot u^j = u^{i+j}$ and in $(u^iv) \cdot (u^jv) = -u^{i+j+p}$ when these products are rewritten in terms of the standard homogeneous basis for $k\Monefeta$.

Fix $0 \leq i,j < p^t$. We first consider the standard basis monomials $u^\ell$ that can occur in $u^i \cdot u^j$.
	\begin{enumerate}
	\item $i+j < p^t$. Then $u^i \cdot u^j = u^{i+j}$ is a standard basis monomial, so $u^\ell$ occurs in $u^{i+j}$ if and only if $\ell = i+j$, and if $u^\ell$ does occur, it does so with coefficient $1$.
	\item $i+j \geq p^t$. Set $m = (i+j)-p^t$. Then $0 \leq m \leq p^t -2$, and
		\[
		u^i \cdot u^j = u^{p^t} \cdot u^m = \left(-\sum_{c=0}^{t-1} a_c u^{p^c} \right) \cdot u^m = - \sum_{c=0}^{t-1} a_c u^{p^c+ m}.
		\]
		\begin{enumerate}
		\item If $0 \leq c < t$ and $\ell = p^c + m < p^t$, then we get a contribution of $-a_c \cdot u^\ell$ in $u^i \cdot u^j$.
		\item Suppose $0 \leq c < t$ and $p^c + m \geq p^t$. Set $n_c = (p^c + m) - p^t$. Then
			\[
			u^{p^c+m} = u^{p^t} \cdot u^{n_c} = \left(-\sum_{d=0}^{t-1} a_d u^{p^d} \right) \cdot u^{n_c} = -\sum_{d=0}^{t-1} a_d u^{p^d+n_c}.
			\]
		Now $0 \leq n_c \leq p^c-2$, so for $0 \leq d < t$ one has $p^d + n_c \leq p^d + p^c - 2 \leq 2p^{t-1} - 2 < p^t$. Then it follows for $0 \leq d < t$ that we get a contribution in $u^i \cdot u^j$ of
		\[
		(-a_c) \cdot ((-a_d) \cdot u^{p^d+n_c}) = (a_c a_d) \cdot u^{p^d+p^c + (i+j)-2p^t}.
		\]
		\end{enumerate}
	\end{enumerate}
Now we consider the standard basis monomials that appear when $(u^i v) \cdot (u^j v) = -u^{i+j+p}$ is written as a linear combination of standard basis elements.
	\begin{enumerate}
	\setcounter{enumi}{2}
	\item $i+j+p < p^t$. Then $(u^iv) \cdot (u^jv) = -u^{i+j+p}$ is a standard basis monomial.

	\item $i+j+p \geq p^t$. Set $m = i+j+p - p^t$. Then $0 \leq m \leq p^t+p-2$, and
		\[
		(u^iv) \cdot (u^jv) = (- u^{p^t}) \cdot u^m = \left( \sum_{c=0}^{t-1} a_c u^{p^c} \right) \cdot u^m = \sum_{c=0}^{t-1} a_c u^{p^c+m}.
		\]
		\begin{enumerate}
		\item If $0 \leq c < t$ and $\ell = p^c+m < p^t$, then we get a contribution of $a_c \cdot u^\ell$ in $(u^i v) \cdot (u^j v)$.
		
		\item Suppose $0 \leq c < t$ and $p^c + m \geq p^t$. Set $n_c = p^c + m - p^t$. Then as above,
			\[
			u^{p^c+m} = -\sum_{d=0}^{t-1} a_d u^{p^d+n_c}.
			\]
		Now $0 \leq n_c \leq p^c + p - 2$, so for $0 \leq d < t$ one has
			\[
			p^d + n_c \leq p^d + p^c + p-2 \leq 2p^{t-1} + (p-2).
			\]
				\begin{enumerate}
				\item $t \geq 2$. Then $2p^{t-1} + (p-2) < p^t$, so it follows for $0 \leq d < t$ that we get a contribution in $(u^i v) \cdot (u^jv)$ of
					\[
					a_c \cdot ( (-a_d) \cdot u^{p^d+n_c}) = -(a_ca_d) \cdot u^{p^d+p^c+p+(i+j)-2p^t}.
					\]
				\item $t = 1$. Then $m = i+j$, $n_c = n_0 = m-(p-1)$, and $p^d + n_c = p^0+n_0 =m-(p-2)$. Then $p^d+n_c < p = p^t$ except when $i = j = p-1$, in which case $p^d+n_c = p$. So for $i = j = p-1$ we get a contribution in $(u^{p-1}v) \cdot (u^{p-1}v)$ of
					\[
					(-a_0^2) \cdot u^{p^d+n_c} = (-a_0^2) \cdot u^p = (a_0^3) \cdot u.
					\]
				\end{enumerate}
		\end{enumerate}
	\end{enumerate}
The previous calculations imply by duality the formula for $\Delta(\sigma_\ell)$. (Note that $\sigma_i \tau \otimes \sigma_j \tau$, viewed as an element of $(k\Monefeta \otimes \Monefeta)^\#$, evaluates to $-1$ on $u^i v \otimes u^j v$ due to the sign conventions of super linear algebra.) Similarly, $\Delta(\tau) = \tau \otimes 1 + 1 \otimes \tau$ because the only products of standard basis monomials in $k\Monefeta$ that include $v$ as a summand when rewritten in terms of the standard basis are $v \cdot 1$ and $1 \cdot v$.
\end{proof}

\subsection{Cohomology} \label{subsection:cohomology}

Recall from \cite[\S5.1]{Drupieski:2013b} that if $G$ is an affine $k$-supergroup scheme, and if $M$ is a rational $G$-supermodule, then the cohomology group $\Hbul(G,M)$ can be computed as the cohomology of the Hochschild complex $C^\bullet(G,M) := M \otimes k[G]^{\otimes \bullet}$. Given a cocycle $z \in C^n(G,M)$, we will write $[z]$ to denote the cohomology class of $z$ in $\opH^n(G,M)$. Recall also that the cohomology ring $\Hbul(G,k)$ is a graded superalgebra in the sense of \cite[\S2.2]{Drupieski:2016a}, i.e., it admits both a $\Z$-grading (the cohomological degree) and a $\Z_2$-grading (the internal superdegree, induced by the $\Z_2$-grading on $k[G]$). Given a homogeneous element $a \in \Hbul(G,k)$, we write $\deg(a)$ for the $\Z$-degree of $a$, and write $\ol{a}$ for the $\Z_2$-degree of $a$. If $G$ is finite, then it follows from \cite[Corollary 2.3.6]{Drupieski:2016a} that $\Hbul(G,k)$ is graded-commutative superalgebra in the sense that if $a,b \in \Hbul(G,k)$ are both homogeneous, then
\[
a \cdot b = (-1)^{\deg(a) \cdot \deg(b) + \ol{a} \cdot \ol{b}} b \cdot a.
\]

\begin{proposition} \label{prop:Mrcohomology}
Let $r \geq 1$, let $0 \neq f \in k[T]$ be an inseparable $p$-polynomial, and let $\eta \in k$.
\begin{enumerate}
\item \label{item:Mr1} As a $k$-superalgebra, $\Hbul(\Mrone,k)$ is generated for $1 \leq i \leq r$ by the cohomology classes
\[
x_i = \left[ \textstyle \sum_{j=1}^{p-1} \frac{(p-1)!}{j!(p-j)!} (\theta^{p^{i-1}})^j \otimes (\theta^{p^{i-1}})^{p-j} \right], \quad
\lambda_i = [\theta^{p^{i-1}}], \quad \text{and} \quad
y = [\tau],
\]
where the bracketed expressions are interpreted as cocycles in the Hochschild complex for $\Mrone$, subject only to the condition that $\Hbul(\Mrone,k)$ is graded-commutative in the sense of \cite[Definition 2.2.1]{Drupieski:2016a}. In other words,
\[
\Hbul(\Mrone,k) \cong k[x_1,\ldots,x_r,y] \gotimes \Lambda(\lambda_1,\ldots,\lambda_r),
\]
where $x_i \in \opH^2(\Mrone,k)_{\zero}$, $\lambda_i \in \opH^1(\Mrone,k)_{\zero}$, $y \in \opH^1(\Mrone,k)_{\one}$.

\item \label{item:Mrquotient} The quotient map $\Mr \twoheadrightarrow \Mrone$ induces a surjective algebra homomorphism $\Hbul(\Mrone,k) \twoheadrightarrow \Hbul(\Mr,k)$ with kernel generated by $x_r-y^2$. In other words,
\[
\Hbul(\Mr,k) \cong k[x_1,\ldots,x_r,y]/\subgrp{x_r-y^2} \gotimes \Lambda(\lambda_1,\ldots,\lambda_r),
\]
where $x_i$, $\lambda_i$, and $y$ are defined by the same formulas as in \eqref{item:Mr1}, interpreting the bracketed expressions as cocycles in the Hochschild complex for $\Mr$.

\item \label{item:Mrs} Suppose $s \geq 2$. Then the quotient map $\Mr \twoheadrightarrow \Mrs$ induces a surjective algebra homomorphism $\Hbul(\Mrs,k) \twoheadrightarrow \Hbul(\Mr,k)$ with kernel generated by the cohomology class
\[ \textstyle
w_s := -\left[ \sum_{j=1}^{p^s-1} \sigma_j \otimes \sigma_{p^s - j} + \sum_{u+v+p = p^s} \sigma_u \tau \otimes \sigma_v \tau \right].
\]
Specifically,
\[
\Hbul(\Mrs,k) \cong k[x_1,\ldots,x_r,y,w_s]/\subgrp{x_r-y^2} \gotimes \Lambda(\lambda_1,\ldots,\lambda_r),
\]
where $w_s \in \opH^2(\Mrs,k)_{\zero}$, and $x_i$, $\lambda_i$, and $y$ are defined by the same formulas as in (1), interpreting the bracketed expressions as cocycles in the Hochschild complex for $\Mrs$.

\item \label{item:Mrfcohomology} Suppose $f = \sum_{i=s}^t a_i T^{p^i} \in k[T]$ with $1 \leq s \leq t$ and $a_s,a_t \neq 0$. Then the inclusion $\pi^*: k[\Mrs] \hookrightarrow k[\Mrf]$ induces an isomorphism in cohomology $\Hbul(\Mrs,k) \cong \Hbul(\Mrf,k)$.

\item \label{item:Mrfetacohomology} Suppose $\eta \neq 0$. Then the algebra isomorphism $k\M_{r+1;f,\eta} \cong k\M_{r;f^p}$ of Remark \ref{remark:immediateMrf}(\ref{item:Mrfetaiso}) induces an isomorphism in cohomology $\Hbul(\M_{r+1;f,\eta},k) \cong \Hbul(\M_{r;f^p},k)$.

\item \label{item:M1fetacohomology} Suppose $\eta \neq 0$. Then the inclusion $(q^-)^*: k[\Gam] = k[\tau]/\subgrp{\tau^2} \hookrightarrow k[\Monefeta]$ induces an isomorphism in cohomology $k[y] \cong \Hbul(\Gam,k) \cong \Hbul(\Monefeta,k)$.
\end{enumerate}
\end{proposition}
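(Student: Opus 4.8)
For the final item~(6) the plan is to realize $\Monefeta$ (when $\eta \neq 0$) as an extension $1 \to N \to \Monefeta \xrightarrow{q^-} \Gam \to 1$ of $\Gam$ by a \emph{linearly reductive} normal subgroup scheme $N$, and then to collapse the associated Lyndon--Hochschild--Serre spectral sequence; this is exactly the mechanism already used in the special case $f = T^p$, $\eta = -1$ of Example~\ref{example:semisimple}. To set things up, recall from Definition~\ref{Def:anotherdef} (with $r = 1$) that
\[
k\Monefeta = k[u,v]/\subgrp{u^p + v^2,\ g(u)}, \qquad g(T) := f(T) + \eta T = \textstyle\sum_{i=0}^{t} a_i T^{p^i},
\]
with $a_0 = \eta$, $\deg f = p^t$, $u$ even and $v$ odd (and $k\Monefeta$ commutative in the ordinary sense), and that by the proof of Lemma~\ref{lemma:k[Mrf]algebra} the monomials $\set{u^i v^j : 0 \le i < p^t,\ 0 \le j \le 1}$ form a homogeneous basis; in particular $1, u, \dots, u^{p^t-1}$ are linearly independent.

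The element $u$ is primitive --- it is $\gamma_1$ in the notation of Proposition~\ref{prop:groupalgebras} --- so the subalgebra $K \subseteq k\Monefeta$ it generates is a Hopf subsuperalgebra, and it is normal since $k\Monefeta$ is (ordinary-)commutative. Because $g(u) = 0$ and $1, \dots, u^{p^t-1}$ are independent, $K \cong k[T]/(g(T))$, an algebra of dimension $p^t$. The decisive point --- and the only place where $\eta \neq 0$ enters --- is that $g$ is a \emph{separable} $p$-polynomial, since $g'(T) = a_0 = \eta \neq 0$; hence $K$ is a separable, and therefore semisimple, $k$-algebra, so the finite subgroup scheme $N$ with $kN = K$ is linearly reductive. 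On the quotient side, killing $u$ turns $u^p + v^2 = 0$ into $v^2 = 0$, so $k\Monefeta/\subgrp{u} \cong k[v]/(v^2) \cong k\Gam$; by Proposition~\ref{prop:groupalgebras}(\ref{item:qGar}) and Remark~\ref{remark:immediateMrf}(\ref{item:FMrfeta}) this quotient map is precisely $q^- : \Monefeta \twoheadrightarrow \Gam$, whose kernel is the $N$ just built. This gives the desired short exact sequence of finite supergroup schemes.

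Finally I would feed the extension into the Lyndon--Hochschild--Serre spectral sequence $\opH^i(\Gam, \opH^j(N,k)) \Rightarrow \opH^{i+j}(\Monefeta, k)$. Linear reductivity of $N$ yields $\opH^j(N,k) = 0$ for $j > 0$ and $\opH^0(N,k) = k$, so the spectral sequence is concentrated in the row $j = 0$ and degenerates, and its edge homomorphism $\opH^\bullet(\Gam, k) \to \opH^\bullet(\Monefeta, k)$ --- which is inflation along $q^-$, i.e.\ the map induced on Hochschild complexes by the comorphism $(q^-)^*\colon k[\Gam] \hookrightarrow k[\Monefeta]$ --- is an isomorphism. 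Since $k\Gam \cong \Lambda(v)$ is exterior on one odd generator, $\opH^\bullet(\Gam,k) = \Ext^\bullet_{\Lambda(v)}(k,k) \cong k[y]$ with $y$ of cohomological degree $1$ and $\Z_2$-degree $\one$, which is the asserted description. The one step demanding genuine thought is the separability of $g$ (equivalently, the semisimplicity of $K = \subgrp{u}$); everything else is bookkeeping with the presentations of Section~\ref{S:multiparametersupergroups}. Should one wish to avoid the spectral sequence, there is a purely algebraic variant: since $\eta \neq 0$ one can solve $g(u) = 0$ for $u$ in terms of $v$, namely $u = \eta^{-1}\sum_{i \ge 1} a_i v^{2p^{i-1}}$, which rewrites $k\Monefeta$ as $k[v]/(h(v^2))$ for a separable $p$-polynomial $h$ with $h'(0) = \eta^p \neq 0$; the Chinese Remainder Theorem then peels off a direct factor $k[v]/(v^2)$ carrying the augmentation, whence $\opH^\bullet(\Monefeta, k) \cong \Ext^\bullet_{k[v]/(v^2)}(k,k) = k[y]$.
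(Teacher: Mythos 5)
Your proposal addresses only item~(6) of the proposition; for that item it is correct and takes essentially the same route as the paper, namely collapsing the Lyndon--Hochschild--Serre spectral sequence for the extension whose normal piece is the semisimple (linearly reductive) Hopf subalgebra $\subgrp{u_0}$ of $k\Monefeta$, with separability of $g(T) = f(T) + \eta T$ following from $g'(T) = \eta \neq 0$. The closing Chinese Remainder Theorem variant is a nice, more elementary alternative to the spectral-sequence collapse, but the main argument coincides with the paper's.
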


\begin{proof}
Fix $r,s \geq 1$, and define $\Nrs$ to be the closed subsupergroup scheme of $\Mr$ with
\[
k[\Nrs] = k[\Mr]/\subgrp{\tau,\theta,\sigma_1,\ldots,\sigma_{p^s-1}}.
\]
Then $\Mr/\Nrs = \Mrs$. For $i \in \N$, let $\wtsigma_i$ denote the image in $k[\Nrs]$ of the generator $\sigma_{ip^s} \in k[\Mr]$. Then $k[\Nrs]$ is generated as a commutative algebra by the elements $\wtsigma_i$ for $i \in \N$ subject only to the relation $\wtsigma_i \wtsigma_j = \binom{i+j}{i} \wtsigma_{i+j}$ for $i,j \in \N$. In particular, the set $\set{\wtsigma_i: i \in \N}$ is a basis for $k[\Nrs]$. Since $\tau$ is an element in the defining ideal of $\Nrs$, it follows that $\Nrs$ is central, hence normal, in $\Mr$. Then as in \cite[\S5.1]{Drupieski:2013b}, we can consider the Lyndon--Hochschild--Serre spectral sequence
\begin{equation} \label{eq:LHSMrs}
E_2^{i,j} = \opH^i(\Mr/\Nrs, \opH^j(\Nrs,k)) \Rightarrow \opH^{i+j}(\Mr,k).
\end{equation}
Since $\Nrs$ is central in $\Mr$, the action of $\Mr/\Nrs$ on the cohomology ring $\Hbul(\Nrs,k)$ is trivial, and hence $E_2^{i,j} \cong \opH^i(\Mr/\Nrs,k) \otimes \opH^j(\Nrs,k) = \opH^i(\Mrs,k) \otimes \opH^j(\Nrs,k)$.

Observe that $k[\Mrone]$ identifies with the tensor product of superalgebras $k[\theta]/\subgrp{\theta^{p^r}} \otimes \Lambda(\tau)$. Then $\Mrone \cong \G_{a(r)} \times \Gam$, and it follows from the K\"{u}nneth isomorphism that
\[
\Hbul(\Mrone,k) \cong \Hbul(\G_{a(r)},k) \gotimes \Hbul(\Gam,k).
\]
Applying the explicit calculation of $\Hbul(\G_{a(r)},k)$ given in \cite[I.4.20--I.4.27]{Jantzen:2003}, and the explicit (classical) calculation of $\Hbul(\Gam,k) = \Hbul(\Lambda(\tau),k)$ in \cite{Priddy:1970}, it follows that $\Hbul(\Mrone,k)$ admits the explicit description given in part \eqref{item:Mr1} of the proposition.

Next, using the fact that $\set{\wtsigma_i: i \in \N}$ is a basis for $k[\Nrs]$, one can check that the Hochschild complex $C^\bullet(\Nrs,k)$ identifies with the dual of the bar construction of a one-variable polynomial algebra, as described in \cite{Priddy:1970}. Then by \cite[Theorem 2.5]{Priddy:1970}, the cohomology ring $\Hbul(\Nrs,k)$ is an exterior algebra generated by the cohomology class of the cocycle $\wtsigma_1 \in C^1(\Nrs,k)$. In particular, $\opH^j(\Nrs,k) = 0$ for $j \geq 2$, so $E_2^{i,j} = 0$ for $j \geq 2$, and the only nonzero differential of the spectral sequence \eqref{eq:LHSMrs} is the map $d_2: E_2^{\bullet,1} \rightarrow E_2^{\bullet+2,0}$. Fixing basis elements $1 \in \opH^0(\Nrs,k)$ and $[\wtsigma_1] \in \opH^1(\Nrs,k)$, we identify $\opH^0(\Nrs,k)$ and $\opH^1(\Nrs,k)$ with the field $k$, and hence identify the differential with a linear map $\Hbul(\Mrs,k) \cong E_2^{\bullet,1} \rightarrow E_2^{\bullet+2,0} \cong \opH^{\bullet+2}(\Mrs,k)$. Then by the derivation property of the differential, $d_2$ acts on $\Hbul(\Mrs,k)$ as multiplication by $d_2([\wtsigma_1])$, and $\Hbul(\Mr,k) = \ker(d_2)/\im(d_2)$.

The differential $d_2: E_2^{0,1} \rightarrow E_2^{2,0}$ is induced by the Hochschild differential $\partial$ on $C^\bullet(\Mr,k)$, and 
\[
\partial(\sigma_{p^s}) = - \left( \sum_{j=1}^{p^s-1} \sigma_j \otimes \sigma_{p^s-j} + \sum_{u+v+p = p^s} \sigma_u \tau \otimes \sigma_v \tau \right).
\]
Since $\sigma_j = \frac{1}{j!}(\sigma_1)^j = \frac{1}{j!}(\theta^{p^{r-1}})^j$ for $1 \leq j \leq p-1$, and since $(p-1)! \equiv -1 \mod p$, we get in the case $s=1$ that $d_2([\wtsigma_1]) = x_r - y^2$. But $x_r - y^2$ is not a zero divisor in $\Hbul(\Mrone,k)$ by part \eqref{item:Mr1} of the proposition, so $\Hbul(\Mr,k) \cong \Hbul(\Mrone,k)/\subgrp{x_r - y^2}$, proving \eqref{item:Mrquotient}. Now suppose $s \geq 2$. By inspection, the cocycle representatives described in part \eqref{item:Mrquotient} for the generators of $\Hbul(\Mr,k)$ all pull back to cocycles in the Hochschild complex for $\Mrs$. Then the map in cohomology induced by the quotient $\Mr \twoheadrightarrow \Mrs$ is a surjection. Equivalently, the horizontal edge map of the spectral sequence \eqref{eq:LHSMrs} is a surjection. This implies that $E_\infty^{\bullet,1} = 0$, and hence that the differential $d_2$ is an injection. In particular, $d_2([\wtsigma_1])$ is not a zero divisor in $\Hbul(\Mrs,k)$. Setting $w_s = d_2([\wtsigma_1])$, \eqref{item:Mrs} then follows.

Recall that, via the equivalence between rational $G$-supermodules and $kG$-supermodules, the cohomology ring of a finite supergroup scheme $G$ identifies with the cohomology ring of its group ring $kG$ \cite[Remark 5.1.1]{Drupieski:2013b}. Then \eqref{item:Mrfetacohomology} is immediate, and we can prove \eqref{item:Mrfcohomology} by proving the equivalent dual statement that the quotient map $k\Mrf \twoheadrightarrow k\Mrs$ induces an isomorphism in cohomology. Indeed, let $A$ be the Hopf subalgebra of $k\Mrf$ generated by the (primitive) element $u_{r-1}^{p^s}$. Then the Hopf superalgebra quotient $k\Mrf // A$ identifies with $k\Mrs$, and one has a Lyndon--Hochschild--Serre spectral sequence
\[
E_2^{i,j} = \opH^i(k\Mrs,\opH^j(A,k)) \Rightarrow \opH^{i+j}(k\Mrf,k).
\]
Set $g = a_s T + a_{s+1} T^p + \cdots + a_t T^{p^{t-s}}$. Then $g$ is a separable polynomial, i.e., the roots of $g$ are distinct in any algebraic closure of $k$, and $A \cong k[x]/\subgrp{g(x)}$, so after scalar extension to a large enough field, the algebra $A$ splits as a direct product of fields. This implies that $\opH^j(A,k) = 0$ for $j > 0$, and hence that the spectral sequence collapses at the $E_2$-page, yielding for each $i \geq 0$ that the quotient map $k\Mrf \twoheadrightarrow k\Mrs$ induces an isomorphism $\opH^i(k\Mrs,k) \simrightarrow \opH^i(k\Mrf,k)$.

The proof of \eqref{item:M1fetacohomology} is essentially the same as that of \eqref{item:Mrfcohomology}, letting $A$ instead be the Hopf subalgebra of $k\Monefeta$ generated by $u_0$, so that $k\Monefeta//A \cong k\Gam$, and setting $g = f(T) - \eta T$.
\end{proof}

\begin{remark} \label{remark:w1}
Interpreting the expression $-[(\sum_{j=1}^{p-1} \sigma_j \otimes \sigma_{p - j}) + \tau \otimes \tau]$ as defining an element $w_1 \in \Hbul(\Mrone,k)$, one gets $w_1 = x_1-y^2$.
\end{remark}

Retain the notations and conventions of Lemma \ref{lemma:Mrfetacoproduct}. Let $I_\ve$ denote the augmentation ideal of $k[\Monefeta]$. Then the powers of $I_\ve$ define a decreasing filtration $F^\bullet$ on $k[\Monefeta]$:
\[
k[\Monefeta] = (I_\ve)^0 \supseteq I_\ve \supseteq (I_\ve)^2 \supseteq (I_\ve)^3 \supseteq \cdots.
\]
In particular, if $j = \sum_{\ell \geq 0} j_\ell p^\ell$ is the $p$-adic decomposition of $j$, then $\sigma_j \in (I_\ve)^i$ if and only if $\sum_{\ell \geq 0} j_\ell \geq i$; cf.\ \eqref{eq:sigmafactorization}. The decreasing filtration on $k[\Monefeta]$ induces a decreasing filtration $F^\bullet$ on $k[\Monefeta]^{\otimes 2}$ with $F^i(k[\Monefeta]^{\otimes 2}) = \sum_{i_1+i_2 \geq i} (I_\ve)^{i_1} \otimes (I_\ve)^{i_2}$. The following lemma will be applied in the proof of Proposition \ref{proposition:erMrs}.

\begin{lemma} \label{lemma:boundarymodfiltration}
Retain the notations and conventions of the preceding paragraph. Let $\partial$ denote the differential of the Hochschild complex $C^\bullet(\Monefeta,k)$, so that $-\partial(\sigma_p) = \Delta(\sigma_p) - (\sigma_p \otimes 1 + 1 \otimes \sigma_p)$. Then
\[
-\partial(\sigma_p) \equiv \left(\sum_{j=1}^{p-1} \sigma_j \otimes \sigma_{p-j}\right) + \tau \otimes \tau + \left( \sum_{j=1}^{p-1} (-a_1) \cdot \sigma_{jp^{t-1}} \otimes \sigma_{(p-j)p^{t-1}} \right) \mod F^{p+1} (k[\Monefeta]^{\otimes 2}).
\]
\end{lemma}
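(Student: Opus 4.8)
The plan is to apply Lemma~\ref{lemma:Mrfetacoproduct} with $\ell = p$ (this is legitimate, since $\sigma_p$ only enters the picture when $p < p^t$, i.e.\ $t \geq 2$), so that $-\partial(\sigma_p) = \Delta(\sigma_p) - \sigma_p \otimes 1 - 1 \otimes \sigma_p$ is given by the right-hand side of \eqref{eq:coproduct} with $\ell = p$ after deletion of the two primitive terms, already written out in the standard basis $\set{\sigma_i \otimes \sigma_j,\ \sigma_i\tau \otimes \sigma_j\tau : 0 \leq i,j < p^t}$ of $k[\Monefeta]^{\otimes 2}$. Since a standard basis monomial $\sigma_i \otimes \sigma_j$ lies in $F^N(k[\Monefeta]^{\otimes 2})$ if and only if $s(i) + s(j) \geq N$, and $\sigma_i\tau \otimes \sigma_j\tau$ lies in $F^N$ if and only if $s(i) + s(j) + 2 \geq N$ (both immediate from the explicit homogeneous basis of $k[\Monefeta]$ and the definition of $F^\bullet$), the task reduces to deciding, for each monomial occurring, whether its associated digit sum is $\leq p$ or $\geq p+1$. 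The key numerical input is Kummer's formula: $s(i) + s(j) - s(i+j) = (p-1) \cdot (\text{number of carries when } i \text{ is added to } j \text{ in base } p)$; in particular one forced carry already costs $p - 1$.

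First I would isolate the two principal contributions. Deleting $\sigma_p \otimes 1$ and $1 \otimes \sigma_p$ from $\sum_{i+j = p}\sigma_i \otimes \sigma_j$ leaves $\sum_{j=1}^{p-1}\sigma_j \otimes \sigma_{p-j}$, and each summand has digit sum $s(j) + s(p-j) = j + (p-j) = p$, hence survives modulo $F^{p+1}$. The term $\sum_{i+j+p = p}\sigma_i\tau \otimes \sigma_j\tau$ forces $i = j = 0$ and equals $\tau \otimes \tau$, of digit sum $0 + 0 + 2 = 2 < p+1$ (this is the one place $p \geq 3$ is used), so it too survives.

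Next I would show that the four remaining sums in \eqref{eq:coproduct} with $\ell = p$ contribute, modulo $F^{p+1}$, only the terms $\sum_{j=1}^{p-1}(-a_1)\sigma_{jp^{t-1}} \otimes \sigma_{(p-j)p^{t-1}}$. The two sums of $\sigma_i\tau \otimes \sigma_j\tau$-type are empty: their index conditions become $i+j = p^t - p^c$, respectively $i+j+p^c = 2p^t$, which contradict $i+j \geq p^t$, respectively $i+j+p^c \geq 2p^t$. For the double sum $\sum(a_ca_d)\sigma_i \otimes \sigma_j$ one has $i+j = p + 2p^t - p^c - p^d$; since $c,d < t$ give $p^c + p^d \leq 2p^{t-1} \leq p^t$, one gets $0 < p \leq i+j - p^t < p^t$, so $s(i+j) = 1 + s(i+j - p^t) \geq 2$, and together with the unavoidable carry into position $t$ (forced because $i,j < p^t$ yet $(i+j)_t = 1$) this yields $s(i)+s(j) \geq s(i+j) + (p-1) \geq p+1$, so every such term dies. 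The decisive case is the single sum $\sum_{\,0 \leq c < t,\ i+j \geq p^t,\ i+j+p^c = p+p^t\,}(-a_c)\sigma_i \otimes \sigma_j$: here $i+j = p + p^t - p^c \geq p^t$ forces $p^c \leq p$, i.e.\ $c \in \set{0,1}$. If $c = 0$ then $i+j = p^t + p - 1$, so $s(i+j) = p$ while there is a carry into position $t$, whence $s(i)+s(j) \geq 2p - 1 \geq p+1$; this is precisely why the coefficient $a_0 = \eta$ does not appear in the final answer. If $c = 1$ then $i+j = p^t$ with $1 \leq i, j \leq p^t - 1$, so $s(p^t) = 1$ gives $s(i)+s(j) = 1 + (p-1)(\text{number of carries})$; a short digit-by-digit argument shows that exactly one carry occurs iff $i$ and $j$ are both multiples of $p^{t-1}$, say $i = j'p^{t-1}$ and $j = (p-j')p^{t-1}$ with $1 \leq j' \leq p-1$, so precisely these terms have digit sum $p$ and survive, contributing $(-a_1)\sigma_{j'p^{t-1}} \otimes \sigma_{(p-j')p^{t-1}}$, while all other $c=1$ terms have at least two carries and lie in $F^{p+1}$. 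Collecting the survivors gives the asserted congruence.

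I expect the last step, the $c \in \set{0,1}$ analysis of the third sum, to be the main obstacle, and within it the two sharp points are: verifying that for $c = 0$ there is always at least one carry (so that no $\eta$-dependence leaks through), and the digit-by-digit characterization of the carry-minimal pairs for $c = 1$. Everything else is bookkeeping with \eqref{eq:coproduct} once the dictionary ``filtration degree $=$ digit sum, digit-sum deficit $= (p-1) \times (\text{carries})$'' is in place.
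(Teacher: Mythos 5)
Your proof is correct and takes essentially the same route as the paper: a term-by-term analysis of the coproduct formula \eqref{eq:coproduct} at $\ell = p$, with survival modulo $F^{p+1}$ decided by $p$-adic digit sums. You carry out all six sums explicitly and organize the bookkeeping via Kummer's carry formula, whereas the paper's proof spells out only the case analysis for the third sum (the $c \in \{0,1\}$ dichotomy, leading to pairs supported in digit $t-1$) and leaves the rest to the reader; your write-up is simply a more complete realization of the same argument. One small slip worth fixing: for the last $\sigma_i\tau\otimes\sigma_j\tau$ sum, substituting $\ell = p$ yields the index equation $i+j+p^c+p^d = 2p^t$, not $i+j+p^c = 2p^t$ as you wrote (the latter would not literally contradict $i+j+p^c \geq 2p^t$); from the correct equation, $p^d \geq 1$ gives $i+j+p^c = 2p^t - p^d < 2p^t$, which is the genuine contradiction with the standing inequality $i+j+p^c \geq 2p^t$, so that sum is indeed empty.
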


\begin{proof}
The coproduct $\Delta(\sigma_p)$ is described explicitly by the formula in Lemma \ref{lemma:Mrfetacoproduct}. Then it suffices to check that the only summands of $\Delta(\sigma_p)$ that are not elements of $F^{p+1} (k[\Monefeta]^{\otimes 2})$ are $\sigma_p \otimes 1$, $1 \otimes \sigma_p$, and the summands included in the statement of this lemma.

For example, suppose $0 \leq i,j < p^t$ are such that $i+j \geq p^t$ and $i+j+p^c = p + p^t$ for some $0 \leq c < t$. Let $i = \sum_{\ell = 0}^{t-1} i_\ell p^\ell$ and $j = \sum_{\ell = 0}^{t-1} j_\ell p^\ell$ be the $p$-adic decompositions of $i$ and $j$. If $\sum_{\ell = 0}^{t-1} (i_\ell+j_\ell) < p$, then $i_\ell+j_\ell < p$ for each $0 \leq \ell < t$, and hence $\sum_{\ell=0}^{t-1} (i_\ell+j_\ell)p^\ell$ is the $p$-adic decomposition of $i+j$. In particular, if $\sum_{\ell=0}^{t-1} (i_\ell+j_\ell) < p$, then $i+j < p^t$. But $i+j \geq p^t$ by assumption, so we must have $\sum_{\ell=0}^{t-1} (i_\ell+j_\ell) \geq p$. Now we consider several cases:
	\begin{enumerate}
	\item $t = 1$. Then $c = 0$ and $i+j+1 = 2p$. This is impossible, because $0 \leq i,j < p$.

	\item $t \geq 2$. Then $p+p^t$ is the $p$-adic decomposition of $i+j+p^c$. Using this observation, one can check that the only way to get $i+j+p^c = p+p^t$ with $\sum_{\ell = 0}^{t-1} (i_\ell + j_\ell) = p$, i.e., with $\sigma_i \otimes \sigma_j \notin F^{p+1}(k[\Monefeta]^{\otimes 2})$, is to have $c = 1$ and $i_1 + j_1 = p$.
	\end{enumerate}
The preceding calculations imply that the only terms in the first summation in the second row of \eqref{eq:coproduct} that are not elements of $F^{p+1}(k[\Monefeta]^{\otimes 2})$ are the terms included in the statement of this lemma. We leave the remaining details of this proof to the reader.
\end{proof}

Recall the (now classical) calculation of the cohomology ring for $\Gar$:
	\[
	\Hbul(\Gar,k) \cong k[x_1,\ldots,x_r] \gotimes \Lambda(\lambda_1,\ldots,\Lambda_r),
	\]
with $\deg(x_i) = 2$ and $\deg(\lambda_i) = 1$ for each $i$ (for details, see \cite[I.4]{Jantzen:2003}).

\begin{lemma} \label{lemma:Fandqoncohomology}
Let $r \geq 1$, and let $f = \sum_{i=s}^t a_i T^{p^i} \in k[T]$ with $1 \leq s \leq t$ and $a_s,a_t \neq 0$. Then in terms of the generators described in Proposition \ref{prop:Mrcohomology}, and identifying $\Hbul(\Mrf,k)$ with $\Hbul(\Mrs,k)$ via the isomorphism of Proposition \ref{prop:Mrcohomology}(\ref{item:Mrfcohomology}):
\begin{enumerate}
\item \label{item:Foncohomology} The maps in cohomology induced by the super Frobenius morphism,
\[
\bsF^* : \Hbul(\Mr,k) \rightarrow \Hbul(\M_{r+1},k) \quad \text{and} \quad 
\bsF^* : \Hbul(\Mrf,k) \rightarrow \Hbul(\M_{r+1;f},k),
\]
satisfy $\bsF^*(x_i) = x_{i+1}$, $\bsF^*(\lambda_i) = \lambda_{i+1}$, $\bsF^*(y) = y$, and (in the second case) $\bsF^*(w_s) = w_s$.

\item \label{item:qoncohomology} The maps in cohomology induced by the quotients $q: \Mr \twoheadrightarrow \G_{a(r)}$ and $q: \Mrf \twoheadrightarrow \G_{a(r)}$,
\[
q^*: \Hbul(\G_{a(r)},k) \rightarrow \Hbul(\Mr,k) \quad \text{and} \quad q^*: \Hbul(\G_{a(r)},k) \rightarrow \Hbul(\Mrf,k),
\]
map the generators for $\Hbul(\G_{a(r)},k)$ to the generators of the same names in $\Hbul(\Mr,k)$ and $\Hbul(\Mrf,k)$, respectively. In particular, $q^*$ maps $\Hbul(\Gar,k)$ isomorphically onto the subalgebra of $\Hbul(\Mr,k)$ generated by $x_1,\ldots,x_r,\lambda_1,\ldots,\lambda_r$.

\item \label{item:squotient} Let $s \geq 1$. The map in cohomology $\pi^*: \Hbul(\Mrs,k) \rightarrow \Hbul(\M_{r;s+1},k)$ induced by the quotient $\pi: \M_{r;s+1} \twoheadrightarrow \Mrs$ satisfies $y \mapsto y$, $x_i \mapsto x_i$, and $\lambda_i \mapsto \lambda_i$. For $s = 1$ one has $\pi^*(w_1) = x_1 - y^2$, while for $s \geq 2$ one has $\pi^*(w_s) = 0$.
\end{enumerate}
\end{lemma}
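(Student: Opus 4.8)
\emph{Plan.} The plan is to prove all three parts by direct cocycle-chasing. Each of $\bsF$, $q$, $\pi$ has a comorphism that is merely a substitution on the distinguished generators $\theta,\tau,\sigma_i$ of the relevant coordinate superalgebra; the induced map of Hochschild complexes is the corresponding tensor power of that substitution, and the ring generators $x_i$, $\lambda_i$, $y$ (and $w_s$) are, by construction, represented by the \emph{same} cochains in the Hochschild complexes for $\Mr$, $\Mrs$, and $\Mrf$ (see Proposition~\ref{prop:Mrcohomology} and Remark~\ref{remark:w1}). Thus the statements about images of generators become matters of inspection. The one point where inspection does not suffice is the vanishing $\pi^*(w_s)=0$ for $s\geq2$ in part~\eqref{item:squotient}: there $w_s$ is not a polynomial in $x_\bullet,\lambda_\bullet,y$, so functoriality yields nothing, and one must argue at the cochain level. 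I regard this as the main, if modest, obstacle.

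For part~\eqref{item:Foncohomology}, by Remark~\ref{remark:immediateMrs}\eqref{item:superF} (and, after the algebra identification of Lemma~\ref{lemma:k[Mrf]algebra}, by Remark~\ref{remark:immediateMrf}\eqref{item:FMrfeta}) the comorphism of the super Frobenius morphism sends $\theta\mapsto\theta^p$, $\tau\mapsto\tau$, $\sigma_i\mapsto\sigma_i$; since $\theta^{p^{i-1}}\mapsto\theta^{p^i}$, it carries the cochain $\sum_{j=1}^{p-1}\tfrac{(p-1)!}{j!(p-j)!}(\theta^{p^{i-1}})^j\otimes(\theta^{p^{i-1}})^{p-j}$ representing $x_i$ to the one representing $x_{i+1}$, sends $[\theta^{p^{i-1}}]\mapsto[\theta^{p^i}]=\lambda_{i+1}$ and $[\tau]\mapsto[\tau]=y$, and (in the $\Mrf$ case) fixes the cochain $-(\sum_{j=1}^{p^s-1}\sigma_j\otimes\sigma_{p^s-j}+\sum_{u+v+p=p^s}\sigma_u\tau\otimes\sigma_v\tau)$ representing $w_s$, so $\bsF^*(w_s)=w_s$. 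For part~\eqref{item:qoncohomology}, the comorphism $q^*\colon k[\Gar]=k[\theta]/\subgrp{\theta^{p^r}}\hookrightarrow k[\Mr]$ (resp.\ $\hookrightarrow k[\Mrf]$) is the inclusion onto the subalgebra generated by $\theta$, so it carries the cochains $\sum_{j=1}^{p-1}\tfrac{(p-1)!}{j!(p-j)!}(\theta^{p^{i-1}})^j\otimes(\theta^{p^{i-1}})^{p-j}$ and $\theta^{p^{i-1}}$ representing the generators $x_i$, $\lambda_i$ of $\Hbul(\Gar,k)$ (those used, via the K\"unneth theorem, in the proof of Proposition~\ref{prop:Mrcohomology}\eqref{item:Mr1}) to the cochains bearing the same names, whence $q^*(x_i)=x_i$ and $q^*(\lambda_i)=\lambda_i$. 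The image of $q^*$ is therefore the subalgebra generated by $x_1,\dots,x_r,\lambda_1,\dots,\lambda_r$; since the only relation of $\Hbul(\Mr,k)$ beyond graded-commutativity is $x_r=y^2$, which involves $y$, these $2r$ classes generate a free graded-commutative subalgebra, and since $\Hbul(\Gar,k)$ is free graded-commutative on generators of the same names, $q^*$ is injective, hence an isomorphism onto that subalgebra.

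For part~\eqref{item:squotient}, $\pi^*\colon k[\Mrs]\hookrightarrow k[\M_{r;s+1}]$ is once more a subalgebra inclusion fixing each distinguished generator, so exactly as in~\eqref{item:qoncohomology} we obtain $\pi^*(y)=y$, $\pi^*(x_i)=x_i$, $\pi^*(\lambda_i)=\lambda_i$. When $s=1$, Remark~\ref{remark:w1} gives $w_1=x_1-y^2$, hence $\pi^*(w_1)=\pi^*(x_1)-\pi^*(y)^2=x_1-y^2$. When $s\geq2$, note that $p^s<p^{s+1}$, so $\sigma_{p^s}$ is one of the distinguished generators of $k[\M_{r;s+1}]$; computing its coproduct in the Hopf subalgebra $k[\M_{r;s+1}]\subseteq k[\Mr]$ by the formula of Lemma~\ref{lemma:MrHopf}, one finds that the cochain $-(\sum_{j=1}^{p^s-1}\sigma_j\otimes\sigma_{p^s-j}+\sum_{u+v+p=p^s}\sigma_u\tau\otimes\sigma_v\tau)$ that represents $\pi^*(w_s)$ equals $\partial(\sigma_{p^s})$, a Hochschild coboundary in $C^\bullet(\M_{r;s+1},k)$; therefore $\pi^*(w_s)=0$. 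Throughout, the only bookkeeping needing care is the relation $\theta^{p^{r-1}}=\sigma_1$ (which relates the $\theta$- and $\sigma$-presentations of the relevant degree-$2$ cochains, introducing the scalar $(p-1)!\equiv-1\pmod p$) and the super signs arising in the coalgebra identifications.
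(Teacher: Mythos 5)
Your proposal is correct and rigorously argued. Parts \eqref{item:Foncohomology} and \eqref{item:qoncohomology} (and the first assertions of \eqref{item:squotient}) proceed exactly as in the paper: the comorphisms of $\bsF$, $q$, $\pi$ are substitutions on the distinguished generators, and the cohomology ring generators are represented by the same cochains throughout, so the images follow by inspection.

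Where you diverge from the paper is the one genuinely nontrivial step, $\pi^*(w_s)=0$ for $s\geq 2$, and you diverge in a way worth noting. The paper's proof uses the internal $\Z$-grading from Remark \ref{remark:immediateMrs}\eqref{item:Zgrading}: $\pi^*$ preserves internal degree, $\deg(w_s)=2p^{r+s-1}$, and one checks that $\opH^2(\M_{r;s+1},k)$ has nothing in that internal degree, so $\pi^*(w_s)$ has nowhere to land. Your argument instead exhibits the vanishing explicitly at the cochain level: the cocycle $-\bigl(\sum_{j=1}^{p^s-1}\sigma_j\otimes\sigma_{p^s-j}+\sum_{u+v+p=p^s}\sigma_u\tau\otimes\sigma_v\tau\bigr)$ representing $\pi^*(w_s)$ equals $\partial(\sigma_{p^s})$ once it is viewed in $C^\bullet(\M_{r;s+1},k)$, where $\sigma_{p^s}$ is available as a $1$-cochain. (This is the same identity $\partial(\sigma_{p^s})=-z_s$ the paper already records in the proof of Proposition \ref{prop:Mrcohomology}; the only new observation is that all tensor factors appearing lie in $k[\Mrs]\subseteq k[\M_{r;s+1}]$ and that $\sigma_{p^s}\in k[\M_{r;s+1}]$.) Your route is shorter and constructive, avoids having to verify that a certain graded piece of $\opH^2(\M_{r;s+1},k)$ is zero, and makes it visually clear why $w_s$ survives in $\Hbul(\Mrs,k)$ but dies one level up: the bounding $1$-cochain $\sigma_{p^s}$ is precisely the first generator missing from $k[\Mrs]$. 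The paper's degree argument, by contrast, requires no explicit cochain computation, and the internal-grading machinery it invokes is already set up for other purposes in the surrounding sections.
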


\begin{proof}
For $f = T^{p^s}$, \eqref{item:Foncohomology} and \eqref{item:qoncohomology} are immediate from the explicit descriptions of the cohomology ring generators in Proposition \ref{prop:Mrcohomology}, and then for general $f$ they then follow from the compatibility conditions described in Remark \ref{remark:immediateMrf}\eqref{item:MrftoMrs}. Next, the effect of $\pi^*$ on $y$, $x_i$, and $\lambda_i$ is also evident from the explicit descriptions of the generators in terms of cochain representatives, so we will focus our remaining attention on computing $\pi^*(w_s) = 0$. The calculation $\pi^*(w_1) = x_1 - y^2$ follows from Remark \ref{remark:w1}, so suppose $s \geq 2$.

Recall from Remark \ref{remark:immediateMrs}\eqref{item:Zgrading} that $k[\Mrs]$ is a $\Z$-graded Hopf superalgebra. Then the cohomology ring $\Hbul(\Mrs,k)$ inherits  an additional internal $\Z$-grading, which we denote by $\deg$. Then $\deg(y) = p^r$, $\deg(\lambda_i) = 2p^{i-1}$, $\deg(x_i) = 2p^i$, and $\deg(w_s) = 2p^{r+s-1}$. Now for $s \geq 2$, one can check that the subspace of $\opH^2(\Mrs,k)$ of elements of internal degree $2p^{r+s-1}$ is one-dimensional spanned by $w_s$, while the subspace of $\opH^2(\M_{r;s+1},k)$ of elements of internal degree $2p^{r+s-1}$ is $0$. The homomorphism $\pi^*$ evidently preserves internal degrees, so $\pi^*(w_s) = 0$ for $s \geq 2$.
\end{proof}

\subsection{Superschemes of multiparameter supergroups} \label{SS:superschemeofMS}

Given an affine $k$-supergroup scheme $G \in \sgrp_k$ and a commutative $k$-superalgebra $A \in \csalg_k$, let $G \otimes_k A \in \sgrp_A$ denote the affine $A$-supergroup scheme obtained from $G$ via base change to $A$. That is, $G \otimes_k A$ is the $A$-supergroup scheme with coordinate Hopf $A$-superalgebra $A[G] := k[G] \otimes_k A$. Base change defines a functor $- \otimes_k A : \sgrp_k \rightarrow \sgrp_A$ from the category of affine $k$-supergroup schemes to the category of affine $A$-supergroup schemes. If $\phi: G \rightarrow G'$ is a homomorphism of affine $k$-supergroup schemes, then $\phi \otimes_k A: G \otimes_k A \rightarrow G' \otimes_k A$ is the homomorphism of $A$-supergroup schemes with comorphism defined by $(\phi \otimes_k A)^* := \phi^* \otimes_k A: k[G'] \otimes_k A \rightarrow k[G] \otimes_k A$.

\begin{definition}
Given affine $k$-supergroup schemes $G,G'$, define the $k$-superfunctor
\[
\bfHom(G,G') : \csalg_k \rightarrow \mathfrak{sets}
\]
by
\[
\bfHom(G,G')(A) = \Hom_{Grp/A}(G \otimes_k A, G' \otimes_k A),
\]
the set of $A$-supergroup scheme homomorphisms $\rho: G \otimes_k A \rightarrow G' \otimes_k A$.
\end{definition}

By the anti-equivalence between $A$-supergroup schemes and commutative Hopf $A$-superalgebras, an $A$-supergroup scheme homomorphism $\rho: G \otimes_k A \rightarrow G' \otimes_k A$ is equivalent to the data of a Hopf $A$-superalgebra homomorphism $\rho^*: A[G'] \rightarrow A[G]$.

\begin{lemma} \label{lemma:HomMrsMrs}
Let $r \geq 1$, and let $A = \Azero \in \calg_k$ be a purely even commutative $k$-superalgebra. Then there exists a natural identification
\[
\bfHom(\Mrone,\Mrone)(A) = \set{(\mu,a_0,\ldots,a_{r-1}) \in A^{r+1}}.
\]
More generally, for $s \geq 2$ there exists a natural inclusion
\[
\set{(\mu,a_0,\ldots,a_{r-1},b) \in A^{r+2}: \mu^2 = a_0^{p^r}} \subseteq \bfHom(\Mrs,\Mrs)(A),
\]
i.e., the elements of the former set naturally identify with elements of the latter set, and if $A$ is reduced (i.e., if $A$ has no nonzero nilpotent elements), then this inclusion is an equality.
\end{lemma}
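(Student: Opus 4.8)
The plan is to classify directly the Hopf $A$-algebra endomorphisms $\rho^*$ of the coordinate algebra, using the key observation that when $A=\Azero$ is purely even the image under $\rho^*$ of an even (resp.\ odd) primitive element is again an even (resp.\ odd) primitive.

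\emph{The case of $\Mrone$.} Here $\Mrone\cong\Gar\times\Gam$, so $A[\Mrone]\cong A[\theta]/\langle\theta^{p^r}\rangle\otimes\Lambda(\tau)$ with $\theta$ an even primitive and $\tau$ an odd primitive. First I would record the primitives of $A[\Mrone]$: the even ones form the free $A$-module on $\theta,\theta^p,\dots,\theta^{p^{r-1}}$ (the classical computation of $P(k[\Gar])$, valid after base change), and the odd ones are exactly $A\cdot\tau$, since for $j\geq 1$ the coproduct $\Delta(\tau\theta^j)$ has the nonzero ``mixed'' summand $\theta^j\otimes\tau$, which cannot cancel. Hence $\rho^*(\theta)=\sum_{i=0}^{r-1}a_i\theta^{p^i}$ and $\rho^*(\tau)=\mu\tau$ with $a_i,\mu\in A$ uniquely determined; conversely any such prescription extends to a Hopf $A$-algebra endomorphism, the target relations $\bigl(\sum a_i\theta^{p^i}\bigr)^{p^r}=0$, $(\mu\tau)^2=0$ being satisfied and compatibility with coproduct/counit/antipode being automatic because the generators go to primitives. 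This gives the natural bijection $\bfHom(\Mrone,\Mrone)(A)=A^{r+1}$.

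\emph{The case of $\Mrs$, $s\geq 2$ --- the routine part.} As a $k$-algebra $k[\Mrs]\cong k[\theta,\sigma_p,\dots,\sigma_{p^{s-1}},\tau]/\langle\theta^{p^r},\sigma_p^{\,p},\dots,\sigma_{p^{s-1}}^{\,p},\tau^2\rangle$ with $\sigma_1=\theta^{p^{r-1}}$ and coproducts given by Lemma~\ref{lemma:MrHopf}; its primitives are the same as for $\Mrone$ (use the internal $\Z$-grading of Remark~\ref{remark:immediateMrs}\eqref{item:Zgrading}, or dualize to the indecomposables of $k\Mrs$, which are spanned by $u_0,\dots,u_{r-1},v$ by Proposition~\ref{prop:groupalgebras}). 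As before $\rho^*(\theta)=\sum a_i\theta^{p^i}$ and $\rho^*(\tau)=\mu\tau$, and since $\sigma_u=\sigma_1^{\,u}/u!$ with the $(p^{r-1})$st power killing the $a_i$ for $i\geq 1$, one gets $\rho^*(\sigma_u)=a_0^{\,up^{r-1}}\sigma_u$ for $0\le u<p$. Imposing the coproduct condition on $\sigma_p$ and comparing the components of internal $\Z$-degree $2p^r$ (the $\tau\otimes\tau$ coefficient) forces $\mu^2=a_0^{\,p^r}$ and $\rho^*(\sigma_p)=a_0^{\,p^r}\sigma_p+(\text{even primitive})$; the relation $\sigma_p^{\,p}=0$ then forces the $\theta^{p^i}$-coefficients of that primitive with $i\le r-2$ to be $p$-nilpotent --- hence $0$ when $A$ is reduced --- leaving a coefficient $b\in A$ on $\theta^{p^{r-1}}$.

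\emph{The inductive step --- the main obstacle.} What remains is to pin down $\rho^*(\sigma_{p^j})$ for $j=2,\dots,s-1$. For each such $j$ the coproduct condition $\bar\Delta(\rho^*\sigma_{p^j})=(\rho^*\otimes\rho^*)\bar\Delta(\sigma_{p^j})$ has right-hand side already determined by $\rho^*$ on $\theta,\tau,\sigma_p,\dots,\sigma_{p^{j-1}}$ (via Lemma~\ref{lemma:MrHopf}, or Lemma~\ref{lemma:Mrfetacoproduct} for the relevant coproduct expansions); this determines $\rho^*(\sigma_{p^j})$ up to an even primitive, the relation $\sigma_{p^j}^{\,p}=0$ removes the $p$-nilpotent ambiguity on $\theta^{p^i}$ for $i\le r-2$, and --- this is the crux --- requiring that the prescription actually be a coalgebra map (equivalently that the next-level condition, for $\sigma_{p^{j+1}}$, be solvable, i.e.\ that its right-hand side lie in the image of the reduced coproduct of $k[\Mrs]$) pins the remaining $\theta^{p^{r-1}}$-coefficient to a fixed polynomial in $a_0,\dots,a_{r-1},\mu$ when $j<s-1$, while for $j=s-1$ it stays free and is the parameter $b$. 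The genuinely hard point is this solvability/obstruction analysis, which needs a sharp description of which summands of $\Delta(\sigma_{p^j})$ survive modulo decomposables in $k[\Mrs]^{\otimes 2}$; I expect the cleanest route is to transpose everything to the group algebra and classify instead the Hopf $A$-algebra endomorphisms $\rho_*$ of $k\Mrs\otimes A\cong k[u_0,\dots,u_{r-1},v]/\langle u_0^{\,p},\dots,u_{r-2}^{\,p},u_{r-1}^{\,p}+v^2,u_{r-1}^{\,p^s}\rangle\otimes A$ (Proposition~\ref{prop:groupalgebras}): there $\rho_*(v)=\mu v$ and $\rho_*(u_0)=a_0u_0+b\,u_{r-1}^{\,p^{s-1}}$ exhaust the odd and even primitives (the latter spanned by $u_0,u_{r-1}^{\,p},\dots,u_{r-1}^{\,p^{s-1}}$, the intermediate directions being killed by $u_i^{\,p}=0$), the identity $\rho_*(u_{r-1})^p=\rho_*(-v^2)=\mu^2u_{r-1}^{\,p}$ yields $\mu^2=a_0^{\,p^r}$, and $\rho_*(u_1),\dots,\rho_*(u_{r-1})$ are then determined by the coproduct formulas $\Delta(u_i)=\sum_{a+b=p^i}\gamma_a\otimes\gamma_b$ together with the defining relations --- a finite combinatorial check. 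Finally, one verifies that the assignment $(\mu,a_0,\dots,a_{r-1},b)\mapsto\rho$ (with the $p$-nilpotent ambiguities set to $0$) lands in $\bfHom(\Mrs,\Mrs)(A)$ for every $A$ and is onto when $A$ is reduced; naturality in $A$ is clear from the explicit formulas.
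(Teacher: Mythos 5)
Your analysis of the $\Mrone$ case and of the $r=1,\ s\geq 2$ case (dualizing to $A\Mones$, where $u$ and $v$ are both primitive) agrees with the paper's proof, and your identification of the primitives — both of $A[\Mrs]$ (spanned by $\tau$ and $\theta^{p^i}$ for $0\le i\le r-1$) and of $A\Mrs$ (spanned by $v$, $u_0$, and $u_{r-1}^{p^j}$ for $1\le j\le s-1$) — is correct. But the $r\ge 2,\ s\ge 2$ case, which you correctly identify as the ``main obstacle,'' is where the proposal has genuine gaps, and the paper's route differs in an essential way that you are missing.

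First, the specific group-algebra formula you write down is wrong for $r\ge 2$. You claim $\rho_*(u_0)=a_0u_0+b\,u_{r-1}^{p^{s-1}}$ and that this ``exhausts'' the even primitives, but for $r\ge 2$ the paper's endomorphism $\phi$ dualizes to $\phi^*(u_0)=a_0u_0$ with \emph{no} $u_{r-1}^{p^{s-1}}$ term: a $u_{r-1}^{p^{s-1}}$ component in $\phi^*(u_0)$ would be dual to a $\theta$-component in $\phi(\sigma_{p^{s-1}})$, which is forbidden by the internal $\Z$-grading. The parameter $b$ lives in $\phi^*(u_{r-1})$ (it is the $u_{r-1}^{p^{s-1}}$-coefficient there, dual to the $\sigma_1$-coefficient of $\phi(\sigma_{p^{s-1}})$), not in $\phi^*(u_0)$. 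You have silently extrapolated the $r=1$ case, where $u_0$ and $u_{r-1}$ coincide. Relatedly, for $r\ge 2$ the generators $u_1,\dots,u_{r-1}$ of $A\Mrs$ are \emph{not} primitive, so your statement that their images ``are then determined by the coproduct formulas together with the defining relations'' cannot be literally correct: those images are determined only up to primitive ambiguities, and it is precisely there that the parameters $a_1,\dots,a_{r-1}$ live. Your parametrization, as written, produces too few parameters. The ``finite combinatorial check'' you defer to is not a check — it is essentially the whole content of the lemma for $r\ge 2$, and your outline does not account for all the constraints it must encode (in particular, the coproduct conditions on the non-primitive $u_i$ impose further restrictions on $\rho_*(u_0)$ beyond nilpotency; it is they that force the absence of the $b$-term there).

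Second, and more structurally: the paper never attempts to pin down the $\theta^{p^{r-1}}$-coefficient of $\phi(\sigma_{p^j})$ by a ``next-level solvability'' obstruction as you propose. Instead, the key step is to show (using the internal $\Z$-grading, injectivity of the coproduct, and the relation $\sigma_{p^j}^p=0$) that when $A$ is reduced every summand of $\phi(\sigma_{p^j})$ of $\Z$-degree less than $2p^{r-1}$ — i.e.\ a linear combination of $\theta^{p^i}$ for $i\le r-2$ — must be primitive and $p$-nilpotent, hence zero, so that $\phi(\sigma_{p^j})$ lies in the Hopf subalgebra $A[\Mones]\subset A[\Mrs]$ generated by $\tau$ and the $\sigma_i$. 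Once $\phi$ restricts to a Hopf endomorphism of $A[\Mones]$, the already-settled $r=1$ case classifies it; the $\theta^{p^{r-1}}=\sigma_1$-coefficients that worry you are simply absorbed into $A[\Mones]$ and taken care of there. This reduction is the idea that makes the argument go through cleanly, and it is absent from your proposal. Without it, your ``solvability'' criterion is an unverified plan with no analogue in the paper. Finally, you also do not address the converse direction: checking that for arbitrary purely even $A$ every $(\mu,a_0,\dots,a_{r-1},b)$ with $\mu^2=a_0^{p^r}$ actually defines a Hopf superalgebra endomorphism.
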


\begin{proof}
Let $s \geq 1$, and let $\phi: A[\Mrs] \rightarrow A[\Mrs]$ be a Hopf $A$-superalgebra homomorphism. Since $\phi$ is by definition an even map, and since $A = \Azero$, we can write
\[
\phi(\tau) = \sum_{i=0}^{p^{r-1}-1} \sum_{j=0}^{p^s-1} \tau \theta^i \sigma_j \cdot \mu_{ij} \in A[\Mrs] = k[\Mrs] \otimes_k A
\]
for some $\mu_{ij} \in A$. Since $\phi$ is a Hopf superalgebra homomorphism, $\phi(\tau)$ is also primitive, which implies that $\mu_{ij} = 0$ unless $i = j = 0$. So $\phi(\tau) = \tau \cdot \mu$ for some $\mu \in A$. Similarly, $\phi$ must map $\theta$ to an even primitive element in $A[\Mrs]$, so $\phi(\theta) = \sum_{i = 0}^{r-1} \theta^{p^i} \cdot a_i$ for some $a_0,\ldots,a_{r-1} \in A$. Since $A[\Mrone]$ is generated as an $A$-algebra by $\tau$ and $\theta$, the scalars $\mu,a_0,\ldots,a_{r-1}$ completely determine $\phi$ if $s=1$, and conversely one can check that any such choice of scalars defines a Hopf superalgebra homomorphism $\phi: A[\Mrone] \rightarrow A[\Mrone]$. This completes the proof in the case $s=1$.

Next suppose that $s \geq 2$ and that $r=1$. The Hopf superalgebra homomorphism $\phi: A[\Mones] \rightarrow A[\Mones]$ defines by duality a Hopf superalgebra homomorphism $\phi^*: A\Mones \rightarrow A\Mones$ between the group algebras. Write $A\Mones = k\Mones \otimes_k A = A[u,v]/\subgrp{u^p+v^2,u^{p^s}}$ with $u$ (resp.\ $v$) an even (resp.\ odd) primitive element. As in the previous paragraph, $\phi^*$ must map $u$ and $v$ to primitive elements of the same parity, so $\phi^*(v) = v \cdot \mu$ and $\phi^*(u) = \sum_{i=0}^{s-1} u^{p^i} \cdot c_i$ for some scalars $\mu,c_0,\ldots,c_{s-1} \in A$. The relation $u^p+v^2 = 0$ implies that
	\[
	0 = \phi(u)^p + \phi(v)^2 = \left(\sum_{i=0}^{s-1} u^{p^{i+1}} \cdot c_i^p\right) + v^2  \cdot \mu^2 = u^p \cdot (c_0^p - \mu^2)  + \left( \sum_{i=1}^{s-2} u^{p^{i+1}} \cdot c_i^p \right),
	\]
and hence that $\mu^2 = c_0^p$ and $c_i^p = 0$ for $1 \leq i \leq s-2$. Conversely, if $\mu,c_0,\ldots,c_{s-1}\in A$ are any scalars such that $\mu^2 = c_0^p$ and such that $c_i^p = 0$ for $1 \leq i \leq s-2$, then one can check that the formulas $\phi^*(v) = v \cdot \mu$ and $\phi^*(u) = \sum_{i=0}^{s-1} u^{p^i} \cdot c_i$ define a Hopf superalgebra homomorphism $\phi^*: A\Mones \rightarrow A\Mones$. In the special case that $c_i = 0$ for $1 \leq i \leq s-2$ (e.g., when $A$ is reduced), set $a_0 = c_0$ and $b = c_{s-1}$. Then $\phi^*(v) = v \cdot \mu$ and $\phi^*(u) = u \cdot a_0 + u^{p^{s-1}} \cdot b$, and the map $\phi: A[\Mones] \rightarrow A[\Mones]$ corresponding via duality to $\phi^*$ satisfies the formulas $\phi(\tau) = \tau \cdot \mu$, $\phi(\sigma_{p^i}) = \sigma_{p^i} \cdot a_0^{p^i}$ for $0 \leq i \leq s-2$, and $\phi(\sigma_{p^{s-1}}) = \sigma_{p^{s-1}} \cdot a_0^{p^{s-1}} + \sigma_1 \cdot b$. This completes the proof in the case $r=1$.

Now let $r \geq 2$ and $s \geq 2$ be arbitrary, and let $\phi: A[\Mrs] \rightarrow A[\Mrs]$ be a Hopf superalgebra homomorphism. As in the first paragraph of the proof, we get $\phi(\tau) = \tau \cdot \mu$ and $\phi(\theta) = \sum_{i=0}^{r-1} \theta^{p^i} \cdot a_i$ for some scalars $\mu,a_0,\ldots,a_{r-1} \in A$. Next, identify $A[\Mones]$ with the Hopf subalgebra of $A[\Mrs]$ generated by $\tau$ and by $\sigma_i$ for $1 \leq i < p^s$. Under the hypothesis that $A$ is reduced, we will show that $\phi$ restricts to a Hopf superalgebra homomorphism $\wt{\phi}: A[\Mones] \rightarrow A[\Mones]$. This will help us, via the results of the second paragraph, to further describe $\phi$. Since $\phi(\tau) = \tau \cdot \mu \in A[\Mones]$, it suffices to show for $1 \leq i < p^s$ that $\phi(\sigma_i) \in A[\Mones]$. We argue by induction on $i$. First, since $\theta^{p^{r-1}} = \sigma_1$ and $(\sigma_1)^p = 0$, the relation $\phi(\theta) = \sum_{i=0}^{r-1} \theta^{p^i} \cdot a_i$ implies that $\phi(\sigma_1) = \sigma_1 \cdot a_0^{p^{r-1}}$. Then the algebra relations in $A[\Mrs]$ imply for $1 \leq i < p$ that $\phi(\sigma_i) \in A[\Mones]$. Now let $1 \leq n < s$, and suppose by induction that $\phi(\sigma_i) \in A[\Mones]$ for all $1 \leq i < p^n$. Since $\phi$ is a Hopf algebra homomorphism, then
\begin{align*}
\Delta \circ \phi(\sigma_{p^n}) &= (\phi \otimes \phi) \circ \Delta(\sigma_{p^n}) \\
&= \phi(\sigma_{p^n}) \otimes 1 + 1 \otimes \phi(\sigma_{p^n}) + \sum_{i=1}^{p^n-1} \phi(\sigma_i) \otimes \phi(\sigma_{p^n-i}) + \sum_{i+j+p = p^n} \phi(\sigma_i \tau) \otimes \phi(\sigma_j \tau).
\end{align*}
Recall from Remark \ref{remark:immediateMrs}\eqref{item:Zgrading} that the algebra $A[\Mrs]$ is $\Z$-graded, and the generators of $A[\Mones]$ are concentrated in $\Z$-degrees at least $2p^{r-1}$. Since the coproduct on $A[\Mrs]$ is injective (and preserves the $\Z$-grading), this observation combined with the above formula for $\Delta \circ \phi(\sigma_{p^n})$ and the induction hypothesis implies that any homogeneous summands of $\phi(\sigma_{p^n})$ of $\Z$-degree less than $2p^{r-1}$ must be primitive. In other words, $\phi(\sigma_{p^n}) = (\sum_{i=0}^{r-2} \theta^{p^i} \cdot c_i ) + z$ for some $c_0,\ldots,c_{r-2} \in A$ and some element $z$ in the augmentation ideal of $A[\Mones]$. But then $z^p = 0$ (cf.\ Remark \ref{remark:immediateMrs}\eqref{item:augmentationnilpotent}), and $(\sigma_{p^n})^p = 0$, so $0 = \phi(\sigma_{p^n})^p = \sum_{i=0}^{r-2} \theta^{p^{i+1}} \cdot c_i^p$. Thus, if $A$ is reduced, it must be the case that $c_0 = \cdots = c_{r-2} = 0$, and hence that $\phi(\sigma_{p^n}) \in A[\Mones]$. Then the induction hypothesis and the algebra relations in $A[\Mrs]$ imply that $\phi(\sigma_i) \in A[\Mones]$ for all $1 \leq i < p^{n+1}$. So by induction on $n$, $\phi(\sigma_i) \in A[\Mones]$ for all $1 \leq i < p^s$.

We have shown that if $A$ is reduced, then $\phi$ restricts to a Hopf superalgebra homomorphism $\wt{\phi}: A[\Mones] \rightarrow A[\Mones]$. Then by the case $r=1$, there exist $a,b \in A$ such that $\phi(\sigma_{p^i}) = \sigma_{p^i} \cdot a^{p^i}$ for $0 \leq i \leq s-2$, and $\phi(\sigma_{p^{s-1}}) = \sigma_{p^{s-1}} \cdot a^{p^{s-1}} + \sigma_1 \cdot b$. Since $\phi(\tau) = \tau \cdot \mu$, the case $r=1$ also implies that $\mu^2 = a^p$, while the relations $\theta^{p^{r-1}} = \sigma_1$ and $(\sigma_1)^p = 0$ imply that $a = a_0^{p^{r-1}}$. Thus, if $A$ is reduced and if $\phi: A[\Mrs] \rightarrow A[\Mrs]$ is a Hopf superalgebra homomorphism, then there exist scalars $\mu,a_0,\ldots,a_{r-1},b \in A$ such that $\mu^2 = a_0^{p^r}$, $\phi(\tau) = \tau \cdot \mu$, $\phi(\theta) = \sum_{i=0}^{r-1} \theta^{p^i} \cdot a_i$, $\phi(\sigma_{p^i}) = \sigma_{p^i} \cdot a_0^{p^{r+i-1}}$ for $0 \leq i \leq s-2$, and $\phi(\sigma_{p^{s-1}}) = \sigma_{p^{s-1}} \cdot a_0^{p^{r+s-2}} + \sigma_1 \cdot b$. Conversely, if $A$ is an arbitrary purely even commutative superalgebra, and if $\mu,a_0,\ldots,a_{r-1},b \in A$ are any scalars such that $\mu^2 = a_0^{p^r}$, then one can check that the preceding formulas define a Hopf superalgebra homomorphism $\phi: A[\Mrs] \rightarrow A[\Mrs]$.
\end{proof}

\begin{remark} \label{remark:toobig}
Let $A = \Azero \in \calg_k$ be an arbitrary (not necessarily reduced) purely even commutative superalgebra, and let $s \geq 2$. Implicit in the proof is an identification of sets
	\[
	\bfHom(\Mones,\Mones)(A) = \set{ (\mu,c_0,\ldots,c_{s-1}) \in A^{s+1}: \mu^2 = c_0^p \text{ and } c_i^p = 0 \text{ for } 1 \leq i \leq s-2},
	\]
but for $r \geq 2$ the set $\bfHom(\Mrs,\Mrs)(A)$ appears to be strictly larger than the subset identified in Lemma \ref{lemma:HomMrsMrs}. Specifically, when constructing for $r,s \geq 2$ a Hopf superalgebra homomorphism $\phi: A[\Mrs] \rightarrow A[\Mrs]$, it appears that $\phi(\sigma_p)$ can equal $\sigma_p \cdot (a_0)^{p^r} + \sum_{i=0}^{r-1} \theta^{p^i} \cdot b_i$ for any choice of scalars $b_0,\ldots,b_{r-1} \in A$, though the combinatorics of checking that this assignment works seem to be sufficiently complicated that we have been deterred from completing the verification. On the other hand, if $k$ is an algebraically closed field, then $\bfHom(\Mrs,\Mrs)(k)$ identifies as a set with the cohomology variety $\abs{\Mrs}$, i.e., the maximal ideal spectrum of the cohomology ring $\Hbul(\Mrs,k)$; cf.\ \cite[Lemma 1.10]{Suslin:1997} and \cite[Theorem 5.2]{Suslin:1997a}. The calculations of Lemma \ref{lemma:HomMrsMrs} are significantly extended in our subsequent work \cite{Drupieski:2017b}.
\end{remark}

\begin{definition} \label{definition:VrGLmn}
For $m,n \in \N$, $r \geq 1$, define the $k$-superfunctor $\bsV_r(\GLmn) : \csalg_k \rightarrow \mathfrak{sets}$ by
\begin{align*}
\bsV_r(\GLmn)(A) = \Big \{ (\alpha_0,\ldots,\alpha_{r-1},\beta) &\in (\Matmn(A)_{\zero})^{\times r} \times \Matmn(A)_{\one} : \\
&[\alpha_i,\alpha_j] = [\alpha_i,\beta] = 0 \text{ for all $0 \leq i,j \leq r-1$}, \\
&\alpha_i^p = 0 \text{ for all $0 \leq i \leq r-2$}, \text{ and } \alpha_{r-1}^p + \beta^2 = 0 \Big \}.
\end{align*}
Given an inseparable $p$-polynomial $0 \neq f(T) \in k[T]$ and given a scalar $\eta \in k$, let $\bsVrfeta(\GLmn)$ be the subsuperfunctor of $\bsV_r(\GLmn)$ defined by
\[
\bsVrfeta(\GLmn)(A) = \set{ (\alpha_0,\ldots,\alpha_{r-1},\beta) \in \bsV_r(\GLmn)(A) : f(\alpha_{r-1}) + \eta \alpha_0=0}.
\]
Finally, set $\bsVrf(\GLmn) = \bsV_{r;f,0}(\GLmn)$, and for $s \geq 1$ set $\bsVrs(\GLmn) = \bsV_{r;T^{p^s}}(\GLmn)$.
\end{definition}

\begin{proposition} \label{proposition:HomMrGLmn}
Let $m,n \in \N$, let $r \geq 1$, let $0 \neq f \in k[T]$ be an inseparable $p$-polynomial, and let $\eta \in k$. Then for each $A \in \csalg_k$, there exists a natural identification
\begin{gather*}
\bfHom(\Mrfeta,\GLmn)(A) = \bsVrfeta(\GLmn)(A), \quad \text{and} \\
\bfHom(\Mr,\GLmn)(A) = \set{ (\alpha_0,\ldots,\alpha_{r-1},\beta) \in \bsV_r(\GLmn) : \alpha_{r-1}^{p^s} = 0 \text{ for some $s \geq 1$} }.
\end{gather*}
Specifically, a tuple $(\ualpha|\beta) := (\alpha_0,\ldots,\alpha_{r-1},\beta) \in \bsVrfeta(\GLmn)(A)$ corresponds to the unique homo\-morphism $\rho_{(\ualpha|\beta)}: \Mrfeta \otimes_k A \rightarrow \GLmn \otimes_k A$ such that, under the induced action of the group algebra $A \Mrfeta := k[\Mrfeta] \otimes_k A$ on $\Amn$, $u_i \in k[\Mrfeta]$ acts via $\alpha_i$ and $v \in k[\Mrfeta]$ acts via $\beta$.
\end{proposition}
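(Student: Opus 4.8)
The plan is to run the statement through the standard translation: homomorphisms into a general linear supergroup correspond to rational representations, rational representations of a \emph{finite} supergroup scheme correspond to modules over its group algebra, and modules over $k\Mrfeta$ are exactly tuples of matrices satisfying the defining relations of $\bsVrfeta(\GLmn)$. First I would fix $A \in \csalg_k$. Since $\GLmn$ represents the functor $B \mapsto \Aut_B(B^{m|n})$ on $\csalg_k$, an $A$-supergroup scheme homomorphism $\rho : \Mrfeta \otimes_k A \to \GLmn \otimes_k A$ is the same datum as a rational $(\Mrfeta \otimes_k A)$-supermodule structure on the free $A$-supermodule $\Amn$ extending its $A$-module structure, i.e.\ an $A$-linear comodule map $\Amn \to \Amn \otimes_A A[\Mrfeta]$; this is the super analogue of \cite[I.2]{Jantzen:2003} (cf.\ \cite[\S5.1]{Drupieski:2013b}). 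Because $\Mrfeta$ is a finite $k$-supergroup scheme (Definition~\ref{Def:anotherdef} and Lemma~\ref{lemma:k[Mrf]algebra}), $k[\Mrfeta]$ is finite-dimensional over $k$, so $A[\Mrfeta] = k[\Mrfeta] \otimes_k A$ is finite free over $A$ with $A$-linear dual $k[\Mrfeta]^\# \otimes_k A = k\Mrfeta \otimes_k A$, which I write as $A\Mrfeta$. Dualizing the comodule map, $\rho$ is then the same as an $A\Mrfeta$-supermodule structure on $\Amn$ (cf.\ \cite[Remark 5.1.1]{Drupieski:2013b}), equivalently an even $A$-superalgebra homomorphism $A\Mrfeta \to \End_A(\Amn) = \Matmn(A)$. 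All of these bijections are natural in $A$, being assembled only from base change and $A$-linear duality of finite free modules.

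Next I would unpack the group algebra. By Proposition~\ref{prop:groupalgebras} and Definition~\ref{Def:anotherdef}, $A\Mrfeta$ is generated as an $A$-algebra by the even elements $u_0,\dots,u_{r-1}$ and the odd element $v$, subject only to the relations that these elements commute in the ordinary sense and that $u_i^p = 0$ for $0 \le i \le r-2$, $u_{r-1}^p + v^2 = 0$, and $f(u_{r-1}) + \eta u_0 = 0$. Hence an even $A$-superalgebra homomorphism $A\Mrfeta \to \Matmn(A)$ is precisely a choice of $\alpha_i \in \Matmn(A)_{\zero}$ (the image of $u_i$) and $\beta \in \Matmn(A)_{\one}$ (the image of $v$) with $[\alpha_i,\alpha_j] = 0$ and $[\alpha_i,\beta] = 0$ for all $i,j$ (for homogeneous elements of these parities the ordinary commutator coincides with the supercommutator), with $\alpha_i^p = 0$ for $0 \le i \le r-2$, with $\alpha_{r-1}^p + \beta^2 = 0$ (note $v^2$ maps to the matrix square $\beta^2$), and with $f(\alpha_{r-1}) + \eta\alpha_0 = 0$. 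Comparing with Definition~\ref{definition:VrGLmn}, this says exactly that $(\alpha_0,\dots,\alpha_{r-1},\beta) \in \bsVrfeta(\GLmn)(A)$, and tracing the identifications back shows that $(\ualpha|\beta)$ corresponds to the homomorphism under which $u_i$ acts on $\Amn$ by $\alpha_i$ and $v$ by $\beta$; uniqueness is automatic since these elements generate $A\Mrfeta$. This proves the first displayed identification.

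For the case of $\Mr$ I would exploit that $\Mr$ is not algebraic while $\GLmn$ is. Arguing as in Remark~\ref{remark:immediateMrs}\eqref{item:MrfactorsMrs} after base change to $A$ --- using that $k[\GLmn] \otimes_k A$ is generated over $A$ by finitely many elements of $k[\GLmn]$, whose images in $k[\Mr] \otimes_k A$ lie in $k[\Mrs] \otimes_k A$ for $s \gg 0$ --- every $\rho : \Mr \otimes_k A \to \GLmn \otimes_k A$ factors through $\Mrs \otimes_k A$ for all large $s$, and such a factorization is unique because $k[\Mrs] \hookrightarrow k[\Mr]$ is injective. Since $\bsVrs(\GLmn) = \bsV_{r;T^{p^s}}(\GLmn)$, the first two paragraphs applied to $\Mrs$ give $\bfHom(\Mrs,\GLmn)(A) = \bsVrs(\GLmn)(A) = \set{(\ualpha|\beta) \in \bsV_r(\GLmn)(A) : \alpha_{r-1}^{p^s} = 0}$, and under these identifications the transition maps $\bfHom(\Mrs,\GLmn)(A) \to \bfHom(\Mrt,\GLmn)(A)$ for $s \le t$ are the inclusions $\bsVrs(\GLmn)(A) \subseteq \bsV_{r;t}(\GLmn)(A)$. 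Passing to the colimit over $s$ therefore yields $\bfHom(\Mr,\GLmn)(A) = \bigcup_{s \ge 1} \bsVrs(\GLmn)(A)$, which is the second displayed set.

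The main obstacle, I expect, is the first step: making the chain of natural identifications (supergroup homomorphism, rational comodule, group-algebra module, $A$-superalgebra homomorphism into $\Matmn(A)$) fully precise over an \emph{arbitrary} commutative superalgebra $A$, in particular one that is neither reduced nor finite over $k$, and checking that every link is functorial in $A$. The essential input is that $k[\Mrfeta]$ is finite-dimensional over $k$, which is exactly what makes $A$-linear duality identify $(k[\Mrfeta] \otimes_k A)^\#$ with $k\Mrfeta \otimes_k A$ and, together with the finiteness of $\Mrfeta$, removes any subtlety about rationality of modules. Once this dictionary is in place, the algebraic bookkeeping of the second paragraph and the colimit argument for $\Mr$ are routine; and in contrast with Remark~\ref{remark:toobig}, mapping \emph{into} $\GLmn$ rather than into another multiparameter supergroup imposes no constraint beyond an $A\Mrfeta$-module structure, so no over- or under-counting occurs.
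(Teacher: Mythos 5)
Your proposal runs the same chain of identifications as the paper's proof: homomorphisms into $\GLmn\otimes_k A$ correspond to rational $(\Mrfeta\otimes_k A)$-supermodule structures on $\Amn$, which correspond to $A\Mrfeta$-module structures, which are pinned down by where the algebra generators $u_0,\dots,u_{r-1},v$ go; and for $\Mr$ you use factorization through some $\Mrs$ (via Remark~\ref{remark:immediateMrs}\eqref{item:MrfactorsMrs}) exactly as the paper does. Your write-up is more explicit about the generators-and-relations bookkeeping and the colimit over $s$, but the approach and the key inputs (finiteness of $\Mrfeta$, the group-algebra description in Proposition~\ref{prop:groupalgebras} and Definition~\ref{Def:anotherdef}, and algebraicity of $\GLmn$) are the same.
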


\begin{proof}
Let $G$ be a finite $k$-supergroup scheme, and let $AG := kG \otimes_k A$ be the group algebra of $G$ over $A$. Then to give a homomorphism of $A$-supergroup schemes $\rho: G \otimes_k A \rightarrow \GLmn \otimes_k A$ is equivalent to specifying a rational $G \otimes_k A$-supermodule structure on $\Amn$, which is in turn equivalent to making $\Amn$ into a left $AG$-supermodule; cf.\ \cite[I.8.6]{Jantzen:2003}. Now the first identification of the proposition is immediate from the structure of the group algebra $k\Mrfeta$, with an element $(\alpha_0,\ldots,\alpha_{r-1},\beta) \in \bsVrfeta(\GLmn)(A)$ corresponding to the $A\Mrfeta$-supermodule structure on $\Amn$ in which the algebra generators $u_i$ and $v$ act via the matrices $\alpha_i$ and $\beta$, respectively. Finally, since $\GLmn$ is an algebraic supergroup scheme, it follows as in Remark \ref{remark:immediateMrs}\eqref{item:MrfactorsMrs} that any $A$-supergroup scheme homomorphism $\rho: \Mr \otimes_k A \rightarrow \GLmn \otimes_k A$ factors through $\Mrs \otimes_k A$ for some $s \geq 1$. Then the second identification of the proposition follows from the first.
\end{proof}

\begin{theorem} \label{theorem:Homsuperscheme}
Let $r \geq 1$, let $0 \neq f \in k[T]$ be an inseparable $p$-polynomial, and let $\eta \in k$.
\begin{enumerate}
\item Let $m,n \in \N$. Then the $k$-superfunctor $\bsV_r(\GLmn)$ admits the structure of an affine superscheme of finite type over $k$.

\item For each algebraic $k$-supergroup scheme $G$, the functor $\bfHom(\Mrfeta,G)$ admits the structure of an affine superscheme of finite type over $k$. Then the assignment $G \mapsto \bfHom(\Mrfeta,G)$ is a covariant functor from the category of algebraic $k$-supergroup schemes to the category of affine superschemes of finite type over $k$ that takes closed embeddings to closed embeddings.
\end{enumerate}
\end{theorem}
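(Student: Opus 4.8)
The plan is to reduce everything to the case $G = \GLmn$, following the strategy of Suslin, Friedlander, and Bendel \cite{Suslin:1997}. For part (1), the functor $A \mapsto (\Matmn(A)_{\zero})^{\times r} \times \Matmn(A)_{\one}$ is represented by an affine superscheme $\bsa$ of finite type over $k$, namely $\Spec$ of the tensor product over $k$ of $r$ copies of $k[\Matmn(-)_{\zero}]$ and one copy of $k[\Matmn(-)_{\one}]$ (the coordinate superalgebras written down explicitly in \eqref{eq:Matmn0relations} and \eqref{eq:Matmn1relations}). Let $\mathcal{A}_0,\dots,\mathcal{A}_{r-1} \in \Matmn(k[\bsa])_{\zero}$ and $\mathcal{B} \in \Matmn(k[\bsa])_{\one}$ denote the resulting universal supermatrices. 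Under the substitution $\alpha_i \mapsto \mathcal{A}_i$, $\beta \mapsto \mathcal{B}$, the defining conditions of $\bsV_r(\GLmn)$ --- namely $[\alpha_i,\alpha_j] = [\alpha_i,\beta] = 0$, $\alpha_i^p = 0$ for $i \leq r-2$, and $\alpha_{r-1}^p + \beta^2 = 0$ --- become the vanishing of the (finitely many) entries of $[\mathcal{A}_i,\mathcal{A}_j]$, $[\mathcal{A}_i,\mathcal{B}]$, $\mathcal{A}_i^p$, and $\mathcal{A}_{r-1}^p + \mathcal{B}^2$; these entries generate a homogeneous ideal $\mathfrak{a} \subseteq k[\bsa]$, and one checks from the universal property of $\bsa$ that $\bsV_r(\GLmn)$ is represented by $k[\bsa]/\mathfrak{a}$, hence is a closed subsuperscheme of $\bsa$ and so affine of finite type over $k$. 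Adjoining to $\mathfrak{a}$ the entries of $f(\mathcal{A}_{r-1}) + \eta\mathcal{A}_0$ gives the same conclusion for the closed subsuperfunctor $\bsVrfeta(\GLmn)$.

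For part (2), the case $G = \GLmn$ is immediate: by Proposition~\ref{proposition:HomMrGLmn} one has $\bfHom(\Mrfeta,\GLmn) \cong \bsVrfeta(\GLmn)$ as functors, which is affine of finite type by part (1). For a general algebraic supergroup scheme $G$ I would first fix a closed embedding $j\colon G \hookrightarrow \GLmn$; this exists by the super analogue of the classical argument, taking $GL(V) \cong \GLmn$ for $V$ a finite-dimensional $G$-subcomodule of $k[G]$ containing a set of algebra generators (so that $j^*\colon k[GL(V)] \to k[G]$ is surjective). Write $J = \ker(j^*) \subseteq k[\GLmn]$ and fix generators $g_1,\dots,g_\ell$ of $J$. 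Put $R = k[\bsVrfeta(\GLmn)]$ and let $\rho^{\mathrm{univ}}\colon \Mrfeta \otimes_k R \to \GLmn \otimes_k R$ be the homomorphism classified, via Proposition~\ref{proposition:HomMrGLmn}, by the universal point $\id_R \in \bsVrfeta(\GLmn)(R)$. Because $\Mrfeta$ is a \emph{finite} supergroup scheme (Lemma~\ref{lemma:k[Mrf]algebra}), $R[\Mrfeta] = R \otimes_k k[\Mrfeta]$ is a finite free $R$-module; fixing a $k$-basis $b_1,\dots,b_d$ of $k[\Mrfeta]$, write $(\rho^{\mathrm{univ}})^*(g_i) = \sum_{j=1}^{d} c_{ij} \otimes b_j$ with $c_{ij} \in R$.

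The key step is then to show that $\bfHom(\Mrfeta,G)$ is exactly the closed subfunctor of $\bfHom(\Mrfeta,\GLmn)$ cut out by the $c_{ij}$. For $A \in \csalg_k$, a point $(\ualpha|\beta) \in \bsVrfeta(\GLmn)(A)$ corresponds to some $h \in \Hom_{\salg}(R,A)$, and $\rho_{(\ualpha|\beta)} = \rho^{\mathrm{univ}} \otimes_R A$, so $\rho_{(\ualpha|\beta)}^*(g_i) = \sum_j h(c_{ij}) \otimes b_j$ in $A[\Mrfeta]$. Now $\rho_{(\ualpha|\beta)}$ factors through the closed subsupergroup scheme $G \otimes_k A$ if and only if its comorphism kills $J \otimes_k A$, i.e.\ if and only if $\rho_{(\ualpha|\beta)}^*(g_i) = 0$ for $1 \leq i \leq \ell$, i.e.\ if and only if $h$ kills the ideal $\mathfrak{b} := \langle c_{ij} : 1 \leq i \leq \ell,\ 1 \leq j \leq d\rangle$ of $R$. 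Hence $\bfHom(\Mrfeta,G)$ is represented by $R/\mathfrak{b}$, a closed subsuperscheme of $\bfHom(\Mrfeta,\GLmn)$, and in particular is affine of finite type over $k$. That $G \mapsto \bfHom(\Mrfeta,G)$ is then a covariant functor into affine superschemes follows formally from the Yoneda lemma, once source and target of each $\bfHom(\Mrfeta,\phi)$ are known to be representable. Finally, given a closed embedding $\iota\colon H \hookrightarrow G$, choose $j\colon G \hookrightarrow \GLmn$ as above; applying the preceding argument to $j$ and to $j\circ\iota$ shows that both $\bfHom(\Mrfeta,G) \hookrightarrow \bfHom(\Mrfeta,\GLmn)$ and $\bfHom(\Mrfeta,H) \hookrightarrow \bfHom(\Mrfeta,\GLmn)$ are closed embeddings, and since the second factors through the first, $\bfHom(\Mrfeta,H) \hookrightarrow \bfHom(\Mrfeta,G)$ is a closed embedding as well.

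The only substantive point is the one in the last paragraph: that ``$\rho$ factors through $G$'' is a closed condition on $\bfHom(\Mrfeta,\GLmn)$. This is exactly where the finiteness of $\Mrfeta$ is essential --- it is what guarantees that the obstruction to factoring is detected by the finitely many regular functions $c_{ij}$ on the finite-type scheme $\bfHom(\Mrfeta,\GLmn)$, rather than by an infinite family (the reason the theorem is stated for the finite $\Mrfeta$ and not for $\Mr$). Everything else --- representability of $\bsV_r(\GLmn)$, the compatibility of the matrix operations with base change, and the formal manipulations with universal points --- is routine bookkeeping with the explicit coordinate superalgebras already set up in the paper.
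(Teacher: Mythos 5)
Your proof is correct and follows essentially the same route as the paper's: part (1) by realizing $\bsV_r(\GLmn)$ as a closed subsuperscheme of $(\Matmn(-)_{\zero})^{\times r}\times \Matmn(-)_{\one}$, and part (2) by fixing a closed embedding $G\hookrightarrow\GLmn$, expanding the pullback under the universal homomorphism of a finite set of homogeneous generators of $\ker(k[\GLmn]\to k[G])$ over a ($k$-finite, by finiteness of $\Mrfeta$) basis of $k[\Mrfeta]$, and observing that the resulting coefficients cut out $\bfHom(\Mrfeta,G)$ as a closed subsuperscheme. Your $c_{ij}$ are exactly the paper's $F_{\ell,i,j},F'_{\ell,i,j}$, and the remaining assertions about covariance and preservation of closed embeddings are handled the same way in both.
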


\begin{proof}
Recall from Section \ref{subsection:commsuperrings} that the functors $\Matmn(-)_{\zero}$ and $\Matmn(-)_{\one}$ are closed subsuper\-schemes of $\Matmn$. The defining equations of $\bsV_r(\GLmn)$ are homo\-geneous polynomial equations on the entries of the component matrices, so $\bsV_r(\GLmn)$ is then a closed subsuperscheme of $(\Matmn)^{\times (r+1)}$, and $\bfHom(\Mrfeta,\GLmn)$ identifies by Proposition \ref{proposition:HomMrGLmn} with the closed sub\-super\-scheme defined by the additional homogeneous polynomial condition $f(\alpha_{r-1})+\eta \alpha_0 = 0$.

Let $A = k[\bfHom(\Mrfeta,\GLmn)]$ be the coordinate superalgebra of $\bfHom(\Mrfeta,\GLmn)$, considered as a closed subsuper\-scheme of $(\Matmn)^{\times (r+1)}$. Then $A$ is a quotient of $k[\Matmn]^{\otimes(r+1)}$. For $0 \leq \ell \leq r$, let $X_{ij}(\ell)$ and $Y_{ij}(\ell)$ denote the copies of the coordinate functions $X_{ij},Y_{ij} \in k[\Matmn]$ that live in the $(\ell+1)$-th tensor factor of $k[\Matmn]^{\otimes (r+1)}$. Now define $(\alpha_0,\ldots,\alpha_{r-1},\beta) \in \Matmn(A)^{\times (r+1)}$ such that the $ij$-entry of $\alpha_\ell$ is the image in $A$ of $X_{ij}(\ell) + Y_{ij}(\ell)$, and the $ij$-entry of $\beta$ is the image in $A$ of $X_{ij}(r) + Y_{ij}(r)$.\footnote{Given $0 \leq \ell \leq r$ and given $i$ and $j$, it is always the case that precisely one of $X_{ij}(\ell)$ or $Y_{ij}(\ell)$ has image equal to $0$ in $A$, by the definition of $\bfHom(\Mrfeta,\GLmn)$ as a sub-superscheme of $(\Matmn(-)_{\zero})^{\times r} \times \Matmn(-)_{\one}$.} Then via the identification of Proposition \ref{proposition:HomMrGLmn},
\[
(\ualpha|\beta) := (\alpha_0,\ldots,\alpha_{r-1},\beta) \in \bfHom(\Mrfeta,\GLmn)(A).
\]
If $B \in \csalg_k$ and if $\phi \in \Hom_{\salg}(A,B)$, then $\phi(\ualpha|\beta) \in \bfHom(\Mrfeta,\GLmn)(B)$. Conversely, each $\rho \in \bfHom(\Mrfeta,\GLmn)(B)$ is of the form $\phi(\ualpha|\beta)$ for some $\phi \in \Hom_{\salg}(A,B)$, namely, if $\rho$ corresponds to the tuple $(\ualpha'|\beta') \in \bsVrfeta(\GLmn)(B)$, then $\phi$ is the unique $k$-algebra homomorphism that maps $(\ualpha|\beta)$ to $(\ualpha'|\beta')$. Let $\rho_{(\ualpha|\beta)}: \Mrfeta \otimes_k A \rightarrow \GLmn \otimes_k A$ be the homomorphism of $A$-super\-group schemes corresponding to the tuple $(\ualpha|\beta)$ defined above; we call $\rho_{(\ualpha|\beta)}$ the \emph{universal supergroup homomorphism from $\Mrfeta$ to $\GLmn$}.

Let $G$ be an algebraic $k$-supergroup scheme. Since $k[G]$ is finitely generated over $k$, there exists a closed embedding $G \hookrightarrow \GLmn$ for some $m,n \in \N$ \cite[Theorem 9.3.2]{Westra:2009}. Choose homogeneous elements $F_1,\ldots,F_M \in k[\GLmn]$ such that $k[G] = k[\GLmn]/\subgrp{F_1,\ldots,F_M}$. Each $F_i$ defines a function $F_i : \GLmn(B) \rightarrow B$ for each $B \in \csalg_k$. More precisely, if $g \in \GLmn(B) = \Hom_{\salg}(k[\GLmn],B)$, then $F_i(g) := g(F_i) \in B$ is homogeneous of the same parity as $F_i$. Now consider the identity map $\id_{A[\Mrfeta]}$ as an element of $(\Mrfeta \otimes_k A)(A[\Mrfeta])$. Then
\[
\rho_{(\ualpha|\beta)}(\id_{A[\Mrfeta]}) \in \GLmn(A[\Mrfeta]) \subset \Matmn(A[\Mrfeta])_{\zero},
\]
and $\rho_{(\ualpha|\beta)}(\id_{A[\Mrfeta]}) \in G(A[\Mrfeta])$ if and only if $F_\ell ( \rho_{(\ualpha|\beta)}(\id_{A[\Mrfeta]}) ) = 0$ for each $1 \leq \ell \leq M$. Assume as in Remark \ref{remark:immediateMrf}\eqref{item:MrftoMrs} that $f = \sum_{i=s}^t a_i T^{p^i}$ with $a_s,a_t \neq 0$ and $1 \leq s \leq t$. Then
\[
F_\ell \left( \rho_{(\ualpha|\beta)}(\id_{A[\Mrfeta]}) \right) = \sum_{\substack{0 \leq i < p^{r-1} \\ 0 \leq j < p^t}} \left[ \theta^i \sigma_j \otimes F_{\ell,i,j} + \tau \theta^i \sigma_j \otimes F_{\ell,i,j}' \right] \in k[\Mrfeta] \otimes_k A = A[\Mrfeta]
\]
for some homogeneous elements $F_{\ell,i,j},F_{\ell,i,j}' \in A$. Denote by $I_{r;f,\eta}(G)$ the homogeneous ideal in $A$ generated by the $F_{\ell,i,j},F_{\ell,i,j}'$. Then it follows via naturality that $I_{r;f,\eta}(G)$ defines $\bfHom(\Mrfeta,G)$ as a closed subsuperscheme of $\bfHom(\Mrfeta,\GLmn)$.

Next, let $\phi: G' \rightarrow G$ be a homomorphism of algebraic $k$-supergroup schemes. Composition with $\phi$ defines a natural transformation of $k$-superfunctors
\[
\bfHom(\Mrfeta,\phi): \bfHom(\Mrfeta,G') \rightarrow \bfHom(\Mrfeta,G).
\]
More precisely, base change induces for each $B \in \csalg_k$ a homomorphism $\phi_B: G' \otimes_k B \rightarrow G \otimes_k B$, and $\bfHom(\Mrfeta,\phi)(B)$ is then defined by $\rho \mapsto \phi_B \circ \rho$. Now by Yoneda's Lemma, $\phi$ corresponds to a morphism of affine $k$-superschemes $\bfHom(\Mrfeta,G') \rightarrow \bfHom(\Mrfeta,G)$. Finally, suppose $\phi$ is a closed embedding. Then for each $B \in \csalg_k$, the induced set map $\bfHom(\Mrfeta,\phi)(B)$ is an injection. As in the previous paragraph, assume that $G$ is a closed subsupergroup scheme of $\GLmn$. Then $G'$ is also closed in $\GLmn$, and $I_{r;f,\eta}(G')$ is defined in the same fashion as $I_{r;f,\eta}(G)$. Then $I_{r;f,\eta}(G') \supseteq I_{r;f,\eta}(G)$, and it follows that $I_{r;f,\eta}(G')$ defines the image of $\bfHom(\Mrfeta,\phi)$ in $\bfHom(\Mrfeta,G)$, and hence that $\bfHom(\Mrfeta,\phi)$ is a closed embedding.
\end{proof}

\begin{definition} \label{definition:evensubschemes}
Given an algebraic $k$-supergroup scheme $G$, let
\[
\Vr(\GLmn) = [\bsV_r(\GLmn)]_\ev, \quad \text{and} \quad \Vrfeta(G) = [\bfHom(\Mrfeta,G)]_\ev
\]
be the underlying purely even subschemes of $\bsV_r(\GLmn)$ and $\bfHom(\Mrfeta,G)$, respectively. Set $\Vrf(G) = V_{r;f,0}(G)$, and for $s \geq 1$ set $\Vrs(G) = V_{r;T^{p^s}}(G)$.
\end{definition}

By definition, $\Vr(\GLmn)$ and $\Vrfeta(\GLmn)$ are closed subschemes of
\begin{equation} \label{eq:Vrfetaambientscheme}
(\Matmn(-)_{\zero})^{\times r} \times \Matmn(-)_{\one};
\end{equation}
cf.\ \eqref{eq:Matmn0relations} and \eqref{eq:Matmn1relations}.

\begin{definition}[Universal homomorphisms] \label{def:universalhom}
Let $r \geq 1$, let $0 \neq f \in k[T]$ be an inseparable $p$-polynomial, and let $\eta \in k$. Let $G$ be an algebraic $k$-supergroup scheme, and let $G \hookrightarrow \GLmn$ be a closed embedding. Set $A' = k[\bfHom(\Mrfeta,\GLmn)]$, and let
\[
\rho_{(\ualpha|\beta)}: \Mrfeta \otimes_k k[\bfHom(\Mrfeta,\GLmn)] \rightarrow \GLmn \otimes_k k[\bfHom(\Mrfeta,\GLmn)]
\]
be the universal homomorphism from $\Mrfeta$ to $\GLmn$ as defined in the proof of Theorem \ref{theorem:Homsuperscheme}.
	\begin{enumerate}
	\item Set $A_G' = k[\bfHom(\Mrfeta,G)]$, and let $u': \Mrfeta \otimes_k A_G' \rightarrow \GLmn \otimes_k A_G'$ be the homomorphism of $A'$-supergroup schemes obtained from $\rho_{(\ualpha|\beta)}$ via base-change along the quotient map $A' \twoheadrightarrow A_G'$. Then the image of $u'$ is contained in $G \otimes_k A_G'$. We call the induced map
	\[
	u_G': \Mrfeta \otimes_k A_G' \rightarrow G \otimes_k A_G'
	\]
	the \emph{universal supergroup homomorphism from $\Mrfeta$ to $G$}. It is universal in the sense that if $B \in \csalg_k$ and $\rho \in \bfHom(\Mrfeta,G)(B)$, then $\rho = u_G' \otimes_\phi B$ for a unique $\phi \in \Hom_{\salg}(A_G',B)$, and conversely $u_G' \otimes_\phi B \in \bfHom(\Mrfeta,G)(B)$ for each $\phi \in \Hom_{\salg}(A_G',B)$.
	
	\item \label{item:uGpurelyeven} Set $A_G = k[\Vrfeta(G)]$, and let $\pi: A_G' \rightarrow A_G$ be the canonical quotient map whose kernel is generated by the odd elements in $A_G'$. Let
	\[
	u_G = (u_G' \otimes_\pi A_G): \Mrfeta \otimes_k A_G \rightarrow G \otimes_k A_G
	\]
	be the homomorphism obtained from $u_G'$ via base change along $\pi$. We call $u$ the \emph{universal purely even supergroup homomorphism from $\Mrfeta$ to $G$}. It is universal in the sense that if $B \in \calg_k$ is a purely even commutative and if $\rho \in \bfHom(\Mrfeta,G)(B)$, then $\rho = u_G \otimes_\phi B$ for a unique $\phi \in \Hom_{\salg}(A_G,B)$, and conversely $u_G \otimes_\phi B \in \bfHom(\Mrfeta,G)(B)$ for each $\phi \in \Hom_{\salg}(A_G,B)$.
	\end{enumerate}
\end{definition}

\begin{remark}
Let $G$ be an algebraic $k$-supergroup scheme. One can check via naturality that the homomorphisms $u_G'$ and $u_G$ defined above correspond respectively to the identity elements
\begin{gather*}
\id \in \Hom_{\salg}(k[\bfHom(\Mrfeta,G)],k[\bfHom(\Mrfeta,G)]) = \bfHom(\Mrfeta,G)(k[\bfHom(\Mrfeta,G)]), \\
\id \in \Hom_{\alg}(k[\Vrfeta(G)],k[\Vrfeta(G)]) = \Vrfeta(G)(k[\Vrfeta(G)]) = \bfHom(\Mrfeta,G)(k[\Vrfeta(G)]).
\end{gather*}
\end{remark}

\subsection{Gradings}\label{SS:gradings}

In this section set $B = k[x,y]/\subgrp{x^{p^r}-y^2}$. We consider $B$ as a purely even augmented $k$-superalgebra, with augmentation $\ve: B \rightarrow k$ defined by $\ve(x) = \ve(y) = 1$. The assignments $x \mapsto x \otimes x$ and $y \mapsto y \otimes y$ uniquely extend to a $k$-algebra homomorphism $\Delta_B: B \rightarrow B \otimes B$, and hence $\Spec(B)$ admits the structure of a (unital, associative) monoid $k$-scheme. The following lemma is analogous to the classical result that rational representations of the multiplicative group $\G_m$ correspond to $\Z$-graded vector spaces.

\begin{lemma} \label{lemma:Zpr2grading}
The category of rational representations of the monoid $k$-scheme $\Spec(B)$ is naturally equivalent to the category non-negatively $\Z[\frac{p^r}{2}]$-graded $k$-vector spaces. More generally, let $X$ be an affine $k$-superscheme admitting a right monoid action of the purely even $k$-scheme $\Spec(B)$. Then $k[X]$ admits a non-negative $\Z[\frac{p^r}{2}]$-grading, which induces a $\Z[\frac{p^r}{2}]$-grading on the purely even quotient $k[X_{\ev}] = k[X]/\subgrp{k[X]_{\one}}$.
\end{lemma}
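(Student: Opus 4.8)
The plan is to reduce everything to the observation that $B$ is the monoid coalgebra on its set of group-like elements. First I would record a $k$-basis of $B$: reducing modulo the relation $x^{p^r} = y^2$, every element of $B$ is uniquely a $k$-linear combination of the monomials in $\mathcal{G} := \{x^a : a \geq 0\} \cup \{x^a y : a \geq 0\}$. Each $g \in \mathcal{G}$ is group-like, since $\Delta_B$ is an algebra map with $\Delta_B(x) = x \otimes x$ and $\Delta_B(y) = y \otimes y$, so $\Delta_B(g) = g \otimes g$ and $\varepsilon(g) = 1$; conversely, comparing the two sides of $\Delta_B(h) = h \otimes h$ in the basis $\mathcal{G}$, together with $\varepsilon(h) = 1$, shows that $\mathcal{G}$ is exactly the set of group-like elements of $B$. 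Moreover $\mathcal{G}$ is closed under multiplication ($x^a \cdot x^{a'} = x^{a+a'}$, $x^a \cdot (x^{a'} y) = x^{a+a'} y$, and $(x^a y)(x^{a'} y) = x^{a+a'+p^r}$), so it is a submonoid of $(B,\cdot)$, and the assignments $x^a \mapsto a$ and $x^a y \mapsto a + \tfrac{p^r}{2}$ identify $\mathcal{G}$ with the submonoid $\Gamma$ of $\mathbb{Z}[\tfrac{p^r}{2}]$ generated by $1$ and $\tfrac{p^r}{2}$, all of whose elements are non-negative.

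Next I would invoke the elementary dictionary between comodules over a group-like-spanned coalgebra and graded vector spaces. A rational representation of $\Spec(B)$ is the same as a right $B$-comodule $\rho : V \to V \otimes B$; writing $\rho(v) = \sum_{g \in \mathcal{G}} \rho_g(v) \otimes g$, coassociativity forces the $\rho_g$ to be pairwise orthogonal idempotents and counitality forces $\sum_{g} \rho_g = \mathrm{id}_V$ (a locally finite sum), so $V = \bigoplus_{g \in \mathcal{G}} V_g$ with $V_g = \{v \in V : \rho(v) = v \otimes g\}$; via $\mathcal{G} \cong \Gamma$ this is a non-negative $\mathbb{Z}[\tfrac{p^r}{2}]$-grading, and conversely every such grading defines a $B$-comodule. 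Since any comodule morphism $f$ automatically satisfies $f(V_g) \subseteq W_g$, this gives the asserted equivalence of categories. The same computation applied to a $B$-comodule algebra shows that when $\rho$ is an algebra map one has $V_g \cdot V_h \subseteq V_{gh}$, so the decomposition is a grading of algebras.

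For an affine $k$-superscheme $X$ carrying a right monoid action of $\Spec(B)$, I would then observe that the action morphism $X \times \Spec(B) \to X$ corresponds to an even $k$-algebra homomorphism $\Delta_X : k[X] \to k[X] \otimes B$ making $k[X]$ a right $B$-comodule algebra, so by the previous paragraph $k[X] = \bigoplus_{g \in \mathcal{G}} k[X]_g$ is a $\Gamma$-graded — hence non-negatively $\mathbb{Z}[\tfrac{p^r}{2}]$-graded — algebra. To descend the grading to $X_{\mathrm{ev}}$ one checks that it is compatible with the super-structure: because $B$ is purely even, $\Delta_X$ preserves parity, so splitting the equation $\Delta_X(f) = f \otimes g$ into its even and odd parts shows that each $k[X]_g$ is a $\mathbb{Z}_2$-graded subspace of $k[X]$. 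Consequently $k[X]_{\one}$ is spanned by $\mathcal{G}$-homogeneous elements, the ideal $\langle k[X]_{\one} \rangle$ is $\mathcal{G}$-graded, and the quotient $k[X_{\mathrm{ev}}] = k[X]/\langle k[X]_{\one} \rangle$ inherits the $\mathbb{Z}[\tfrac{p^r}{2}]$-grading.

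I do not expect a serious obstacle here, as the content is entirely structural. The one point deserving care is the final step — compatibility of the monoid grading with the $\mathbb{Z}_2$-grading — since this is exactly what legitimizes the passage to $X_{\mathrm{ev}}$; it rests on $\Spec(B)$ being a purely even scheme, so that $\Delta_X$ is an even map. The only other place where an index or sign slip could intrude is the identification of the group-like monoid $\mathcal{G}$, which must be carried out working modulo the relation $x^{p^r} = y^2$.
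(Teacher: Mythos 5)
Your argument is correct and follows the same essential route as the paper's own proof: both expand the coaction $f^* \colon k[X] \to k[X] \otimes B$ in the basis $\{x^i, x^i y\}$, deduce that the resulting coefficient operators are orthogonal idempotents summing (locally finitely) to the identity, and use the evenness of these operators (a consequence of $B$ being purely even) to show that $k[X]_{\one}$ is graded and hence the grading descends to $k[X_{\ev}]$. Your framing via the monoid of group-like elements $\mathcal{G}$ and the standard dictionary for comodules over a pointed cosemisimple coalgebra is a clean conceptual repackaging of the same computation; the paper does not name $\mathcal{G}$ or $\Gamma$ explicitly, but the content is identical.
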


\begin{proof}
Let $f: X \times \Spec(B) \rightarrow X$ be the morphism defining the right monoid action of $\Spec(B)$ on $X$. Considering $B$ as a purely even commutative superalgebra, the comorphism $f^*$ is a $k$-superalgebra homomorphism
\[
f^*: k[X] \rightarrow k[X \times \Spec(B)] = k[X] \otimes B.
\]
The set $\set{x^i, x^i y : i \in \N}$ is a $k$-basis for $B$, so for each $a \in k[X]$ one can write
\[
f^*(a) = \sum_{i \geq 0} [ \phi_i(a) \otimes x^i + \psi_i(a) \otimes x^i y]
\]
for some elements $\phi_i(a),\psi_i(a) \in k[X]$ of the same parity as $a$. Then the fact that $f$ defines a right monoid action of $\Spec(B)$ on $X$ implies that $(1_{k[X]} \otimes \Delta_B) \otimes f^* = (f^* \otimes 1_B) \circ f^*$. Using this identity and the fact that $f^*$ is an $k$-superalgebra homomorphism, it follows that the functions $\phi_i: k[X] \rightarrow k[X]$ and $\psi_i: k[X] \rightarrow k[X]$ define projection maps onto the $\Z[\frac{p^r}{2}]$-graded components of $k[X]$ of degrees $i$ and $i+\frac{p^r}{2}$, respectively. Now to see that the $\Z[\frac{p^r}{2}]$-grading on $k[X]$ induces a $\Z[\frac{p^r}{2}]$-grading on $k[X_{\ev}]$, one can use the fact that the functions $\phi_i,\psi_i: k[X] \rightarrow k[X]$ are even linear maps, and hence $k[X]_{\one}$ is a $\Z[\frac{p^r}{2}]$-homogeneous subspace of $k[X]$.
\end{proof}

Now fix $r,s \geq 1$, and let $G$ be an algebraic $k$-supergroup scheme. Composition of homomorphisms defines a natural morphism of affine $k$-superschemes
\begin{equation} \label{eq:HomMonoidAction}
\bfHom(\Mrs,G) \times \bfHom(\Mrs,\Mrs) \rightarrow \bfHom(\Mrs,G).
\end{equation}
Consider the purely even subscheme of $\bfHom(\Mrs,\Mrs)$ consisting for each $A \in \csalg_k$ of those morphisms $\rho_{(\mu,a)} \in \bfHom(\Mrs,\Mrs)(A)$ such that the comorphism $\rho_{(\mu,a)}^*: A[\Mrs] \rightarrow A[\Mrs]$ satisfies $\rho_{(\mu,a)}^*(\tau) = \tau \cdot \mu$, $\rho_{(\mu,a)}^*(\theta) = \theta \cdot a$, and $\rho_{(\mu,a)}^*(\sigma_i) = \sigma_i \cdot a^{ip^{r-1}}$ for some $\mu,a \in \Azero$ with $a^{p^r} = \mu^2$; cf.\ Lemma \ref{lemma:HomMrsMrs}. One has $\rho_{(\mu,a)} \circ \rho_{(\mu',a')} = \rho_{(\mu\mu',aa')}$, so this purely even subscheme identifies with $\Spec(B)$. Then the right action of $\bfHom(\Mrs,\Mrs)$ restricts to a right monoid action of $\Spec(B)$ on $\bfHom(\Mrs,G)$.

\begin{lemma} \label{lemma:Hom(Mrs,G)graded}
Let $r,s \geq 1$, and let $G$ be an algebraic $k$-supergroup scheme. Then the coordinate superalgebra $k[\bfHom(\Mrs,G)]$ is a $\Z[\frac{p^r}{2}]$-graded connected $k$-algebra. Moreover, if $G \rightarrow G'$ is a homomorphism of algebraic $k$-super\-group schemes, then the induced superalgebra map
\[
k[\bfHom(\Mrs,G)] \rightarrow k[\bfHom(\Mrs,G')]
\]
respects gradings.
\end{lemma}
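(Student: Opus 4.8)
The plan is to obtain both assertions formally from the right monoid action of $\Spec(B)$ on $\bfHom(\Mrs,G)$ constructed in the paragraph preceding the lemma, together with Lemma~\ref{lemma:Zpr2grading}. Write $X = \bfHom(\Mrs,G)$. Since $X$ carries a right monoid action of the purely even scheme $\Spec(B)$, the general part of Lemma~\ref{lemma:Zpr2grading} already equips $k[X]$ with a non-negative $\Z[\tfrac{p^r}{2}]$-grading, and it shows that, in the notation of that proof, the component of degree $i$ (resp.\ $i+\tfrac{p^r}{2}$) is $\phi_i(k[X])$ (resp.\ $\psi_i(k[X])$), where $f^*(a) = \sum_{i\geq 0}[\phi_i(a)\otimes x^i + \psi_i(a)\otimes x^i y]$ is the coaction. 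It therefore remains to prove (i) that this grading is \emph{connected}, i.e.\ $k[X]_0 = k\cdot 1$, and (ii) that it is natural in $G$.

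For (i), I would use the ``zero'' element of the monoid $\Spec(B)$. Since $0^{p^r}-0^2 = 0$, there is a $k$-algebra map $\ve_0\colon B \rightarrow k$ sending $x$ and $y$ to $0$, and because $\Delta_B(x) = x\otimes x$ and $\Delta_B(y) = y\otimes y$ the corresponding $k$-point $z$ of $\Spec(B)$ is absorbing for the monoid structure. Under the identification of $\Spec(B)$ with a subscheme of $\bfHom(\Mrs,\Mrs)$, the point $z$ is $\rho_{(0,0)}$, whose comorphism kills the augmentation ideal of $k[\Mrs]$; thus $\rho_{(0,0)}$ factors through the trivial group, and hence so does $\rho\circ\rho_{(0,0)}$ for every $A\in\csalg_k$ and every $\rho \in X(A)$. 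Consequently the morphism $g\colon X \rightarrow X$ obtained by restricting the action $f\colon X\times\Spec(B)\rightarrow X$ along the section $X\cong X\times\{z\}\hookrightarrow X\times\Spec(B)$ is the \emph{constant} morphism at the $k$-point $x_0 \in X(k)$ given by the trivial homomorphism $\Mrs\rightarrow G$; equivalently, its comorphism $g^*\colon k[X]\rightarrow k[X]$ is evaluation at $x_0$ followed by the unit $k\hookrightarrow k[X]$. On the other hand $g^* = (1\otimes\ve_0)\circ f^*$, and for $a\in k[X]_0$ one has $f^*(a) = a\otimes 1$, whence $g^*(a) = a$. Comparing the two descriptions of $g^*(a)$ gives $a \in k\cdot 1$, so $k[X]_0 = k\cdot 1$ and the grading is connected.

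For (ii), let $\phi\colon G\rightarrow G'$ be a homomorphism of algebraic $k$-supergroup schemes. By the functoriality in Theorem~\ref{theorem:Homsuperscheme}, $\phi$ induces a morphism $\bfHom(\Mrs,\phi)\colon \bfHom(\Mrs,G)\rightarrow\bfHom(\Mrs,G')$ of affine superschemes, given on points by post-composition $\rho\mapsto\phi\circ\rho$. Post-composition with a fixed $\phi$ commutes with pre-composition with endomorphisms of $\Mrs$, so $\bfHom(\Mrs,\phi)$ is equivariant for the two right $\Spec(B)$-actions; equivalently, the induced map on coordinate superalgebras is a morphism of $B$-comodules. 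Since the $\Z[\tfrac{p^r}{2}]$-grading on each side is read off from the $B$-comodule structure via the maps $\phi_i,\psi_i$ recalled above, any morphism of $B$-comodules preserves the grading, which is the final assertion.

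The only step that requires any thought is (i); once one has singled out the absorbing point $z$ and checked that it is indeed present in $\Spec(B)\subseteq\bfHom(\Mrs,\Mrs)$ (which it is, since the defining condition $a^{p^r}=\mu^2$ of that subscheme holds for $(\mu,a)=(0,0)$), the argument is a routine unwinding of the proof of Lemma~\ref{lemma:Zpr2grading}, and everything in (ii) is purely formal.
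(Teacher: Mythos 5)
Your proof is correct, and the naturality part (ii) uses essentially the same argument as the paper (naturality of the monoid action (\ref{eq:HomMonoidAction})). However, your treatment of connectedness is genuinely different from the paper's, and it is worth recording the comparison.

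The paper proves connectedness by first specializing to $G = \GLmn$: it identifies the $\Z[\frac{p^r}{2}]$-grading on $k[\bfHom(\Mrs,\GLmn)]$ explicitly, observing via the action formula $(\ualpha|\beta)\cdot(\mu,a) = (\alpha_0 a,\ldots,\alpha_{r-1}a^{p^{r-1}},\beta\mu)$ that the coordinate functions $X_{ij}(\ell)$ and $Y_{ij}$ all have strictly positive degree, so the ambient free commutative superalgebra (and hence its quotient) is connected. It then handles general $G$ by choosing a closed embedding $G \hookrightarrow \GLmn$ and using the surjection $k[\bfHom(\Mrs,\GLmn)] \twoheadrightarrow k[\bfHom(\Mrs,G)]$ of graded algebras. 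Your argument instead works intrinsically with $X = \bfHom(\Mrs,G)$: you observe that $\Spec(B)$ has an absorbing $k$-point $z = \rho_{(0,0)}$ corresponding to the trivial endomorphism of $\Mrs$, that restricting the action along $X \cong X \times \{z\}$ yields the constant morphism at the trivial homomorphism $x_0 \in X(k)$, and that for $a \in k[X]_0$ the coaction gives $f^*(a) = a \otimes 1$, so $a = (1 \otimes \ve_0)f^*(a) = g^*(a) = \ve_{x_0}(a) \cdot 1 \in k$. Your verification that $z$ lies in the copy of $\Spec(B)$ inside $\bfHom(\Mrs,\Mrs)$ (since $0^{p^r} = 0^2$) and that $\rho_{(0,0)}$ factors through the trivial group is correct, as is the identification of $g^*$ with $\ve_{x_0}$ followed by the unit. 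This route avoids choosing a closed embedding $G\hookrightarrow\GLmn$ and any coordinate bookkeeping; it is the general "monoid with absorbing element forces connectedness" argument, and it buys a more conceptual proof at the cost of requiring the reader to have internalized the comodule-theoretic description of the grading from the proof of Lemma \ref{lemma:Zpr2grading}. The paper's route is more elementary but ties the connectedness statement to the explicit $\GLmn$-description that it needs anyway.
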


\begin{proof}
The coordinate algebra $k[\bfHom(\Mrs,G)]$ is $\Z[\frac{p^r}{2}]$-graded by Lemma \ref{lemma:Zpr2grading}, and the algebra map $k[\bfHom(\Mrs,G)] \rightarrow k[\bfHom(\Mrs,G')]$ then respects gradings by the naturality of \eqref{eq:HomMonoidAction}. So it remains only to show that $k[\bfHom(\Mrs,G)]$ is connected, i.e., that the degree-$0$ component of $k[\bfHom(\Mrs,G)]$ is just the field $k$.

Let $m,n \in \N$. As a superscheme, $\bfHom(\Mrs,\GLmn)$ is by construction a closed subsuperscheme of $\Matmn(-)_{\zero}^{\times r} \times \Matmn(-)_{\one}$, so $k[\bfHom(\Mrs,\GLmn)]$ is a quotient of the $k$-superalgebra
\[
A := k[\Matmn(-)_{\zero}]^{\otimes r} \otimes k[\Matmn(-)_{\one}].
\]
The $k$-superalgebra $A$ admits a $\Z[\frac{p^r}{2}]$-grading in which the coordinate functions in the $\ell$-th tensor factor of $k[\Matmn(-)_{\zero}]^{\otimes r}$ are of graded degree $p^\ell$ and the coordinate functions in $k[\Matmn(-)_{\one}]$ are of graded degree $\frac{p^r}{2}$. Identifying $\bfHom(\Mrs,\GLmn)$ as in Proposition \ref{proposition:HomMrGLmn}, the right monoid action of $\Spec(B)$ on $\bfHom(\Mrs,\GLmn)$ corresponds to the map
\[
(\alpha_0,\alpha_1,\ldots,\alpha_{r-1},\beta) \times (\mu,a) \mapsto (\alpha_0 \cdot a,\alpha_1 \cdot a^p,\ldots,\alpha_{r-1} \cdot a^{p^{r-1}},\beta \cdot \mu).
\]
From this it follows that the $\Z[\frac{p^r}{2}]$-grading on $k[\bfHom(\Mrs,\GLmn)]$ is precisely the grading induced by the aforementioned $\Z[\frac{p^r}{2}]$-grading on $A$. In particular, $k[\bfHom(\Mrs,\GLmn)]$ is connected. Now choosing a closed embedding $G \hookrightarrow \GLmn$, we get a surjective superalgebra homomorphism $k[\bfHom(\Mrs,\GLmn)] \twoheadrightarrow k[\bfHom(\Mrs,G)]$ that respects gradings, and hence $k[\bfHom(\Mrs,G)]$ is connected as well.
\end{proof}

\begin{corollary} \label{cor:VrsGgraded}
Let $G$ be an algebraic $k$-supergroup scheme. Then $k[\Vrs(G)]$ is a $\Z[\frac{p^r}{2}]$-graded connected $k$-algebra. If $G \rightarrow G'$ is a homomorphism of algebraic $k$-super\-group schemes, then the corresponding algebra map $k[\Vrs(G)] \rightarrow k[\Vrs(G')]$ respects gradings.
\end{corollary}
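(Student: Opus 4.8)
The plan is to deduce the corollary directly from Lemmas \ref{lemma:Zpr2grading} and \ref{lemma:Hom(Mrs,G)graded}, since essentially all the content is already packaged there. First I would recall that by Definition \ref{definition:evensubschemes} one has $\Vrs(G) = [\bfHom(\Mrs,G)]_\ev$, so that $k[\Vrs(G)] = k[\bfHom(\Mrs,G)]/\subgrp{k[\bfHom(\Mrs,G)]_\one}$ is the largest purely even quotient of $k[\bfHom(\Mrs,G)]$.

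Next, applying Lemma \ref{lemma:Hom(Mrs,G)graded}, the superalgebra $k[\bfHom(\Mrs,G)]$ carries a $\Z[\frac{p^r}{2}]$-grading and is connected. The right monoid action of $\Spec(B)$ on $\bfHom(\Mrs,G)$ constructed just before that lemma is precisely the action inducing this grading, so the second assertion of Lemma \ref{lemma:Zpr2grading} (applied with $X = \bfHom(\Mrs,G)$) shows that the $\Z[\frac{p^r}{2}]$-grading descends to $k[X_\ev] = k[\Vrs(G)]$; concretely, the projection operators $\phi_i,\psi_i$ onto the graded pieces are even linear maps, so $k[\bfHom(\Mrs,G)]_\one$ is a $\Z[\frac{p^r}{2}]$-homogeneous subspace and the quotient inherits the grading. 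Connectedness of $k[\Vrs(G)]$ then follows because the quotient map $k[\bfHom(\Mrs,G)] \twoheadrightarrow k[\Vrs(G)]$ is surjective and graded, so the degree-$0$ component of the target is a nonzero quotient of the degree-$0$ component $k$ of the source, hence equals $k$.

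Finally, for functoriality, a homomorphism $G \rightarrow G'$ of algebraic $k$-supergroup schemes induces by Lemma \ref{lemma:Hom(Mrs,G)graded} a graded superalgebra map $k[\bfHom(\Mrs,G)] \rightarrow k[\bfHom(\Mrs,G')]$. Every superalgebra homomorphism preserves parity, hence carries $k[\bfHom(\Mrs,G)]_\one$ into $k[\bfHom(\Mrs,G')]_\one$, so this map descends to a homomorphism between the purely even quotients $k[\Vrs(G)] \rightarrow k[\Vrs(G')]$; since the original map and both quotient maps respect gradings, so does the descended map. I do not anticipate a genuine obstacle here: the only point requiring (minor) care is that passage to the largest purely even quotient is compatible both with the grading—handled by the second half of Lemma \ref{lemma:Zpr2grading}—and with morphisms, which is immediate from parity-preservation of superalgebra homomorphisms.
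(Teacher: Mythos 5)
Your proposal is correct and is precisely the intended deduction: Lemma \ref{lemma:Hom(Mrs,G)graded} puts the connected $\Z[\frac{p^r}{2}]$-grading on $k[\bfHom(\Mrs,G)]$, and the final clause of Lemma \ref{lemma:Zpr2grading} shows the grading passes to the purely even quotient $k[\Vrs(G)]$, with connectedness and functoriality following by the surjectivity and parity-preservation arguments you give. The paper offers no explicit proof of this corollary because it is exactly this short argument, so no comparison of approaches is needed.
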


\section{The Yoneda algebra \texorpdfstring{$\Ext_{\bsp}^\bullet(\bsir,\bsir)$}{ExtP(Ir,Ir)}}\label{S:Ext}

\subsection{Recollections on the cohomology of strict polynomial superfunctors}

Let $\bsp = \bsp_k$ be the category of strict polynomial superfunctors over $k$ as defined in \cite[\S2.1]{Drupieski:2016}. In \cite{Drupieski:2016}, the first author studied for each $r \geq 1$ the structure of $\Ext_{\bsp}^\bullet(\bsir,\bsir)$, the extension algebra in $\bsp$ of the $r$-th Frobenius twist of the identity functor $\bsi$. The functor $\bsir$ admits a direct sum decomposition, $\bsir = \bsirzero \oplus \bsirone$, which gives rise to the matrix ring decomposition
\begin{equation} \label{eq:matrixring}
\renewcommand*{\arraystretch}{1.5}
\Ext_{\bsp}^\bullet(\bsir,\bsir) =
\begin{pmatrix}
\Ext_{\bsp}^\bullet(\bsi_0^{(r)},\bsi_0^{(r)}) & \Ext_{\bsp}^\bullet(\bsi_1^{(r)},\bsi_0^{(r)}) \\
\Ext_{\bsp}^\bullet(\bsi_0^{(r)},\bsi_1^{(r)}) & \Ext_{\bsp}^\bullet(\bsi_1^{(r)},\bsi_1^{(r)})
\end{pmatrix}.
\end{equation}
Define $\bse_0 \in \Hom_{\bsp}(\bsirzero,\bsirzero)$ and $\bse_0^\Pi \in \Hom_{\bsp}(\bsirone,\bsirone)$ to be the respective identity elements. Then $\bse_0$ and $\bse_0^\Pi$ are commuting orthogonal idempotents that sum to the identity in $\Ext_{\bsp}^\bullet(\bsir,\bsir)$, and $\Ext_{\bsp}^\bullet(\bsir,\bsir)$ is generated as a $k$-algebra by $\bse_0$ and $\bse_0^\Pi$ together with certain distinguished (even superdegree) extension classes
\begin{equation} \label{eq:Extgenerators}
\left.
\begin{aligned}
\bse_i^{(r-i)} &\in \Ext_{\bsp}^{2p^{i-1}}(\bsi_0^{(r)},\bsi_0^{(r)}) \\
(\bse_i^{(r-i)})^\Pi &\in \Ext_{\bsp}^{2p^{i-1}}(\bsi_1^{(r)},\bsi_1^{(r)})
\end{aligned}
\right\} \text{ for $1 \leq i \leq r$, and }
\left\{
\begin{aligned}
\bsc_r &\in \Ext_{\bsp}^{p^r}(\bsi_1^{(r)},\bsi_0^{(r)}), \\
\bsc_r^\Pi &\in \Ext_{\bsp}^{p^r}(\bsi_0^{(r)},\bsi_1^{(r)}),
\end{aligned}
\right.
\end{equation}
which are unique up to nonzero scalar multiples. To completely specify the classes, i.e., to fix the scalar multiples, it suffices to fix $\bse_r$ and $\bsc_r$ for each $r \geq 1$, since then the remaining classes are defined in terms of Frobenius twists and the action of the parity change functor $\Pi$.

Recall from \cite[\S3.3]{Drupieski:2016} that post-composition with $\Pi$ induces for each $F,G \in \bsp$ a canonical identi\-fication $\Ext_{\bsp}^\bullet(F,G) = \Ext_{\bsp}^\bullet(\Pi \circ F,\Pi \circ G)$, while pre-composition induces an even isomorphism $\Ext_{\bsp}^\bullet(F,G) \cong \Ext_{\bsp}^\bullet(F \circ \Pi,G \circ \Pi)$. Combining these two actions we get the conjugation action $F \mapsto F^\Pi := \Pi \circ F \circ \Pi$ on objects in $\bsp$, and its induced action on extension groups. Since $\bsi^\Pi = \bsi$ and $(\bsirzero)^\Pi = \bsirone$, conjugation by $\Pi$ defines an automorphism of $\Ext_{\bsp}^\bullet(\bsir,\bsir)$ that exchanges the diagonal (resp., the anti-diagonal) components of the matrix ring \eqref{eq:matrixring}.

Fix positive integers $m$ and $n$. Then as discussed in \cite[Corollary 5.2.3]{Drupieski:2013b}, there exists a May spectral sequence converging to the cohomology of the Frobenius kernel $\GLmnone$ of $\GLmn$:
\begin{equation} \label{eq:Mayspecseq}
\begin{split}
E_0^{i,j} &= \Lambda_s^j(\glmn^\#) \otimes S^{i/2}(\glzero^\#[2])^{(1)} \Rightarrow \opH^{i+j}(\GLmnone,k), \\
E_1^{i,j} &= \opH^j(\glmn,k) \otimes S^{i/2}(\glzero^\#[2])^{(1)} \Rightarrow \opH^{i+j}(\GLmnone,k).
\end{split}
\end{equation}
Here the superscript $i/2$ means that the term is zero unless $i$ is even. In particular, $E_1^{i,j} = 0$ for all odd $i$, so $E_1^{i,j} = E_2^{i,j}$. The restrictions of $\bse_r + \bse_r^\Pi$ and $\bsc_r+\bsc_r^\Pi$ to $\GLmnone$ (cf.\ Definition \ref{def:charclasses}) naturally give rise to homomorphisms of graded superalgebras
\begin{subequations}
\begin{align}
(\bse_r+\bse_r^\Pi) &: S(\glzero^\#[2p^{r-1}])^{(r)} \rightarrow \Hbul(\GLmnone,k), \quad \text{and} \label{ertoG1} \\
(\bsc_r+\bsc_r^\Pi)|_G &: S(\glone^\#[p^r])^{(r)} \rightarrow \Hbul(\GLmnone,k). \label{crtoG1}
\end{align}
\end{subequations}

In the proof of \cite[Theorem 5.5.1]{Drupieski:2016} (as corrected in \cite{Drupieski:2017}), the first author verified the following:
	\begin{itemize}	
	\item Up to a nonzero scalar factor, \eqref{ertoG1} is equal to the composition of the $p^{r-1}$-power map
	\[
	S(\glzero^\#[2p^{r-1}])^{(r)} \rightarrow S(\glzero^\#[2])^{(1)}
	\]
	with the horizontal edge map $E_1^{\bullet,0} \twoheadrightarrow E_\infty^{\bullet,0} \hookrightarrow \Hbul(\GLmnone,k)$ of \eqref{eq:Mayspecseq}. 

	\item Up to a nonzero scalar factor, the composition of \eqref{crtoG1} with the vertical edge map
	\[
	\Hbul(\GLmnone,k) \twoheadrightarrow E_\infty^{0,\bullet} \hookrightarrow E_1^{0,\bullet} = \Hbul(\glmn,k)
	\]
	of \eqref{eq:Mayspecseq} is equal to the homomorphism of graded superalgebras induced by the composition
	\[
	S(\glone^\#[p^r])^{(r)} \rightarrow S(\glone^\#) \hookrightarrow \Lambda_s(\glmn^\#),
	\]
	where the first arrow is the $p^r$-power map and the second arrow is the natural inclusion.
	\end{itemize}
By the strict polynomial superfunctor analogue of \cite[Lemma 3.14]{Friedlander:1997}, the aforementioned scalar factors are independent of the particular values of $m$ and $n$.

\begin{convention} \label{convention}
From now on we will assume for each $r \geq 1$ that the extension classes $\bse_r$ and $\bsc_r$ are fixed so as to ensure that the aforementioned scalar factors are both equal to $1$. This convention may differ from the convention chosen in \cite[\S4.4]{Drupieski:2016}.
\end{convention}

The following proposition is a reformulation of \cite[Corollary 4.6.5]{Drupieski:2016}:

\begin{proposition} \label{prop:erj}
Given an integer $0 \leq j < p^r$ with $p$-adic decomposition $j = \sum_{i=0}^{r-1} j_i p^i$, set
\[
\bse_r(j) = \frac{(\bse_1^{(r-1)})^{j_0}(\bse_2^{(r-2)})^{j_1} \cdots (\bse_r)^{j_{r-1}}}{(j_0!)(j_1!) \cdots (j_{r-1}!)} \in \Ext_{\bsp}^{2j}(\bsi_0^{(r)},\bsi_0^{(r)}).
\]
For $j \geq p^r$, write $j = b + ap^r$ with $0 \leq b < p^r$. Set $\bse_r(j) = \bse_r(b) \cdot [(\bse_r)^p]^a$ and $\bse_r^\Pi(j) = [\bse_r(j)]^\Pi$. Then:
	\begin{enumerate}
	\item $\set{\bse_r(j) : j \in \N}$ is a (purely even) basis for $\Ext_{\bsp}^\bullet(\bsirzero,\bsirzero)$.
	\item $\set{\bse_r(j) \cdot \bsc_r : j \in \N}$ is a (purely even) basis for $\Ext_{\bsp}^\bullet(\bsirone,\bsirzero)$.
	\item $\set{\bse_r^\Pi(j) : j \in \N}$ is a (purely even) basis for $\Ext_{\bsp}^\bullet(\bsirone,\bsirone)$.
	\item $\set{\bse_r^\Pi(j) \cdot \bsc_r^\Pi : j \in \N}$ is a (purely even) basis for $\Ext_{\bsp}^\bullet(\bsirzero,\bsirone)$.
	\end{enumerate}
\end{proposition}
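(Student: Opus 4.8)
The plan is to deduce this proposition directly from the corresponding statement in \cite{Drupieski:2016} by tracking the change of normalization introduced in Convention \ref{convention}. As noted before the proposition, it is a reformulation of \cite[Corollary 4.6.5]{Drupieski:2016}, so the only work is to verify that the basis statements are insensitive to the choice of scalar multiples for the distinguished classes $\bse_r$ and $\bsc_r$. Concretely, I would first recall from \cite[\S4]{Drupieski:2016} that $\Ext_{\bsp}^\bullet(\bsir,\bsir)$ has, in each matrix-ring component \eqref{eq:matrixring}, a $k$-basis of exactly the shape asserted here, but formed from whatever fixed choices of $\bse_r, \bsc_r$ (and their Frobenius twists and $\Pi$-conjugates $(\bse_i^{(r-i)})^{\Pi}$, $\bsc_r^{\Pi}$) were made in that paper.

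Next I would observe that passing from the normalization of \cite[\S4.4]{Drupieski:2016} to the one of Convention \ref{convention} simply rescales each generator $\bse_i^{(r-i)}$ by a nonzero scalar $\mu_i \in k^\times$ and $\bsc_r$ by a nonzero scalar $\nu \in k^\times$ (and, compatibly, rescales the $\Pi$-conjugate generators by the same scalars, since conjugation by $\Pi$ is a ring automorphism that sends $\bse_i^{(r-i)} \mapsto (\bse_i^{(r-i)})^{\Pi}$ and $\bsc_r \mapsto \bsc_r^{\Pi}$). Under such a rescaling, the monomial $\bse_r(j)$ defined in the statement is multiplied by the nonzero scalar $\mu_1^{j_0}\mu_2^{j_1}\cdots\mu_r^{j_{r-1}}$ in the range $0 \le j < p^r$, and by a corresponding nonzero scalar in the general range $j = b + ap^r$ (since $[(\bse_r)^p]^a$ rescales by $\mu_r^{pa}$); likewise $\bse_r(j)\cdot\bsc_r$ rescales by a nonzero scalar involving the extra factor $\nu$, and similarly for the $\Pi$-versions. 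Since each element of the claimed basis is thereby changed only by a nonzero scalar, the claimed families are bases if and only if the corresponding families in \cite[Corollary 4.6.5]{Drupieski:2016} are bases. (One also checks the cosmetic point that the factorization \eqref{eq:sigmafactorization}-style denominators $(j_0!)(j_1!)\cdots$ match those used in \cite{Drupieski:2016}; this is immediate from the definition of $\bse_r(j)$ given in the statement, and the $p$-adic digits $j_i$ lie in $\{0,\dots,p-1\}$ so the factorials are units in $k$.)

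Finally I would conclude that all four assertions hold. There is no real obstacle here: the content is entirely contained in \cite{Drupieski:2016}, and the present proposition is only a bookkeeping restatement adapted to Convention \ref{convention}; the one place to be slightly careful is to make sure the rescaling scalars for the $\Pi$-conjugate generators are forced to agree with those for the original generators, which is guaranteed because $\Pi$-conjugation is applied \emph{after} fixing $\bse_r$ and $\bsc_r$, so that $(\bse_i^{(r-i)})^{\Pi}$ and $\bsc_r^{\Pi}$ are not independent choices. Hence the proof is a short appeal to \cite[Corollary 4.6.5]{Drupieski:2016} together with the remark that multiplying basis vectors by nonzero scalars preserves the property of being a basis.
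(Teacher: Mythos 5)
Your proposal is correct and matches the paper's (implicit) approach: the paper itself offers no proof here, only the assertion that the proposition is ``a reformulation of [Corollary~4.6.5, Drupieski:2016],'' and your argument is precisely the bookkeeping needed to justify that claim. You correctly identify that the only new ingredient is Convention~\ref{convention}, which merely fixes the previously-undetermined scalar multiples of $\bse_r$ and $\bsc_r$; since the classes of \eqref{eq:Extgenerators} are unique up to nonzero scalars and the $\Pi$-conjugate generators are defined from them (not chosen independently), every element of the claimed bases is rescaled by a nonzero scalar, and a family of elements remains a basis under such rescaling. The one minor imprecision — the rescaling of $\bse_i^{(r-i)}$ by the Frobenius-twist operation is by some power of $\mu_i$ rather than $\mu_i$ itself — does not affect the argument, since only nonvanishing of the scalar matters.
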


\begin{remark} \label{remark:erjtoerj}
It follows from the naturality of the May spectral sequence that, with the adoption of Convention \ref{convention}, the restriction homomorphism $\Ext_{\bsp}^\bullet(\bsir,\bsir) \rightarrow \Ext_{\cp}^\bullet(I^{(r)},I^{(r)})$ induced by restriction from $\bsp$ to $\cp$ (see the end of \cite[\S2.1]{Drupieski:2016}) maps $\bse_r$ to the class $e_r$ defined in \cite[Convention 3.4]{Suslin:1997}, and hence maps $\bse_r(j)$ for $0 \leq j < p^r$ to the class $e_r(j)$ defined in \cite[Theorem 3.1]{Suslin:1997}.
\end{remark}

\subsection{Hopf superalgebra structure of \texorpdfstring{$\Ext_{\bsp}^\bullet(\bsir,\bsir)$}{ExtP(Ir,Ir)}}

Recall the category $\bibsp$ of strict poly\-nomial bisuperfunctors defined in \cite[\S2.4]{Drupieski:2016}. Given $F,G \in \bsp$, the external tensor product $F \boxtimes G \in \bibsp$ is defined on objects by $(F \boxtimes G)(V,W) = F(V) \otimes G(W)$. As discussed in \cite[\S3.4]{Drupieski:2016}, the external tensor product operation gives rise for each $A,B,C,D \in \bsp$ to an even bilinear map
\[
\Ext_{\bsp}^m(A,B) \otimes \Ext_{\bsp}^n(C,D) \rightarrow \Ext_{\bibsp}^{m+n}(A \boxtimes C, B \boxtimes D) \label{eq:externalbibspproduct}.
\]
Then the same argument as for \cite[Proposition 3.6]{Suslin:1997} shows the following:

\begin{lemma}
Let $F,G \in \bsp$. Then the external tensor product operation induces an even isomorphism of graded superalgebras
\begin{equation} \label{eq:externalproductextiso}
\boxtimes^*: \Ext_{\bsp}^\bullet(F,F) \gotimes \Ext_{\bsp}^\bullet(G,G) \simrightarrow \Ext_{\bibsp}^\bullet(F \boxtimes G, F \boxtimes G),
\end{equation}
where $\Ext_{\bsp}^\bullet(F,F)$, $\Ext_{\bsp}^\bullet(G,G)$, and $\Ext_{\bibsp}^\bullet(F \boxtimes G, F \boxtimes G)$ are graded superalgebras via their respective Yoneda composition products.
\end{lemma}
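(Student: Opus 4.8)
The plan is to mimic the argument of \cite[Proposition 3.6]{Suslin:1997}, adapting it to the super setting by keeping careful track of Koszul signs. First I would recall that the external tensor product $-\boxtimes-: \bsp \times \bsp \to \bibsp$ sends a pair of projective resolutions to a projective resolution of the external tensor product (this is a standard fact about bifunctor categories; it follows because $F \boxtimes G$ evaluated at $(V,W)$ is $F(V) \otimes G(W)$ and one can tensor resolutions). Thus if $P_\bullet \to F$ and $Q_\bullet \to G$ are projective resolutions in $\bsp$, then the total complex $\mathrm{Tot}(P_\bullet \boxtimes Q_\bullet) \to F \boxtimes G$ is a projective resolution in $\bibsp$, and the map $\boxtimes^*$ is realized at the cochain level by the usual Alexander--Whitney/shuffle-type pairing $\Hom_{\bsp}(P_\bullet,F) \otimes \Hom_{\bsp}(Q_\bullet,G) \to \Hom_{\bibsp}(\mathrm{Tot}(P_\bullet\boxtimes Q_\bullet), F\boxtimes G)$ sending $\phi \otimes \psi$ to $\phi \boxtimes \psi$ (with the Koszul sign dictated by the superdegrees and cohomological degrees when the tensor factors are permuted past each other).

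The key steps in order: (1) establish that $\boxtimes^*$ as defined via the external tensor pairing on Ext groups is a well-defined even homomorphism of graded superspaces of the asserted source and target — this is essentially the content of the preceding discussion in \cite[\S3.4]{Drupieski:2016} and the bilinear map displayed just before the lemma. (2) Show $\boxtimes^*$ is an isomorphism: use a projective resolution $P_\bullet$ of $F$ consisting of finite direct sums of objects on which Ext vanishes in positive degrees (standard projectives in $\bsp$), and likewise $Q_\bullet$ for $G$; then $\mathrm{Tot}(P_\bullet \boxtimes Q_\bullet)$ computes $\Ext_{\bibsp}^\bullet(F \boxtimes G, F\boxtimes G)$, and by a Künneth-type argument (the relevant Hom-complexes are complexes of $k$-vector spaces, so Künneth holds with no Tor terms) one gets $\opH^\bullet(\Hom_{\bibsp}(\mathrm{Tot}(P_\bullet \boxtimes Q_\bullet), F\boxtimes G)) \cong \opH^\bullet(\Hom_{\bsp}(P_\bullet,F)) \otimes \opH^\bullet(\Hom_{\bsp}(Q_\bullet,G))$, which is exactly $\Ext_{\bsp}^\bullet(F,F) \otimes \Ext_{\bsp}^\bullet(G,G)$ as a superspace, and the comparison map is $\boxtimes^*$. (3) Verify multiplicativity: show that $\boxtimes^*$ intertwines the Yoneda composition product on $\Ext_{\bibsp}^\bullet(F\boxtimes G, F\boxtimes G)$ with the product on the graded tensor product $\Ext_{\bsp}^\bullet(F,F)\gotimes\Ext_{\bsp}^\bullet(G,G)$ — here the graded tensor product multiplication $(a\otimes b)(a'\otimes b') = (-1)^{(\deg b\cdot\deg a' + \ol b\cdot\ol{a'})} aa'\otimes bb'$ is precisely what one needs, because composing $\phi\boxtimes\psi$ with $\phi'\boxtimes\psi'$ in $\bibsp$ requires sliding $\psi$ past $\phi'$, introducing exactly that Koszul sign. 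This is why the target must carry the graded tensor product structure $\gotimes$ rather than an ordinary tensor product.

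The main obstacle is step (3), the sign bookkeeping showing $\boxtimes^*$ is multiplicative for the graded tensor product. In the non-super setting (\cite[Proposition 3.6]{Suslin:1997}) one only tracks cohomological-degree signs; here one must simultaneously track the internal $\Z_2$-superdegree, and the subtle point is that the category $\bsp$ is enriched over $\svec_k$ (not merely $\fsvec_k$), so the Yoneda product already carries a $(-1)^{\ol{\cdot}\,\ol{\cdot}}$ sign, and the external tensor pairing carries another. One must confirm that the total sign appearing when one computes $(\phi\boxtimes\psi)\circ(\phi'\boxtimes\psi')$ and reorganizes it as $(\phi\circ\phi')\boxtimes(\psi\circ\psi')$ is exactly $(-1)^{\deg(\psi)\cdot\deg(\phi') + \ol\psi\cdot\ol{\phi'}}$, matching the multiplication rule in $\gotimes$. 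Since the paper explicitly says ``the same argument as for \cite[Proposition 3.6]{Suslin:1997}'' works, I would carry out the Künneth/resolution argument as above and then invoke a careful but routine diagram chase with the Koszul sign rule to pin down the multiplicativity, leaving the fully explicit sign computation to the reader or to a short lemma on the behavior of composition under $\boxtimes$.
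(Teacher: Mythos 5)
Your proposal is correct and follows exactly the route the paper indicates, namely transplanting the argument of \cite[Proposition 3.6]{Suslin:1997}: tensor projective resolutions, identify $\Hom_{\bibsp}(P_i \boxtimes Q_j, F \boxtimes G)$ with $\Hom_{\bsp}(P_i,F)\otimes\Hom_{\bsp}(Q_j,G)$ for standard projectives via Yoneda, apply the K\"unneth theorem over the field $k$, and check that the Koszul signs arising from sliding $\psi$ past $\phi'$ in $(\phi\boxtimes\psi)\circ(\phi'\boxtimes\psi')$ are exactly those built into the graded tensor product $\gotimes$. The paper gives no further details beyond the citation, and your elaboration of the sign bookkeeping is the right way to fill in the gap.
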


Define $\nabla: \bsp \rightarrow \bibsp$ by $\nabla(F) = F \circ (\bsi \boxtimes \bsi)$. Then on objects, $\nabla(F)$ is defined by
\[
\nabla(F)(V,W) = F(V \otimes W).
\]
Since $\nabla$ is exact, we again get, as in \cite[\S3]{Suslin:1997}, an induced map on extension groups
\[
\nabla^*: \Ext_{\bsp}^\bullet(F,G) \rightarrow \Ext_{\bibsp}^\bullet(\nabla(F),\nabla(G)).
\]

\begin{lemma}
There exists a natural identification $\nabla(\bsir) = \bsir \boxtimes \bsir$.
\end{lemma}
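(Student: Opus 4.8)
The plan is to reduce the statement to the classical fact that the $r$-th Frobenius twist commutes, up to natural isomorphism, with tensor products. Unwinding the definitions, for $k$-superspaces $V,W$ one has $\nabla(\bsir)(V,W) = \bsir(V \otimes W) = (V \otimes W)^{(r)}$ and $(\bsir \boxtimes \bsir)(V,W) = \bsir(V) \otimes \bsir(W) = V^{(r)} \otimes W^{(r)}$, so it suffices to produce a natural isomorphism $\phi_{V,W} : (V \otimes_k W)^{(r)} \simrightarrow V^{(r)} \otimes_k W^{(r)}$ of strict polynomial bisuperfunctors in $(V,W)$.

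First I would note that, since the Frobenius endomorphism $\varphi : \lambda \mapsto \lambda^p$ of $k$ is purely even, forming the $r$-th Frobenius twist is compatible with the $\Z_2$-grading: for any superspace $U$ one has $(U^{(r)})_{\zero} = (U_{\zero})^{(r)}$ and $(U^{(r)})_{\one} = (U_{\one})^{(r)}$. Decomposing both sides of the desired isomorphism along the $\Z_2$-grading therefore exhibits $\phi_{V,W}$ as a direct sum of maps of the form $(V' \otimes_k W')^{(r)} \to (V')^{(r)} \otimes_k (W')^{(r)}$ for ordinary (trivially graded) vector spaces $V',W'$, namely the homogeneous components of $V$ and $W$. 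For such spaces the natural isomorphism $(V' \otimes_k W')^{(r)} \simrightarrow (V')^{(r)} \otimes_k (W')^{(r)}$, $(v \otimes w)^{(r)} \mapsto v^{(r)} \otimes w^{(r)}$, is a standard fact (cf.\ \cite[\S3]{Suslin:1997}); it is well defined precisely because $v \mapsto v^{(r)}$ is additive and $\varphi^r$-semilinear. Since $\phi_{V,W}$ never permutes the two tensor factors, no Koszul signs intervene when reassembling these component maps, and one obtains a well-defined even isomorphism $\phi_{V,W}$.

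It then remains to check that $\phi_{V,W}$ is natural in $(V,W)$ and is in fact a morphism in $\bibsp$, i.e., that it is compatible with the strict polynomial bisuperfunctor structures and not merely with the underlying ordinary functoriality. Choosing homogeneous bases for $V$ and $W$, a homogeneous linear map is recorded by a matrix, a tensor product $f \otimes g$ of such maps by a Kronecker product of matrices, and the $r$-th Frobenius twist of a map by the entrywise $p^r$-power of its matrix; the required compatibilities — including compatibility with the divided-power structure maps $\Gamma^{p^r}(\Hom(V,V')) \to \Hom(V^{(r)},(V')^{(r)})$ that define these strict polynomial (bi)superfunctors — then all reduce to the evident identity that the entrywise $p^r$-power of a Kronecker product of matrices is the Kronecker product of the entrywise $p^r$-powers. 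This verification proceeds exactly as in the classical case of \cite{Suslin:1997}. The only genuine obstacle, really a point of bookkeeping, is keeping track of the divided-power structure throughout this last step; no sign subtleties arise, because the Frobenius twist preserves parities and no transposition of tensor factors is ever involved.
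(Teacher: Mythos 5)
Your argument is correct but takes a more explicit route than the paper's. Both start from the obvious superspace identification $(V\otimes W)^{(r)}=V^{(r)}\otimes W^{(r)}$, $(v\otimes w)^{(r)}\mapsto v^{(r)}\otimes w^{(r)}$; the content lies in checking compatibility with the bisuperfunctor structure maps. The paper does this by duality: since $\bsir_{V,W}\colon\bsg^{p^r}\Hom_k(V,W)\to\Hom_k(V^{(r)},W^{(r)})$ is \emph{defined} as the dual of the $p^r$-th power map on the symmetric superalgebra, the compatibility is verified on the $\bss$ side, where the relevant diagram (of the form \cite[(2.7.1)]{Drupieski:2016}) is straightforward, and then transported through the duality $\bsg\cong\bss^\#$. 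You instead work directly on the $\bsg$ side with explicit bases: $\bsir_{V,W}$ sends $\gamma_{p^r}(\phi)$, for $\phi$ an even basis element, to the map whose matrix in the twisted bases is the entrywise $p^r$-power of the matrix of $\phi$, so compatibility becomes the commutation of entrywise $p^r$-powers with Kronecker products. Your route is more hands-on and closely parallels \cite[\S3]{Suslin:1997}; the paper's is shorter because dualizing sidesteps the basis combinatorics entirely. Two small points worth fixing: you write $\Gamma^{p^r}$ for what the paper denotes $\bsg^{p^r}$ (the super divided power), and the ``reduce to the Kronecker-product identity'' step silently uses both that $\bsir_{V,W}$ kills every basis monomial of $\bsg^{p^r}\Hom_k(V,W)$ except the $\gamma_{p^r}(x)$ with $x$ even (recorded in the remark following Definition~\ref{def:charclasses}), and that the inclusion $\bsg^{p^r}\Hom_k(V_1,V_2)\otimes\bsg^{p^r}\Hom_k(W_1,W_2)\hookrightarrow\bsg^{p^r}\Hom_k(V_1\otimes W_1,V_2\otimes W_2)$ sends $\gamma_{p^r}(f)\otimes\gamma_{p^r}(g)$ to $\gamma_{p^r}(f\boxtimes g)$ while carrying all other basis monomials into $\ker(\bsir_{V_1\otimes W_1,V_2\otimes W_2})$; making these two facts explicit would close the only real content hiding behind ``proceeds exactly as in the classical case.''
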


\begin{proof}
Given $V,W \in \bsv$, there is an obvious natural identification of vector superspaces
\[
(V \otimes W)^{(r)} = V^{(r)} \otimes W^{(r)},
\]
so it suffices to check that this identification is compatible with the actions of the bisuperfunctors $\nabla(\bsir)$ and $\bsir \boxtimes \bsir$ on morphisms. The compatibility can be checked via duality from a commutative diagram of the type considered in \cite[(2.7.1)]{Drupieski:2016}.
\end{proof}

\begin{remark} \label{remark:nablabsir}
The identification $\nabla(\bsir) = \bsir \boxtimes \bsir$ restricts to identifications
\begin{align*}
\nabla(\bsi_0^{(r)}) &= (\bsi_0^{(r)} \boxtimes \bsi_0^{(r)}) \oplus (\bsi_1^{(r)} \boxtimes \bsi_1^{(r)}), \quad \text{and} \\
\nabla(\bsi_1^{(r)}) &= (\bsi_0^{(r)} \boxtimes \bsi_1^{(r)}) \oplus (\bsi_1^{(r)} \boxtimes \bsi_0^{(r)}).
\end{align*}
\end{remark}

\begin{lemma} \label{lemma:augmentation}
Define $\ve: \Ext_{\bsp}^\bullet(\bsir,\bsir) \rightarrow k$ to be the composite of projection onto cohomological degree zero and evaluation on the vector space $k$,
\[
\ve: \Ext_{\bsp}^\bullet(\bsir,\bsir) \rightarrow \Hom_{\bsp}(\bsir,\bsir) \rightarrow \Hom_k(k^{(r)},k^{(r)}) \cong k.
\]
Set $\ve^\Pi = \ve \circ (-)^\Pi$. Then $\ve$ and $\ve^\Pi$ both make $\Ext_{\bsp}^\bullet(\bsir,\bsir)$ into an augmented superalgebra.
\end{lemma}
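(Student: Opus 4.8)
The plan is to exhibit $\ve$ as a composite of two unital $k$-algebra homomorphisms, which reduces the claim to the (standard) observation that each piece is multiplicative. First I would record that $\Ext_{\bsp}^\bullet(\bsir,\bsir) = \bigoplus_{n \geq 0} \Ext_{\bsp}^n(\bsir,\bsir)$ is $\N$-graded by cohomological degree and that the Yoneda composition product respects this grading, sending $\Ext_{\bsp}^m(\bsir,\bsir) \otimes \Ext_{\bsp}^n(\bsir,\bsir)$ into $\Ext_{\bsp}^{m+n}(\bsir,\bsir)$. Because the grading is concentrated in degrees $\geq 0$, it follows at once that the projection $\pi_0 : \Ext_{\bsp}^\bullet(\bsir,\bsir) \twoheadrightarrow \Ext_{\bsp}^0(\bsir,\bsir) = \Hom_{\bsp}(\bsir,\bsir)$ onto cohomological degree zero is a homomorphism of unital $k$-algebras: it carries the identity $\bse_0 + \bse_0^\Pi$ to itself, and for $x = \sum_m x_m$ and $y = \sum_n y_n$ the degree-zero component of $xy$ is $x_0 y_0$.

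Next I would verify that evaluation at the one-dimensional superspace $k$ defines a unital $k$-algebra homomorphism $\Hom_{\bsp}(\bsir,\bsir) \rightarrow \End_k(\bsir(k))$. This amounts to the formal fact that for natural transformations $\varphi,\psi : \bsir \rightarrow \bsir$ one has $(\psi \circ \varphi)_k = \psi_k \circ \varphi_k$ and $(\id_{\bsir})_k = \id_{\bsir(k)}$, i.e., that evaluation at a fixed object is compatible with composition and units; here one uses that the Yoneda product on $\Ext_{\bsp}^0(\bsir,\bsir)$ is just composition of natural transformations in $\bsp$. Since $\bsir(k) = k^{(r)}$ is one-dimensional over $k$, the ring $\End_k(k^{(r)})$ is canonically $k$, and hence the composite $\ve = (\textup{eval at }k) \circ \pi_0 : \Ext_{\bsp}^\bullet(\bsir,\bsir) \rightarrow k$ is a unital $k$-algebra homomorphism. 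Because every algebra generator of $\Ext_{\bsp}^\bullet(\bsir,\bsir)$ --- the orthogonal idempotents $\bse_0$, $\bse_0^\Pi$ and the classes of \eqref{eq:Extgenerators} --- has even superdegree, the algebra is purely even, so $\ve$ is automatically a homomorphism of $k$-superalgebras; thus $\ve$ is an augmentation.

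Finally, for $\ve^\Pi = \ve \circ (-)^\Pi$ I would use the fact recalled earlier in this section that conjugation by the parity-change functor $\Pi$ is an algebra automorphism of $\Ext_{\bsp}^\bullet(\bsir,\bsir)$; then $\ve^\Pi$ is the composite of this automorphism with the augmentation $\ve$, hence is itself a unital $k$-superalgebra homomorphism to $k$, i.e., an augmentation. I do not anticipate any genuine obstacle: the only point demanding a little care is the bookkeeping that identifies the degree-zero Yoneda product with ordinary composition in $\bsp$ and confirms that evaluation at $k$ intertwines it with composition in $\End_k(k^{(r)})$, and this is purely formal.
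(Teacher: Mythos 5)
Your proof is correct and takes essentially the same route as the paper: the paper simply asserts that it is ``straightforward to verify'' that $\ve$ is a graded superalgebra homomorphism and then deduces the claim for $\ve^\Pi$ from the fact that $\Pi$-conjugation is an even automorphism, which is exactly what you do, only with the two verifications (projection onto degree zero is multiplicative because the grading is nonnegative; evaluation at the object $k$ intertwines Yoneda composition in degree zero with composition in $\End_k(k^{(r)})$) written out explicitly. Your added observation that $\Ext_{\bsp}^\bullet(\bsir,\bsir)$ is purely even, so that any $k$-linear map to $k$ is automatically an even superalgebra map, is a correct way to dispose of the ``super'' part of the assertion.
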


\begin{proof}
It is straightforward to verify that $\ve$ is homomorphism of graded superalgebras, and hence that $\ve$ makes $\Ext_{\bsp}^\bullet(\bsir,\bsir)$ into an augmented superalgebra. The claim for $\ve^\Pi$ then follows immediately, since conjugation by $\Pi$ defines an even automorphism of $\Ext_{\bsp}^\bullet(\bsir,\bsir)$.
\end{proof}

\begin{remark}
Recall that $\bse_0$ and $\bse_0^\Pi$ are commuting orthogonal idempotents that sum to the identity in $\Ext_{\bsp}^\bullet(\bsir,\bsir)$. The augmentation map $\ve$ satisfies $\ve(\bse_0) = 1$ and $\ve(\bse_0^\Pi) = 0$, while $\ve^\Pi$ satisfies $\ve^\Pi(\bse_0) = 0$ and $\ve^\Pi(\bse_0^\Pi) = 1$. 
\end{remark}

\begin{proposition} \label{prop:coproduct}
Making the identification $\nabla(\bsir) = \bsir \boxtimes \bsir$, the map $\Delta$ defined by
\begin{multline*}
\Delta := (\boxtimes^*)^{-1} \circ \nabla^*: \Ext_{\bsp}^\bullet(\bsir,\bsir) \rightarrow \Ext_{\bibsp}^\bullet(\bsir \boxtimes \bsir,\bsir \boxtimes \bsir) \\
\simrightarrow \Ext_{\bsp}^\bullet(\bsir,\bsir) \gotimes \Ext_{\bsp}^\bullet(\bsir,\bsir)
\end{multline*}
is a homomorphism of graded superalgebras that, together with the augmentation map $\ve$ defined in Lemma \ref{lemma:augmentation}, provides $\Ext_{\bsp}^\bullet(\bsir,\bsir)$ with the structure of a graded super-bialgebra. The coproduct $\Delta$ is compatible with the conjugation action of $\Pi$ in the sense that
\begin{equation} \label{eq:coproductPi}
[(-)^\Pi \otimes 1] \circ \Delta = \Delta \circ (-)^\Pi = [1 \otimes (-)^\Pi] \circ \Delta.
\end{equation}
In particular,
\begin{equation} \label{eq:coproductPiPi}
\Delta = [(-)^\Pi \otimes (-)^\Pi] \circ \Delta.
\end{equation}
Finally, the (super)algebra homomorphism $\Ext_{\bsp}^\bullet(\bsir,\bsir) \rightarrow \Ext_{\cp}^\bullet(I^{(r)},I^{(r)})$ defined by restriction from $\bsp$ to $\cp$ is a homomorphism of graded (super)bialgebras.
\end{proposition}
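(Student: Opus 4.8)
The plan is to follow the template of Suslin, Friedlander, and Bendel \cite[\S3]{Suslin:1997}, exploiting that every structure map in sight is induced by an exact functor of (bi- or tri-)superfunctor categories. First, since $\nabla$ is exact and respects the $\Z_2$-grading, the induced map $\nabla^*$ is a homomorphism of graded superalgebras for the Yoneda composition products; composing with the graded superalgebra isomorphism $(\boxtimes^*)^{-1}$ supplied by the preceding lemma shows that $\Delta = (\boxtimes^*)^{-1} \circ \nabla^*$ is a homomorphism of graded superalgebras, which lands in $\Ext_{\bsp}^\bullet(\bsir,\bsir) \gotimes \Ext_{\bsp}^\bullet(\bsir,\bsir)$ by way of the identification $\nabla(\bsir) = \bsir \boxtimes \bsir$. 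Since the augmentation $\ve$ is already a superalgebra homomorphism by Lemma \ref{lemma:augmentation}, it remains only to check coassociativity and the counit axiom in order to conclude that $(\Ext_{\bsp}^\bullet(\bsir,\bsir),\Delta,\ve)$ is a graded super-bialgebra.

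For the counit, the natural identifications of superfunctors $\bsir(k \otimes W) \cong \bsir(W) \cong \bsir(W \otimes k)$, together with the description of $\ve$ as projection to cohomological degree zero followed by evaluation at the rank-one free superspace, force $(\ve \otimes \id) \circ \Delta = \id = (\id \otimes \ve) \circ \Delta$ after identifying $k \gotimes \Ext_{\bsp}^\bullet(\bsir,\bsir) = \Ext_{\bsp}^\bullet(\bsir,\bsir) = \Ext_{\bsp}^\bullet(\bsir,\bsir) \gotimes k$. For coassociativity, I would pass to the category of strict polynomial trisuperfunctors and the exact functor $F \mapsto F \circ (\bsi \boxtimes \bsi \boxtimes \bsi)$; associativity of the tensor product on superspaces shows that both iterations of $\nabla$ are computed by this same trisuperfunctor, and the triple external tensor product isomorphism (coherent with the pairwise one of the preceding lemma) then yields $(\Delta \otimes \id) \circ \Delta = (\id \otimes \Delta) \circ \Delta$, exactly as in \cite[\S3]{Suslin:1997}.

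For the compatibility with $\Pi$, the key inputs are the canonical \emph{even} isomorphisms of superspaces $\Pi(V \otimes W) \cong \Pi(V) \otimes W$ (from associativity of $\otimes$) and $\Pi(V \otimes W) \cong V \otimes \Pi(W)$ (via the supertwist), both natural in $V$ and $W$. Applying $\bsir$, and using post-composition with the outer $\Pi$ to produce the decompositions of Remark \ref{remark:nablabsir}, these show that $\nabla$ intertwines the conjugation action $F \mapsto F^\Pi$ on $\bsp$ with conjugation of the first, respectively the second, external factor in $\bibsp$. Since $\boxtimes^*$ likewise intertwines conjugation of one $\bsp$-factor with conjugation of the corresponding tensor factor of $\Ext_{\bsp}^\bullet(\bsir,\bsir) \gotimes \Ext_{\bsp}^\bullet(\bsir,\bsir)$, we obtain \eqref{eq:coproductPi}, and applying both halves of \eqref{eq:coproductPi} then gives \eqref{eq:coproductPiPi}. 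Finally, the restriction functor $\res \colon \bsp \to \cp$ is exact, so it induces a graded ring homomorphism on Yoneda algebras (as already invoked in Remark \ref{remark:erjtoerj}); it commutes with external tensor products and with the $\nabla$-construction (since $\res(F \circ (\bsi \boxtimes \bsi)) = \res(F) \circ (I \boxtimes I)$), and it carries $\ve$ to the augmentation on $\Ext_{\cp}^\bullet(I^{(r)},I^{(r)})$. As the bialgebra structure on $\Ext_{\cp}^\bullet(I^{(r)},I^{(r)})$ in \cite[\S3]{Suslin:1997} is defined by precisely the analogous recipe, these compatibilities imply that $\res$ is a homomorphism of graded (super)bialgebras.

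I expect the main obstacle to be the $\Pi$-compatibility: although the underlying superspace isomorphisms are canonical, one must track carefully how the parity-change functor and the super sign conventions propagate through the external tensor product isomorphism $\boxtimes^*$, in order to confirm that conjugating a single $\bsp$-factor really corresponds, on extension groups, to $[(-)^\Pi \otimes 1]$ (rather than to some sign-twisted variant) and, in particular, to verify that the left- and right-hand identifications in \eqref{eq:coproductPi} genuinely agree.
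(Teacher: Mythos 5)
Your argument follows essentially the same route as the paper: the bialgebra axioms are reduced to the argument in Suslin--Friedlander--Bendel \cite[Proposition 3.7]{Suslin:1997}, the $\Pi$-compatibility is deduced from the identifications $\Pi \boxtimes \bsi = \Pi \circ (\bsi \boxtimes \bsi) = \bsi \boxtimes \Pi$, equation \eqref{eq:coproductPiPi} is obtained by chaining both halves of \eqref{eq:coproductPi}, and the last assertion follows from the compatibility of restriction to $\cp$ with $\boxtimes$, $\nabla$, and $\ve$. The sign concern you flag is precisely the point the paper leans on: since $\Pi$ acts as the literal identity on morphisms, the displayed identifications are taken as canonical equalities of bisuperfunctors, so conjugating a single $\bsp$-factor transports across $\boxtimes^*$ to $[(-)^\Pi \otimes 1]$ (resp.\ $[1 \otimes (-)^\Pi]$) with no extra sign.
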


\begin{proof}
The proof that $\Delta$ and $\ve$ make $\Ext_{\bsp}^\bullet(\bsir,\bsir)$ into a graded super-bialgebra proceeds in precisely the same fashion as the proof of the corresponding fact in \cite[Proposition 3.7]{Suslin:1997}. Next we verify the equalities in \eqref{eq:coproductPi}. Recall that the parity change functor $\Pi$ acts on objects in $\bsv$ by reversing the $\Z_2$-grading, and acts on morphisms by $\Pi(\phi) = \phi$, i.e., $\Pi(\phi)$ is equal to $\phi$ as a map between the underlying vector spaces. Then one has canonical identifications in $\bibsp$
\[
\Pi \boxtimes \bsi = \Pi \circ (\bsi \boxtimes \bsi) = \bsi \boxtimes \Pi.
\]
Using these identifications, one gets natural identifications
\begin{align*}
\Ext_{\bibsp}^\bullet(\Pi \circ \nabla(\bsir \circ \Pi),\Pi \circ \nabla(\bsir \circ \Pi)) &= \Ext_{\bibsp}^\bullet(\nabla(\bsir \circ \Pi), \nabla(\bsir \circ \Pi)) \\
&= \Ext_{\bibsp}^\bullet(\bsir \circ (\Pi \boxtimes \bsi),\bsir \circ (\Pi \boxtimes \bsi)) \\
&= \Ext_{\bibsp}^\bullet((\bsir \circ \Pi) \boxtimes \bsir,(\bsir \circ \Pi) \boxtimes \bsir).
\end{align*}
One also has canonical identifications
\[
\Ext_{\bsp}^\bullet(\bsir \circ \Pi,\bsir \circ \Pi) = \Ext_{\bsp}^\bullet(\Pi \circ \bsir \circ \Pi,\Pi \circ \bsir \circ \Pi) = \Ext_{\bsp}^\bullet((\bsir)^\Pi,(\bsir)^\Pi).
\]
Combining these identifications, the first equality in \eqref{eq:coproductPi} follows, and the reasoning for the second equality in \eqref{eq:coproductPi} is entirely analogous. Now applying both equalities in \eqref{eq:coproductPi}, one gets
\[
\Delta = \Delta \circ (-)^\Pi \circ (-)^\Pi = [(-)^\Pi \otimes 1] \circ \Delta \circ (-)^\Pi = [(-)^\Pi \otimes 1] \circ [1 \otimes (-)^\Pi] \circ \Delta,
\]
establishing \eqref{eq:coproductPiPi}. Finally, restriction from $\bsp$ to $\cp$ sends the operations $\boxtimes$, $\nabla$, and $\ve$ to the operations used in \cite[3.7]{Suslin:1997} to define the coproduct and augmentation map on $\Ext_{\cp}^\bullet(I^{(r)},I^{(r)})$. This implies the last assertion of the proposition.
\end{proof}

\begin{proposition} \label{proposition:coproduct}
The coproduct $\Delta$ on $\Ext_{\bsp}^\bullet(\bsir,\bsir)$ satisfies the following:
\begin{align*}
\Delta(\bse_r(\ell)) &= \textstyle \sum_{i+j = \ell} \left[ \bse_r(i) \otimes \bse_r(j) + \bse_r^\Pi(i) \otimes \bse_r^\Pi(j) \right], & \text{if $0 \leq \ell < p^r$,} \\
\Delta(\bse_r^\Pi(\ell)) &= \textstyle \sum_{i+j = \ell} \left[ \bse_r^\Pi(i) \otimes \bse_r(j) + \bse_r(i) \otimes \bse_r^\Pi(j) \right], & \text{if $0 \leq \ell < p^r$,} \\
\Delta(\bsc_r) &= \bsc_r \otimes \bse_0 + \bse_0 \otimes \bsc_r + \bsc_r^\Pi \otimes \bse_0^\Pi + \bse_0^\Pi \otimes \bsc_r^\Pi, & \text{and} \\
\Delta(\bsc_r^\Pi) &= \bsc_r^\Pi \otimes \bse_0 + \bse_0 \otimes \bsc_r^\Pi + \bsc_r \otimes \bse_0^\Pi + \bse_0^\Pi \otimes \bsc_r.
\end{align*}
\end{proposition}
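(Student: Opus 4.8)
The plan is to deduce all four formulas from a small number of structural inputs, reducing the "even" case to the classical computation of Suslin--Friedlander--Bendel and then propagating the answer by the $\Pi$-equivariance of $\Delta$. The first input I would record is the pair of idempotent coproduct formulas
\[
\Delta(\bse_0) = \bse_0 \otimes \bse_0 + \bse_0^\Pi \otimes \bse_0^\Pi, \qquad
\Delta(\bse_0^\Pi) = \bse_0^\Pi \otimes \bse_0 + \bse_0 \otimes \bse_0^\Pi,
\]
which hold because, by Remark \ref{remark:nablabsir}, $\nabla^*$ sends the identity of $\bsirzero$ to the identity of $\nabla(\bsirzero) = (\bsirzero \boxtimes \bsirzero) \oplus (\bsirone \boxtimes \bsirone)$, and applying $(\boxtimes^*)^{-1}$ breaks that identity up as $\bse_0 \otimes \bse_0 + \bse_0^\Pi \otimes \bse_0^\Pi$ (and similarly for $\bse_0^\Pi$). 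The second input is the restriction homomorphism to $\Ext_{\cp}^\bullet(I^{(r)},I^{(r)})$: by Proposition \ref{prop:coproduct} it is a morphism of bialgebras, by Remark \ref{remark:erjtoerj} and Convention \ref{convention} it sends $\bse_r(j)$ to the class $e_r(j)$, and being an algebra map into the connected graded algebra $\Ext_{\cp}^\bullet(I^{(r)},I^{(r)})$ it must carry the orthogonal idempotents $\bse_0,\bse_0^\Pi$ onto $\{0,1\}$; since $\bse_r \mapsto e_r \neq 0$ and $\bse_r$ lies in the $(\bsirzero,\bsirzero)$-block of \eqref{eq:matrixring}, it sends $\bse_0 \mapsto 1$, hence $\bse_0^\Pi \mapsto 0$ and $\Ext_{\bsp}^\bullet(\bsirone,\bsirone) \mapsto 0$. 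The remaining inputs are the $\Pi$-equivariance \eqref{eq:coproductPi}--\eqref{eq:coproductPiPi}, the counit axioms for $(\Delta,\ve)$ with $\ve(\bse_0)=1$, $\ve(\bse_0^\Pi)=0$, $\ve$ vanishing in positive cohomological degree (Lemma \ref{lemma:augmentation}), the bases of Proposition \ref{prop:erj}, and the classical formula $\Delta(e_r(\ell)) = \sum_{i+j=\ell} e_r(i) \otimes e_r(j)$ from \cite[\S3]{Suslin:1997}.

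For the $\bse_r(\ell)$ formula with $0 \le \ell < p^r$, I would argue as follows. Since $\bse_r(\ell) = \bse_0 \cdot \bse_r(\ell) \cdot \bse_0$ and $\Delta$ is a degree-preserving algebra map, $\Delta(\bse_r(\ell)) = (p_1+p_2)\,\Delta(\bse_r(\ell))\,(p_1+p_2)$ with $p_1 = \bse_0 \otimes \bse_0$ and $p_2 = \bse_0^\Pi \otimes \bse_0^\Pi$. Keeping track of sources and targets in \eqref{eq:matrixring}, the four summands $p_a\,\Delta(\bse_r(\ell))\,p_b$ lie respectively in $\Ext_{\bsp}^\bullet(\bsirzero,\bsirzero)^{\otimes 2}$, $\Ext_{\bsp}^\bullet(\bsirone,\bsirzero)^{\otimes 2}$, $\Ext_{\bsp}^\bullet(\bsirzero,\bsirone)^{\otimes 2}$, and $\Ext_{\bsp}^\bullet(\bsirone,\bsirone)^{\otimes 2}$; by Proposition \ref{prop:erj} the two "off-diagonal" groups are concentrated in cohomological degrees $\geq p^r$, so the corresponding summands live in degrees $\geq 2p^r > 2\ell = \deg\bse_r(\ell)$ and vanish. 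Thus $\Delta(\bse_r(\ell)) = D_0 + D_1$ with $D_0 \in \Ext_{\bsp}^\bullet(\bsirzero,\bsirzero)^{\otimes 2}$ and $D_1 \in \Ext_{\bsp}^\bullet(\bsirone,\bsirone)^{\otimes 2}$. Applying the restriction homomorphism (which is a bialgebra map annihilating $D_1$) gives that the image of $D_0$ equals $\Delta(e_r(\ell)) = \sum_{i+j=\ell} e_r(i) \otimes e_r(j)$, and comparing coefficients in the compatible bases $\{\bse_r(i) \otimes \bse_r(j)\}$, $\{e_r(i) \otimes e_r(j)\}$ forces $D_0 = \sum_{i+j=\ell} \bse_r(i) \otimes \bse_r(j)$. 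Finally \eqref{eq:coproductPiPi} gives $\Delta(\bse_r(\ell)) = (D_0)^{\Pi\otimes\Pi} + (D_1)^{\Pi\otimes\Pi}$, and since $(D_0)^{\Pi\otimes\Pi} \in \Ext_{\bsp}^\bullet(\bsirone,\bsirone)^{\otimes 2}$ and $(D_1)^{\Pi\otimes\Pi} \in \Ext_{\bsp}^\bullet(\bsirzero,\bsirzero)^{\otimes 2}$, matching blocks yields $D_1 = (D_0)^{\Pi\otimes\Pi} = \sum_{i+j=\ell} \bse_r^\Pi(i) \otimes \bse_r^\Pi(j)$. The $\bse_r^\Pi(\ell)$ formula then follows from $\Delta(\bse_r^\Pi(\ell)) = \Delta((\bse_r(\ell))^\Pi) = [(-)^\Pi \otimes 1]\circ\Delta(\bse_r(\ell))$ via \eqref{eq:coproductPi}.

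For $\bsc_r$ I would proceed similarly. Since $\bsc_r = \bse_0 \cdot \bsc_r \cdot \bse_0^\Pi$ lies in the $(\bsirone \to \bsirzero)$-block, $\Delta(\bsc_r) = \Delta(\bse_0)\,\Delta(\bsc_r)\,\Delta(\bse_0^\Pi)$; expanding with the idempotent formulas and reading off the bases of Proposition \ref{prop:erj}, the only cohomological-degree-$p^r$ contributions available in the relevant blocks are $\bsc_r \otimes \bse_0$, $\bse_0 \otimes \bsc_r$, $\bse_0^\Pi \otimes \bsc_r^\Pi$, and $\bsc_r^\Pi \otimes \bse_0^\Pi$, so $\Delta(\bsc_r) = \lambda_1\,\bsc_r \otimes \bse_0 + \lambda_2\,\bse_0 \otimes \bsc_r + \lambda_3\,\bse_0^\Pi \otimes \bsc_r^\Pi + \lambda_4\,\bsc_r^\Pi \otimes \bse_0^\Pi$ for some $\lambda_i \in k$. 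Equation \eqref{eq:coproductPiPi} applied to $\bsc_r$ forces $\lambda_1 = \lambda_4$ and $\lambda_2 = \lambda_3$, while the counit axioms $(\ve \otimes \id)\circ\Delta = \id = (\id \otimes \ve)\circ\Delta$, together with $\ve(\bse_0)=1$, $\ve(\bse_0^\Pi)=0$, $\ve(\bsc_r)=\ve(\bsc_r^\Pi)=0$, force $\lambda_1 = \lambda_2 = 1$; hence all four coefficients equal $1$, which is the third formula, and the fourth follows from $\Delta(\bsc_r^\Pi) = [(-)^\Pi \otimes 1]\circ\Delta(\bsc_r)$.

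The genuinely new ingredient beyond the Suslin--Friedlander--Bendel picture is the $\Pi$-equivariance of $\Delta$, already in hand from Proposition \ref{prop:coproduct}, so conceptually the argument is light. The step I expect to require the most care is the block bookkeeping in the matrix ring \eqref{eq:matrixring} and its tensor square: one must precisely identify the $\Ext$-group in which each product $p_a\,\Delta(x)\,p_b$ (or its $\Pi$-conjugate) lives, and check that the cohomological-degree constraint — which hinges on the fact that the lowest-degree classes in the off-diagonal blocks, namely $\bsc_r$ and $\bsc_r^\Pi$, sit in degree $p^r$ — genuinely excludes every unwanted term. A secondary point to get right is the observation that restriction to $\cp$ annihilates $\bsirone$ (via the orthogonal-idempotents argument), since this is what makes the reduction to the classical coproduct computation clean.
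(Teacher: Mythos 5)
Your proposal is correct and follows essentially the same route as the paper: identify the blocks of the matrix ring \eqref{eq:matrixring} (and its tensor square) in which $\Delta$ can land, eliminate the off-diagonal blocks by the cohomological-degree bound $p^r$, push through the restriction to $\Ext_{\cp}^\bullet(I^{(r)},I^{(r)})$ to import the Suslin--Friedlander--Bendel formula, recover the $(\bsirone,\bsirone)$-block by \eqref{eq:coproductPiPi}, and pin down the coefficients of $\Delta(\bsc_r)$ with the counit axiom. The only cosmetic differences are that you establish the block constraint by sandwiching with $\Delta(\bse_0)$ and $\Delta(\bse_0^\Pi)$ where the paper reads it directly off Remark \ref{remark:nablabsir}, and you make explicit (via the orthogonal-idempotent argument) why the restriction map kills the $\bse_0^\Pi$-block, a fact the paper invokes from the proof of \cite[Theorem 4.7.1]{Drupieski:2016}; both of these are sound and faithful to the paper's argument.
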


\begin{proof}
It suffices by \eqref{eq:coproductPi} to verify the stated formulas for $\Delta(\bse_r(\ell))$ and $\Delta(\bsc_r)$. First, it follows from Remark \ref{remark:nablabsir} and the definition of the coproduct that $\Delta(\bse_r(\ell))$ is an element of
\[
\Ext_{\bsp}^\bullet(\bsi_0^{(r)},\bsi_0^{(r)})^{\otimes 2}
\oplus \Ext_{\bsp}^\bullet(\bsi_1^{(r)},\bsi_1^{(r)})^{\otimes 2}
\oplus \Ext_{\bsp}^\bullet(\bsi_0^{(r)},\bsi_1^{(r)})^{\otimes 2}
\oplus \Ext_{\bsp}^\bullet(\bsi_1^{(r)},\bsi_0^{(r)})^{\otimes 2}.
\]
Since the coproduct preserves the total cohomological degree, and since the third and fourth direct summands of the previous expression are both nonzero only in total cohomological degrees at least $2p^r$, it follows for $0 \leq \ell < p^r$ that $\Delta(\bse_r(\ell)) \in \Ext_{\bsp}^\bullet(\bsi_0^{(r)},\bsi_0^{(r)})^{\otimes 2}
\oplus \Ext_{\bsp}^\bullet(\bsi_1^{(r)},\bsi_1^{(r)})^{\otimes 2}$. Next consider the restriction map $\Ext_{\bsp}^\bullet(\bsir,\bsir) \rightarrow \Ext_{\cp}^\bullet(I^{(r)},I^{(r)})$, which is a homomorphism of graded superbialgebras by Proposition \ref{prop:coproduct}. As observed in the proof of \cite[Theorem 4.7.1]{Drupieski:2016}, the restriction map induces isomorphisms $\Ext_{\bsp}^j(\bsirzero,\bsirzero) \cong \Ext_{\cp}^j(I^{(r)},I^{(r)})$ for $0 \leq j < 2p^r$. Then the formula for $\Delta(\bse_r(\ell))$ follows from Remark \ref{remark:erjtoerj}, the formula stated in \cite[Theorem 4.6]{Suslin:1997} for the coproduct in $\Ext_{\cp}^\bullet(I^{(r)},I^{(r)})$, and the identity \eqref{eq:coproductPiPi}.

By reasoning parallel to that for $\Delta(\bse_r(\ell))$, $\Delta(\bsc_r)$ must be an element of the direct sum
\begin{multline*}
\left( \Ext_{\bsp}^\bullet(\bsi_1^{(r)},\bsi_0^{(r)}) \otimes \Ext_{\bsp}^\bullet(\bsi_0^{(r)},\bsi_0^{(r)}) \right)
\oplus \left( \Ext_{\bsp}^\bullet(\bsi_0^{(r)},\bsi_0^{(r)}) \otimes \Ext_{\bsp}^\bullet(\bsi_1^{(r)},\bsi_0^{(r)}) \right) \\
\oplus \left( \Ext_{\bsp}^\bullet(\bsi_0^{(r)},\bsi_1^{(r)}) \otimes \Ext_{\bsp}^\bullet(\bsi_1^{(r)},\bsi_1^{(r)}) \right)
\oplus \left( \Ext_{\bsp}^\bullet(\bsi_1^{(r)},\bsi_1^{(r)}) \otimes \Ext_{\bsp}^\bullet(\bsi_0^{(r)},\bsi_1^{(r)}) \right).
\end{multline*}
Since $\Delta$ preserves the total cohomological degree, $\Delta(\bsc_r)$ must then be a linear combination of the monomials $\bsc_r \otimes \bse_0$, $\bse_0 \otimes \bsc_r$, $\bsc_r^\Pi \otimes \bse_0^\Pi$, and $\bse_0^\Pi \otimes \bsc_r^\Pi$. Using \eqref{eq:coproductPiPi} and the bialgebra axiom $(\ve \otimes 1) \circ \Delta = 1 = (1 \otimes \ve) \circ \Delta$, it follows that each monomial occurs with coefficient $1$.
\end{proof}

\section{Characteristic classes arising from multiparameter supergroups} \label{section:charclasses}

\subsection{Definition of characteristic classes}

The following lemma strengthens Remark \ref{remark:GLVstructure}.

\begin{lemma} \label{lemma:evaluationV}
Let $A$ be a commutative $k$-superalgebra, and let $d,m,n \in \N$. Evaluation on $\Amn$ defines an exact functor from $\bsp_{d,A}$ to the category of rational $\GLmn \otimes_k A$-supermodules.
\end{lemma}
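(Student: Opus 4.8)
The plan is to make the statement precise by unwinding what ``evaluation on $\Amn$'' means and then checking two things: that the output actually carries a rational $\GLmn \otimes_k A$-supermodule structure, and that the functor is exact. For the first point, I would recall that a strict polynomial superfunctor $F \in \bsp_{d,A}$ comes with a structure map (a coaction of the appropriate divided-power or Schur-type superalgebra), and that evaluating $F$ on the free $A$-supermodule $\Amn = \kmn \otimes_k A$ produces an $A$-supermodule $F(\Amn)$ together with, for every $B \in \csalg_A$ and every $g \in \GLmn(B) = \GLmn \otimes_k A (B)$ acting $A$-linearly on $\Amn \otimes_A B$, a natural transformation $F(\Amn) \otimes_A B \to F(\Amn) \otimes_A B$. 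Functoriality of $F$ in the $\bsv_A$-morphism $g$, together with the fact that $F$ respects composition and identities, is exactly the cocycle/counit condition making $F(\Amn)$ into a rational $\GLmn \otimes_k A$-supermodule. This is the ``structural'' half of the argument and it is essentially a matter of carefully transcribing the definition of $\bsp_{d,A}$ (as in \cite[\S2.1]{Drupieski:2016}, now over the base $A$); I would phrase it as: the universal element of $\GLmn \otimes_k A$ over its coordinate algebra gives, via $F$ applied to the corresponding generic matrix, the comodule map, and the bialgebra axioms for that comodule map follow from the two defining axioms (associativity and unitality) of a strict polynomial superfunctor.

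For the exactness half, I would argue as follows. A short exact sequence $0 \to F' \to F \to F'' \to 0$ in $\bsp_{d,A}$ is, by definition, a sequence that is exact after evaluation on every object of $\bsv_A$ (equivalently, $\bsp_{d,A}$ is a category of modules over the Schur superalgebra $S_A(m'|n',d)$ for $m'|n'$ large enough, and short exact sequences are detected there). In particular, evaluating on the specific object $\Amn$ yields a sequence
\[
0 \to F'(\Amn) \to F(\Amn) \to F''(\Amn) \to 0
\]
of $A$-supermodules, which is exact precisely because exactness in $\bsp_{d,A}$ is defined objectwise. Since the forgetful functor from rational $\GLmn \otimes_k A$-supermodules to $A$-supermodules is faithful and detects exactness (a sequence of rational supermodules is exact iff it is exact as a sequence of $A$-modules), the evaluated sequence is exact in the category of rational $\GLmn \otimes_k A$-supermodules as well. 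Hence evaluation is an exact functor.

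The main obstacle, such as it is, is not any deep point but the bookkeeping of the super sign conventions and the base change to $A$: one must make sure that when $g \in \GLmn(B)$ is an inhomogeneous invertible matrix over a commutative $B \in \csalg_A$, the assignment $g \mapsto F(g \colon \Amn \otimes_A B \to \Amn \otimes_A B)$ is genuinely natural in $B$ and multiplicative, so that it defines a morphism of functors $\GLmn \otimes_k A \to GL(F(\Amn))$ rather than merely a family of maps. This is where Remark \ref{remark:GLVstructure} (which the lemma is said to strengthen) is invoked: it already supplies, for each individual evaluation, the rational $GL(\Amn)$-action, and one restricts along $\GLmn \otimes_k A \hookrightarrow GL(\Amn \otimes_A -)$; the new content of the lemma over the remark is just the uniformity over $A$ and the exactness, both of which reduce to the objectwise definition of (short) exact sequences in $\bsp_{d,A}$. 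I would therefore keep the proof short: cite \cite[\S2.1]{Drupieski:2016} and Remark \ref{remark:GLVstructure} for the module structure, and observe that exactness is immediate because short exact sequences in $\bsp_{d,A}$ are exactly those that are exact upon evaluation at each object of $\bsv_A$, in particular at $\Amn$.
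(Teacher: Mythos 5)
Your proposal is correct in outline but takes a genuinely different route from the paper's proof. The paper proceeds by dualizing the structure map $T_{V,V}\colon \bsg^d\Hom_A(V,V)\to\Hom_A(T(V),T(V))$ through the natural identifications $\Hom_A(X,Y)\cong Y\otimes_A X^{\#}$ to produce an explicit map $T_V\colon T(V)\to T(V)\otimes_A[\bsg^d\Hom_A(V,V)]^{\#}$, and then verifies coassociativity directly by a Sweedler-notation computation that carefully tracks all super signs; it finishes by identifying $[\bsg^d\Hom_A(V,V)]^{\#}$ with $\bss^d(\Hom_A(V,V)^{\#})$ inside $A[\Matmn(-)_{\zero}]\subset A[\GLmn]$. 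Your proof instead adopts the functor-of-points perspective: for each $B\in\csalg_A$ and $g\in\GLmn(B)$, feed $g^{\otimes d}$ into (the base change of) $F$ to get an automorphism of $F(\Amn)\otimes_A B$, observe this defines a morphism of group-valued functors $\GLmn\otimes_k A\to GL(F(\Amn))$, and extract the comodule map from the universal element over $A[\GLmn]$. Both arguments are valid, and they are essentially dual presentations of the same construction. What the paper's route buys is self-containedness: it never needs the base-change functor $F\mapsto F_B$ from the appendix nor the standard (but unstated) equivalence between natural families of actions and comodule structures, and it records the sign calculation in one place. Your route is shorter at the price of pushing the bookkeeping into those two inputs.

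One small overstatement to be careful about: you say the naturality-in-$B$ and multiplicativity "is where Remark~\ref{remark:GLVstructure} \dots is invoked," but that remark only supplies the action of the abstract group $GL(V)$ on $T(V)$ for a fixed ring; it says nothing about naturality as $B$ varies or about the polynomial nature of the matrix entries of $g\mapsto T_{V,V}(g^{\otimes d})$, which is precisely the content needed to get a \emph{rational} representation rather than an abstract one. That step is the real work, and in your approach it is carried by the base-change isomorphism \eqref{eq:basechangebsg} and the $A$-linearity of $T_{V,V}$, not by the remark. It is not a gap in the argument, but the credit is misallocated, and if you were to write the proof out in full you would need to spell that naturality out rather than cite the remark for it.
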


\begin{proof}
Set $V = \Amn$. Evaluation on $V$ defines an exact functor from $\bsp_{d,A}$ to the category of $A$-supermodules by the definition of exactness in $\bsp_{d,A}$, so it suffices to show that the image of the evaluation functor is contained in the subcategory of rational $\GLmn \otimes_k A$-supermodules.

Let $T \in \bsp_{d,A}$. Then $T$ defines an even $A$-linear map
\[
T_{V,V} : \Hom_{\bsg^d(\bsv_A)}(V,V) = \bsg^d \Hom_A(V,V) \rightarrow \Hom_A(T(V),T(V))
\]
that is compatible with the composition of homomorphisms. For $X,Y \in \bsv_A$, one has a natural even $A$-linear identification $\Hom_A(X,Y) \cong Y \otimes_A \Hom_A(X,A) = Y \otimes_A X^\#$. Then
\begin{align*}
\Hom_A(\bsg^d \Hom_A(V,V),\Hom_A(T(V),T(V)) &\cong (T(V) \otimes_A T(V)^\#) \otimes_A [\bsg^d \Hom_A(V,V)]^\# \\
&\cong (T(V) \otimes_A [\bsg^d \Hom_A(V,V)]^\#) \otimes_A T(V)^\# \\
&\cong \Hom_A(T(V),T(V) \otimes_A [\bsg^d \Hom_A(V,V)]^\#).
\end{align*}
Let $T_V$ denote the image of $T_{V,V}$ under this sequence of identifications. Then for each $v \in T(V)$, we can write (in Sweedler's notation) $T_V(v) = \sum_{(v)} v_{(0)} \otimes_A v_{(1)}$ for some $v_{(0)} \in T(V)$ and $v_{(1)} \in [\bsg^d \Hom_A(V,V)]^\#$ such that $T_{V,V}(\gamma)(v) = (-1)^{\ol{v} \cdot \ol{\gamma}} \sum_{(v)} v_{(0)} \cdot v_{(1)}(\gamma)$ for each $\gamma \in \bsg^d \Hom_A(V,V)$. Similarly, the composition map $[\bsg^d \Hom_A(V,V)]^{\otimes 2} \rightarrow \bsg^d \Hom_A(V,V)$, $\gamma \otimes \gamma' \rightarrow \gamma \circ \gamma'$, corresponds by duality to an even $A$-linear map $\Delta: [\bsg^d \Hom_A(V,V)]^\# \rightarrow ([\bsg^d \Hom_A(V,V)]^{\otimes 2})^\#$ such that $\Delta(f)(\gamma \otimes \gamma') = f(\gamma \circ \gamma')$. By \eqref{eq:boxtimes} we identify the image of $\Delta$ with $([\bsg^d \Hom_A(V,V)]^\#)^{\otimes 2}$, and write $\Delta(f) = \sum_{(f)} f_{(1)} \otimes_A f_{(2)}$ for some $f_{(1)},f_{(2)} \in [\bsg^d \Hom_A(V,V)]^\#$ such that
\[ \textstyle
f(\gamma \circ \gamma') = \Delta(f)(\gamma \otimes \gamma') = \sum_{(f)} (-1)^{\ol{\gamma} \cdot \ol{f_{(2)}}} f_{(1)}(\gamma) \cdot f_{(2)}(\gamma').
\]
Now for $\gamma,\gamma' \in \bsg^d \Hom_A(V,V) = \Hom_{\bsg^d(\bsv_A)}(V,V)$, we get
\begin{align*}
T_{V,V}(\gamma \circ \gamma')(v) &= T_{V,V}(\gamma) \circ T_{V,V}(\gamma')(v) \\
&= \textstyle T_{V,V}(\gamma)\left( (-1)^{\ol{v} \cdot \ol{\gamma'}} \sum_{(v)} v_{(0)} \cdot v_{(1)}(\gamma') \right) \\
&= \textstyle \left[\sum_{(v)} (-1)^{\ol{v} \cdot \ol{\gamma'}} T_{V,V}(\gamma)(v_{(0)}) \right] \cdot v_{(1)}(\gamma') \\
&= \textstyle \left[ \sum_{(v)} \sum_{(v_{(0)})} (-1)^{\ol{v} \cdot \ol{\gamma'} + \ol{v_{(0)}} \cdot \ol{\gamma}} (v_{(0)})_{(0)} \cdot (v_{(0)})_{(1)}(\gamma)\right] \cdot v_{(1)}(\gamma').
\end{align*}
Using the coproduct $\Delta$, we also get
\begin{align*}
T_{V,V}(\gamma \circ \gamma')(v) &= \textstyle \sum_{(v)} (-1)^{\ol{v} \cdot (\ol{\gamma} + \ol{\gamma'})} v_{(0)} \cdot v_{(1)}(\gamma \circ \gamma') \\
&= \textstyle \sum_{(v)} \sum_{(v_{(1)})} (-1)^{\ol{v} \cdot (\ol{\gamma} + \ol{\gamma'}) + \ol{\gamma} \cdot \ol{(v_{(1)})_{(2)}}} v_{(0)} \cdot \left[ (v_{(1)})_{(1)}(\gamma) \cdot (v_{(1)})_{(2)}(\gamma') \right].
\end{align*}
Note that $T(V) \otimes_A ([\bsg^d \Hom_A(V,V)]^\#)^{\otimes 2}$ identifies with $\Hom_A([\bsg^d \Hom_A(V,V)]^{\otimes 2},T(V))$ via
\[
v \otimes_A (\phi \otimes_A \psi): \gamma \otimes_A \gamma' \mapsto (-1)^{\ol{\psi} \cdot \ol{\gamma}} v \cdot \left( \phi(\gamma) \cdot \psi(\gamma') \right).
\]
Viewed in this way as functions, we see from the two expressions for $T_{V,V}(\gamma \circ \gamma')(v)$ that
\[
\sum_{(v)} \sum_{(v_{(0)})} \left[(v_{(0)})_{(0)} \otimes (v_{(0)})_{(1)} \right] \otimes v_{(1)} = \sum_{(v)} \sum_{(v_{(1)})} v_{(0)} \otimes \left[ (v_{(1)})_{(1)} \otimes (v_{(1)})_{(2)} \right],
\]
and thus conclude that $T_V$ and $\Delta$ make $T(V)$ into a right $[\bsg^d \Hom_A(V,V)]^\#$-super\-comodule. Now to finish the proof, observe that the duality isomorphism $\bsg^\# \cong \bss$ restricts to an isomorphism $[\bsg^d \Hom_A(V,V)]^\# \cong \bss^d(\Hom_A(V,V)^\#)$. The symmetric superalgebra $\bss(\Hom_A(V,V)^\#)$ identifies, via the choice of a homogeneous basis for $V$, with the coordinate superalgebra $A[\Matmn(-)_{\zero}]$ of the $A$-superfunctor $\Matmn(-)_{\zero}$. Moreover, under these identifications the coproduct $\Delta$ corresponds to the usual coproduct on $A[\Matmn(-)_{\zero}]$ that is induced by matrix multiplication in $\Matmn$. Finally, since $A[\GLmn]$ is by definition a localization of $A[\Matmn(-)_{\zero}]$, $\bss^d(\Hom_A(V,V)^\#)$ identifies with a subspace of $A[\GLmn]$. Then $T_V$ defines an even $A$-linear map $T_V: T(V) \rightarrow T(V) \otimes A[\GLmn]$.
\end{proof}

\begin{lemma} \label{lemma:restrictionGV}
Let $d,m,n \in \N$, and let $S,T \in \bsp_{d,k}$. Set $V = \Amn$, and denote $\GLmn \otimes_k A$ by $GL(V)$. Let $\rho: G \rightarrow GL(V)$ be a homomorphism of $A$-supergroup schemes. Then base-change to $A$, evaluation on $V$, and restriction to $G$ define an even $A$-linear map
\begin{multline*}
\res_{(G,V)}: \Ext_{\bsp}^\bullet(S,T) \stackrel{-\otimes_k A}{\longrightarrow} \Ext_{\bsp_A}^\bullet(S_A,T_A) \\
\longrightarrow \Ext_{GL(V)}^\bullet(S_A(V),T_A(V)) \longrightarrow \Ext_G^\bullet(S_A(V),T_A(V)),
\end{multline*}
which we denote $z \mapsto z(G,V)$. If $S = T$, then $\res_{(G,V)}$ is homomorphism of graded superalgebras.
\end{lemma}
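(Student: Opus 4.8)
The plan is to realize $\res_{(G,V)}$ as a composite of three maps on Yoneda Ext groups, each induced by an exact additive functor, and then to invoke the standard fact that an exact additive functor induces maps on Yoneda Ext groups compatible with Baer sums and with Yoneda splicing products. The three functors in question are: (i) base change $T \mapsto T_A = T \otimes_k A$, from $\bsp_{d,k}$ to $\bsp_{d,A}$; (ii) evaluation $T \mapsto T(V)$ on $V = \Amn$, from $\bsp_{d,A}$ to the category of rational $GL(V)$-supermodules; and (iii) restriction along $\rho$, from rational $GL(V)$-supermodules to rational $G$-supermodules.

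First I would verify that each of (i)--(iii) is exact and additive. For (ii) this is exactly the content of Lemma~\ref{lemma:evaluationV}. For (iii) it is immediate, since restriction along $\rho$ leaves the underlying superspace of a supermodule unchanged, hence preserves kernels, cokernels, and finite direct sums. For (i), the essential point is that $k$ is a field, so every $k$-module is flat; identifying $\bsp_{d,k}$ and $\bsp_{d,A}$ with the module categories of a super Schur algebra $\Gamma$ and of its base change $\Gamma \otimes_k A$ (as in \cite{Drupieski:2016, Drupieski:2013b}), the functor $-\otimes_k A$ becomes ordinary flat base change of modules, which is exact, and a projective resolution over $\Gamma$ base-changes to a projective resolution over $\Gamma \otimes_k A$; since cohomology then commutes with $-\otimes_k A$, this produces the natural $A$-linear comparison map $\Ext_{\bsp}^\bullet(S,T) \otimes_k A \to \Ext_{\bsp_A}^\bullet(S_A,T_A)$ on Ext.

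Next I would recall the Yoneda description of Ext: for objects $S, T$ of an abelian category, $\Ext^n(S,T)$ is the set of equivalence classes of $n$-fold extensions of $S$ by $T$, a group under Baer sum, with the graded product on $\Ext^\bullet(S,S)$ given by Yoneda splicing. An exact additive functor $\Phi$ carries an $n$-fold extension of $S$ by $T$ to an $n$-fold extension of $\Phi(S)$ by $\Phi(T)$, respecting equivalence, Baer sum, and splicing, and therefore induces a natural graded map $\Phi_* : \Ext^\bullet(S,T) \to \Ext^\bullet(\Phi S, \Phi T)$ which is a homomorphism of graded rings when $S = T$; this is the mechanism used in \cite[\S1, \S3]{Suslin:1997}. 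Applying $\Phi_*$ for the functors (i), (ii), (iii) and composing yields the map $\res_{(G,V)}$ of the statement, and composing three graded-ring homomorphisms in the case $S = T$ gives a graded-ring homomorphism. For the remaining bookkeeping, note that each of the three functors acts on morphisms either by $-\otimes_k A$ or by forgetting comodule structure, so it is parity-preserving; consequently the induced maps on Ext preserve both the cohomological $\Z$-grading and the internal $\Z_2$-grading, i.e.\ are \emph{even}. The maps induced by (ii) and (iii) are $A$-linear because those functors are $A$-linear between $A$-linear categories, while the map induced by (i) factors $A$-linearly through $\Ext_{\bsp}^\bullet(S,T) \otimes_k A$; hence $\res_{(G,V)}$ is $A$-linear in the stated sense, and is a homomorphism of graded superalgebras when $S = T$.

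The step that I expect to demand the most care is (i): confirming that the base-change functor on strict polynomial superfunctors is well defined and exact and that it interacts correctly with Ext. This hinges on $k$ being a field and on the identification of $\bsp_{d,k}$ and $\bsp_{d,A}$ with module categories over a super Schur algebra and its flat base change; alternatively, to bypass Schur algebras one can observe that the functor-homology (bar-type) resolutions of \cite{Drupieski:2016} are manifestly compatible with $-\otimes_k A$, exactly as in the classical treatment of \cite{Suslin:1997}. Everything downstream of this point is formal bookkeeping with Yoneda extensions and gradings.
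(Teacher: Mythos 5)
Your proposal is correct and takes essentially the same approach as the paper, which dispatches the lemma in one line by citing Corollary~\ref{cor:functorbasechange} (for the base-change arrow) and Lemma~\ref{lemma:evaluationV} (for exactness of evaluation), with exactness of restriction along $\rho$ left implicit. You have simply unpacked that citation: the Schur-algebra detour you sketch for step (i) is not what the paper's Appendix~\ref{section:basechange} actually does (it develops base change for strict polynomial superfunctors directly), but your alternative remark about bar-type resolutions being compatible with $-\otimes_k A$ is closer to the paper's actual mechanism, and in any case the needed facts (exactness and preservation of projectives under base change over a field) are exactly the content of the cited corollary.
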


\begin{proof}
This is an immediate consequence of Corollary \ref{cor:functorbasechange} and Lemma \ref{lemma:evaluationV}.
\end{proof}

\begin{definition}[Characteristic classes] \label{def:charclasses}
Let $\bse_r(j)$ and $\bse_r^\Pi(j)$ be as defined in Proposition \ref{prop:erj}. Let $A \in \csalg_k$. Now for $V = \Amn$ and $\rho: G \rightarrow GL(V)$ as in Lemma \ref{lemma:restrictionGV}, define
\begin{align*}
\bse_r(j)(G,V) &\in \Ext_G^{2j}((A^{m|0})^{(r)},(A^{m|0})^{(r)}), & \bsc_r(G,V) &\in \Ext_G^{p^r}((A^{0|n})^{(r)},(A^{m|0})^{(r)}), \\
\bse_r^\Pi(j)(G,V) &\in \Ext_G^{2j}((A^{0|n})^{(r)},(A^{0|n})^{(r)}), & \bsc_r^\Pi(G,V) &\in \Ext_G^{p^r}((A^{m|0})^{(r)},(A^{0|n})^{(r)}),
\end{align*}
to be the images of $\bse_r(j)$, $\bsc_r$, $\bse_r^\Pi(j)$, and $\bsc_r^\Pi$, respectively, under the restriction homomorphism $\res_{(G,V)}$ of Lemma \ref{lemma:restrictionGV} (cf.\ Remark \ref{remark:Frobeniusbasechange} for the calculation of $(\bsirzero)_A(V)$ and $(\bsirone)_A(V)$).
\end{definition}

We now get the following direct analogue of \cite[Proposition 3.2]{Suslin:1997}:

\begin{proposition} \label{prop:charclassproperties}
Let $V,W \in \bsv_A$ be free $A$-supermodules equipped with rational representations of an $A$-supergroup scheme $G$ as in Lemma \ref{lemma:restrictionGV}. Let $z \in \Ext_{\bsp}^\bullet(\bsir,\bsir)$. Then:
\begin{enumerate}
\item \label{item:evenmap} If $\phi: V \rightarrow W$ is an even homomorphism of rational $G$-supermodules, then
	\[
	\phi^{(r)} \cdot z(G,V) = z(G,W) \cdot \phi^{(r)} \in \Ext_G^\bullet(V^{(r)},W^{(r)}).
	\]

\item \label{item:directsum} Under the matrix ring decomposition
\[
\renewcommand*{\arraystretch}{1.5}
\Ext_G^\bullet((V \oplus W)^{(r)},(V \oplus W)^{(r)}) =
\begin{pmatrix}
\Ext_G^\bullet(V^{(r)},V^{(r)}) & \Ext_G^\bullet(W^{(r)},V^{(r)}) \\
\Ext_G^\bullet(V^{(r)},W^{(r)}) & \Ext_G^\bullet(W^{(r)},W^{(r)})
\end{pmatrix}
\]
arising from the identification $(V \oplus W)^{(r)} = V^{(r)} \oplus W^{(r)}$, $z(G,V \oplus W) = z(G,V) \oplus z(G,W)$.

\item \label{item:basechange} If $G$ is affine and if $A'$ is a commutative $A$-superalgebra, then $z(G \otimes_A A', V_{A'})$ is equal to the image of $z(G,V)$ under the base change homomorphism of Corollary \ref{cor:functorbasechange}.
\end{enumerate}
\end{proposition}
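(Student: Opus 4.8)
The plan is to derive all three assertions from the functoriality already built into the definition of the classes $z(G,V)$. Recall from Lemma~\ref{lemma:restrictionGV} that $z(G,V) = \res_{(G,V)}(z)$, where $\res_{(G,V)}$ is the composite of the base-change map $\Ext_{\bsp}^\bullet(\bsir,\bsir) \to \Ext_{\bsp_A}^\bullet((\bsir)_A,(\bsir)_A)$, evaluation on $V = \Amn$, and restriction along $\rho$, and that each of these three steps is induced by an exact functor. I would write $\mathrm{Ev}_V \colon \bsp_{p^r,k} \to \{\text{rational } G\text{-supermodules}\}$, $T \mapsto T_A(V)$ with $G$ acting through $T_A(\rho)$, for the composite exact functor, so that $z(G,V) = (\mathrm{Ev}_V)_*(z)$ and, for $S = T$, $\res_{(G,V)} = (\mathrm{Ev}_V)_*$ is multiplicative.

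For part~\eqref{item:evenmap} I would first note that $\phi^{(r)} = (\bsir)_A(\phi)$ is an even map of rational $G$-supermodules, since $(\bsir)_A$ is a functor and $\phi$ is even and $G$-equivariant; more generally, every $T \in \bsp_{p^r,k}$ produces a $G$-supermodule map $T_A(\phi) \colon \mathrm{Ev}_V(T) \to \mathrm{Ev}_W(T)$. The crux is then the naturality identity
\[
T_A(\phi) \cdot (\mathrm{Ev}_V)_*(z) = (\mathrm{Ev}_W)_*(z) \cdot S_A(\phi) \in \Ext_G^\bullet(\mathrm{Ev}_V(S), \mathrm{Ev}_W(T)),
\]
to be verified for all $S,T \in \bsp_{p^r,k}$ and all $z \in \Ext_{\bsp}^\bullet(S,T)$; part~\eqref{item:evenmap} is the special case $S = T = \bsir$. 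Since $(\mathrm{Ev}_V)_*$ and $(\mathrm{Ev}_W)_*$ are compatible with Yoneda splicing (being induced by exact functors) and every class in $\Ext_{\bsp}^n(S,T)$ is a Yoneda splice of one-fold extensions, I would reduce to $z \in \Ext^1_{\bsp}(S,T)$, i.e.\ to the class of a short exact sequence $0 \to T \to E \to S \to 0$ in $\bsp_{p^r,k}$. Applying the exact functors $\mathrm{Ev}_V$ and $\mathrm{Ev}_W$ together with the comparison maps induced by $\phi$ (which are natural in the functor variable, since $T \mapsto E$ is a morphism in $\bsp_{p^r,k}$) produces a commutative ladder of short exact sequences of rational $G$-supermodules with vertical arrows $T_A(\phi)$, $E_A(\phi)$, $S_A(\phi)$; a morphism of short exact sequences of this shape is exactly the data encoding the displayed equality in $\Ext^1_G$, and splicing then yields the general case.

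Part~\eqref{item:directsum} should fall out formally from part~\eqref{item:evenmap}. Because the Frobenius twist is additive, the canonical inclusions $\iota_V \colon V \hookrightarrow V \oplus W$, $\iota_W \colon W \hookrightarrow V \oplus W$ and projections $\pi_V,\pi_W$, all even maps of rational $G$-supermodules, realize $(V \oplus W)^{(r)} = V^{(r)} \oplus W^{(r)}$ and satisfy $\pi_V^{(r)}\iota_V^{(r)} = \id_{V^{(r)}}$, $\pi_V^{(r)}\iota_W^{(r)} = 0$, and similarly with $V$ and $W$ interchanged. Substituting these maps into the identity of part~\eqref{item:evenmap} (e.g.\ $\pi_V^{(r)} \cdot z(G,V\oplus W) = z(G,V) \cdot \pi_V^{(r)}$) and invoking the functoriality of the twist then pins down the four block components of $z(G,V\oplus W)$ in the indicated matrix decomposition to be $z(G,V)$, $0$, $0$, and $z(G,W)$.

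For part~\eqref{item:basechange}, the plan is to check that each of the three steps composing $\res_{(G,V)}$ commutes with base change along $A \to A'$: the two successive base-change functors $\bsp_{p^r,k} \to \bsp_{p^r,A} \to \bsp_{p^r,A'}$ compose to the base-change functor $\bsp_{p^r,k} \to \bsp_{p^r,A'}$; evaluation on $V_{A'} = V \otimes_A A'$ agrees with evaluation on $V$ followed by $-\otimes_A A'$, using the base-change compatibility of strict polynomial superfunctors (and Remark~\ref{remark:Frobeniusbasechange} for $\bsir$ itself); and restriction along $\rho \otimes_A A'$ is the base change of restriction along $\rho$. Combining these with Corollary~\ref{cor:functorbasechange} identifies the composite of $\res_{(G,V)}$ with the base-change map on $\Ext_G^\bullet$ with $\res_{(G\otimes_A A',V_{A'})}$, which is precisely the assertion. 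I expect the only genuinely delicate point to be part~\eqref{item:evenmap}: correctly setting up the naturality of Yoneda $\Ext$ with respect to the morphisms $T_A(\phi)$ and the reduction to ladders of short exact sequences; parts~\eqref{item:directsum} and~\eqref{item:basechange} should then be routine bookkeeping.
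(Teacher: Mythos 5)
Your proof is correct and follows essentially the same route as the paper, which in turn reduces the proposition to a ``direct repetition'' of the proof of Proposition~3.2 in Suslin--Friedlander--Bendel. You organize the argument around reducing to $\Ext^1$ and Yoneda splicing, whereas the paper interprets $z$ directly as (the class of) a length-$n$ exact sequence and applies the exact evaluation-restriction functor to the whole resolution; both produce the same ladder-of-short-exact-sequences argument. Parts~\eqref{item:directsum} and~\eqref{item:basechange} you handle exactly as the paper does.

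Two expository points are worth tightening, and both are raised explicitly in the paper's proof. First, you implicitly assume $z$ is represented by a (long) exact sequence in the underlying even subcategory $(\bsp_{p^r})_\ev$; since $\bsp_A$ itself is not abelian, only its even subcategory is, this requires knowing $z$ has even superdegree. For the case at hand this is automatic because $\Ext_{\bsp}^\bullet(\bsir,\bsir)$ is a purely even algebra (this is where the calculations of \cite{Drupieski:2016} enter), and you should say so. Second, and more substantively: you write ``$\phi^{(r)} = (\bsir)_A(\phi)$ is an even map of rational $G$-supermodules, since $(\bsir)_A$ is a functor and $\phi$ is even and $G$-equivariant.'' But $(\bsir)_A$ (and more generally any $T \in \bsp_{p^r,A}$) is a functor on the category $\bsg^{p^r}(\bsv_A)$, whose morphisms are elements of $\bsg^{p^r}\Hom_A(V,W) = (\Hom_A(V,W)^{\otimes p^r})^{\fS_{p^r}}$, not on $\bsv_A$ itself. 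The reason evenness of $\phi$ matters is precisely that it guarantees $\phi^{\otimes p^r} \in \bsg^{p^r}\Hom_A(V,W)$ (odd maps would anticommute under place permutations and fail to be $\fS_{p^r}$-invariant), and one then \emph{defines} $T_A(\phi) := T_A(\phi^{\otimes p^r})$. This is the lever that makes the ladders well-defined; without it the comparison maps $T_A(\phi)$ in your commutative ladders don't exist. Once you phrase it this way, the $G$-equivariance of $T_A(\phi)$ also falls out of functoriality applied to $\rho_W(g) \circ \phi = \phi \circ \rho_V(g)$ inside $\bsg^{p^r}(\bsv_A)$.
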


\begin{proof}
We may assume that $z$ is homogeneous of cohomological degree $n$. Since $\Ext_{\bsp}^\bullet(\bsir,\bsir)$ is a purely even algebra by the calculations of \cite{Drupieski:2016}, $z$ is automatically of even superdegree. Then as in \cite[\S3.5]{Drupieski:2016}, $z$ may be interpreted as the  equivalence class of an exact sequence in $(\bsp_{p^r})_\ev$:
\[
0 \rightarrow \bsir \rightarrow T_1 \rightarrow \cdots \rightarrow T_n \rightarrow \bsir \rightarrow 0.
\]
Given an even map $\phi: V \rightarrow W$, one then has $\phi^{\otimes p^r} \in \bsg^{p^r} \Hom_A(V,W)$. Now interpreting $(T_i)_A(\phi)$ to be $(T_i)_A(\phi^{\otimes p^r})$, the proof becomes a direct repetition of the proof of \cite[Proposition 3.2]{Suslin:1997}.
\end{proof}

\begin{remark}
Let $m,n \in \N$, set $V = \Amn$, and let $\rho: G \rightarrow GL(V)$ be a homomorphism of $A$-supergroup schemes as in Lemma \ref{lemma:restrictionGV}. Suppose that $G$ is infinitesimal of height $\leq r$, i.e., suppose the augmentation ideal $I_G$ of $A[G]$ is nilpotent of degree $\leq p^r$. Then the comorphism $\rho^*: A[GL(V)] \rightarrow A[G]$ factors through the quotient $A[GL(V)]/\subgrp{f^{p^r}: f \in I_{GL(V)}}$, and hence the image of $\rho$ is contained in $GL(V)_{(r)}$, the $r$-th Frobenius kernel of $GL(V)$. Next, since $V$ is free over $A$, so is $\Hom_A(V,V)$, and hence the structure morphism
\[
\bsir_{V,V} = (\varphi^r)^\#(\Hom_A(V,V)): \bsg^{p^r} \Hom_A(V,V) \rightarrow\Hom_A(V^{(r)},V^{(r)})
\]
admits an explicit description on generators as in \cite[(2.7.2)]{Drupieski:2016}. In particular, fixing a homogeneous basis for $\Hom_A(V,V)$, and then fixing the corresponding basis for $\bsg^{p^r} \Hom_A(V,V)$ as in \cite[(2.3.4)]{Drupieski:2016}, it follows that $\bsir_{V,V}$ vanishes on all basis monomials except those of the form $\gamma_{p^r}(x)$ for $x$ an even basis element of $\Hom_A(V,V)$. Using this observation, one can check that the evaluation functor of Lemma \ref{lemma:evaluationV} maps the Frobenius twist superfunctor $\bsir \in \bsp_{p^r,A}$ to a rational $GL(V)$-supermodule $V^{(r)}$ that restricts trivially to $GL(V)_{(r)}$, and hence also restricts trivially along $\rho$ to $G$.
\end{remark}

\begin{remark} \label{remark:Extmatrices}
Retain the assumptions and notations of the previous remark. Then as in \cite[Remark 3.3]{Suslin:1997}, we have algebra identifications
	\begin{equation} \label{eq:Extfloatout}
	\begin{split}
	\Ext_G^\bullet(V^{(r)},V^{(r)}) &= \Hom_A(V^{(r)},V^{(r)}) \otimes_A \Hbul(G,A) \\
	&= \Hom_A(V,V)^{(r)} \otimes_A \Hbul(G,A) \\
	&= \Matmn(A)^{(r)} \otimes_A \Hbul(G,A)
	\end{split}
	\end{equation}
Moreover, $\Matmn(A)^{(r)}$ further identifies as an algebra with $\Matmn(A)$: If $A \in \csalg_k$ is purely even, then the identification $\Matmn(A)^{(r)} = \Matmn(A)$ is defined for $\alpha \in \Matmn(A)$ by sending $\alpha^{(r)} = \alpha \otimes_{\varphi^r} 1 \in \Matmn(A) \otimes_{\varphi^r} A$ to the matrix in $\Matmn(A)$ obtained by raising the individual matrix entries of $\alpha$ to the $p^r$-th power; if $A$ is not purely even, then the identification is induced by a similar map, though with various $\pm$ signs involved based on the convention for the right action of $A$ on $\Matmn(A)$; cf.\ Section \ref{subsection:supermatrices}. Similarly, there exists an identification
\[
\Matmn(A) \otimes_A \Hbul(G,A) = \Matmn(\Hbul(G,A))
\]
involving various $\pm$ signs; cf.\ \eqref{eq:Matmnbasechange}. Under the composite identification
\[
\Ext_G^\bullet(V^{(r)},V^{(r)}) = \Matmn(\Hbul(G,A)),
\]
the Yoneda product in $\Ext_G^\bullet(V^{(r)},V^{(r)})$ corresponds to the matrix product in $\Matmn(\Hbul(G,A))$.
\end{remark}

\begin{proposition} \label{proposition:Kunneth}
Let $V,V' \in \bsv_A$ be free $A$-supermodules equipped with rational representations $\rho: G \rightarrow GL(V)$ and $\rho': G' \rightarrow GL(V')$ of $A$-supergroup schemes $G$ and $G'$ as in Lemma \ref{lemma:restrictionGV}. Let $z \in \Ext_{\bsp}^n(\bsir,\bsir)$, and let $\Delta(z) = \sum_{(z)} z_{(1)} \otimes z_{(2)}$ be the coproduct of $z$ as in Proposition \ref{prop:coproduct}. Then the K\"{u}nneth isomorphism gives rise to an identification
	\begin{align*}
	z(G \times G',V \otimes_A V') &= \sum_{(z)} z_{(1)}(G,V) \otimes z_{(2)}(G',V') \\
	&\in \bigoplus_{i+j=n} \opH^i(G,\End_A(V)^{(r)}) \otimes_A \opH^j(G',\End_A(V')^{(r)}) \\
	&=\opH^n(G \times G',\End_A(V \otimes_A V')^{(r)}).
	\end{align*}
\end{proposition}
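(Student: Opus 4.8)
The strategy is to repeat, with super refinements, the proof of the corresponding classical statement in \cite{Suslin:1997}. Everything reduces to the mutual compatibility of three functorial constructions: the evaluation functor of Lemma \ref{lemma:evaluationV} (and its evident analogue for bisuperfunctors), the external tensor product operation $\boxtimes^*$, and the composition functor $\nabla(F) = F \circ (\bsi \boxtimes \bsi)$ that defines the coproduct $\Delta$ in Proposition \ref{prop:coproduct}. Throughout, write $W = V \otimes_A V'$, and let $GL(V) \times GL(V') \to GL(W)$ denote the external tensor-product homomorphism.

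First I would record two compatibilities, both checked by the same kind of bookkeeping of structure morphisms and super signs as in the proof of Lemma \ref{lemma:evaluationV}. For $F \in \bsp_A$, evaluating $\nabla(F) \in \bibsp_A$ at the pair $(V, V')$ returns $F(W)$, and the $GL(V) \times GL(V')$-supercomodule structure that the evaluation functor puts on it is exactly the restriction, along the tensor-product homomorphism, of the $GL(W)$-supercomodule structure that the evaluation functor puts on $F(W)$; dually, evaluating $F \boxtimes F' \in \bibsp_A$ at $(V, V')$ returns $F(V) \otimes_A F'(V')$ with the evident $GL(V) \times GL(V')$-structure. Consequently, for all $S, T \in \bsp$ one obtains a commuting diagram of even $A$-linear maps in which the composite ``apply $\nabla^*$, then evaluate at $(V,V')$'' equals ``evaluate at $W$, then restrict along the tensor-product homomorphism'', and in which ``apply $\boxtimes^*$, then evaluate at $(V,V')$'' equals ``tensor together the evaluations at $V$ and at $V'$, then apply the K\"unneth external product into $\Ext_{GL(V) \times GL(V')}^\bullet$''. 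The second of these is formal, since it amounts to saying that evaluation commutes with the external tensor product of extensions (exact sequences).

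Granting these, the rest is a short calculation. The action of $G \times G'$ on $W$ is through $GL(V) \times GL(V') \to GL(W)$ precomposed with $\rho \times \rho'$, so by Lemma \ref{lemma:restrictionGV} and Definition \ref{def:charclasses},
\[
z(G \times G', W) = (\rho \times \rho')^* \circ \res^{GL(W)}_{GL(V) \times GL(V')} \circ \textup{eval}_W \,(z_A).
\]
By the first compatibility this equals $(\rho \times \rho')^*$ applied to $(\textup{eval}_{(V,V')} \circ \nabla^*)(z_A)$; since $\Delta = (\boxtimes^*)^{-1} \circ \nabla^*$ commutes with base change to $A$, we may rewrite $\nabla^*(z_A)$ as $\boxtimes^*\bigl(\sum_{(z)} (z_{(1)})_A \otimes (z_{(2)})_A\bigr)$; by the second compatibility, evaluating this at $(V,V')$ is the K\"unneth external product of $\sum_{(z)} \textup{eval}_V((z_{(1)})_A) \otimes \textup{eval}_{V'}((z_{(2)})_A)$; and applying $(\rho \times \rho')^* = \rho^* \otimes \rho'^*$, using naturality of the K\"unneth map together with Definition \ref{def:charclasses}, yields $\sum_{(z)} z_{(1)}(G,V) \otimes z_{(2)}(G',V')$. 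Here the K\"unneth isomorphism $\opH^\bullet(G \times G', M \otimes_A N) \cong \opH^\bullet(G,M) \otimes_A \opH^\bullet(G',N)$ is available because $M = \End_A(V)^{(r)}$ and $N = \End_A(V')^{(r)}$ are free over $A$ (cf.\ the identifications of Remark \ref{remark:Extmatrices}), and the decomposition $\bigoplus_{i+j=n}$ is immediate since $\Delta$ preserves cohomological degree.

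The main obstacle is the first of the two compatibilities: that the $GL(V) \times GL(V')$-supercomodule structure on $F(W)$ produced via $\nabla$ and Lemma \ref{lemma:evaluationV} agrees with the one obtained by evaluating $F$ at $W$ and then restricting along the tensor-product homomorphism $GL(V) \times GL(V') \to GL(W)$. Conceptually this is evident, but verifying it means unwinding the coproduct on $\bss^{p^r}(\Hom_A(W,W)^\#) \hookrightarrow A[GL(W)]$ constructed in the proof of Lemma \ref{lemma:evaluationV}, comparing it with the algebra map $A[GL(W)] \to A[GL(V) \times GL(V')]$ dual to the tensor-product homomorphism, and matching the super signs that enter both; this is the super analogue of the (largely implicit) verification in \cite{Suslin:1997}, and although routine it is the step demanding the most care.
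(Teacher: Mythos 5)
Your proposal is correct and takes essentially the same approach as the paper: the paper's proof is the single sentence ``The proof is a word-for-word repetition of the argument proving [Suslin:1997, Proposition 3.8]'', and your write-up is a faithful unwinding of exactly that argument, with the super-sign bookkeeping made explicit. The two key compatibilities you isolate (evaluation of $\nabla(F)$ at $(V,V')$ versus evaluation of $F$ at $V\otimes_A V'$ restricted along the tensor-product homomorphism, and evaluation of $F\boxtimes F'$ versus the K\"unneth external product) are precisely the content of the SFB proof, and your reduction via $\Delta = (\boxtimes^*)^{-1}\circ\nabla^*$ matches.
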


\begin{proof}
The proof is a word-for-word repetition of the argument proving \cite[Proposition 3.8]{Suslin:1997}.
\end{proof}

\subsection{Reduction lemmas for characteristic classes} \label{subsection:reductionlemmas}

For the rest of this section, fix an integer $r \geq 1$, an inseparable $p$-polynomial $0 \neq f \in k[T]$, and a scalar $\eta \in k$. Assume that $f = \sum_{i=s}^t a_i T^{p^i}$ with $1 \leq s \leq t$, $a_s \neq 0$, and $a_t = 1$. 

\begin{notation}
Given $A \in \csalg_k$ and $(\ualpha|\beta) \in \bsVrfeta(\GLmn)(A)$, let $\Vuab$ denote the free $A$-super\-module $\Amn$, considered as a rational $\Mrfeta \otimes_k A$-supermodule via the correspondence of Proposition \ref{proposition:HomMrGLmn}.
\end{notation}

Set $\Grf = \Gar \times \cdots \times \G_{a(2)} \times \Monef$. If $(\ualpha|\beta) \in \bsVrf(\GLmn)(A)$, then $(\ualpha|\beta)$ also determines a group homomorphism $\wh{\rho}_{(\ualpha|\beta)}: \Grf \otimes_k A \rightarrow \GLmn \otimes_k A$, or equivalently, an $A\Grf$-supermodule structure on $\Amn$. The action of $A\Grf$ on $\Amn$ is described as follows: First, we make the identification of algebras
\[
A\Grf = (k\Gar \otimes k\G_{a(r-1)} \otimes \cdots \otimes k\G_{a(2)} \otimes k\Monef) \otimes_k A.
\]
By Proposition \ref{prop:groupalgebras}\eqref{item:qGar}, $k\Gal$ is generated by the commuting elements $u_0,\ldots,u_{\ell-1}$ subject to the relations $u_0^p = \cdots = u_{\ell-1}^p = 0$. Then for $2 \leq \ell \leq r$, the tensor factor $k\Gal$ acts on $\Amn$ with $u_0 \in k\Gal$ acting via the matrix $\alpha_{r-\ell}$ and with the remaining generators acting as zero. Finally, the tensor factor $k\Monef$ is generated by $u$ and $v$ subject to $u^p + v^2 = 0$ and $f(u) = 0$. Then $u$ acts via the matrix $\alpha_{r-1}$, and $v$ acts via the matrix $\beta$.

\begin{notation}
Given $A \in \csalg_k$ and $(\ualpha|\beta) \in \bsVrf(\GLmn)(A)$, let $\Wuab$ denote the free $A$-super\-module $\Amn$, considered as a rational $\Grf \otimes_k A$-supermodule as in the preceding paragraph.
\end{notation}

Now given $z \in \Ext_{\bsp}^\bullet(\bsir,\bsir)$ and $\Wuab$ as above, the characteristic class $z(\Grf \otimes_k A,\Wuab)$ identifies with an element of
\begin{align*}
\Matmn(A)^{(r)} \otimes_A \Hbul(\Grf \otimes_k A,A) &= \Matmn(A)^{(r)} \otimes_A \left( \Hbul(\Grf,k) \otimes_k A \right) \\
&= \Matmn(A)^{(r)} \otimes_k \Hbul(\Grf,k).
\end{align*}
As discussed in the first paragraph of Section \ref{subsection:phi}, the subspace
\[
H(\Grf,k) := \opH^{\ev}(\Grf,k)_{\zero} \oplus \opH^{\odd}(\Grf,k)_{\one}
\]
of $\Hbul(\Grf,k)$ is a commutative $k$-algebra in the ordinary sense.

The next lemma is a direct analogue of \cite[Lemma 4.3]{Suslin:1997}.


\begin{lemma} \label{lemma:freeimpliesall}
Let $P \in k[X_0,\ldots,X_{r-1}] \otimes H(\Grf,k)$ and $Q \in k[Y] \otimes H(\Grf,k)$ be polynomials with coefficients in the ring $H(\Grf,k)$. Given a purely even commutative algebra $A \in \calg_k$, and given $(\ualpha|\beta) = (\alpha_0,\ldots,\alpha_{r-1},\beta) \in \bsVrf(\GLmn)(A)$, let $\alpha_i = \sm{\dalpha_i & 0 \\ 0 & \ddalpha_i}$ and $\beta = \sm{0 & \dbeta \\ \ddbeta & 0}$ be the block decom\-positions of $\alpha_i$ and $\beta $ as in (\ref{eq:blockform}). By abuse of notation, we identify $\dalpha_i$ and $\ddalpha_i$ with $\sm{\dalpha_i & 0 \\ 0 & 0}$ and $\sm{0 & 0 \\ 0 & \ddalpha_i}$, and similarly for $\dbeta$ and $\ddbeta$. Now given $0 \leq j < p^r$, suppose that the formulas
\begin{align*}
\bse_r(j)(\Grf \otimes_k A,\Wuab) &= P(\dalpha_0^{(r)},\ldots,\dalpha_{r-1}^{(r)}), & \bsc_r(\Grf \otimes_k A,\Wuab) &= Q(\dbeta^{(r)}), \\
\bse_r^\Pi(j)(\Grf \otimes_k A,\Wuab) &= P(\ddalpha_0^{(r)},\ldots,\ddalpha_{r-1}^{(r)}), & \bsc_r^\Pi(\Grf \otimes_k A,\Wuab) &= Q(\ddbeta^{(r)}),
\end{align*}
in $\Matmn(A) \otimes \Hbul(\Grf,k)$ hold in the following special cases:
	\begin{enumerate}
	\item $A = k$, $\Wuab = k\Mrf$, and the left $k\Mrf$-super\-module structure on $\Wuab$ defined by $(\ualpha|\beta)$ is the left action of $k\Mrf$ on itself (i.e., $\alpha_i$ represents left multiplication by $u_i$, and $\beta$ represents left multiplication by $v$).
	
	\item $A = k$, $\Wuab = \Pi(k\Mrf)$, and the left $k\Mrf$-supermodule structure on $\Wuab$ defined by $(\ualpha|\beta)$ is the structure induced as in Remark \ref{remark:paritychange} by the left action of $k\Mrf$ on itself.
	\end{enumerate}
Then the formulas hold for any $A \in \calg_k$ and any $(\ualpha|\beta) \in \bsVrf(\GLmn)(A)$.
\end{lemma}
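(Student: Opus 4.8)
The plan is to follow the blueprint of \cite[Lemma 4.3]{Suslin:1997}, reducing the general case to the universal case by a naturality argument. First I would observe that, by Proposition \ref{proposition:HomMrGLmn} and the discussion preceding the lemma, the $\Grf \otimes_k A$-supermodule structure on $\Wuab = \Amn$ determined by $(\ualpha|\beta) \in \bsVrf(\GLmn)(A)$ is obtained by base change along a unique algebra homomorphism from the coordinate ring of $\bfHom(\Grf, \GLmn)$ (or rather the relevant ``even'' scheme) to $A$: namely, the tuple $(\ualpha|\beta)$ is the image of the universal tuple under a unique $\phi: k[\Vrf(\GLmn)] \rightarrow A$ in $\calg_k$. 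Then Proposition \ref{prop:charclassproperties}\eqref{item:basechange} says that the characteristic classes $\bse_r(j)(\Grf \otimes_k A, \Wuab)$, $\bsc_r(\Grf \otimes_k A, \Wuab)$, and their $\Pi$-counterparts are the images, under the base change homomorphism induced by $\phi$, of the corresponding universal characteristic classes associated to the universal tuple $(\ualpha^{\textup{univ}}|\beta^{\textup{univ}})$ over $k[\Vrf(\GLmn)]$. Since $P$ and $Q$ are polynomial expressions with coefficients in $H(\Grf,k)$, and since base change respects these polynomial expressions (the entries of $\dalpha_i^{(r)}$, etc., are just elements of the coordinate ring being transported along $\phi$), the desired formulas over an arbitrary $A$ follow once they are known over the universal coordinate ring $k[\Vrf(\GLmn)]$.

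The key point is thus to verify the formulas in the universal case, and here the strategy is to exploit that $k[\Vrf(\GLmn)]$ is a \emph{reduced} (indeed, the coordinate ring of a scheme defined by explicit polynomial equations — see Definition \ref{definition:VrGLmn} and Theorem \ref{theorem:Homsuperscheme}), and in fact one can further reduce, as in \cite{Suslin:1997}, to checking the identity after applying an arbitrary $k$-point, i.e., to the case $A = k$ with $(\ualpha|\beta) \in \bsVrf(\GLmn)(k)$. This is because two elements of $\Matmn(A) \otimes \Hbul(\Grf,k)$ (with $A$ reduced and finitely generated, hence a subring of a product of fields) that agree after every specialization to a geometric point are equal. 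The final reduction is the representation-theoretic one: any $(\ualpha|\beta) \in \bsVrf(\GLmn)(k)$ makes $\Amn$ into a $k\Mrf$-supermodule (via Proposition \ref{proposition:HomMrGLmn}), and since $k\Mrf$ is a Frobenius (self-injective) algebra, every finite-dimensional $k\Mrf$-supermodule is a direct summand of a direct sum of copies of $k\Mrf$ and $\Pi(k\Mrf)$. Then Proposition \ref{prop:charclassproperties}\eqref{item:evenmap} and \eqref{item:directsum} — the compatibility of characteristic classes with even module maps and with direct sums — lets us propagate the formulas from the two generating modules $k\Mrf$ and $\Pi(k\Mrf)$ to an arbitrary $\Wuab$, exactly as in the proof of \cite[Lemma 4.3]{Suslin:1997}.

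In detail, the order of steps is: (i) reduce to $A$ reduced, via Proposition \ref{prop:charclassproperties}\eqref{item:basechange} applied to the universal tuple over $k[\Vrf(\GLmn)]$; (ii) reduce further to $A = k$ by specializing at $k$-points of the reduced finitely generated $k$-algebra $k[\Vrf(\GLmn)]$, using that such an algebra embeds in a product of fields so an identity of matrices over it can be checked pointwise; (iii) for $A = k$, write the given $k\Mrf$-supermodule $\Wuab$ as an even direct summand of $(k\Mrf)^{\oplus a} \oplus \Pi(k\Mrf)^{\oplus b}$ using the self-injectivity of the Frobenius algebra $k\Mrf$; (iv) apply Proposition \ref{prop:charclassproperties}\eqref{item:directsum} to pass to the direct sum, and then Proposition \ref{prop:charclassproperties}\eqref{item:evenmap} with the idempotent projection onto $\Wuab$ to descend to $\Wuab$, noting that on $(k\Mrf)^{\oplus a} \oplus \Pi(k\Mrf)^{\oplus b}$ the classes are governed block-by-block by the two hypothesized special cases. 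The one subtlety to watch is the bookkeeping of the block decomposition into even/odd parts and the identification of $\dalpha_i, \ddalpha_i, \dbeta, \ddbeta$: on $k\Mrf$ (resp.\ $\Pi(k\Mrf)$) the relevant matrix of $u_i$-action splits according to the $\Z_2$-grading of $k\Mrf$, and one must check that the hypothesized formulas in cases (1) and (2) are precisely what is needed to evaluate $P$ and $Q$ on the ``dotted'' and ``double-dotted'' blocks respectively; this is the main place where care is needed, but it is the same bookkeeping as in \cite{Suslin:1997} together with the $\Pi$-compatibility \eqref{eq:coproductPi} of Proposition \ref{prop:coproduct}.

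The step I expect to be the main obstacle is step (iii)–(iv), namely making rigorous the passage from the two ``generating'' modules to an arbitrary $\Wuab$ while correctly tracking which block of the characteristic-class matrix corresponds to $\dalpha_i$ versus $\ddalpha_i$ (and $\dbeta$ versus $\ddbeta$). The decomposition $(k\Mrf)^{\oplus a} \oplus \Pi(k\Mrf)^{\oplus b}$ of $\Wuab$ is only an \emph{even} splitting, not one compatible with the $k\Mrf$-action realizing $(\ualpha|\beta)$ in general, so one cannot naively conjugate; instead one uses Proposition \ref{prop:charclassproperties}\eqref{item:evenmap} applied to the (even) inclusion and projection maps witnessing $\Wuab$ as a summand, exactly as Suslin--Friedlander--Bendel do, and the content is that the characteristic class of $\Wuab$ is the corresponding ``corner'' of the characteristic class of the big module. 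Verifying that the hypothesized formulas for the big module are the direct sum of the case-(1) and case-(2) formulas — and that $P$ evaluated on $\dalpha_i^{(r)}$ for $\Wuab$ matches the restriction of $P$ evaluated on the universal matrices — is the combinatorial heart of the argument, and it proceeds exactly along the lines of \cite[Lemma 4.3]{Suslin:1997}, now with the extra even/odd block structure coming from $m|n \neq m|0$.
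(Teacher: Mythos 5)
Your proposal runs in the opposite direction from the paper: you try to reduce the general case \emph{down} to $A=k$ and then down to the two free modules, whereas the paper builds \emph{up} from the two free modules (over $k$, via base change to arbitrary $A$, then direct sums, then quotients). The reduction direction is not intrinsically wrong, but your specific implementation has two genuine gaps.

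First, step (iii) is false as stated. You claim that because $k\Mrf$ is a Frobenius algebra, every finite-dimensional $k\Mrf$-supermodule $\Wuab$ is an even direct summand of $(k\Mrf)^{\oplus a}\oplus\Pi(k\Mrf)^{\oplus b}$. That would mean every finite-dimensional supermodule is projective, i.e.\ that $k\Mrf$ is semisimple, which it is not. What self-injectivity gives you is that projective $=$ injective, not that every module is projective. The paper avoids this by writing $\Wuab$ as a \emph{quotient} of a free module via an even surjection $\phi:V\twoheadrightarrow W$ (which always exists), and then using Proposition~\ref{prop:charclassproperties}\eqref{item:evenmap} together with the surjectivity of $\phi^{(r)}$ to kill the discrepancy $z(G,W)-P(\ualpha'^{(r)})$: one gets $[z(G,W)-P(\ualpha'^{(r)})]\circ\phi^{(r)}=0$, and since $\phi^{(r)}$ is onto this forces $z(G,W)=P(\ualpha'^{(r)})$. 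Your ``idempotent projection onto $\Wuab$'' does not exist inside $\Hom_{k\Mrf}$, so the even module map you need to invoke Proposition~\ref{prop:charclassproperties}\eqref{item:evenmap} is unavailable.

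Second, step (ii) rests on the unjustified claim that $k[\Vrf(\GLmn)]$ is reduced. The defining equations of $\bsVrf(\GLmn)$ include $\alpha_i^p=0$ (entrywise matrix polynomial equations), which typically cut out non-reduced schemes — already for $m=1,n=0,r=1$ the ring is $k[x]/\langle x^p\rangle$. If the universal ring is not reduced, an identity of elements over it cannot be checked merely by specializing at $k$-points, so the reduction to $A=k$ does not go through. Moreover, the statement of the lemma is asserted for \emph{arbitrary} $A\in\calg_k$, including non-reduced $A$ such as $k[\epsilon]/\langle\epsilon^2\rangle$; specializing at $k$-points loses exactly the infinitesimal information such $A$ carries. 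The paper sidesteps all of this: it never passes through a universal ring, never needs reducedness, and base change (Proposition~\ref{prop:charclassproperties}\eqref{item:basechange}) is applied only in the easy direction, extending from $k$ to arbitrary $A$ for the two free modules.

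In short: reverse the direction of the reduction, replace direct summands by quotients of free modules together with the surjectivity of $\phi^{(r)}$, and the reducedness issue disappears — this is exactly what the paper (following \cite[4.3.2]{Suslin:1997}) does.
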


\begin{proof}
The argument is a direct generalization of the proof of \cite[Lemma 4.3]{Suslin:1997}. First, by hypothesis, the formulas hold if $A = k$ and if $\Wuab$ is one of the free rank-one $k\Mrf$-supermodules $k\Mrf$ or $\Pi(k\Mrf)$. Then it follows from Proposition \ref{prop:charclassproperties}\eqref{item:basechange} that the formulas hold if $A \in \calg_k$ is arbitrary and $\Wuab$ is one of the free rank-one $A\Mrf$-supermodules $A\Mrf$ or $\Pi(A\Mrf)$. Next, Proposition \ref{prop:charclassproperties}\eqref{item:directsum} implies that the formulas hold whenever $\Wuab$ is a finite direct sum of copies of $A\Mrf$ or $\Pi(A\Mrf)$. Finally, any finitely-generated $A\Mrf$-supermodule can be written as a quotient via an even homomorphism of a direct sum of copies of $A\Mrf$ and $\Pi(A\Mrf)$. If $(\ualpha'|\beta')$ is some other tuple defining a representation of $\Grf \otimes_k A$, and if $\phi: V \rightarrow W$ is a surjective even $A$-supermodule homomorphism between the underlying spaces for $(\ualpha|\beta)$ and $(\ualpha'|\beta')$ such that $\phi \circ \beta = \beta' \circ \phi$ and $\phi \circ \alpha_i = \alpha_i' \circ \phi$ for each $i$, then because $\phi$ is even and $A$ is purely even, one gets
\begin{align*}
\phi \circ \dbeta &= \dbeta' \circ \phi, & \phi \circ \dalpha_i &= \dalpha_i' \circ \phi, \\
\phi \circ \ddbeta &= \ddbeta' \circ \phi, & \phi \circ \ddalpha_i &= \ddalpha_i' \circ \phi.
\end{align*}
Then one can finish the proof by arguing exactly as in \cite[4.3.2]{Suslin:1997}, replacing \cite[Proposition 3.2(a)]{Suslin:1997} with Proposition \ref{prop:charclassproperties}\eqref{item:evenmap}.
\end{proof}

The case $r=1$ of Lemma \ref{lemma:freeimpliesall} concerns characteristic classes for $\Monef$.

\begin{lemma} \label{lemma:freeimpliesallMonefeta}
The assertion of Lemma \ref{lemma:freeimpliesall} also holds, via exactly the same line of reasoning, if $\Grf$ is replaced by $\Monefeta$ and $\Wuab$ is replaced by $\Vab$ with $(\alpha|\beta) \in \bsVonefeta(\GLmn)(A)$.
\end{lemma}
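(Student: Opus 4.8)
The plan is to transcribe the proof of Lemma~\ref{lemma:freeimpliesall} essentially verbatim, making the substitutions $\Grf \leadsto \Monefeta$, $\Wuab \leadsto \Vab$, $\wh{\rho}_{(\ualpha|\beta)} \leadsto \rho_{(\alpha|\beta)}$ (the homomorphism $\Monefeta \otimes_k A \to \GLmn \otimes_k A$ attached to $(\alpha|\beta)$ by Proposition~\ref{proposition:HomMrGLmn}), $H(\Grf,k) \leadsto H(\Monefeta,k)$, and the polynomial ring $k[X_0,\dots,X_{r-1}] \leadsto k[X]$; this is the $r=1$, possibly-$\eta$-nonzero instance of the set-up. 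The observation to record is that the proof of Lemma~\ref{lemma:freeimpliesall} uses only the following formal inputs, each of which is available for $\Monefeta$ as well: (i) the characteristic classes $z(\Monefeta \otimes_k A,\Vab)$ can be floated out into matrices with entries in $\Hbul(\Monefeta,k) \otimes_k A$ as in Remark~\ref{remark:Extmatrices}, which applies because $k[\Monefeta]$ is infinitesimal of height $1$ by Lemma~\ref{lemma:k[Mrf]algebra}; (ii) compatibility of characteristic classes with base change along purely even extensions, i.e.\ Proposition~\ref{prop:charclassproperties}\eqref{item:basechange}; (iii) additivity over direct sums, i.e.\ Proposition~\ref{prop:charclassproperties}\eqref{item:directsum}; and (iv) naturality along even homomorphisms of rational supermodules, i.e.\ Proposition~\ref{prop:charclassproperties}\eqref{item:evenmap}.

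Granting these, the reduction runs exactly as in Lemma~\ref{lemma:freeimpliesall}. By hypothesis the stated formulas hold when $A=k$ and $\Vab$ is one of the free rank-one $k\Monefeta$-supermodules $k\Monefeta$ or $\Pi(k\Monefeta)$; input (ii) upgrades this to arbitrary purely even $A \in \calg_k$ with $\Vab$ equal to $A\Monefeta$ or $\Pi(A\Monefeta)$, and input (iii) upgrades it further to finite direct sums of such. Since $k\Monefeta$ is a finite-dimensional $k$-superalgebra, every finitely-generated $A\Monefeta$-supermodule is the target of an even surjection from a finite direct sum of copies of $A\Monefeta$ and $\Pi(A\Monefeta)$; because $A$ is purely even, such an even surjection $\phi$ intertwines the block pieces $\dalpha,\ddalpha,\dbeta,\ddbeta$ with those of the source exactly as in the proof of Lemma~\ref{lemma:freeimpliesall}, so input (iv), applied as in \cite[4.3.2]{Suslin:1997}, transports the formulas to an arbitrary $(\alpha|\beta) \in \bsVonefeta(\GLmn)(A)$.

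I do not expect a genuine obstacle; the one point meriting a second's attention is that for $\eta \neq 0$ the algebra $k\Monefeta$ need not be local, and in fact may contain semisimple elements (cf.\ Section~\ref{SS:Caution}), so its representation theory is richer than that of $\Monef$. But the reduction step only requires that every finitely-generated module over the finite-dimensional algebra $A\Monefeta$ be a quotient, by an even map, of a finite sum of copies of the regular supermodule and its parity shift --- which holds irrespective of whether $A\Monefeta$ is local --- so the proof of Lemma~\ref{lemma:freeimpliesall} carries over unchanged. It therefore suffices to spell out the dictionary above and refer to that proof.
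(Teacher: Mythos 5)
Your proposal is correct and is exactly what the paper intends: Lemma \ref{lemma:freeimpliesallMonefeta} is stated without a separate proof block precisely because the argument is a verbatim transcription of the proof of Lemma \ref{lemma:freeimpliesall} with the substitutions you list, and the only ingredients that transcription invokes --- floating characteristic classes out to $\Matmn(\Hbul(-,k)\otimes_k A)$, base change, additivity over direct sums, naturality along even equivariant surjections, and the fact (Remark \ref{remark:paritychange}) that any finitely-generated $A\Monefeta$-supermodule is an even quotient of a finite sum of copies of $A\Monefeta$ and $\Pi(A\Monefeta)$ --- are all available independent of whether $\eta=0$. Your parenthetical observation that non-locality of $k\Monefeta$ for $\eta\neq 0$ is immaterial, because the reduction step never uses locality but only finite generation, is the one point worth recording explicitly, and you have it right.
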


\begin{proposition} \label{proposition:erMrs}
Let $(\alpha|\beta) \in \bsV_{1;f}(\GLmn)(k)$, and let $\alpha = \sm{\dalpha & 0 \\ 0 & \ddalpha}$ be the block decomposition of $\alpha$ as in Lemma \ref{lemma:freeimpliesall}. Set $P_f(X) = \sum_{\ell=1}^{t-1} (a_{\ell+1}a_s^{-1})^{p^{r-1}} X^{p^\ell} \in k[X]$, where by convention $P_f(X) = 0$ if $t = 1$, and $a_j = 0$ for $1 \leq j < s$. Set $w_1 = x_1-y^2$ as in Remark \ref{remark:w1}. Then under the identification (\ref{eq:Extfloatout}), and identifying $\Hbul(\Monef,k)$ with $\Hbul(\Mones,k)$ as in Proposition \ref{prop:Mrcohomology}(\ref{item:Mrfcohomology}),
	\begin{subequations}
	\begin{align}
	\bse_r(\Monef,\Vab) &= \dalpha^{(r)} \otimes x_1^{p^{r-1}} + P_f(\dalpha^{(r)}) \otimes w_s^{p^{r-1}}, \label{eq:erM1f} \\
	\bse_r^\Pi(\Monef,\Vab) &= \ddalpha^{(r)} \otimes x_1^{p^{r-1}} + P_f(\ddalpha^{(r)}) \otimes w_s^{p^{r-1}}, \quad \text{and} \label{eq:erPiM1f} \\
	\bse_r(j)(\Monef,\Vab) &= \bse_r^\Pi(j)(\Monef,\Vab) = 0 \text{ for $0 < j < p^{r-1}$.}
	\end{align}
	\end{subequations}
\end{proposition}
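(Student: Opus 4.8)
I would follow the template of \cite[\S4]{Suslin:1997}: first reduce to the free (regular) representations, then carry out an explicit computation in the Hochschild complex of $\Monef$. By Lemma \ref{lemma:freeimpliesallMonefeta} it suffices to verify the formulas of the proposition when $A=k$ and $\Vab$ is one of the free rank-one modules $k\Monef$ or $\Pi(k\Monef)$, with the left regular module structure (resp.\ its parity change). Since $x_1$, $w_s$ and the entries of $P_f(\dalpha^{(r)})$ all have even superdegree, and since conjugation by $\Pi$ interchanges $\bse_r(j)$ with $\bse_r^\Pi(j)$ and $k\Monef$ with $\Pi(k\Monef)$, the $\bse_r^\Pi$-formulas follow from the $\bse_r$-formulas via Proposition \ref{prop:charclassproperties}\eqref{item:evenmap} and the naturality of $(-)^\Pi$; so it is enough to compute $\bse_r(\Monef,\Vab)$ and the $\bse_r(j)(\Monef,\Vab)$ with $0<j<p^{r-1}$, for $\Vab\in\set{k\Monef,\Pi(k\Monef)}$. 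In either case the even part of $\Vab$, under the action of the generator $u\in k\Monef$, is the cyclic module $k[u]/\subgrp{f(u)}$ on which $u$ acts by the companion matrix $\dalpha$ of $f$; in particular $f(\dalpha)=0$, i.e.\ $\dalpha^{p^t}=-\sum_{i=s}^{t-1}a_i\dalpha^{p^i}$, so a single computation handles all these cases.

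By Lemma \ref{lemma:k[Mrf]algebra}, $\Monef$ is infinitesimal of height $1$, so the representation $\rho:\Monef\to GL(\Vab)$ corresponding to $(\alpha|\beta)$ factors through the first Frobenius kernel $GL(\Vab)_{(1)}$, and every $\bse_r(j)(\Monef,\Vab)$ is the image under the restriction map $\rho^*$ of the corresponding class for $GL(\Vab)_{(1)}$. By \eqref{ertoG1} and Convention \ref{convention}, the restriction of $\bse_r$ to $GL(\Vab)_{(1)}$ is, under the identifications of Remark \ref{remark:Extmatrices}, the matrix-valued class whose $(a,b)$-coefficient is $d_{ab}^{\,p^{r-1}}$, where $d_{ab}\in\opH^2(GL(\Vab)_{(1)},k)$ is the degree-$2$ polynomial generator dual to the matrix unit $E_{ab}$ under the edge map of the May spectral sequence \eqref{eq:Mayspecseq}; the lower classes $\bse_k^{(r-k)}$ with $k<r$ are governed similarly by the degree-$2p^{k-1}$ polynomial part of $\opH^\bullet(GL(\Vab)_{(1)},k)$. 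Thus everything reduces to identifying the pullbacks $\rho^*(d_{ab})\in\opH^2(\Monef,k)$.

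To compute $\rho^*(d_{ab})$ I would work in the Hochschild complex $C^\bullet(\Monef,k)$. The comorphism $\rho^*$ is determined by how $\theta,\sigma_i,\tau\in k[\Monef]$ act on the even part of $\Vab$, i.e.\ by the companion matrix $\dalpha$ together with the wrap-around relation $f(\dalpha)=0$; pulling back the standard cocycle representatives for the $d_{ab}$ (of the shape appearing in Proposition \ref{prop:Mrcohomology}\eqref{item:Mr1} and Remark \ref{remark:w1}) yields explicit $2$-cocycles for $\Monef$ that must be recognized in $\opH^2(\Monef,k)\cong\opH^2(\Mones,k)$. This is exactly where Lemma \ref{lemma:boundarymodfiltration} enters: working modulo the filtration by powers of the augmentation ideal, it isolates two surviving contributions — the ``expected'' one, which reassembles to $\dalpha^{(r)}\otimes x_1^{p^{r-1}}$, and a ``wrap-around'' contribution forced by $f(\dalpha)=0$, which (after collecting the matrix coefficients, the $a_s^{-1}$-rescaling and index shift built into $P_f$, and the cohomology identification of Proposition \ref{prop:Mrcohomology}\eqref{item:Mrfcohomology}) reassembles to $P_f(\dalpha^{(r)})\otimes w_s^{p^{r-1}}$; the filtration estimate guarantees that nothing else survives in cohomology. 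This cochain identification — together with the bookkeeping needed to reconcile the $(r)$-twist on the matrix side with the $p^{r-1}$-th powers on the cohomology side — is the main obstacle; the genuinely new feature, absent in \cite{Suslin:1997}, is precisely the appearance of $w_s$.

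Finally, for the vanishing $\bse_r(j)(\Monef,\Vab)=0$ when $0<j<p^{r-1}$: by Proposition \ref{prop:erj}, any such $\bse_r(j)$ is an invertible scalar times a product of factors $\bse_k^{(r-k)}$ with $1\le k\le r-1$, and the restriction map is an algebra homomorphism on the $\bsi_0^{(r)}$-corner (Lemma \ref{lemma:restrictionGV}), so it suffices to show $\bse_k^{(r-k)}(\Monef,\Vab)=0$ for $1\le k\le r-1$. This is the direct analogue of the classical fact that $e_k^{(r-k)}$ restricts trivially to $\G_{a(1)}$ for $k<r$ \cite[\S4]{Suslin:1997}, and is obtained by the same reasoning: restricting to $GL(\Vab)_{(1)}$ and then along $\rho$ as above, these classes lie in cohomological degrees $<2p^{r-1}$, where the explicit structure of $\Hbul(\Monef,k)\cong\Hbul(\Mones,k)$ from Proposition \ref{prop:Mrcohomology} together with the cochain analysis of the previous paragraph forces them to vanish.
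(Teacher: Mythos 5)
Your strategy tracks the paper's only up to a point: you correctly invoke Convention \ref{convention} and \eqref{ertoG1} to express the restriction of $\bse_r$ to the first Frobenius kernel of $GL(\Vab)$ via the horizontal edge map of the May spectral sequence, but the step you yourself flag as the main obstacle --- computing $\rho^*(d_{ab})$ --- is genuinely missing a mechanism. You propose to pull back ``the standard cocycle representatives for the $d_{ab}$ (of the shape appearing in Proposition \ref{prop:Mrcohomology}\eqref{item:Mr1} and Remark \ref{remark:w1}),'' but those displayed cochains live in $C^\bullet(\Mrone,k)$, not in $C^\bullet(\GLmnone,k)$; the paper never produces explicit Hochschild cocycle representatives for the polynomial generators of $\opH^2(\GLmnone,k)$, and it is not easy to do so. The paper circumvents this precisely by the \emph{naturality} of the May spectral sequence --- the commutative diagram \eqref{eq:Mayspecseqcomdiag} --- which transports the computation from $\GLmnone$ into the $E_1$-page of the May spectral sequence of $\Monef$ itself, where everything is explicit. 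Without that diagram (or an equivalent device) your second paragraph does not actually produce a cocycle in $C^2(\Monef,k)$ for your third paragraph to recognize.

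Relatedly, the role you assign to Lemma \ref{lemma:boundarymodfiltration} is misplaced. In the paper the wrap-around contribution $P_f(\dalpha^{(r)}) \otimes w_s^{p^{r-1}}$ comes from the $d_2$-differential of the May spectral sequence for $\Monef$: since $u_t = -\sum_{\ell} a_\ell u_\ell$ in $\g = \Lie(\Monef)$, one has $d_2(u_\ell^*) = u_{\ell-1}^{*(1)} - a_\ell u_{t-1}^{*(1)}$, hence $u_\ell^{*(1)} \equiv a_{\ell+1} u_{t-1}^{*(1)} \mod \im(d_2)$, and these congruences are what reassemble into $P_f$. Lemma \ref{lemma:boundarymodfiltration} is used only in the edge case $s=1$, $t\geq 2$, to verify that the horizontal edge map carries $a_1 \cdot u_{t-1}^{*(1)}$ to $w_1 = x_1 - y^2$; you have it bearing the entire weight of the wrap-around, which is not where the argument lives. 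Two smaller points: the initial reduction via Lemma \ref{lemma:freeimpliesallMonefeta} to the free rank-one modules is a valid but genuinely different organization (the paper proves the proposition directly for arbitrary $(\alpha|\beta) \in \bsV_{1;f}(\GLmn)(k)$ and uses the reduction lemmas only later, in Theorem \ref{theorem:charclassesWuab}); and deriving the $\bse_r^\Pi$-formulas from the $\bse_r$-formulas ``via Proposition \ref{prop:charclassproperties}\eqref{item:evenmap}'' needs justification, since the natural isomorphism $V \simeq \Pi(V)$ is odd while that part of the proposition is stated for even maps --- the paper instead obtains both formulas simultaneously by restricting the same diagram to the subspaces $\gl_m^{\#(r)}$ and $\gl_n^{\#(r)}$.
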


\begin{proof}
Let $\g$ be the restricted Lie superalgebra generated by an even element $u$ and an odd element $v$ subject only to the relations $u^{[p]} + \frac{1}{2}[v,v] = 0$ and $\sum_{\ell = s}^t a_\ell u^{[p^\ell]} =0$. Then $V(\g) = k\Monef$ and $\Hbul(V(\g),k) = \Hbul(\Monef,k)$. Now as in \eqref{eq:Mayspecseq}, there exists a May spectral sequence
\begin{equation} \label{eq:M1Mayspecseq}
\begin{split}
E_0^{i,j} &= \Lambda_s^j(\g^\#) \otimes S^{i/2}(\gzero^\#[2])^{(1)} \Rightarrow \opH^{i+j}(\Monef,k), \\
E_1^{i,j} = E_2^{i,j} &= \opH^j(\g,k) \otimes S^{i/2}(\gzero^\#[2])^{(1)} \Rightarrow \opH^{i+j}(\Monef,k).
\end{split}
\end{equation}
In particular, $E_2^{0,1} = \opH^1(\g,k) \cong (\g/[\g,\g])^\#$, and the differential $d_2: E_2^{0,1} \rightarrow E_2^{2,0} \cong \gzero^{\#(1)}$ identifies with the transpose of the linear map ${\gzero}^{(1)} \rightarrow \g/[\g,\g]$ induced by the $p$-mapping $x \mapsto x^{[p]}$ on $\gzero$. This identification of the map $d_2: E_2^{0,1} \rightarrow E_2^{2,0}$ can be checked, for example, by using the explicit construction of the May spectral sequence described in \cite[\S\S3.4--3.5]{Drupieski:2013b}.

The naturality of the May spectral sequence implies the commutativity of the diagram below, in which the first pair of horizontal arrows are the $p^{r-1}$-power maps, and the second pair of horizontal arrows are the horizontal edge maps of the corresponding May spectral sequences:
\begin{equation} \label{eq:Mayspecseqcomdiag}
\vcenter{\xymatrix{
\glzero^{\#(r)} \ar@{->}[r] \ar@{->}[d]^{\rho_{(\alpha|\beta)}^{\#(r)}} & S^{p^{r-1}}(\glzero^\#)^{(1)} \ar@{->}[r] \ar@{->}[d]^{S^{p^{r-1}}(\rho_{(\alpha|\beta)}^\#)^{(1)}} & \opH^{2p^{r-1}}(\GLmnone,k) \ar@{->}[d]^{\rho_{(\alpha|\beta)}^*} \\
\gzero^{\#(r)} \ar@{->}[r] & S^{p^{r-1}}(\gzero^\#)^{(1)} \ar@{->}[r] & \opH^{2p^{r-1}}(\Monef,k),
}}
\end{equation}
By Convention \ref{convention}, the restriction of the top row to the subspaces $\gl_m^{\#(r)}$ and $\gl_n^{\#(r)}$ of $\glzero^{\#(r)}$ are the maps corresponding to $\bse_r(\GLmnone,\Vab)$ and $\bse_r^\Pi(\GLmnone,\Vab)$, respectively. Thus, to describe the characteristic classes $\bse_r(\Monef,\Vab)$ and $\bse_r^\Pi(\Monef,\Vab)$, it suffices to describe the images of these subspaces through the bottom row of the diagram.

Let $\{ X_{ij} \}$ and $\{ Y_{ij} \}$ be the bases of coordinate functions that are dual to the standard matrix unit bases of $\glzero$ and $\glone$, respectively. For $0 \leq i \leq t$, set $u_i = u^{[p^i]}$. Then the set $\{ v,u_0,\ldots,u_{t-1} \}$ is a homogeneous basis for $\g$; let $\{ v^*,u_0^*,\ldots,u_{t-1}^* \}$ be the dual basis. At the level of Lie superalgebras, $\rho_{(\alpha|\beta)}$ maps $u_i$ and $v$ to the matrices $\alpha^{p^i}$ and $\beta$, respectively. Then
\[
\rho_{(\alpha|\beta)}^{\#(r)}\left( X_{ij}^{(r)} \right) = \left( \alpha_{ij} \cdot u_0^* + (\alpha^p)_{ij} \cdot u_1^* + \cdots + (\alpha^{p^{t-1}})_{ij} \cdot u_{t-1}^* \right)^{(r)},
\]
where $(\alpha^{p^\ell})_{ij}$ denotes the $ij$-entry of the matrix $\alpha^{p^\ell}$. Then the image of $X_{ij}^{(r)}$ in $S^{p^{r-1}}(\gzero^\#)^{(1)}$ is
\[
[\alpha_{ij}]^{p^r} \cdot (u_0^{*(1)})^{p^{r-1}} + [(\alpha^p)_{ij}]^{p^r} \cdot (u_1^{*(1)})^{p^{r-1}} + \cdots + [(\alpha^{p^{t-1}})_{ij}]^{p^r} \cdot (u_{t-1}^{*(1)})^{p^{r-1}}.
\]

Next, since $[\g,\g]$ is spanned by $u_1$, the space $E_2^{0,1} \cong (\g/[\g,\g])^\#$ is spanned by $v^*,u_0^*$, and (if $t \geq 3$) $u_2^*,\ldots,u_{t-1}^*$. Then by the description of the differential $d_2: E_2^{0,1} \rightarrow E_2^{2,0}$ given above, and since $u_t = -\sum_{\ell=s}^{t-1} a_\ell u_\ell$ by the assumption that $a_t = 1$, one has $d_2(v^*) = d_2(u_0^*) = 0$, and
\[
d_2(u_\ell^*) = u_{\ell-1}^{*(1)} - a_\ell \cdot u_{t-1}^{*(1)} \quad \text{for $2 \leq \ell \leq t-1$.}
\]
So $u_\ell^{*(1)} \equiv a_{\ell+1} \cdot u_{t-1}^{*(1)} \mod \im(d_2)$ for $1 \leq \ell \leq t-2$. Since $a_t = 1$, the previous congruence also holds for $\ell = t-1$. Now recall that the horizontal edge map $E_2^{\bullet,0} \rightarrow \Hbul(\Monef,k)$ factors through the quotient $E_2^{\bullet,0} / \im(d_2)$. Then by the multiplicativity of the May spectral sequence, it follows that the image of $X_{ij}^{(r)}$ through the bottom row of \eqref{eq:Mayspecseqcomdiag} is the equal to the image under the horizontal edge map $E_2^{\bullet,0} \rightarrow \Hbul(\Monef,k)$ of
\begin{multline*} \label{eq:Xijimage}
[\alpha_{ij}]^{p^r} \cdot (u_0^{*(1)})^{p^{r-1}} +  \sum_{\ell=1}^{t-1} \left( [(\alpha^{p^\ell})_{ij}]^{p^r} (a_{\ell+1})^{p^{r-1}} \cdot (u_{t-1}^{*(1)})^{p^{r-1}} \right) \\
= [\alpha^{(r)}]_{ij} \cdot (u_0^{*(1)})^{p^{r-1}} + P_f(\alpha^{(r)})_{ij} \cdot (a_s \cdot u_{t-1}^{*(1)})^{p^{r-1}},
\end{multline*}
where $[\alpha^{(r)}]_{ij}$ and $P_f(\alpha^{(r)})_{ij}$ denote, respectively, the $ij$-entries of the matrices $\alpha^{(r)} \in \Matmn(k)^{(r)}$ and $P_f(\alpha^{(r)}) \in \Matmn(k)^{(r)}$. It follows from the construction of the May spectral sequence given in the proof of \cite[Proposition 3.4.2]{Drupieski:2013b}, and from the explicit descriptions of the generators in Proposition \ref{prop:Mrcohomology}, that the horizontal edge map $E_2^{\bullet,0} \rightarrow \Hbul(\Monef,k)$ sends $u_0^{*(1)}$ to $x_1 \in \opH^2(\Monef,k)$. Then to finish the proof of \eqref{eq:erM1f} and \eqref{eq:erPiM1f}, it suffices to show for $t \geq 2$ that the horizontal edge map sends $a_s \cdot u_{t-1}^{*(1)} \in E_2^{2,0}$ to $w_s \in \opH^2(\Monef,k)$.

The cohomology class $w_s$ is represented by the cochain
\[ \textstyle
z:= -(\sum_{j=1}^{p^s-1} \sigma_j \otimes \sigma_{p^s-j} + \sum_{u+v+p=p^s} \sigma_u \tau \otimes \sigma_v \tau) \in k[\Monef]^{\otimes 2}.
\]
Suppose for the moment that $s \geq 2$. Then
\[
z \equiv -\left(\sum_{j=1}^{p-1} \sigma_{j p^{s-1}} \otimes \sigma_{(p-j)p^{s-1}}\right) \mod F^{p+1} \left(k[\Monef]^{\otimes 2}\right),
\]
where $F^\bullet$ denotes the filtration of $k[\Monef]^{\otimes 2}$ discussed prior to Lemma \ref{lemma:boundarymodfiltration}. Next observe that, in the notation of \cite[(3.4.8)]{Drupieski:2013b},
\[
\beta(\sigma_{p^{s-1}}) := \sum_{j=1}^{p-1} \frac{(p-1)!}{j!(p-j)!} (\sigma_{p^{s-1}})^j \otimes (\sigma_{p^{s-1}})^{p-j} = -\sum_{j=1}^{p-1} \sigma_{j p^{s-1}} \otimes \sigma_{(p-j)p^{s-1}}.
\]
Since the image of $\sigma_{p^{s-1}}$ in $I_\ve/(I_\ve)^2 \cong \Lie(\Monef)^\#$ is the dual basis vector $u_{s-1}^*$, this implies by the construction of the May spectral sequence in \cite[Proposition 3.4.2]{Drupieski:2013b} (and the reindexing in \cite[\S3.6]{Drupieski:2013b}) that the cochain $z$ represents $u_{s-1}^{*(1)} \in E_1^{2,0}$, and hence the horizontal edge map sends $u_{s-1}^{*(1)}$ to $w_s$. Then applying the congruence $u_\ell^{*(1)} \equiv a_{\ell+1} \cdot u_{t-1}^{*(1)} \mod \im(d_2)$ in the case $\ell = s-1$, it follows that the horizontal edge map sends $a_s \cdot u_{t-1}^{*(1)}$ to $w_s$.

Now suppose that $t \geq 2$ and $s = 1$. Let $\partial$ denote the differential on the Hochschild complex $C^\bullet(\Monef,k)$. Then by Lemma \ref{lemma:boundarymodfiltration},
\[
-\partial(\sigma_p) \equiv \left(\sum_{j=1}^{p-1} \sigma_j \otimes \sigma_{p-j} + \tau \otimes \tau \right) - a_1 \cdot \left( \sum_{j=1}^{p-1} \sigma_{jp^{t-1}} \otimes \sigma_{(p-j)p^{t-1}} \right) \mod F^{p+1} \left(k[\Monef]^{\otimes 2}\right).
\]
Then in the notation of the previous paragraph, $-\partial(\sigma_p) \equiv -z + a_1 \cdot \beta(\sigma_{p^{t-1}}) \mod F^{p+1}\left(k[\Monef]^{\otimes 2}\right)$, or equivalently, $a_1 \cdot \beta(\sigma_{p^{t-1}}) \equiv z + \partial(\sigma_p) \mod F^{p+1}\left(k[\Monef]^{\otimes 2}\right)$. This implies by the construction of the May spectral sequence that the horizontal edge map sends the cohomology class of $a_1 \cdot \beta(\sigma_{p^{t-1}})$ to the cohomology class of $z$, i.e., sends $a_1 \cdot u_{t-1}^{*(1)}$ to $x_1 - y^2$.

Finally, for $0 < j < p^{r-1}$, the triviality of $\bse_r(j)(\GLmnone,\Vab)$ and $\bse_r^\Pi(j)(\GLmnone,\Vab)$, and hence the triviality of $\bse_r(j)(\Monef,\Vab)$ and $\bse_r^\Pi(j)(\Monef,\Vab)$, can be established by a direct adaptation of the argument proving \cite[Lemma 4.2(b)]{Suslin:1997}.
\end{proof}

\begin{proposition} \label{proposition:erMrfeta}
Suppose $0 \neq \eta \in k$. Let $A \in \calg_k$ be a purely even commutative $k$-algebra, let $(\alpha|\beta) \in \bsVonefeta(\GLmn)(A)$, and let $\alpha = \sm{\dalpha & 0 \\ 0 & \ddalpha}$ be the block decomposition of $\alpha$ as in Lemma \ref{lemma:freeimpliesall}. Then under the identification (\ref{eq:Extfloatout}), and identifying $\Hbul(\Monefeta,k)$ with $\Hbul(\Ga^-,k) \cong k[y]$ as in Proposition \ref{prop:Mrcohomology}(\ref{item:M1fetacohomology}),
	\begin{align*}
	\bse_r(\Monefeta,\Vab) &= \dalpha^{(r)} \otimes y^{2p^{r-1}} \\
	\bse_r^\Pi(\Monefeta,\Vab) &= \ddalpha^{(r)} \otimes y^{2p^{r-1}}, \quad \text{and} \\
	\bse_r(j)(\Monefeta,\Vab) &= \bse_r^\Pi(j)(\Monef,\Vab) = 0 \text{ for $0 < j < p^{r-1}$.}
	\end{align*}
\end{proposition}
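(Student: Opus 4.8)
The plan is to run the argument of Proposition~\ref{proposition:erMrs} essentially verbatim; the only genuinely new ingredient is the computation of the second differential of the May spectral sequence for $\Monefeta$, which is where the hypothesis $\eta \neq 0$ enters. First I would invoke Lemma~\ref{lemma:freeimpliesallMonefeta} to reduce the three asserted identities to the two special cases $A = k$ with $\Vab = k\Monefeta$ (the left regular module) and $\Vab = \Pi(k\Monefeta)$, in which $\dalpha$ and $\ddalpha$ are left multiplication by $u$ on the even and odd summands of the underlying superspace. Then, as in \eqref{eq:M1Mayspecseq}, I would take $\g = \Lie(\Monefeta)$: the restricted Lie superalgebra with even generators $u_0,\dots,u_{t-1}$ and odd generator $v$, with $p$-operation $u_\ell^{[p]} = u_{\ell+1}$ for $\ell < t-1$ and $u_{t-1}^{[p]} = -(\eta u_0 + \sum_{\ell=s}^{t-1} a_\ell u_\ell)$, bracket $[v,v] = -2u_1$, and all remaining brackets zero, so that $V(\g) = k\Monefeta$ and $\Hbul(V(\g),k) = \Hbul(\Monefeta,k)$. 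Naturality of the May spectral sequence gives precisely the commutative square \eqref{eq:Mayspecseqcomdiag} with $\Monef$ replaced by $\Monefeta$, and the identical bookkeeping shows that the image of $X_{ij}^{(r)}$ in $S^{p^{r-1}}(\gzero^\#)^{(1)} = E_2^{2p^{r-1},0}$ is $\sum_{\ell = 0}^{t-1}[(\alpha^{p^\ell})_{ij}]^{p^r}\cdot(u_\ell^{*(1)})^{p^{r-1}}$, with the restrictions of the top row to $\gl_m^{\#(r)}$ and $\gl_n^{\#(r)}$ realizing $\bse_r(\GLmnone,\Vab)$ and $\bse_r^\Pi(\GLmnone,\Vab)$ by Convention~\ref{convention}.

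The decisive computation is that of $d_2\colon E_2^{0,1} = (\g/[\g,\g])^\# \to E_2^{2,0} = \gzero^{\#(1)}$, which is the transpose of the $p$-operation $\gzero^{(1)} \to \g/[\g,\g]$. Since $[\g,\g] = k\cdot u_1$ and $u_{t-1}^{[p]}$ now carries the summand $\eta u_0$ with $\eta \neq 0$, one gets $d_2(v^*) = 0$, $d_2(u_0^*) = -\eta\cdot u_{t-1}^{*(1)}$, and $d_2(u_\ell^*) = u_{\ell - 1}^{*(1)} - a_\ell\cdot u_{t-1}^{*(1)}$ for $2 \leq \ell \leq t-1$; as $\eta \neq 0$ this forces $\im(d_2) = \langle u_1^{*(1)},\dots,u_{t-1}^{*(1)}\rangle$. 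Hence $E_3^{2,0} = E_\infty^{2,0}$ is one-dimensional, spanned by the residue of $u_0^{*(1)}$, while $u_\ell^{*(1)} \equiv 0$ in $E_3^{2,0}$ for $1 \leq \ell \leq t-1$. This is exactly where the analysis departs from the $\Monef$ case: there $u_0^{*(1)}$ and $u_{t-1}^{*(1)}$ both survive and give rise to $x_1$ and $w_s$, whereas here there is no surviving $w_s$, in keeping with $\Hbul(\Monefeta,k) = k[y]$, so no $P_f$-type term appears. Consequently the image of $X_{ij}^{(r)}$ along the bottom row is the edge-map image of $[\alpha^{(r)}]_{ij}\cdot(u_0^{*(1)})^{p^{r-1}}$.

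It then remains to identify the edge-map image of $u_0^{*(1)}$ in $\opH^2(\Monefeta,k) = k\cdot y^2$. As in Proposition~\ref{proposition:erMrs}, $u_0^{*(1)}$ is represented by the Hochschild cocycle $z_1 := -\sum_{j=1}^{p-1}\sigma_j \otimes \sigma_{p-j}$ (the cocycle representing the class called $x_1$ in the $\M$-picture), and a cochain-level computation using the explicit coproduct of Lemma~\ref{lemma:Mrfetacoproduct} — equivalently, the $\mod F^{p+1}$ reduction of Lemma~\ref{lemma:boundarymodfiltration} — should show that $z_1$ is cohomologous to $\tau \otimes \tau$, i.e.\ $[z_1] = y^2$; this is the analogue of the relation $w_1 = x_1 - y^2$ of Remark~\ref{remark:w1}, but with the would-be $w_s$ collapsed to zero. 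Granting this, the bottom row sends $X_{ij}^{(r)}$ to $[\alpha^{(r)}]_{ij}\cdot y^{2p^{r-1}}$, so $\bse_r(\Monefeta,\Vab) = \alpha^{(r)} \otimes y^{2p^{r-1}}$, and reading off the $\gl_m$- and $\gl_n$-blocks gives the stated formulas for $\bse_r(\Monefeta,\Vab)$ and $\bse_r^\Pi(\Monefeta,\Vab)$. For $0 < j < p^{r-1}$, the triviality of $\bse_r(j)(\GLmnone,\Vab)$ and $\bse_r^\Pi(j)(\GLmnone,\Vab)$ — hence of $\bse_r(j)(\Monefeta,\Vab)$ and $\bse_r^\Pi(j)(\Monefeta,\Vab)$ — follows from the adaptation of \cite[Lemma~4.2(b)]{Suslin:1997} used at the end of the proof of Proposition~\ref{proposition:erMrs}.

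The hard part will be the last cochain identification $[z_1] = y^2$. In contrast to the $\Monef$ case with $s \geq 2$, where this holds on the nose, here $\Delta(\sigma_p)$ acquires additional summands proportional to $\eta$ (and, for $s = 1$, to $a_1$; when $t = 1$ there is a further correction to $\Delta(\sigma_1)$), and one must verify that none of these alter the class of $z_1$ in $\Hbul(\Monefeta,k)$ — the point being that the extra summands are cocycles representing classes (such as the residue of $u_{t-1}^{*(1)}$) that already die in $E_3^{2,0}$, so that they are coboundaries in $\opH^2(\Monefeta,k)$. This is precisely what the explicit structure of the coproduct in Lemma~\ref{lemma:Mrfetacoproduct} (together with the filtration estimates behind Lemma~\ref{lemma:boundarymodfiltration}) is meant to supply.
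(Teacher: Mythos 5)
Your proposal is correct and takes essentially the same route as the paper's proof: reduce to $A = k$ and the free rank-one modules via Lemma~\ref{lemma:freeimpliesallMonefeta}, identify $k\Monefeta$ with $V(\g)$ and run the May spectral sequence exactly as in Proposition~\ref{proposition:erMrs}, observe that $\eta \neq 0$ forces $u_{t-1}^{*(1)} \in \im(d_2)$ so that no $P_f$-type term survives, and then pin down the edge image of $u_0^{*(1)}$ as $y^2$ via the cochain-level identity supplied by Lemmas~\ref{lemma:Mrfetacoproduct} and \ref{lemma:boundarymodfiltration}. (One small remark: your sign $d_2(u_0^*) = -\eta\,u_{t-1}^{*(1)}$ is the correct one — the paper writes $a_0 \cdot u_{t-1}^{*(1)}$, which is a harmless sign slip since only nonvanishing matters.)
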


\begin{proof}
By Lemma \ref{lemma:freeimpliesallMonefeta} it suffices to assume that $A = k$. Then the proof is a modification of the argument used to establish Proposition \ref{proposition:erMrs}. Specifically, set $a_0 = \eta$. Then $k\Monefeta \cong V(\g)$, where $\g$ is the restricted Lie superalgebra generated by an even element $u$ and an odd element $v$ subject only to the relations $u^{[p]} + \frac{1}{2}[v,v] = 0$ and $\sum_{\ell=0}^t a_\ell u^{[p^\ell]} = 0$. Now the argument proceeds in precisely the same manner as in the first three (and last) paragraphs of the proof of Proposition \ref{proposition:erMrs}. In the fourth paragraph one now has $u^t = -\sum_{\ell=0}^{t-1} a_\ell u_\ell$, so it follows that $d_2(u_0^*) = a_0 \cdot u_{t-1}^{*(1)}$. Since $a_0 \neq 0$ by assumption, this implies that the image of $u_{t-1}^{*(1)}$ under the horizontal edge map is $0$. Then reasoning as in the fourth paragraph of the proof of Proposition \ref{proposition:erMrs}, it suffices to show that the horizontal edge map sends $u_0^{*(1)} \in E_2^{2,0}$ to the cohomology class $y^2 \in \opH^2(\Monefeta,k)$. Explicitly, by the construction of the May spectral sequence, $u_0^{*(1)}$ is represented by the cocycle
\[ \textstyle
\beta(\sigma_1) := \sum_{j=1}^{p-1} \frac{1}{j! (p-j)!} (\sigma_1)^j \otimes (\sigma_1)^{p-j} = \sum_{j=1}^{p-1} \sigma_j \otimes \sigma_{p-j} \in k[\Monefeta]^{\otimes 2}.
\]
Then by the same reasoning as in the second-to-last paragraph of the proof of Proposition \ref{proposition:erMrs},
\[
a_1 \cdot \beta(\sigma_{p^{t-1}}) \equiv -\beta(\sigma_1) + \tau \otimes \tau + \partial(\sigma_p) \mod F^{p+1}(k[\Monefeta]^{\otimes 2}).
\]
Since $\beta(\sigma_{p^{t-1}})$ is a cocycle representative for $u_{t-1}^{*(1)}$, and since $u_{t-1}^{*(1)}$ maps to $0$ in $\Hbul(\Monefeta,k)$, this implies that $\beta(\sigma_1)$ and $\tau \otimes \tau$ represent the same cohomology class in $\opH^2(\Monefeta,k)$, and hence that the horizontal edge map sends $u_0^{*(1)} \in E_2^{2,0}$ to $y^2 \in \opH^2(\Monefeta,k)$.
\end{proof}

\begin{remark}
Let $\eta \in k$ be arbitrary. Let $0 \leq j < p^r$, and let $j = \sum_{i=0}^{r-1} j_i p^i$ be the $p$-adic decomposition of $j$. Since $\bse_r(\ell)(\Monefeta,\Vab) = 0$ for $0 < \ell < p^{r-1}$, it follows that $\bse_r(j)(\Monefeta,\Vab) = 0$ unless $j \equiv 0 \mod p^{r-1}$, and similarly for $\bse_r^\Pi(j)(\Monefeta,\Vab)$.
\end{remark}

\begin{lemma} \label{lemma:crMonefeta}
Let $\eta \in k$. Let $A \in \calg_k$ be a purely even commutative $k$-superalgebra, let $(\alpha|\beta) \in \bsVonefeta(\GLmn)(A)$, and let $\beta = \sm{0 & \dbeta \\ \ddbeta & 0}$ be the block decomposition of $\beta$ as in Lemma \ref{lemma:freeimpliesall}. Then under the identification (\ref{eq:Extfloatout}), and under the identifications of Proposition \ref{prop:Mrcohomology}(\ref{item:Mrfcohomology}) and (\ref{item:M1fetacohomology}),
\begin{align*}
\bsc_r(\Monefeta,\Vab) = \dbeta^{(r)} \otimes y^{p^r} \quad \text{and} \quad \bsc_r^\Pi(\Monefeta,\Vab) = \ddbeta^{(r)} \otimes y^{p^r}.
\end{align*}
\end{lemma}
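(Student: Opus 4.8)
The plan is to follow the strategy of Propositions~\ref{proposition:erMrs} and~\ref{proposition:erMrfeta}, now analyzing $\bsc_r$ and $\bsc_r^\Pi$ rather than the classes $\bse_r(j)$. First, by Lemma~\ref{lemma:freeimpliesallMonefeta} (the $\Monefeta$-analogue of Lemma~\ref{lemma:freeimpliesall}), applied with $Q(Y) = Y \otimes y^{p^r}$, it suffices to establish the two displayed formulas when $A = k$ and $\Vab$ is one of the free rank-one $k\Monefeta$-supermodules $k\Monefeta$ or $\Pi(k\Monefeta)$, with $\alpha$ and $\beta$ respectively left multiplication by $u$ and by $v$. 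Since conjugation by $\Pi$ interchanges $\bsc_r$ with $\bsc_r^\Pi$ (cf.\ \eqref{eq:coproductPi}) and interchanges these two modules, it is then enough to compute $\bsc_r(\Monefeta,\Vab)$ for them; the formulas for $\bsc_r^\Pi$ follow by applying $\Pi$.

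Because $\Monefeta$ is infinitesimal of height $\le 1$, the homomorphism $\rho_{(\alpha|\beta)}$ factors through $GL(\Vab)_{(1)}$, so $\bsc_r(\Monefeta,\Vab)$ is the restriction of $\bsc_r(GL(\Vab)_{(1)},\Vab)$. Writing $\g = \Lie(\Monefeta)$ for the restricted Lie superalgebra on an even generator $u$ and an odd generator $v$ subject to $u^{[p]} + \tfrac12[v,v] = 0$ and $\eta\,u + \sum_{\ell=s}^{t} a_\ell u^{[p^\ell]} = 0$, we have the May spectral sequence of \eqref{eq:M1Mayspecseq} converging to $\Hbul(\Monefeta,k)$ together with a naturality square of the type \eqref{eq:Mayspecseqcomdiag}. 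Identifying $\Hbul(\Monefeta,k)$ as in Proposition~\ref{prop:Mrcohomology}, and writing $y^{p^r} = y \cdot (y^2)^{(p^r-1)/2}$: here $y$ has May filtration degree $0$ (it is $[v^*] \in E_\infty^{0,1}$), while $y^2 = x_1$ has filtration degree $2$ (the horizontal edge map carries $u_0^{*(1)} \in E_2^{2,0}$ to $x_1$, as in the proofs of Propositions~\ref{proposition:erMrs} and~\ref{proposition:erMrfeta}), so $y^{p^r}$ lies in filtration degree $p^r - 1$ and is represented at the $E_2$-page by $v^* \otimes (u_0^{*(1)})^{(p^r-1)/2} \in \opH^1(\g,k) \otimes S^{(p^r-1)/2}(\gzero^\#[2])^{(1)}$. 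Using the identification \eqref{eq:Extfloatout}, the lemma is thus equivalent to the assertion that the image of $\bsc_r(\Monefeta,\Vab)$ in the corresponding subquotient $E_\infty^{p^r-1,1} \otimes \End_k(\Vab)^{(r)}$ of $\opH^{p^r}(\Monefeta,\End_k(\Vab)^{(r)})$ equals $v^* \otimes (u_0^{*(1)})^{(p^r-1)/2} \otimes \dbeta^{(r)}$, and likewise with $\ddbeta$ for $\Pi(k\Monefeta)$.

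The place where the argument must go beyond the proofs of Propositions~\ref{proposition:erMrs} and~\ref{proposition:erMrfeta} is that $y^{p^r}$ occupies the filtration degree $p^r-1 > 0$ rather than the top ($j = 0$) row of the May filtration, so it is invisible to the vertical edge map. Indeed $\rho_{(\alpha|\beta)}^\#$ carries the odd coordinate functions $Y_{ij}$ of $\glmn$ to $\beta_{ij} v^*$, so by \eqref{crtoG1} the composite of $\bsc_r$ with the vertical edge map and with $\rho_{(\alpha|\beta)}^\#$ has matrix entries $(\beta_{ij})^{p^r} (v^*)^{p^r} \in \opH^{p^r}(\g,k)$, and these vanish whenever $[v,v] \ne 0$ in $\g$ (so that $(v^*)^2$ is a Chevalley--Eilenberg coboundary), which is the case for every $\Monefeta$ except $\M_{1;1}$. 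One must therefore descend one filtration step further, and the natural way to do so is a cochain-level computation as in \cite{Suslin:1997} and \cite{Drupieski:2016}: fix an explicit cocycle representing $\bsc_r$ in the appropriate bar-type resolution, evaluate it on $\Vab$, and restrict along $\rho_{(\alpha|\beta)}$; the resulting degree-$p^r$ cochain in $C^\bullet(\Monefeta,\End_k(\Vab)^{(r)})$ should, after the reindexing of the May spectral sequence, be cohomologous to $\dbeta^{(r)}$ times the Hochschild cocycle $\tau^{\otimes p^r}$ representing $y^{p^r}$. (The exceptional case $\Monefeta = \M_{1;1}$, where $\g$ is abelian and $(v^*)^{p^r} \ne 0$, is instead handled exactly by the vertical edge map argument of Proposition~\ref{proposition:erMrs}.)

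I expect this cochain-level bookkeeping to be the main obstacle: producing a workable cocycle for $\bsc_r$, restricting it along $\rho_{(\alpha|\beta)}$, and matching it with $\tau^{\otimes p^r}$ while keeping track of the sign conventions of super linear algebra and of the degree shifts built into the May spectral sequence. As with the computation of $\bse_r$, though, I do not anticipate needing any conceptually new ingredient beyond those already used in Propositions~\ref{proposition:erMrs} and~\ref{proposition:erMrfeta}.
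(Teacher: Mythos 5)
Your reduction to $A=k$ and to the two free rank-one $k\Monefeta$-supermodules is the same first step the paper takes, and your diagnosis of the obstacle is sharp and correct: $y^{p^r}$ lies strictly below the top row of the May filtration, and the class $(v^*)^{p^r}$ dies in the Lie superalgebra cohomology $\Hbul(\g,k)$ whenever $[v,v]\neq 0$, so reading $\bsc_r$ off the vertical edge map alone gives only ``zero modulo higher filtration.'' However, the proposal stops exactly where the work begins. You announce that ``a cochain-level computation'' will be needed --- fix a cocycle for $\bsc_r$, evaluate on $\Vab$, restrict along $\rho_{(\alpha|\beta)}$, match against $\tau^{\otimes p^r}$ --- but do not produce the cocycle, carry out the restriction, or perform the descent through the filtration. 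That bookkeeping is genuinely nontrivial, and it is the substance of the lemma.

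The paper circumvents the issue by not using the May spectral sequence for this computation at all. Instead it invokes the explicit $V(\g)$-free resolution $X(\g)$ of the trivial module from \cite[\S5.2]{Drupieski:2016a}, for which $\Hom_{V(\g)}(X(\g),k)\cong\Lambda_s(\g^\#)\otimes S(\gzero^\#[2])^{(1)}$ as graded superspaces, together with \cite[Lemma 5.2.4]{Drupieski:2016a}, which identifies $(\bsc_r+\bsc_r^\Pi)\colon S(\glone^\#[p^r])^{(r)}\to\Hbul(V(\g),k)$ on the nose as restriction $\glmn\to\g$ followed by the $p^r$-power map $S(\gone^\#[p^r])^{(r)}\to S(\gone^\#)$ and the inclusion $S(\gone^\#)\hookrightarrow\Lambda_s(\g^\#)$. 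Since restriction carries $Y_{ij}$ to $\beta_{ij}v^*$, the whole computation collapses to identifying the class of $(v^*)^{p^r}$ in $\Hom_{V(\g)}(X(\g),k)$ with $y^{p^r}$, which is supplied by the comparison results of \cite[\S3.5]{Drupieski:2013b}. Note two payoffs of this route that your plan misses: $(v^*)^{p^r}$ is a cocycle in the $X(\g)$-complex itself (not the Chevalley--Eilenberg complex), so there is no issue of exactness; and the argument is uniform, with no special case for $\M_{1;1}$. In short, the ingredient missing from your proposal --- and the real content of the paper's proof --- is the identification of $(\bsc_r+\bsc_r^\Pi)$ at the level of the $X(\g)$-complex via \cite[Lemma 5.2.4]{Drupieski:2016a}; without it your ``cochain-level bookkeeping'' amounts to rederiving that lemma from scratch.
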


\begin{proof}
Retain the notational conventions of the two preceding proofs, so that $k\Monefeta \cong V(\g)$. By Lemma \ref{lemma:freeimpliesall}, we may assume that $A = k$ and that $\g$ is a restricted subalgebra of $\glmn$. Now recall from \cite[\S5.2]{Drupieski:2016a} that there exists a $V(\g)$-free resolution $(X(\g),d_t)$ of the trivial $V(\g)$-module $k$ such that, as graded superspaces,
\begin{equation} \label{eq:Xgiso}
\Hom_{V(\g)}(X(\g),k) \cong \Lambda_s(\g^\#) \otimes S(\gzero^\#[2])^{(1)}
\end{equation}
Then by \cite[Lemma 5.2.4]{Drupieski:2016a} and Convention \ref{convention}, the homomorphism
\[
(\bsc_r + \bsc_r^\Pi): S(\glone^\#[p^r])^{(r)} \rightarrow \Hbul(V(\g),k)
\]
is equal to the composite homomorphism of graded superalgebras
\[
S(\glone^\#[p^r])^{(r)} \rightarrow S(\gone^\#[p^r])^{(r)} \rightarrow \Hbul(V(\g),k),
\]
where the first arrow is induced by restriction from $\glmn$ to $\g$, and the second arrow is induced via \eqref{eq:Xgiso} by the $p^r$-power map $S(\gone^\#[p^r])^{(r)} \rightarrow S(\gone^\#)$ and the inclusion $S(\gone^\#) \hookrightarrow \Lambda_s(\g^\#)$. Restriction from $\glmn$ to $\g$ sends the odd coordinate function $Y_{ij} \in \glone^\#$ to $\beta_{ij} \cdot v^*$, and one can check via the comparison results of \cite[\S3.5]{Drupieski:2013b} that the cohomology class in $\Hbul(V(\g),k)$ of $(v^*)^{p^r}$ identifies with the cohomology class in $\Hbul(\Monefeta,k)$ of $\tau^{\otimes p^r} \in k[\Monefeta]^{\otimes p^r}$, i.e., identifies with $y^{p^r}$. Then $(\bsc_r + \bsc_r^\Pi)(Y_{ij}^{(r)}) = (\beta_{ij})^{p^r} \cdot y^{p^r}$, which implies the result.
\end{proof}

\begin{lemma} \label{lemma:erGalcrGal}
Let $A \in \calg_k$ be a purely even commutative superalgebra. Then for any $0 < \ell \leq r$, any $0 \leq j < p^r$, and any $p$-nilpotent matrix $\alpha = \sm{\dalpha & 0 \\ 0 & \ddalpha} \in \Matmn(A)_{\zero}$, one has in the notation of \cite[Theorem 1.13(4)]{Suslin:1997},
	\begin{align*}
	\bse_r(j)(\Gal \otimes_k A,V_\alpha) &= (\dalpha^{(r)})^{s(j)} \otimes \frac{x_r^{j_0} x_{r-1}^{pj_1} \cdots x_1^{p^{r-1}j_{r-1}}}{(j_0!)(j_1!)\cdots (j_{r-1}!)}, \\
	\bse_r^\Pi(j)(\Gal \otimes_k A,V_\alpha) &= (\ddalpha^{(r)})^{s(j)} \otimes \frac{x_r^{j_0} x_{r-1}^{p j_1} \cdots x_1^{p^{r-1}j_{r-1}}}{(j_0!)(j_1!) \cdots (j_{r-1}!)}, \text{ and} \\
	\bsc_r(\Gal \otimes_k A,V_\alpha) &= \bsc_r^\Pi(\Gal \otimes_k A,V_\alpha) = 0,
	\end{align*}
where $j = \sum_{i=0}^{r-1} j_i p^i$ is the $p$-adic expansion of $j$, $s(j) = \sum_{i=0}^{r-1} j_i$, and $x_i$ is interpreted to be $0$ if $i > \ell$. In particular, $\bse_r(j)(\Gal \otimes_k A,V_\alpha) = \bse_r^\Pi(j)(\Gal \otimes_k A,V_\alpha) = 0$ unless $j \equiv 0 \mod p^{r-\ell}$.
\end{lemma}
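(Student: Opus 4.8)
The plan is to reduce everything to the classical computation of Suslin--Friedlander--Bendel by exploiting that $\Gal$ is purely even. Since $k[\Gal]$ and $A$ are purely even, the Hochschild complex $A \otimes k[\Gal]^{\otimes \bullet}$ computing $\Hbul(\Gal \otimes_k A, A)$ is concentrated in $\Z_2$-degree $\zero$, so $\Hbul(\Gal \otimes_k A, A) \cong \Hbul(\Gar, k) \otimes_k A$ carries only even internal superdegree; moreover along the inclusion $\Gal \hookrightarrow \Gar$ it restricts onto the subalgebra $k[x_1, \ldots, x_\ell] \otimes \Lambda(\lambda_1, \ldots, \lambda_\ell)$, with $x_i, \lambda_i$ for $i > \ell$ restricting to $0$. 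These two facts — vanishing of the odd part and vanishing of the ``high'' $x_i$ — will be the only inputs beyond the classical theory.

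First I would dispose of $\bsc_r$ and $\bsc_r^\Pi$ by a parity count. Under the identification \eqref{eq:Extfloatout} of Remark \ref{remark:Extmatrices} (valid since $\Gal$ is infinitesimal of height $\ell \leq r$), the class $\bsc_r(\Gal \otimes_k A, V_\alpha)$ lies in $\Hom_A((A^{0|n})^{(r)}, (A^{m|0})^{(r)}) \otimes_A \Hbul(\Gal \otimes_k A, A)$, whose first tensor factor is purely odd and whose second is purely even by the previous paragraph; hence every element of this space has $\Z_2$-degree $\one$. But $\Ext_\bsp^\bullet(\bsir,\bsir)$ is a purely even algebra and $\res_{(G,V)}$ is even, so $\bsc_r(\Gal \otimes_k A, V_\alpha)$ has $\Z_2$-degree $\zero$; therefore it vanishes, and the same argument gives $\bsc_r^\Pi(\Gal \otimes_k A, V_\alpha) = 0$.

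For $\bse_r(j)$ and $\bse_r^\Pi(j)$ I would decompose $V_\alpha = A^{m|0} \oplus A^{0|n}$ as a $\Gal \otimes_k A$-supermodule: since $\Gal$ is purely even and $\alpha$ is block diagonal, the purely even and purely odd parts of $\Amn$ are subsupermodules on which $u_0 \in k\Gal$ acts through $\dalpha$ and $\ddalpha$ respectively, the remaining generators acting as $0$. Proposition \ref{prop:charclassproperties}\eqref{item:directsum}, together with the fact that $\bse_r(j)(G,-)$ and $\bse_r^\Pi(j)(G,-)$ take values in $\Ext$-groups between the (purely even, resp.\ purely odd) summands, then gives $\bse_r(j)(\Gal \otimes_k A, V_\alpha) = \bse_r(j)(\Gal \otimes_k A, A^{m|0})$ and $\bse_r^\Pi(j)(\Gal \otimes_k A, V_\alpha) = \bse_r^\Pi(j)(\Gal \otimes_k A, A^{0|n})$. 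Now $A^{m|0}$ is purely even, so evaluating a Yoneda representative of $\bse_r(j)$ on it is the same as first restricting that representative from $\bsp$ to $\cp$ and then evaluating; by Remark \ref{remark:erjtoerj} this restriction carries $\bse_r(j)$ to the class $e_r(j)$ of \cite[Theorem 3.1]{Suslin:1997}, so $\bse_r(j)(\Gal \otimes_k A, A^{m|0})$ is the classical characteristic class $e_r(j)(\Gal \otimes_k A, V_{\dalpha})$. By \cite[Theorem 1.13(4)]{Suslin:1997} (applied over $\Gar$ and then restricted along $\Gal \hookrightarrow \Gar$, using Proposition \ref{prop:charclassproperties}\eqref{item:basechange} to pass to arbitrary $A \in \calg_k$) this equals $(\dalpha^{(r)})^{s(j)} \otimes \frac{x_r^{j_0} x_{r-1}^{p j_1} \cdots x_1^{p^{r-1} j_{r-1}}}{(j_0!)(j_1!) \cdots (j_{r-1}!)}$ with $x_i$ read as $0$ for $i > \ell$. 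For the $\Pi$-twisted class I would use $\bse_r^\Pi(j) = [\bse_r(j)]^\Pi$ and $A^{0|n} = \Pi(A^{n|0})$: the conjugation-by-$\Pi$ identifications on extension groups at the functor and module levels (\cite[\S3.3]{Drupieski:2016}) combine to identify $\bse_r^\Pi(j)(\Gal \otimes_k A, \Pi(A^{n|0}))$ with $\bse_r(j)(\Gal \otimes_k A, A^{n|0})$, which by the case just treated is $(\ddalpha^{(r)})^{s(j)} \otimes \frac{x_r^{j_0} \cdots x_1^{p^{r-1}j_{r-1}}}{(j_0!) \cdots (j_{r-1}!)}$; transporting back along that identification yields the asserted formula under the abuse of notation $\ddalpha = \sm{0 & 0 \\ 0 & \ddalpha}$.

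The ``in particular'' clause is then immediate: if $j \not\equiv 0 \bmod p^{r-\ell}$, some $p$-adic digit $j_i$ of $j$ with $i < r-\ell$ is nonzero, so the monomial $x_r^{j_0} x_{r-1}^{p j_1} \cdots x_1^{p^{r-1}j_{r-1}}$ involves $x_{r-i}$ with $r-i > \ell$ to a positive power and hence vanishes. The steps requiring genuine care, rather than bookkeeping, are the parity argument annihilating $\bsc_r$ and $\bsc_r^\Pi$ and the claim that evaluating a superfunctor extension on the purely even module $A^{m|0}$ factors compatibly through the restriction $\bsp \to \cp$; the latter is the main obstacle, although it is ultimately a formal consequence of the definition of that restriction functor and of the characteristic-class construction in Lemma \ref{lemma:restrictionGV}.
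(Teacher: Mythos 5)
Your argument is correct and follows the same basic strategy as the paper's proof: reduce to the purely even and purely odd summands via the block decomposition and Proposition~\ref{prop:charclassproperties}\eqref{item:directsum}, and then exploit the restriction from $\bsp$ to $\cp$ (via Remark~\ref{remark:erjtoerj}) to transport the computation to the classical result of Suslin--Friedlander--Bendel. The two places where you deviate are both valid alternatives. For $\bsc_r = \bsc_r^\Pi = 0$, you use a parity argument on the target $\Ext$-group directly (odd $\Hom$-space tensored with an even cohomology ring is purely odd, whereas the characteristic class is purely even), which needs no direct-sum decomposition; the paper instead first decomposes and then observes that $\bsirone(V_{\dalpha}) = 0$ (resp.\ $\bsirzero(V_{\ddalpha}) = 0$), making the relevant $\Ext$-groups literally zero. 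Both are short, and yours has the minor virtue of handling $\bsc_r$ in one stroke. For $\bse_r^\Pi(j)$ you explicitly invoke the conjugation-by-$\Pi$ identifications and transport the purely even computation to the purely odd summand, whereas the paper simply declares the purely odd case ``entirely analogous''; these describe the same underlying mechanism at different levels of explicitness, and the compatibility of $\res_{(G,V)}$ with $\Pi$-conjugation that your version leans on is the formal content hidden in ``analogous.'' One small slip: you write $\Hbul(\Gal \otimes_k A, A) \cong \Hbul(\Gar,k) \otimes_k A$ where you clearly mean $\Hbul(\Gal,k) \otimes_k A$; the surrounding sentence about the restriction map $\Hbul(\Gar,k) \to \Hbul(\Gal,k)$ killing $x_i, \lambda_i$ for $i > \ell$ is correct and is what the lemma's convention ``$x_i$ is interpreted to be $0$ if $i > \ell$'' encodes.
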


\begin{proof}
First observe that the block decomposition of $\alpha$ gives rise to the direct sum decomposition $V_\alpha = V_{\dalpha} \oplus V_{\ddalpha}$, where $V_{\dalpha} = A^{m|0}$ considered as a rational $\Gal$-module via $\dalpha$, and $V_{\ddalpha} = A^{0|n}$ considered as a rational $\Gal \otimes_k A$-module via $\ddalpha$. Then Proposition \ref{prop:charclassproperties}\eqref{item:directsum} allows us to reduce to the case where $V_\alpha$ is a homogeneous superspace. Thus, for the rest of the proof let us assume that $V_\alpha$ is purely even and $\alpha = \dalpha$; the details for the purely odd case are entirely analogous. First, since $V_\alpha$ is purely even, one has $\bsirone(V_\alpha) = 0$, and hence $\bse_r^\Pi(j)(\Gal \otimes_k A,V_\alpha) = \bsc_r(\Gal \otimes_k A,V_\alpha) = \bsc_r(\Gal \otimes_k A,V_\alpha) = 0$. Next, there exists a commutative diagram as follows:
\[
\xymatrix{
\Ext_{\bsp}^\bullet(\bsir_{\zero},\bsir_{\zero}) \otimes_k A \ar@{->}[r]^{\sim} \ar@{->}[d]^{\res \otimes 1_A} & \Ext_{\bsp_A}^\bullet((\bsir_{\zero})_A,(\bsir_{\zero})_A) \ar@{->}[r] \ar@{->}[d]^{\res} & \Mat_{m|0}(A) \otimes_A \Hbul(\Gal \otimes_k A, A) \ar@{=}[d] \\
\Ext_{\cp}^\bullet(I^{(r)},I^{(r)}) \otimes_k A \ar@{->}[r]^{\sim} & \Ext_{\cp_A}^\bullet(I_A^{(r)},I_A^{(r)}) \ar@{->}[r] & M_m(A) \otimes_A \Hbul(\Gal \otimes_k A, A).
}
\]
The first vertical arrow is induced by restriction from the category $\bsp$ of strict polynomial superfunctors to the category $\cp$ of ordinary strict polynomial functors (cf.\ \cite[\S2.1]{Drupieski:2016}), and the second vertical arrows is the analogous restriction map from $\bsp_A$ to $\cp_A$. The first pair of horizontal arrows are the base-change isomorphisms of Corollary \ref{cor:functorbasechange} and \cite[Corollary 2.7]{Suslin:1997}, and the second pair of horizontal arrows are the restriction maps of Lemma \ref{lemma:restrictionGV} and \cite[Proposition 3.2]{Suslin:1997}. The leftmost vertical arrow is an isomorphism in cohomological degrees less than $2p^r$, so the calculation of $\bse_r(j)(\Gal \otimes_k A,V_\alpha)$ now follows from \cite[Theorem 4.7]{Suslin:1997}.
\end{proof}

\subsection{Calculation of characteristic classes}\label{SS:calculationofcharacteristicclasses}

In this section again fix $r \geq 1$, fix an inseparable $p$-polynomial $0 \neq f \in k[T]$, and let $\eta \in k$. Again assume that $f = \sum_{i=s}^t a_i T^{p^i}$ with $1 \leq s \leq t$, $a_s \neq 0$, and $a_t = 1$, and set $\Grf = \Gar \times \cdots \times \G_{a(2)} \times \Monef$. Let $A \in \calg_k$ be a purely even commutative $k$-superalgebra, and let $(\ualpha|\beta) \in \Vrfeta(\GLmn)(A)$. Our goal in this section is to finish describing the characteristic classes for $\Mrfeta$ corresponding to the representation $\Vuab$ introduced at the beginning of the preceding section. For $r = 1$ and $\eta \neq 0$, the relevant calculations are already given by Proposition \ref{proposition:erMrfeta} and Lemma \ref{lemma:crMonefeta}.

\begin{theorem} \label{theorem:charclassesWuab}
Let $A \in \calg_k$ be a purely even commutative $k$-superalgebra, and let $(\ualpha|\beta) \in \bsVrf(\GLmn)(A)$. Write the block decompositions of each $\alpha_i$ and $\beta$ as in Lemma \ref{lemma:freeimpliesall}. Let $P_f(X) \in k[X]$ be as in Proposition \ref{proposition:erMrs}, and set $w_1 = x_1 - y^2$. Then for each $0 \leq j < p^r$,
\begin{subequations}
\begin{multline} \label{eq:erjWuab}
\bse_r(j)(\Grf \otimes_k A,\Wuab) = \\
\sum_{\substack{j_0+\cdots+j_{r-1}=j \\ j_0,\ldots,j_{r-1} \geq 0 \\ j_i \equiv 0 \mod p^i}} \sum_{\substack{c,d \geq 0 \\ c+d = j_{r-1,r-1}}}
\Bigg( \binom{j_{r-1,r-1}}{c} (\dalpha_0^{(r)})^{s(j_0)} \cdots (\dalpha_{r-2}^{(r)})^{s(j_{r-2})} (\dalpha_{r-1}^{(r)})^c (P_f(\dalpha_{r-1}^{(r)}))^d \\
\otimes \frac{x_r^{j_{0,0}} x_{r-1}^{p j_{0,1}} \cdots x_1^{p^{r-1} j_{0,r-1}}}{(j_{0,0}!)(j_{0,1}!) \cdots (j_{0,r-1}!)}
\otimes \cdots
\otimes \frac{x_2^{p^{r-2} j_{r-2,r-2}} x_1^{p^{r-1} j_{r-2,r-1}}}{(j_{r-2,r-2}!) (j_{r-2,r-1}!)}
\otimes \frac{x_1^{p^{r-1} c} w_s^{p^{r-1}d}}{j_{r-1,r-1}!} \Bigg),
\end{multline}
\begin{multline} \label{eq:erjPiWuab}
\bse_r^\Pi(j)(\Grf \otimes_k A,\Wuab) = \\
\sum_{\substack{j_0+\cdots+j_{r-1}=j \\ j_0,\ldots,j_{r-1} \geq 0 \\ j_i \equiv 0 \mod p^i}} \sum_{\substack{c,d \geq 0 \\ c+d = j_{r-1,r-1}}}
\Bigg( \binom{j_{r-1,r-1}}{c} (\ddalpha_0^{(r)})^{s(j_0)} \cdots (\ddalpha_{r-2}^{(r)})^{s(j_{r-2})} (\ddalpha_{r-1}^{(r)})^c (P_f(\ddalpha_{r-1}^{(r)}))^d \\
\otimes \frac{x_r^{j_{0,0}} x_{r-1}^{p j_{0,1}} \cdots x_1^{p^{r-1} j_{0,r-1}}}{(j_{0,0}!)(j_{0,1}!) \cdots (j_{0,r-1}!)}
\otimes \cdots
\otimes \frac{x_2^{p^{r-2} j_{r-2,r-2}} x_1^{p^{r-1} j_{r-2,r-1}}}{(j_{r-2,r-2}!) (j_{r-2,r-1}!)}
\otimes \frac{x_1^{p^{r-1} c} w_s^{p^{r-1}d}}{j_{r-1,r-1}!} \Bigg),
\end{multline}
\begin{align}
\bsc_r(\Grf \otimes_k A, \Wuab) &= \dbeta^{(r)} \otimes 1 \otimes \cdots \otimes 1 \otimes y^{p^r}, \text{ and} \label{eq:crWuab} \\
\bsc_r^\Pi(\Grf \otimes_k A, \Wuab) &= \ddbeta^{(r)} \otimes 1 \otimes \cdots \otimes 1 \otimes y^{p^r}. \label{eq:crPiWuab}
\end{align}
\end{subequations}
where $j_i = \sum_{\ell=0}^{r-1} j_{i,\ell}p^\ell$ is the $p$-adic decomposition of $j_i$, and $s(j_i) = \sum_{\ell=0}^{r-1} j_{i,\ell}$.
\end{theorem}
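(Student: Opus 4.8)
The plan is to follow the strategy of \cite[\S4]{Suslin:1997}, of which this theorem is the super-analogue. First I would reduce to free cyclic modules: the right-hand sides of \eqref{eq:erjWuab}--\eqref{eq:crPiWuab} are polynomial expressions in $\dalpha_0^{(r)},\dots,\dalpha_{r-1}^{(r)}$ (resp.\ $\ddalpha_0^{(r)},\dots,\ddalpha_{r-1}^{(r)}$, resp.\ $\dbeta^{(r)}$, resp.\ $\ddbeta^{(r)}$) whose coefficients --- divided monomials in the even classes $x_i,w_s$ together with $y^{p^r}$ --- lie in the ordinary-commutative ring $H(\Grf,k)$, so by Lemma~\ref{lemma:freeimpliesall} it suffices to verify all four formulas when $A=k$ and $\Wuab$ is $k\Mrf$ or $\Pi(k\Mrf)$ with the left-multiplication module structure. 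Moreover the case $\Pi(k\Mrf)$ reduces to the case $k\Mrf$ via the parity-change symmetry \eqref{eq:coproductPi} of $\Delta$ and Proposition~\ref{prop:charclassproperties}, and the case $r=1$ is precisely Proposition~\ref{proposition:erMrs} and Lemma~\ref{lemma:crMonefeta}; so I may assume $r\ge 2$ and $\Wuab=k\Mrf$.

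Next I would exploit the algebra factorization $k\Mrf \cong \bigl(\bigotimes_{i=0}^{r-2}k[u_i]/\subgrp{u_i^p}\bigr)\otimes k\Monef$ of Proposition~\ref{prop:groupalgebras}. Under the regular action, $u_i=\alpha_i$ acts by left multiplication on the $i$-th tensor factor for $0\le i\le r-2$, while $u_{r-1}=\alpha_{r-1}$ and $v=\beta$ act on the $k\Monef$ factor, and all act as the identity on the remaining factors; hence $k\Mrf$, as a $\Grf$-supermodule, is the external tensor product of a $\Gar$-module, a $\G_{a(r-1)}$-module, \dots, a $\G_{a(2)}$-module, and the regular $\Monef$-module $k\Monef$. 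Using the iterated Künneth formula for characteristic classes (Proposition~\ref{proposition:Kunneth}) together with the iterated coproduct (Proposition~\ref{proposition:coproduct} and coassociativity of $\Delta$), I would then expand $\bse_r(j)(\Grf,k\Mrf)$ as a sum over $j_0+\dots+j_{r-1}=j$ of products $\bse_r(j_0)(\Gar,-)\otimes\dots\otimes\bse_r(j_{r-2})(\G_{a(2)},-)\otimes\bse_r(j_{r-1})(\Monef,k\Monef)$: the parity-change superscripts produced by iterating $\Delta$ appear only in even-cardinality patterns, and because each of $\Gar,\dots,\G_{a(2)}$ acts on a purely even superspace, $\bsi_1^{(r)}$ evaluates to $0$ on those factors, forcing the all-untwisted pattern. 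The same device handles $\bse_r^\Pi(j)$, $\bsc_r$, and $\bsc_r^\Pi$; for the last two, $\Delta(\bsc_r)=\bsc_r\otimes\bse_0+\bse_0\otimes\bsc_r+\bsc_r^\Pi\otimes\bse_0^\Pi+\bse_0^\Pi\otimes\bsc_r^\Pi$ and the vanishing of $\bsc_r$, $\bsc_r^\Pi$, and $\bse_0^\Pi$ on the purely even $\G_{a(\ell)}$-factors pins $\bsc_r$ onto the $\Monef$ slot.

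Then I would substitute the known single-factor classes: Lemma~\ref{lemma:erGalcrGal} for each $\G_{a(\ell)}$-factor --- which forces $j_i\equiv 0 \mod p^i$ (otherwise the contribution is zero) and produces the matrix power $(\dalpha_i^{(r)})^{s(j_i)}$ and the divided monomial in $x_r,x_{r-1},\dots$ with the prescribed exponents --- and Proposition~\ref{proposition:erMrs} with Lemma~\ref{lemma:crMonefeta} for the $\Monef$-factor. On the $\Monef$ slot only $j_{r-1}=j_{r-1,r-1}p^{r-1}$ with $0\le j_{r-1,r-1}<p$ contributes, so $\bse_r(j_{r-1})=(\bse_r)^{j_{r-1,r-1}}/j_{r-1,r-1}!$ and the binomial expansion of $\bigl(\dalpha_{r-1}^{(r)}\otimes x_1^{p^{r-1}}+P_f(\dalpha_{r-1}^{(r)})\otimes w_s^{p^{r-1}}\bigr)^{j_{r-1,r-1}}$ reproduces the inner sum over $c+d=j_{r-1,r-1}$ with its coefficient $\binom{j_{r-1,r-1}}{c}/j_{r-1,r-1}!$. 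Because $\alpha_0,\dots,\alpha_{r-1},\beta$ are supported on distinct, hence commuting, tensor slots, the external tensor product of their block-matrix coefficients collapses to the single product $(\dalpha_0^{(r)})^{s(j_0)}\cdots(\dalpha_{r-2}^{(r)})^{s(j_{r-2})}(\dalpha_{r-1}^{(r)})^c(P_f(\dalpha_{r-1}^{(r)}))^d$, and the cohomology tensor factors assemble into the claimed element of $\Hbul(\Grf,k)=\Hbul(\Gar,k)\otimes\dots\otimes\Hbul(\G_{a(2)},k)\otimes\Hbul(\Monef,k)$, giving \eqref{eq:erjWuab}; the identical computation (using $\dalpha_i=\ddalpha_i$ as operators on $k\Mrf$ itself, since $v$ is central in $k\Mrf$) gives \eqref{eq:erjPiWuab}, and \eqref{eq:crWuab}--\eqref{eq:crPiWuab} fall out of the $\bsc_r$-expansion with $\bsc_r(\Monef,k\Monef)=\dbeta^{(r)}\otimes y^{p^r}$. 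Finally, Lemma~\ref{lemma:freeimpliesall} promotes all four identities to arbitrary $A$ and $(\ualpha|\beta)$.

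The conceptual inputs --- the Künneth formula, the coproduct computations of Proposition~\ref{proposition:coproduct}, the single-factor results, and the free-module reduction --- are all available, so the main obstacle is purely the combinatorial bookkeeping of the second and third steps: iterating $\Delta$ and the Künneth isomorphism without error, deciding which parity patterns survive on purely even factors, matching the index shift between the $\G_{a(\ell)}$-factors of $\Grf$ and the summation variables $j_i$ so that the digit sums $s(j_i)$ of Lemma~\ref{lemma:erGalcrGal} become the exponents of $\dalpha_i^{(r)}$, and checking that the binomial coefficients emerge precisely from (and only from) the $\Monef$-factor.
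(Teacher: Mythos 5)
Your proposal follows essentially the same route as the paper's proof: reduce via Lemma~\ref{lemma:freeimpliesall} to the free rank-one modules $k\Mrf$ and $\Pi(k\Mrf)$, decompose the module across the tensor factors of $\Grf$, apply the iterated K\"unneth formula (Proposition~\ref{proposition:Kunneth}) with the iterated coproduct (Proposition~\ref{proposition:coproduct}), discard the parity-mismatched summands on the purely even $\Gal$-factors, and substitute the single-factor computations from Proposition~\ref{proposition:erMrs} and Lemmas~\ref{lemma:crMonefeta} and~\ref{lemma:erGalcrGal}. The one minor stylistic difference is that the paper carries both cases $\Pi^a(k\Mrf)$, $a \in \{0,1\}$, in parallel rather than reducing one to the other, but the observation that makes either bookkeeping painless --- $\dalpha_i = \ddalpha_i$ on the regular representation, which you note --- is the same.
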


\begin{proof}
Given a superspace $V$ and $a \in \set{0,1}$, set $\Pi^a(V) = V$ if $a = 0$, and set $\Pi^a(V) = \Pi(V)$ if $a = 1$. Then by Lemma \ref{lemma:freeimpliesall}, it suffices to assume that:
	\begin{itemize}
	\item $A = k$;
	\item the underlying superspace of $\Wuab$ is
		\[
		\Pi^a(k\Mrf) \cong k[u_0]/\subgrp{u_0^p} \otimes \cdots \otimes k[u_{r-2}]/\subgrp{u_{r-2}^p} \otimes \Pi^a \left( k[u_{r-1},v]/\subgrp{u_{r-1}^p+v^2,f(u_{r-1})} \right),
		\]
	with either $a = 0$ or $a = 1$; and
	\item for $0 \leq i \leq r-1$, the matrix $\alpha_i$ represents the left action of $1^{\otimes i} \otimes u_i \otimes 1^{\otimes (r-1-i)}$, and the matrix $\beta$ represents the left action of $1^{\otimes (r-1)} \otimes v$.
	\end{itemize}
As in Lemma \ref{lemma:k[Mrf]algebra}, a homogeneous basis for $k\Mrf$ is given by the set
\[
\set{u_0^{i_0} \cdots u_{r-2}^{i_{r-2}} u_{r-1}^{i_{r-1}} v^j: 0 \leq i_\ell < p \text{ for $0 \leq \ell \leq r-2$}; 0 \leq i_{r-1} < p^t; 0 \leq j \leq 1}.
\]
In particular, right multiplication by $v$ defines an odd isomorphism $(k\Mrf)_{\zero} \simeq (k\Mrf)_{\one}$. With this identification, it immediately follows for each $i$ that $\dalpha_i = \ddalpha_i$.
	
For $0 \leq i \leq r-2$, let $V_{\alpha_i}$ denote the space $k[u_i]/\subgrp{u_i^p}$ considered as a $k\G_{a(r-i)}$-module such that the generator $u_0 \in k\G_{a(r-i)}$ acts on $k[u_i]/\subgrp{u_i^p}$ as left multiplication by $u_i$, and such that the remaining generators $u_1,\ldots,u_{r-i-1} \in k\G_{a(r-i)}$ act as $0$. Similarly, let $V_{(\alpha_{r-1}|\beta)} = \Pi^a ( k\Monef )$,  considered as a $k\Monef$-supermodule as in Remark \ref{remark:paritychange} via the left action of $k\Monef$ on itself. Let $z \in \Ext_{\bsp}^\bullet(\bsir,\bsir)$. Then by Proposition \ref{proposition:Kunneth},
\[
z(\Grf,\Wuab) = \sum z_{(0)}(\Gar,V_{\alpha_0}) \otimes \cdots \otimes z_{(r-2)}(\G_{a(2)},V_{\alpha_{r-2}}) \otimes z_{(r-1)}(\Monef,V_{(\alpha_{r-1}|\beta)}),
\]
where $\Delta^r(z) = \sum z_{(0)} \otimes \cdots \otimes z_{(r-2)} \otimes z_{(r-1)} \in \Ext_{\bsp}^\bullet(\bsir,\bsir)^{\otimes r}$ denotes the $r$-fold iterated coproduct of $z$. Now as in the proof of \cite[Corollary 4.8]{Suslin:1997}, we apply the explicit description of the coproduct given in Proposition \ref{proposition:coproduct} to reduce to the calculations of Proposition \ref{proposition:erMrs} and Lemmas \ref{lemma:crMonefeta} and \ref{lemma:erGalcrGal}. For example, if $0 \leq j < p^r$ and if $z = \bse_r(j)$ (resp., if $z = \bse_r^\Pi(j)$), then $\Delta^r(z)$ can be written as a sum of monomials $z_{(1)} \otimes \cdots \otimes z_{(r)}$ such that for each $i$ either $z_{(i)} = \bse_r(j_i)$ or $z_{(i)} = \bse_r^\Pi(j_i)$, with an even (resp.\ odd) number of the $z_{(i)}$ of the latter form, and $j_0+ \cdots + j_{r-1} = j$. For $0 \leq i \leq r-2$ the space $V_{\alpha_i}$ is purely even, hence $\bse_r^\Pi(j_i)(\G_{a(i)},V_{\alpha_{r-i}}) = 0$ by Lemma \ref{lemma:erGalcrGal}. Then it follows that
\begin{multline*}
\bse_r(j)(\Grf,\Wuab) = \\
\sum_{j_0+\cdots+j_{r-1} = j} \bse_r(j_0)(\Gar,V_{\alpha_0}) \otimes \cdots \otimes \bse_r(j_{r-2})(\G_{a(2)},V_{\alpha_{r-2}}) \otimes \bse_r(j_{r-1})(\Monef,V_{(\alpha_{r-1}|\beta)}),
\end{multline*}
and
\begin{multline*}
\bse_r^\Pi(j)(\Grf,\Wuab) = \\
\sum_{j_0+\cdots+j_{r-1} = j} \bse_r(j_0)(\Gar,V_{\alpha_0}) \otimes \cdots \otimes \bse_r(j_{r-2})(\G_{a(2)},V_{\alpha_{r-2}}) \otimes \bse_r^\Pi(j_{r-1})(\Monef,V_{(\alpha_{r-1}|\beta)}).
\end{multline*}
Now applying Proposition \ref{proposition:erMrs} and Lemma \ref{lemma:erGalcrGal}, and the observation $\dalpha_i = \ddalpha_i$, the formulas \eqref{eq:erjWuab} and \eqref{eq:erjPiWuab} follow. Arguing similarly in the cases $z = \bsc_r$ and $z = \bsc_r^\Pi$, one gets
	\begin{align*}
	\bsc_r(\Grf,\Wuab) &= \bse_0(\Gar,V_{\alpha_0}) \otimes \cdots \otimes \bse_0(\G_{a(2)},V_{\alpha_{r-2}}) \otimes \bsc_r(\Monef,V_{(\alpha_{r-1}|\beta)}) \\
	&= 1 \otimes \cdots \otimes 1 \otimes \bsc_r(\Monef,V_{(\alpha_{r-1}|\beta)}), &\text{and} \\
	\bsc_r^\Pi(\Grf,\Wuab) &= \bse_0(\Gar,V_{\alpha_0}) \otimes \cdots \otimes \bse_0(\G_{a(2)},V_{\alpha_{r-2}}) \otimes \bsc_r^\Pi(\Monef,V_{(\alpha_{r-1}|\beta)}) \\
	&= 1 \otimes \cdots \otimes 1 \otimes \bsc_r^\Pi(\Monef,V_{(\alpha_{r-1}|\beta)}),
	\end{align*}
so the formulas \eqref{eq:crWuab} and \eqref{eq:crPiWuab} then follow from Lemma \ref{lemma:crMonefeta}.
\end{proof}

Suppose $(\ualpha|\beta) \in \bsVrf(\GLmn)(A)$. From the description of the homomorphism $\wh{\rho}_{(\ualpha|\beta)}$ at the beginning of Section \ref{subsection:reductionlemmas}, it follows that $\rho_{(\ualpha|\beta)}$ is the pullback of $\wh{\rho}_{(\ualpha|\beta)}$ via the homomorphism
\begin{equation} \label{eq:factorquotient}
q \times (q \circ \bsF) \times \cdots \times (q \circ \bsF^{r-2}) \times \bsF^{r-1}: \Mrf \rightarrow \Gar \times \G_{a(r-1)} \times \cdots \times \G_{a(2)} \times \Monef.
\end{equation}

\begin{theorem}  \label{theorem:charclassesVuab}
Let $A \in \calg_k$ be a purely even commutative $k$-superalgebra, and let $(\ualpha|\beta) \in \bsVrf(\GLmn)(A)$. Write the block decompositions of each $\alpha_i$ and $\beta$ as in Lemma \ref{lemma:freeimpliesall}. Let $P_f(X) \in k[X]$ be as in Proposition \ref{proposition:erMrs}, and set $w_1 = x_1-y^2$. Then for each $0 \leq j < p^r$,
\begin{subequations}
\begin{multline} \label{eq:erjVuab}
\bse_r(j)(\Mrf \otimes_k A,\Vuab) = \\
\sum_{\substack{j_0+\cdots+j_{r-1}=j \\ j_0,\ldots,j_{r-1} \geq 0 \\ j_i \equiv 0 \mod p^i}} \sum_{\substack{c,d \geq 0 \\ c+d = j_{r-1,r-1}}}
\Bigg( \binom{j_{r-1,r-1}}{c} (\dalpha_0^{(r)})^{s(j_0)} \cdots (\dalpha_{r-2}^{(r)})^{s(j_{r-2})} (\dalpha_{r-1}^{(r)})^c (P_f(\dalpha_{r-1}^{(r)}))^d \\
\otimes \frac{x_r^{j_{0,0}} x_{r-1}^{p j_{0,1}} \cdots x_1^{p^{r-1} j_{0,r-1}}}{(j_{0,0}!)(j_{0,1}!) \cdots (j_{0,r-1}!)}
\cdots
\frac{x_r^{p^{r-2} j_{r-2,r-2}} x_{r-1}^{p^{r-1} j_{r-2,r-1}}}{(j_{r-2,r-2}!) (j_{r-2,r-1}!)}
\cdot \frac{x_r^{p^{r-1} c} w_s^{p^{r-1}d}}{j_{r-1,r-1}!} \Bigg),
\end{multline}
\begin{multline} \label{eq:erjPiVuab}
\bse_r^\Pi(j)(\Mrf \otimes_k A,\Vuab) = \\
\sum_{\substack{j_0+\cdots+j_{r-1}=j \\ j_0,\ldots,j_{r-1} \geq 0 \\ j_i \equiv 0 \mod p^i}} \sum_{\substack{c,d \geq 0 \\ c+d = j_{r-1,r-1}}}
\Bigg( \binom{j_{r-1,r-1}}{c} (\ddalpha_0^{(r)})^{s(j_0)} \cdots (\ddalpha_{r-2}^{(r)})^{s(j_{r-2})} (\ddalpha_{r-1}^{(r)})^c (P_f(\ddalpha_{r-1}^{(r)}))^d \\
\otimes \frac{x_r^{j_{0,0}} x_{r-1}^{p j_{0,1}} \cdots x_1^{p^{r-1} j_{0,r-1}}}{(j_{0,0}!)(j_{0,1}!) \cdots (j_{0,r-1}!)}
\cdots
\frac{x_r^{p^{r-2} j_{r-2,r-2}} x_{r-1}^{p^{r-1} j_{r-2,r-1}}}{(j_{r-2,r-2}!) (j_{r-2,r-1}!)}
\cdot \frac{x_r^{p^{r-1} c} w_s^{p^{r-1}d}}{j_{r-1,r-1}!} \Bigg),
\end{multline}
\begin{align}
\bsc_r(\Mrf \otimes_k A, \Vuab) &= \dbeta^{(r)} \otimes y^{p^r}, \text{ and} \label{eq:crVuab} \\
\bsc_r^\Pi(\Mrf \otimes_k A, \Vuab) &= \ddbeta^{(r)} \otimes y^{p^r}. \label{eq:crPiVuab}
\end{align}
\end{subequations}
where $j_i = \sum_{\ell=0}^{r-1} j_{i,\ell}p^\ell$ is the $p$-adic decomposition of $j_i$, and $s(j_i) = \sum_{\ell=0}^{r-1} j_{i,\ell}$.
\end{theorem}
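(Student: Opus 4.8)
The plan is to deduce \eqref{eq:erjVuab}--\eqref{eq:crPiVuab} from Theorem \ref{theorem:charclassesWuab} by pulling back along the quotient homomorphism \eqref{eq:factorquotient}. As recorded immediately before the statement, for $(\ualpha|\beta) \in \bsVrf(\GLmn)(A)$ the homomorphism $\rho_{(\ualpha|\beta)}: \Mrf \otimes_k A \rightarrow \GLmn \otimes_k A$ equals $\wh{\rho}_{(\ualpha|\beta)} \circ \pi_A$, where $\pi: \Mrf \rightarrow \Grf$ denotes the homomorphism $q \times (q \circ \bsF) \times \cdots \times (q \circ \bsF^{r-2}) \times \bsF^{r-1}$ of \eqref{eq:factorquotient} and $\pi_A$ is its base change to $A$; in particular, $\Vuab$ is the restriction of $\Wuab$ along $\pi_A$. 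The characteristic-class construction of Definition \ref{def:charclasses} is natural with respect to restriction along supergroup homomorphisms --- this is immediate from the description of $\res_{(G,V)}$ in Lemma \ref{lemma:restrictionGV}, since restriction along a composite $\rho' \circ \sigma$ of supergroup homomorphisms factors as restriction along $\rho'$ followed by restriction along $\sigma$ --- so for every $z \in \Ext_{\bsp}^\bullet(\bsir,\bsir)$ one has
\[
z(\Mrf \otimes_k A,\Vuab) = \pi_A^*\bigl( z(\Grf \otimes_k A,\Wuab) \bigr),
\]
where $\pi_A^*$ denotes the map on $\Ext$-groups induced by $\pi_A$. Taking $z$ to be $\bse_r(j)$, $\bse_r^\Pi(j)$, $\bsc_r$, and $\bsc_r^\Pi$, the theorem reduces to evaluating $\pi_A^*$ on the right-hand sides of \eqref{eq:erjWuab}--\eqref{eq:crPiWuab}.

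Since $A$ is purely even, under the identifications \eqref{eq:Extfloatout} for $\Grf$ and $\Mrf$ the map $\pi_A^*$ acts as the identity on the matrix factor $\Matmn(A)^{(r)}$, tensored over $A$ with the pullback homomorphism $\pi^*: \Hbul(\Grf,k) \rightarrow \Hbul(\Mrf,k)$. By the K\"unneth theorem, $\Hbul(\Grf,k) \cong \Hbul(\Gar,k) \gotimes \Hbul(\G_{a(r-1)},k) \gotimes \cdots \gotimes \Hbul(\G_{a(2)},k) \gotimes \Hbul(\Monef,k)$, and, factoring $\pi$ as the iterated diagonal $\Mrf \rightarrow (\Mrf)^{\times r}$ followed by the product of its components, $\pi^*$ carries an external product $a_0 \otimes \cdots \otimes a_{r-1}$ to the cup product in $\Hbul(\Mrf,k)$ of the pullbacks of the $a_i$ along the components of $\pi$. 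By Lemma \ref{lemma:Fandqoncohomology}, the $(i+1)$-st component $q \circ \bsF^i: \Mrf \rightarrow \G_{a(r-i)}$ (for $0 \leq i \leq r-2$) induces on cohomology the homomorphism sending each generator $x_m$ of $\Hbul(\G_{a(r-i)},k)$ to $x_{m+i} \in \Hbul(\Mrf,k)$, while the last component $\bsF^{r-1}: \Mrf \rightarrow \Monef$ induces the homomorphism sending $x_1 \mapsto x_r$, $w_s \mapsto w_s$, and $y \mapsto y$. Consequently, applying $\pi_A^*$ to \eqref{eq:erjWuab}--\eqref{eq:crPiWuab} amounts to: in the $(i+1)$-st cohomology tensor factor (for $0 \leq i \leq r-2$) replace every $x_m$ by $x_{m+i}$; in the last factor replace $x_1$ by $x_r$ while fixing $w_s$ and $y$; and read the remaining $\otimes$'s between cohomology factors as cup products in $\Hbul(\Mrf,k)$. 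Under these substitutions the monomial $x_{r-i}^{p^i j_{i,i}} \cdots x_1^{p^{r-1} j_{i,r-1}}$ in the $(i+1)$-st factor of \eqref{eq:erjWuab} becomes $x_r^{p^i j_{i,i}} \cdots x_{i+1}^{p^{r-1} j_{i,r-1}}$, the monomial $x_1^{p^{r-1}c} w_s^{p^{r-1}d}$ in the last factor becomes $x_r^{p^{r-1}c} w_s^{p^{r-1}d}$, and $y^{p^r}$ in \eqref{eq:crWuab}--\eqref{eq:crPiWuab} is unchanged; comparison with \eqref{eq:erjVuab}--\eqref{eq:crPiVuab} gives exactly the asserted formulas, with the summation constraints $j_i \equiv 0 \bmod p^i$, the binomial coefficients, and the matrix factors all carrying over verbatim.

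The genuine content of the argument is the naturality of the characteristic classes with respect to $\pi$, together with the factorization $\rho_{(\ualpha|\beta)} = \wh{\rho}_{(\ualpha|\beta)} \circ \pi_A$ recorded before the statement, and the explicit behaviour of $q^*$ and $\bsF^*$ on the cohomology generators supplied by Lemma \ref{lemma:Fandqoncohomology}; once these are in hand the remainder is index bookkeeping. I therefore expect no serious obstacle, since all of the spectral-sequence and coproduct computations underlying the formulas have already been carried out in Theorem \ref{theorem:charclassesWuab} and the preceding lemmas of this section. The one point to handle with care is keeping track of the index shifts $x_m \mapsto x_{m+i}$ factor by factor so that the exponent patterns match those written in \eqref{eq:erjVuab}--\eqref{eq:erjPiVuab}, and confirming that the vanishing convention of Lemma \ref{lemma:erGalcrGal} is precisely what produces the condition $j_i \equiv 0 \bmod p^i$ in the summation.
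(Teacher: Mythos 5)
Your proposal is correct and follows the same route as the paper's proof, which is itself just the single sentence ``Applying the observation that $\Vuab$ is the pullback of $\Wuab$ via the homomorphism \eqref{eq:factorquotient}, the result follows from Theorem \ref{theorem:charclassesWuab} and Lemma \ref{lemma:Fandqoncohomology}.'' Your writeup supplies the unstated bookkeeping (naturality of the characteristic classes under restriction, the factorization of $\pi^*$ through the K\"unneth isomorphism as ``apply $\pi_\ell^*$ componentwise, then take cup products,'' and the index shifts $x_m \mapsto x_{m+i}$ in the $(i+1)$-st factor for $0\le i\le r-2$ and $x_1\mapsto x_r$, $w_s\mapsto w_s$, $y\mapsto y$ in the last factor), all of which checks out against Lemma \ref{lemma:Fandqoncohomology}(1)--(2) and matches the exponent patterns in \eqref{eq:erjVuab}--\eqref{eq:crPiVuab}.
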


\begin{proof}
Applying the observation that $\Vuab$ is the pullback of $\Wuab$ via the homomorphism \eqref{eq:factorquotient}, the result follows from Theorem \ref{theorem:charclassesWuab} and Lemma \ref{lemma:Fandqoncohomology}.
\end{proof}

Suppose $\eta \neq 0$ and $(\ualpha|\beta) \in \bsV_{r+1;f,\eta}(\GLmn)(A)$. Then applying the algebra isomorphism $A\M_{r+1;f,\eta} \cong A\M_{r;f^p}$ of Remark \ref{remark:immediateMrf}\eqref{item:Mrfetaiso}, the $A\M_{r+1;f,\eta}$-supermodule structure on $\Amn$ defined by $(\ualpha|\beta)$ identifies with the $A\M_{r;f^p}$-supermodule structure on $\Amn$ defined by
	\[
	(\ualpha_+|\beta) := (\alpha_1,\ldots,\alpha_r,\beta) \in \bsV_{r;f^p}(\GLmn)(A).
	\]
Thus, at the level of group algebras, the homomorphism $\rho_{(\ualpha|\beta)}: \M_{r+1;f,\eta} \otimes_k A \rightarrow \GLmn \otimes_k A$ admits the factorization
\begin{equation} \label{eq:r+1factorization}
A\M_{r+1;f,\eta} \stackrel{\cong}{\longrightarrow} A\M_{r;f^p} \stackrel{\eqref{eq:factorquotient}}{\longrightarrow} A\G_{r;f^p} \stackrel{\wh{\rho}_{(\ualpha_+|\beta)}}{\longrightarrow} A\GLmn.
\end{equation}

\begin{corollary} \label{corollary:characlassesMrfp}
Suppose $0 \neq \eta \in k$. Let $A \in \calg_k$ be a purely even commutative $k$-superalgebra, and let $(\ualpha|\beta) := (\alpha_0,\ldots,\alpha_{r-1},\alpha_r,\beta) \in \bsV_{r+1;f,\eta}(\GLmn)(A)$. Then identifying $\Hbul(\M_{r+1;f,\eta},k)$ with $\Hbul(\M_{r;f^p},k) \cong \Hbul(\M_{r;s+1},k)$ as in Proposition \ref{prop:Mrcohomology}(\ref{item:Mrfetacohomology}), one has for $0 \leq j < p^r$,
\begin{subequations}
\begin{multline}
\bse_r(j)(\M_{r+1;f,\eta} \otimes_k A,\Vuab) = \\
\sum_{\substack{j_0+\cdots+j_{r-1}=j \\ j_0,\ldots,j_{r-1} \geq 0 \\ j_i \equiv 0 \mod p^i}} \sum_{\substack{c,d \geq 0 \\ c+d = j_{r-1,r-1}}}
\Bigg( \binom{j_{r-1,r-1}}{c} (\dalpha_1^{(r)})^{s(j_0)} \cdots (\dalpha_{r-1}^{(r)})^{s(j_{r-2})} (\dalpha_r^{(r)})^c (P_{f^p}(\dalpha_r^{(r)}))^d \\
\otimes \frac{x_r^{j_{0,0}} x_{r-1}^{p j_{0,1}} \cdots x_1^{p^{r-1} j_{0,r-1}}}{(j_{0,0}!)(j_{0,1}!) \cdots (j_{0,r-1}!)}
\cdots
\frac{x_r^{p^{r-2} j_{r-2,r-2}} x_{r-1}^{p^{r-1} j_{r-2,r-1}}}{(j_{r-2,r-2}!) (j_{r-2,r-1}!)}
\cdot \frac{x_r^{p^{r-1} c} w_{s+1}^{p^{r-1}d}}{j_{r-1,r-1}!} \Bigg),
\end{multline}
\begin{multline}
\bse_r^\Pi(j)(\M_{r+1;f,\eta} \otimes_k A,\Vuab) = \\
\sum_{\substack{j_0+\cdots+j_{r-1}=j \\ j_0,\ldots,j_{r-1} \geq 0 \\ j_i \equiv 0 \mod p^i}} \sum_{\substack{c,d \geq 0 \\ c+d = j_{r-1,r-1}}}
\Bigg( \binom{j_{r-1,r-1}}{c} (\ddalpha_1^{(r)})^{s(j_0)} \cdots (\ddalpha_{r-1}^{(r)})^{s(j_{r-2})} (\ddalpha_r^{(r)})^c (P_{f^p}(\ddalpha_r^{(r)}))^d \\
\otimes \frac{x_r^{j_{0,0}} x_{r-1}^{p j_{0,1}} \cdots x_1^{p^{r-1} j_{0,r-1}}}{(j_{0,0}!)(j_{0,1}!) \cdots (j_{0,r-1}!)}
\cdots
\frac{x_r^{p^{r-2} j_{r-2,r-2}} x_{r-1}^{p^{r-1} j_{r-2,r-1}}}{(j_{r-2,r-2}!) (j_{r-2,r-1}!)}
\cdot \frac{x_r^{p^{r-1} c} w_{s+1}^{p^{r-1}d}}{j_{r-1,r-1}!} \Bigg),
\end{multline}
\begin{align}
\bsc_r(\M_{r+1;f,\eta} \otimes_k A, \Vuab) &= \dbeta^{(r)} \otimes y^{p^r}, \text{ and} \\
\bsc_r^\Pi(\M_{r+1;f,\eta} \otimes_k A, \Vuab) &= \ddbeta^{(r)} \otimes y^{p^r}.
\end{align}
\end{subequations}
where $j_i = \sum_{\ell=0}^{r-1} j_{i,\ell}p^\ell$ is the $p$-adic decomposition of $j_i$, and $s(j_i) = \sum_{\ell=0}^{r-1} j_{i,\ell}$.
\end{corollary}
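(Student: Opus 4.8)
The plan is to deduce the corollary from Theorem \ref{theorem:charclassesVuab} by replacing $f$ with $f^p$, using the factorization \eqref{eq:r+1factorization}. The crucial input, recorded in the paragraph preceding the corollary, is that for $\eta \neq 0$ the algebra isomorphism $A\M_{r+1;f,\eta} \cong A\M_{r;f^p}$ of Remark \ref{remark:immediateMrf}\eqref{item:Mrfetaiso} identifies the $\M_{r+1;f,\eta} \otimes_k A$-representation $\Vuab$ with the pullback along that isomorphism of the $\M_{r;f^p} \otimes_k A$-representation attached by Proposition \ref{proposition:HomMrGLmn} to the Frobenius-shifted tuple $(\ualpha_+|\beta) := (\alpha_1,\ldots,\alpha_r,\beta) \in \bsV_{r;f^p}(\GLmn)(A)$.

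First I would check that $f^p$ meets the standing hypotheses of Section \ref{subsection:reductionlemmas}. Writing $f = \sum_{i=s}^{t} a_i T^{p^i}$ with $a_s \neq 0$ and $a_t = 1$, one has $f^p = \sum_{i=s+1}^{t+1} a_{i-1}^p\, T^{p^i}$, which is again an inseparable $p$-polynomial, now with lowest exponent $p^{s+1}$, nonzero lowest coefficient $a_s^p$, and leading coefficient $a_t^p = 1$. Hence Theorem \ref{theorem:charclassesVuab} applies verbatim with $f$ replaced by $f^p$ and the tuple $(\alpha_0,\ldots,\alpha_{r-1},\beta)$ replaced by $(\alpha_1,\ldots,\alpha_r,\beta)$, computing $\bse_r(j)$, $\bse_r^\Pi(j)$, $\bsc_r$, and $\bsc_r^\Pi$ over $\M_{r;f^p} \otimes_k A$ in terms of the block data of $\alpha_1,\ldots,\alpha_r,\beta$ and the polynomial $P_{f^p}$. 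Because the lowest exponent of $f^p$ is $p^{s+1}$, these formulas live in $\Hbul(\M_{r;f^p},k)$, which Proposition \ref{prop:Mrcohomology}(\ref{item:Mrfcohomology}) identifies with $\Hbul(\M_{r;s+1},k)$, and the distinguished generator occurring in them is $w_{s+1}$.

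Next I would invoke the naturality of characteristic classes under restriction along supergroup scheme homomorphisms, which is immediate from the construction of $\res_{(G,V)}$ in Lemma \ref{lemma:restrictionGV} and is the same device already used to pass from Theorem \ref{theorem:charclassesWuab} to Theorem \ref{theorem:charclassesVuab} via the quotient \eqref{eq:factorquotient}. Applied to the isomorphism $\M_{r+1;f,\eta} \otimes_k A \cong \M_{r;f^p} \otimes_k A$ together with the identification of $\Vuab$ above, this shows that for each $z \in \Ext_{\bsp}^\bullet(\bsir,\bsir)$ the class $z(\M_{r+1;f,\eta} \otimes_k A, \Vuab)$ is the image of the corresponding class over $\M_{r;f^p} \otimes_k A$ under the homomorphism induced by the isomorphism; under the identification \eqref{eq:Extfloatout} this homomorphism is the identity on the matrix factor $\Matmn(A)^{(r)}$ and is the cohomology isomorphism $\Hbul(\M_{r+1;f,\eta},k) \cong \Hbul(\M_{r;f^p},k)$ of Proposition \ref{prop:Mrcohomology}(\ref{item:Mrfetacohomology}) on the remaining factor. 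Transporting the formulas produced by Theorem \ref{theorem:charclassesVuab} through this identification then yields the stated formulas, the $p$-adic bookkeeping data $j_{i,\ell}$ and $s(j_i)$ being inherited unchanged.

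I do not anticipate a substantive obstacle: the argument is formal once the factorization \eqref{eq:r+1factorization} is available. The only point requiring real care is the combinatorial bookkeeping — confirming that $f^p$ is in the normalized form demanded by Section \ref{subsection:reductionlemmas}, that the index shift from $(\alpha_0,\ldots,\alpha_{r-1})$ to $(\alpha_1,\ldots,\alpha_r)$ reproduces the indices in the statement, and that the two cohomology identifications (Proposition \ref{prop:Mrcohomology}(\ref{item:Mrfcohomology}) comparing $\M_{r;f^p}$ with $\M_{r;s+1}$, and Proposition \ref{prop:Mrcohomology}(\ref{item:Mrfetacohomology}) comparing $\M_{r+1;f,\eta}$ with $\M_{r;f^p}$) compose to the single identification named in the corollary, carrying $w_{s+1}$ to $w_{s+1}$.
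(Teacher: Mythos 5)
Your proposal is correct and follows essentially the same route as the paper: both apply the factorization \eqref{eq:r+1factorization}, reducing the computation over $\M_{r+1;f,\eta}$ to Theorem \ref{theorem:charclassesVuab} applied to $\M_{r;f^p}$ with the shifted tuple $(\alpha_1,\ldots,\alpha_r,\beta)$, and then transport across the algebra isomorphism $A\M_{r+1;f,\eta}\cong A\M_{r;f^p}$. Your careful checks that $f^p$ is again normalized (lowest exponent $p^{s+1}$, leading coefficient $1$) and that the cohomology identifications carry $w_{s+1}$ to itself are exactly the bookkeeping the paper's terse ``follows immediately'' elides.
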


\begin{proof}
Applying the factorization \eqref{eq:r+1factorization}, the calculation of the characteristic classes for $\M_{r+1;f,\eta}$ follows immediately from the calculation for $\M_{r;f^p}$.
\end{proof}

\begin{corollary} \label{corollary:coefficient}
In the conditions and notations of Theorem \ref{theorem:charclassesVuab}, the coefficients with which $x_r^j$ appears in $\bse_r(j)(\Mrf \otimes_k A, \Vuab)$ and $\bse_r^\Pi(j)(\Mrf \otimes_k A, \Vuab)$, respectively, are
\[
\frac{(\dalpha_0^{(r)})^{j_0}(\dalpha_1^{(r)})^{j_1} \cdots (\dalpha_{r-1}^{(r)})^{j_{r-1}}}{(j_0!)(j_1!) \cdots (j_{r-1}!)}
\quad \text{and} \quad
\frac{(\ddalpha_0^{(r)})^{j_0} (\ddalpha_1^{(r)})^{j_1} \cdots (\ddalpha_{r-1}^{(r)})^{j_{r-1}}}{(j_0!)(j_1!) \cdots (j_{r-1}!)},
\]
where $j = \sum_{i=0}^{r-1} j_i p^i$ is the $p$-adic decomposition of $j$. (In the case $s=1$, we mean that we ignore the fact that $w_1 = x_1 - y^2$, and instead consider $x_1,\ldots,x_r,w_1$ as commuting algebraically independent indeterminants.) In particular, taking $1 \leq \ell \leq r$ and $j = p^{\ell-1}$, the coefficients with which $x_r^{p^{\ell-1}}$ appears in $\bse_\ell^{(r-\ell)}(\Mrf \otimes_k A,\Vuab)$ and $(\bse_\ell^{(r-\ell)})^\Pi(\Mrf \otimes_k A,\Vuab)$ are $\dalpha_{\ell-1}^{(r)}$ and $\ddalpha_{\ell-1}^{(r)}$, respectively.
\end{corollary}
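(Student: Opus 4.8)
The plan is to read off the two claims directly from the explicit formulas \eqref{eq:erjVuab} and \eqref{eq:erjPiVuab} of Theorem \ref{theorem:charclassesVuab}. Fix $0 \leq j < p^r$ with $p$-adic decomposition $j = \sum_{i=0}^{r-1} j_i p^i$. Recall that \eqref{eq:erjVuab} expresses $\bse_r(j)(\Mrf \otimes_k A, \Vuab)$ as a double sum---over tuples $(j_0,\dots,j_{r-1})$ with $j_0 + \cdots + j_{r-1} = j$ and $j_i \equiv 0 \bmod p^i$, and over pairs $(c,d)$ with $c + d = j_{r-1,r-1}$---in which each summand carries a cohomology factor that is a monomial in $x_1, \dots, x_r, w_s$; following the statement's convention, when $s = 1$ we treat $x_1,\dots,x_r,w_1$ as algebraically independent indeterminates (so we do not rewrite $w_1$ as $x_1 - y^2$), and for $s \geq 2$ the $x_i$ and $w_s$ are genuinely algebraically independent in $\Hbul(\Mrf,k)$.

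First I would determine which summands of \eqref{eq:erjVuab} contribute a pure power of $x_r$. Since those generators are (formally) independent, a monomial in them is a scalar multiple of $x_r^j$ precisely when every generator other than $x_r$ occurs with exponent $0$. Inspecting the last factor $\frac{x_r^{p^{r-1}c} w_s^{p^{r-1}d}}{j_{r-1,r-1}!}$ forces $d = 0$ and hence $c = j_{r-1,r-1}$; inspecting the $i$-th factor for $0 \leq i \leq r-2$, which has the form $\frac{x_r^{p^i j_{i,i}} x_{r-1}^{p^{i+1} j_{i,i+1}} \cdots x_{i+1}^{p^{r-1} j_{i,r-1}}}{(j_{i,i}!) \cdots (j_{i,r-1}!)}$ (the index shift being an artifact of the Frobenius twists in \eqref{eq:factorquotient}, cf.\ Lemma \ref{lemma:Fandqoncohomology}), forces $j_{i,i+1} = \cdots = j_{i,r-1} = 0$. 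Combined with $j_i \equiv 0 \bmod p^i$ this pins $j_i$ down to $j_{i,i}p^i$ with $0 \leq j_{i,i} < p$; since $\sum_i j_{i,i}p^i = \sum_i j_i = j$, uniqueness of $p$-adic expansions identifies $j_{i,i}$ with the $i$-th $p$-adic digit of $j$. So exactly one summand survives, and I would then read off its value: its binomial coefficient is $\binom{j_{r-1,r-1}}{j_{r-1,r-1}} = 1$, its $P_f$-factor is trivial because $d = 0$, the digit sum satisfies $s(j_{i,i}p^i) = j_{i,i}$, so the matrix part collapses to $(\dalpha_0^{(r)})^{j_0} \cdots (\dalpha_{r-1}^{(r)})^{j_{r-1}}$ and the cohomology part to $\bigl(\prod_i \tfrac{1}{j_i!}\bigr) x_r^j$. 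This is exactly the asserted coefficient, and the argument for \eqref{eq:erjPiVuab} is word-for-word the same with each $\dalpha_i$ replaced by $\ddalpha_i$.

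Finally, for the ``in particular'' clause I would specialize to $j = p^{\ell-1}$ with $1 \leq \ell \leq r$, whose only nonzero $p$-adic digit is a $1$ in position $\ell-1$, so that the general coefficient formula just obtained reduces to $\dalpha_{\ell-1}^{(r)}$ (resp.\ $\ddalpha_{\ell-1}^{(r)}$). It then remains only to observe that $\bse_r(p^{\ell-1}) = \bse_\ell^{(r-\ell)}$ by the definition of $\bse_r(j)$ in Proposition \ref{prop:erj}, so that applying the algebra homomorphism $\res_{(\Mrf \otimes_k A,\Vuab)}$ of Lemma \ref{lemma:restrictionGV} turns the statement into the asserted one about $\bse_\ell^{(r-\ell)}(\Mrf \otimes_k A,\Vuab)$ and $(\bse_\ell^{(r-\ell)})^\Pi(\Mrf \otimes_k A,\Vuab)$. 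I do not anticipate a real obstacle: the whole argument is the extraction of a single term from an already-established formula, and the only point needing genuine care is the $p$-adic digit bookkeeping above---in particular the reminder that, when $s = 1$, terms with $d > 0$ must not be rewritten via $w_1 = x_1 - y^2$, since in the case $r = 1$ doing so would introduce further powers of $x_r$ and spuriously alter the coefficient of $x_r^j$.
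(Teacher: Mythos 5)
Your proposal is correct and follows essentially the same approach as the paper: the paper's proof likewise observes that the only summands in \eqref{eq:erjVuab} and \eqref{eq:erjPiVuab} involving a pure power of $x_r$ are those with $j_i = j_{i,i}p^i$ for each $i$ (and, implicitly, $d=0$), then reads off the coefficient, though it states this in one line without the explicit bookkeeping you supply.
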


\begin{proof}
In the notation of Theorem \ref{theorem:charclassesVuab}, one gets summands involving only $x_r^j$ in \eqref{eq:erjVuab} and \eqref{eq:erjPiVuab} if and only if $j_i = j_{i,i}p^i$ for each $i$ (with $0 \leq j_{i,i} < p$). This implies the result.
\end{proof}

As a consequence of Theorem \ref{theorem:charclassesVuab}, we can also finally pin down the structure constants of the extension algebra $\Ext_{\bsp}^\bullet(\bsir,\bsir)$.

\begin{theorem} \label{theorem:Extalgebrarelations}
Let $r \geq 1$. Then under the assumption of Convention \ref{convention}, $\Ext_{\bsp}^\bullet(\bsir,\bsir)$ is generated by the distinguished (purely even) extension classes \eqref{eq:Extgenerators} and the identity elements
\[
\bse_0 \in \Hom_{\bsp}(\bsi_0^{(r)},\bsi_0^{(r)}) \quad \text{and} \quad \bse_0^\Pi \in \Hom_{\bsp}(\bsi_1^{(r)},\bsi_1^{(r)})
\]
subject only to the relations imposed by the matrix ring decomposition \eqref{eq:matrixring} and:
	\begin{enumerate}
	\item $(\bse_r)^p = \bsc_r \circ \bsc_r^\Pi$ and $(\bse_r^\Pi)^p = \bsc_r^\Pi \circ \bsc_r$.

	\item For each $1 \leq i < r$, $(\bse_i^{(r-i)})^p = [(\bse_i^{(r-i)})^\Pi]^p = 0$.

	\item For each $1 \leq i \leq r$, $\bse_i^{(r-i)} \circ \bsc_r = \bsc_r \circ (\bse_i^{(r-i)})^\Pi$ and $(\bse_i^{(r-i)})^\Pi \circ \bsc_r^\Pi = \bsc_r^\Pi \circ \bse_i^{(r-i)}$.

	\item The subalgebra generated by $\bse_1^{(r-1)},\ldots,\bse_r,(\bse_1^{(r-1)})^\Pi,\ldots,\bse_r^\Pi$ is commutative.
	\end{enumerate}
\end{theorem}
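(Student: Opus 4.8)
The plan is to realize $\Ext_{\bsp}^\bullet(\bsir,\bsir)$ as the image of the graded (super)algebra $E$ presented by symbols matching $\bse_0$, $\bse_0^\Pi$ and the distinguished classes \eqref{eq:Extgenerators}, modulo the matrix-ring relations of \eqref{eq:matrixring} together with relations (1)--(4). There is then a natural algebra homomorphism $\phi\colon E\to\Ext_{\bsp}^\bullet(\bsir,\bsir)$, and $\phi$ is surjective because these classes generate the Yoneda algebra, as recalled at the beginning of Section \ref{S:Ext} (see \cite{Drupieski:2016,Drupieski:2017}). The theorem follows once we show: (a) relations (1)--(4) genuinely hold in $\Ext_{\bsp}^\bullet(\bsir,\bsir)$ under Convention \ref{convention}, so that $\phi$ is well-defined; and (b) $\phi$ is injective. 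For (b), the strategy of \cite[\S4]{Suslin:1997} applies: using only the defining relations of $E$, every product of generators is reduced to a normal form, yielding a $k$-spanning set of $E$ whose image under $\phi$ is exactly the union of the four bases of Proposition \ref{prop:erj}; since a surjection carrying a spanning set onto a basis is an isomorphism, this finishes the argument.

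For step (a), the relations living entirely inside the diagonal component $\Ext_{\bsp}^\bullet(\bsirzero,\bsirzero)$ of \eqref{eq:matrixring}---relation (2), and the part of relation (4) involving only the classes $\bse_i^{(r-i)}$---are inherited from the classical computation of Suslin, Friedlander, and Bendel. Indeed, as recorded in the proof of \cite[Theorem 4.7.1]{Drupieski:2016}, the restriction map $\Ext_{\bsp}^j(\bsirzero,\bsirzero)\to\Ext_{\cp}^j(I^{(r)},I^{(r)})$ is an isomorphism for $j<2p^r$, and all of the products in question have cohomological degree $<2p^r$ since $p\geq 3$; so the corresponding identities in $\Ext_{\cp}^\bullet(I^{(r)},I^{(r)})$ established in \cite{Suslin:1997} transport back to $\bsp$. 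Applying the $\Pi$-conjugation automorphism of $\Ext_{\bsp}^\bullet(\bsir,\bsir)$, which swaps the two diagonal blocks of \eqref{eq:matrixring}, then yields the analogous statements on $\Ext_{\bsp}^\bullet(\bsirone,\bsirone)$, completing the verification of (2) and (4).

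The genuinely new relations are (1) and (3), which involve the off-diagonal classes $\bsc_r,\bsc_r^\Pi$; this is where the calculations of Section \ref{section:charclasses} are brought to bear. By Proposition \ref{prop:erj} the relevant graded component of the relevant matrix entry is one-dimensional, so each side of (1) (resp.\ of (3)) is a priori a scalar multiple of the other, and the content is that Convention \ref{convention} forces the scalar to be exactly as displayed. To pin it down, restrict both sides along a homomorphism $\rho_{(\ualpha|\beta)}\colon\Mrf\otimes_k A\to\GLmn\otimes_k A$ for a conveniently chosen $f$ and tuple $(\ualpha|\beta)$---for instance one with $\alpha_0=\cdots=\alpha_{r-2}=0$ and $\dalpha_{r-1},\dbeta$ invertible---and compare the images using the explicit formulas of Theorem \ref{theorem:charclassesVuab} and Corollary \ref{corollary:coefficient}. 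Since restriction is a homomorphism of graded superalgebras and Remark \ref{remark:Extmatrices} identifies Yoneda products with matrix products, this reduces relations (1) and (3) to elementary identities among the matrices $\dalpha_i^{(r)},\ddalpha_i^{(r)},\dbeta^{(r)},\ddbeta^{(r)}$, together with the relation $\alpha_{r-1}^p+\beta^2=0$ coming from the group algebra and the relation tying $y^2$ to $x_r$ in $\Hbul(\Mrf,k)$ from Proposition \ref{prop:Mrcohomology}. The subtle point here---and the reason Theorem \ref{theorem:charclassesVuab} is needed rather than a single naive restriction---is that one must check the chosen test representation does not annihilate the classes being compared, so that the scalar can be read off; this is exactly what the nonvanishing of leading coefficients in Corollary \ref{corollary:coefficient} supplies.

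For step (b)---which I expect to be the lengthiest part, though conceptually a transcription of \cite[\S4]{Suslin:1997}---one proceeds as follows. The matrix-ring relations split $E$ into four pieces corresponding to the entries of \eqref{eq:matrixring}; relation (4) lets one commute the $\bse_i^{(r-i)}$'s among themselves (and likewise the $(\bse_i^{(r-i)})^\Pi$'s); relation (2) bounds the exponent of each $\bse_i^{(r-i)}$ and $(\bse_i^{(r-i)})^\Pi$ for $i<r$ by $p-1$; relation (3) moves any $\bse_i^{(r-i)}$ past a $\bsc_r$ at the cost of replacing it by $(\bse_i^{(r-i)})^\Pi$; and relation (1), together with the vanishing of $\bsc_r\bsc_r$ and $\bsc_r^\Pi\bsc_r^\Pi$ forced by the matrix-ring relations, lets one rewrite any surplus $(\bse_r)^p$ as $\bsc_r\bsc_r^\Pi$ and collapse any alternating string of $\bsc_r$'s and $\bsc_r^\Pi$'s. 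Carrying this out, every monomial in the generators equals (up to sign) one of $\bse_r(j)$, $\bse_r(j)\bsc_r$, $\bse_r^\Pi(j)$, or $\bse_r^\Pi(j)\bsc_r^\Pi$ for a unique $j\in\N$, where $\bse_r(j)$ and $\bse_r^\Pi(j)$ denote the polynomials in the generators occurring in Proposition \ref{prop:erj} (with $(\bse_r)^p$ reinterpreted via relation (1) when $j\geq p^r$). Under $\phi$ this spanning set of $E$ maps bijectively onto the four bases of Proposition \ref{prop:erj}, so $\phi$ is an isomorphism.
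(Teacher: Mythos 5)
Your argument is correct, and the core step --- pinning down the structure constants in relations (1) and (3) by restricting along representations $\rho_{(\ualpha|\beta)}\colon\Mrf\otimes_k A\to\GLmn\otimes_k A$ and reading off both sides via Theorem \ref{theorem:charclassesVuab} together with the algebra identification of Remark \ref{remark:Extmatrices} --- is precisely the paper's strategy. The main difference is one of scope. The paper cites \cite[Theorem 5.1.1]{Drupieski:2016a} for the fact that the displayed relations already give a complete presentation once certain structure constants (shown there to be $\pm1$) are specified, so its proof consists solely of checking that Convention \ref{convention} forces each of those signs to be $+1$; you instead re-derive the completeness from scratch via a normal-form reduction against the bases of Proposition \ref{prop:erj} (your step (b)), which, while sound, duplicates the cited result. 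Your dimension-count observation --- that by Proposition \ref{prop:erj} each side of (1) and (3) lies in a one-dimensional graded component of the appropriate matrix entry, so the two sides are a priori proportional --- is a clean, self-contained substitute for citing the $\pm1$ determination from the earlier paper, and your reduction of (2) and the $\bse$-only part of (4) to the classical computation via the low-degree isomorphism $\Ext_{\bsp}^j(\bsirzero,\bsirzero)\cong\Ext_{\cp}^j(I^{(r)},I^{(r)})$ (for $j<2p^r$, using $p\geq 3$) is exactly the relevant input here. The overall logic --- surjectivity from \cite{Drupieski:2016,Drupieski:2017}, well-definedness by verifying the relations via characteristic-class computations, injectivity from matching a spanning set onto a known basis --- is sound and agrees with the paper's for the part the paper actually proves.
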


\begin{proof}
Up to a rescaling of the algebra generators, and modulo certain structure constants that were determined to be either $+1$ or $-1$, these relations were established in \cite[Theorem 5.1.1]{Drupieski:2016a}. Now to see that the above relations hold under the assumptions of Convention \ref{convention} (i.e., to show that the undetermined constants are all equal to $+1$), it suffices to observe by Theorem \ref{theorem:charclassesVuab} that the stated relations hold upon restriction to the $(\Mrf \otimes_k A)$-supermodule $\Vuab$ for each $A \in \calg_k$ and each $(\ualpha|\beta) \in \bsVrf(\GLmn)(A)$. For example, since $(\dalpha_i)^p = 0$ for $0 \leq i \leq r-2$, the only summands in \eqref{eq:erjVuab} that will contribute to $[\bse_r(p^{r-1})(\Mrf,\Vuab)]^p$ are those with $j_0 = \cdots = j_{r-2} = 0$ and $j_{r-1} = p^{r-1}$ (hence $j_{r-1,r-1} = 1$). Then
\begin{align*}
[\bse_r(\Mrf,\Vuab)]^p &= [\bse_r(p^{r-1})(\Mrf,\Vuab)]^p \\
&= [\dalpha_{r-1}^{(r)} \otimes x_r^{p^{r-1}} + P_f(\dalpha_{r-1}^{(r)}) \otimes w_s^{p^{r-1}}]^p \\
&= (\dalpha_{r-1}^p)^{(r)} \otimes x_r^{p^r} + P_f(\dalpha_{r-1}^{(r)})^p \otimes w_s^{p^r}.
\end{align*}
Note that $P_f(X)^p = \sum_{\ell=1}^{t-1} (a_{\ell+1} a_s^{-1})^{p^r} X^{p^{\ell+1}} = \sum_{\ell=2}^t (a_\ell a_s^{-1})^{p^r} X^{p^\ell}$, so
\[
P_f(\dalpha_{r-1}^{(r)})^p  \otimes w_s^{p^r} = \begin{cases}
\left( a_s^{-1} \cdot f(\dalpha_{r-1}) \right)^{(r)} \otimes w_s^{p^r} = 0 \otimes w_s^{p^r} = 0 & \text{if $s \geq 2$,} \\
- (\dalpha_{r-1}^p)^{(r)} \otimes w_1^{p^r} = -(\dalpha_{r-1}^p)^{(r)} \otimes (x_1^p - y^{2p}) & \text{if $s = 1$.}
\end{cases}
\]
On the other hand, since $y^{p^r}$ and $\ddbeta^{(r)}$ are both of odd superdegree,
\begin{align*}
\bsc_r(\Mrf,\Vuab) \cdot \bsc_r^\Pi(\Mrf,\Vuab) &= (\dbeta^{(r)} \otimes y^{p^r}) \cdot (\ddbeta^{(r)} \otimes y^{p^r}) \\
&= - (\dbeta^{(r)} \cdot \ddbeta^{(r)}) \otimes (y^{p^r} \cdot y^{p^r}) \\
&= - (\dbeta \ddbeta)^{(r)} \otimes y^{2p^r}.
\end{align*}
By assumption $\alpha_{r-1}^p + \beta^2 = 0$, so $\dalpha_{r-1}^p = -\dbeta\ddbeta$, and hence $-(\dbeta \ddbeta)^{(r)} \otimes y^{2p^r} = (\dalpha_{r-1}^p)^{(r)} \otimes y^{2p^r}$. Since for $s \geq 2$ one has $y^{2p^r} = (y^2)^{p^r} = x_r^{p^r}$, it follows that
\[
[\bse_r(\Mrf,\Vuab)]^p = \bsc_r(\Mrf,\Vuab) \cdot \bsc_r^\Pi(\Mrf,\Vuab).
\]
The other relations are verified in a similar fashion.
\end{proof}

\section{Geometric applications}\label{section:geometricapps}

In this section we apply the results of Section \ref{section:charclasses} to obtain information about the maximal ideal spectrum of the cohomology ring $\Hbul(\GLmnr,k)$ of the $r$-th Frobenius kernel of $\GLmn$.

\subsection{The homomorphism \texorpdfstring{$\phi$}{phi}} \label{subsection:phi}

As recalled at the beginning of Section \ref{subsection:cohomology}, the cohomology ring $\Hbul(G,k)$ of a finite $k$-supergroup scheme $G$ is a graded-commutative superalgebra. In particular, if $a,b \in \Hbul(G,k)$ are homogeneous with respect to both the cohomological $\Z$-grading and the internal $\Z_2$-grading, then
	\[
	a \cdot b = (-1)^{\deg(a) \cdot \deg(b) + \ol{a} \cdot \ol{b}} b \cdot a.
	\]
This implies that the subspace
\begin{equation} \label{eq:H(G,k)}
H(G,k) := \opH^{\ev}(G,k)_{\zero} \oplus \opH^{\odd}(G,k)_{\one}
\end{equation}
of $\Hbul(G,k)$ is a commutative $k$-algebra in the ordinary sense, while the subspace
\[
\opH^{\odd}(G,k)_{\zero} \oplus \opH^{\ev}(G,k)_{\one}
\]
consists of nilpotent elements; cf.\ \cite[Corollary 2.2.5]{Drupieski:2016a}. Thus, there exists a canonical identification between the maximal (resp.\ prime) ideal spectra of $\Hbul(G,k)$ and of $H(G,k)$. As in \cite[Definition 2.3.8]{Drupieski:2016a}, we define the cohomology variety $\abs{G}$ of $G$ to be the maximal ideal spectrum of $H(G,k)$:
\[
\abs{G} = \Max \left( H(G,k) \right) = \Max \left( \Hbul(G,k) \right).
\]
By abuse of notation, we may also use the notation $\abs{G}$ to denote the $k$-scheme defined by the commutative $k$-algebra $H(G,k)$. (It should always be clear from the context whether we mean $\abs{G}$ to mean the affine variety or the affine scheme defined by $H(G,k)$.)

Fix integers $m,n,r \geq 1$. As in \cite[\S5.1]{Drupieski:2016a} and \cite[\S5.1]{Drupieski:2016}, set
\begin{align*}
\g_m &= \Hom_k(k^{m|0},k^{m|0}), & \g_{+1} &= \Hom_k(k^{0|n},k^{m|0}),\\
\g_n &= \Hom_k(k^{0|n},k^{0|n}), & \g_{-1} &= \Hom_k(k^{m|0},k^{0|n}),
\end{align*}
so that $\glzero = \g_m \oplus \g_n$, and $\glone = \g_{+1} \oplus \g_{-1}$. Then as discussed in \cite[\S5.1]{Drupieski:2016a} (cf.\ also \cite[\S5.1]{Drupieski:2016}), the characteristic classes
\begin{subequations}
\begin{equation} \label{eq:eicharclasshom}
\bse_i^{(r-i)}(\GLmnr,\kmn) \in \g_m^{(r)} \otimes \opH^{2p^{i-1}}(\GLmnr,k) \cong \Hom_k(\g_m^{\#(r)},\opH^{2p^{i-1}}(\GLmnr,k)),
\end{equation}
\begin{multline}
(\bse_i^{(r-i)})^\Pi (\GLmnr,\kmn) \in \g_n^{(r)} \otimes \opH^{2p^{i-1}}(\GLmnr,k) \\
\cong \Hom_k(\g_n^{\#(r)},\opH^{2p^{i-1}}(\GLmnr,k)),
\end{multline}
for $1 \leq i \leq r$, and
\begin{align}
\bsc_r(\GLmnr,\kmn) &\in \g_{+1}^{(r)} \otimes \opH^{p^r}(\GLmnr,k) \cong \Hom_k(\g_{+1}^{\#(r)},\opH^{p^r}(\GLmnr,k)), \text{ and} \\
\bsc_r^\Pi(\GLmnr,\kmn) &\in \g_{-1}^{(r)} \otimes \opH^{p^r}(\GLmnr,k) \cong \Hom_k(\g_{-1}^{\#(r)},\opH^{p^r}(\GLmnr,k)), \label{eq:crPicharclasshom}
\end{align}
\end{subequations}
extend multiplicatively to a homomorphism of graded superalgebras
\begin{equation} \label{eq:phi}
\phi: \left( \bigotimes_{i=1}^r S(\glzero^{\#(r)}[2p^{i-1}]) \right) \otimes S(\glone^{\#(r)}[p^r]) \rightarrow \Hbul(\GLmnr,k),
\end{equation}
One of the main results of \cite{Drupieski:2016} was that $\Hbul(\GLmnr,k)$ is finite over the image of $\phi$. Since $\phi$ preserves both the $\Z$-degree and the superdegree of elements, the image of $\phi$ is contained in the subalgebra $H(\GLmnr,k)$ of $\Hbul(\GLmnr,k)$.

Recall from Definition \ref{definition:evensubschemes} that $\Vr(\GLmn)$ is the underlying even subscheme of $\bsV_r(\GLmn)$.

\begin{proposition} \label{proposition:phibar}
The homomorphism $\phi$ of (\ref{eq:phi}) induces a homomorphism
\[
\ol{\phi}: k[\Vr(\GLmn)] \rightarrow H(\GLmnr,k).
\]
\end{proposition}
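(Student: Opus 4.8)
The plan is to realise $k[\Vr(\GLmn)]$ as an explicit quotient of the source algebra of $\phi$ and to show that $\phi$ annihilates the defining ideal of that quotient. Write $R$ for the source of $\phi$ appearing in \eqref{eq:phi}. Using the evident identification $S(W^{(r)})\cong S(W)$ of symmetric algebras (and forgetting the internal $\Z_2$-grading), $R$ is the coordinate algebra of the underlying even subscheme of $(\Matmn(-)_{\zero})^{\times r}\times\Matmn(-)_{\one}$ (cf.\ \eqref{eq:Vrfetaambientscheme}); in particular it carries universal block-diagonal matrices $\mbf A_0,\dots,\mbf A_{r-1}$ whose nonzero entries are the $\glzero^{\#(r)}$-generators of cohomological degrees $2,2p,\dots,2p^{r-1}$, together with a universal block-antidiagonal matrix $\mbf B$ whose nonzero entries are the $\glone^{\#(r)}$-generators. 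By Definitions \ref{definition:VrGLmn} and \ref{definition:evensubschemes}, $k[\Vr(\GLmn)]$ is the quotient of $R$ by the ideal $J$ generated by the matrix entries of
\[
[\mbf A_i,\mbf A_j],\quad [\mbf A_i,\mbf B]\ \ (0\le i,j\le r-1),\quad \mbf A_i^p\ \ (0\le i\le r-2),\quad \mbf A_{r-1}^p+\mbf B^2,
\]
so it suffices to show that $\phi(J)=0$; the induced map $\ol\phi$ is then automatically an algebra homomorphism, and its image lies in $H(\GLmnr,k)$ because that of $\phi$ does.

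The key point is that $\phi$ is the entrywise application of the matrix-valued characteristic classes. Unwinding Definition \ref{def:charclasses}, Remark \ref{remark:matrixtohom}, and the identification $\Ext_{\GLmnr}^\bullet(\kmn^{(r)},\kmn^{(r)})=\Matmn(\Hbul(\GLmnr,k))$ of Remark \ref{remark:Extmatrices}, the homomorphism $\phi$ sends $\mbf A_i$ entrywise to $\bs{\mathcal E}_i:=\bse_{i+1}^{(r-i-1)}(\GLmnr,\kmn)+(\bse_{i+1}^{(r-i-1)})^\Pi(\GLmnr,\kmn)$ and sends $\mbf B$ entrywise to (a fixed sign times) $\bs{\mathcal C}:=\bsc_r(\GLmnr,\kmn)+\bsc_r^\Pi(\GLmnr,\kmn)$, these being viewed as $\Matmn$-sized matrices over $\Hbul(\GLmnr,k)$ via Remark \ref{remark:Extmatrices}; compare Corollary \ref{corollary:coefficient}, which already records the leading entries of these classes after further restriction to $\Mrf$. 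Since $\phi$ is an algebra homomorphism, its entrywise extension $\Matmn(R)\to\Matmn(\Hbul(\GLmnr,k))$ is a ring homomorphism of matrix rings, and hence carries $[\mbf A_i,\mbf A_j]$, $[\mbf A_i,\mbf B]$, $\mbf A_i^p$, and $\mbf A_{r-1}^p+\mbf B^2$ to $[\bs{\mathcal E}_i,\bs{\mathcal E}_j]$, $\pm[\bs{\mathcal E}_i,\bs{\mathcal C}]$, $\bs{\mathcal E}_i^p$, and $\bs{\mathcal E}_{r-1}^p+\bs{\mathcal C}^2$, respectively. Thus $\phi(J)=0$ will follow once these four matrices are shown to vanish.

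Finally, the vanishing of those matrices is the translation of Theorem \ref{theorem:Extalgebrarelations} under the graded-superalgebra homomorphism $\res_{(\GLmnr,\kmn)}$ of Lemma \ref{lemma:restrictionGV}: part~(4) of that theorem gives $[\bs{\mathcal E}_i,\bs{\mathcal E}_j]=0$; part~(2) gives $\bs{\mathcal E}_i^p=0$ for $0\le i\le r-2$ (for such $i$ the matrix $\bs{\mathcal E}_i$ is built from the classes $\bse_\ell^{(r-\ell)}$ with $\ell<r$); part~(3), which reads $\bse_\ell^{(r-\ell)}\circ\bsc_r=\bsc_r\circ(\bse_\ell^{(r-\ell)})^\Pi$ and $(\bse_\ell^{(r-\ell)})^\Pi\circ\bsc_r^\Pi=\bsc_r^\Pi\circ\bse_\ell^{(r-\ell)}$, is precisely the pair of off-block conditions making $[\bs{\mathcal E}_i,\bs{\mathcal C}]=0$; and part~(1), together with $\bsc_r\circ\bsc_r=\bsc_r^\Pi\circ\bsc_r^\Pi=0$ (incompatible (co)domains), identifies the diagonal blocks $(\bse_r)^p$ and $(\bse_r^\Pi)^p$ of $\bs{\mathcal E}_{r-1}^p$ with the diagonal blocks of $\bs{\mathcal C}^2$, so that $\bs{\mathcal E}_{r-1}^p+\bs{\mathcal C}^2=0$. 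I expect the only real difficulty to be concealed in this last step: one must check that the $-1$ produced by the odd superdegree of the $\bsc$-classes when the Yoneda products are rewritten as matrix products --- via \eqref{eq:Matmnbasechange}, Remark \ref{remark:matrixtohom}, and Remark \ref{remark:Extmatrices} --- is exactly what reconciles the identity $(\bse_r)^p=\bsc_r\circ\bsc_r^\Pi$ with the relation $\alpha_{r-1}^p+\beta^2=0$ (rather than $\alpha_{r-1}^p-\beta^2=0$) defining $\Vr(\GLmn)$. This is the same sign issue flagged in the footnote to \eqref{E:Frob1} and handled in the proof of Theorem \ref{theorem:Extalgebrarelations}, where precisely the identity $(\bse_r)^p=\bsc_r\circ\bsc_r^\Pi$ is verified upon restriction to the supermodules $\Vuab$; I would carry it out by repeating that supertwist-sign bookkeeping. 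Granting it, $\phi$ factors through $k[\Vr(\GLmn)]$, yielding $\ol\phi$.
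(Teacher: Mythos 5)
Your plan is the same as the paper's: realize $k[\Vr(\GLmn)]$ as a quotient of the source of $\phi$ by the ideal $J$ of matrix-entry relations, interpret $\phi$ through the matrix-valued characteristic classes $\bse_\ell^{(r-\ell)}(\GLmnr,\kmn)$, $(\bse_\ell^{(r-\ell)})^\Pi(\GLmnr,\kmn)$, $\bsc_r(\GLmnr,\kmn)$, $\bsc_r^\Pi(\GLmnr,\kmn)$ in $\Matmn(\Hbul(\GLmnr,k))$ via Remarks~\ref{remark:matrixtohom} and~\ref{remark:Extmatrices}, and transport the relations of Theorem~\ref{theorem:Extalgebrarelations}. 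The paper's proof does exactly this, so structurally you are on target.

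However, there is a concrete error in the middle of your argument, and your closing hedge correctly senses but does not resolve it. You assert that $\phi$ sends $\mbf B$ entrywise to ``(a fixed sign times)'' $\bs{\mathcal C}=\bsc_r(\GLmnr,\kmn)+\bsc_r^\Pi(\GLmnr,\kmn)$, and you then conclude $\bs{\mathcal E}_{r-1}^p+\bs{\mathcal C}^2=0$ from Theorem~\ref{theorem:Extalgebrarelations}(1). But Theorem~\ref{theorem:Extalgebrarelations}(1) gives $(\bse_r)^p=\bsc_r\circ\bsc_r^\Pi$ and $(\bse_r^\Pi)^p=\bsc_r^\Pi\circ\bsc_r$, hence $\bs{\mathcal E}_{r-1}^p=\bs{\mathcal C}^2$ (the cross terms vanish for the block reasons you note). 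That makes $\bs{\mathcal E}_{r-1}^p+\bs{\mathcal C}^2=2\bs{\mathcal C}^2\ne 0$ since $p\ne2$, so your chain would fail if $\phi(\mbf B)$ really were $\pm\bs{\mathcal C}$ with a single global sign: any global sign squares away in $\mbf B^2$, and you would then be claiming $\phi(\mbf A_{r-1}^p+\mbf B^2)=\bs{\mathcal E}_{r-1}^p+\bs{\mathcal C}^2\ne0$.

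The resolution, which the paper records explicitly, is that the sign is \emph{not} uniform across the two antidiagonal blocks: the nonzero entries of $\bsc_r(\GLmnr,\kmn)$ are $-\phi(Y_{ij})$ (for $1\le i\le m$, $m+1\le j\le m+n$), whereas those of $\bsc_r^\Pi(\GLmnr,\kmn)$ are $+\phi(Y_{ij})$ (for $m+1\le i\le m+n$, $1\le j\le m$). Thus $\phi(\mbf B)=-\bsc_r(\GLmnr,\kmn)+\bsc_r^\Pi(\GLmnr,\kmn)$, so $\phi(\mbf B)^2=-\bsc_r\circ\bsc_r^\Pi-\bsc_r^\Pi\circ\bsc_r=-\bs{\mathcal C}^2$, and then $\phi(\mbf A_{r-1}^p+\mbf B^2)=\bs{\mathcal E}_{r-1}^p-\bs{\mathcal C}^2=0$ by Theorem~\ref{theorem:Extalgebrarelations}(1). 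This blockwise sign asymmetry comes out of Remark~\ref{remark:matrixtohom} (the factor $(-1)^{\ol{a}\cdot\ol{e_i}+\ol{e_i}+\ol{e_j}}$) and is exactly the sign issue flagged in the footnote to \eqref{E:Frob1}; it is not recoverable from a single overall $\pm$ on $\mbf B$. The remaining relations (parts (2)--(4) of Theorem~\ref{theorem:Extalgebrarelations} giving $\bs{\mathcal E}_i^p=0$ for $i\le r-2$, $[\bs{\mathcal E}_i,\bs{\mathcal E}_j]=0$, and $[\bs{\mathcal E}_i,\bs{\mathcal C}]=0$) go through as you describe, since they are unaffected by the overall blockwise sign on $\mbf B$.
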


\begin{proof}
The domain of $\phi$ identifies with the coordinate algebra of
\[ \textstyle
\left( \bigoplus_{i=1}^r \glzero^{(r)} \right) \oplus \glone^{(r)} = \left( \bigoplus_{i=1}^r \glzero \right) \oplus \glone.
\]
For $1 \leq \ell \leq r$, let $X_{ij}(\ell)$ be the coordinate function that returns the $ij$-entry of the $\ell$-th copy of $\glzero$, and let $Y_{ij}$ be the coordinate function that returns the $ij$-entry of $\glone$. Then the set $\set{X_{ij}(\ell),Y_{ij}: 1 \leq \ell \leq r}$ identifies with a set of algebra generators for the domain of $\phi$; cf.\ (\ref{eq:Matmn0relations}--\ref{eq:Matmn1relations}) and \eqref{eq:Vrfetaambientscheme}. Next by Remark \ref{remark:Extmatrices}, we may view the characteristic classes (\ref{eq:eicharclasshom}--\ref{eq:crPicharclasshom}) as elements of the matrix ring $\Matmn(\Hbul(\GLmnr,k))$. Then by Remark \ref{remark:matrixtohom}, the nonzero entries
\[
\text{of } \left\{ \begin{aligned}
\bse_\ell^{(r-\ell)}(\GLmnr,\kmn) \\
(\bse_\ell^{(r-\ell)})^\Pi(\GLmnr,\kmn) \\
\bsc_r(\GLmnr,\kmn) \\
\bsc_r^\Pi(\GLmnr,\kmn)
\end{aligned} \right\}
\text{ are }
\left\{ \begin{aligned}
 &\phi(X_{ij}(\ell)), & 1 \leq i,j \leq m, \\
 &\phi(X_{ij}(\ell)), & m+1 \leq i,j \leq m+n, \\
-&\phi(Y_{ij}), & 1 \leq i \leq m; m+1 \leq j \leq m+n, \\
 &\phi(Y_{ij}), & m+1 \leq i \leq m+n; 1 \leq j \leq m.
\end{aligned} \right.
\]
Now it follows from the relations in Theorem \ref{theorem:Extalgebrarelations} that the defining relations of the scheme $\Vr(\GLmn)$ are contained in the kernel of $\phi$, and hence that $\phi$ factors through $k[\Vr(\GLmn)]$. For example, the relation $(\bse_r)^p = \bsc_r \circ \bsc_r^\Pi$ implies for $1 \leq i,j \leq m$ that
\[
\sum_{1 \leq t_1,\ldots,t_{p-1} \leq m} \phi(X_{i,t_1}(r)) \cdot \phi(X_{t_1,t_2}(r)) \cdots \phi(X_{t_{p-1},j}(r)) = \sum_{t=1}^n [-\phi(Y_{i,m+t})] \cdot \phi(Y_{m+t,j}),
\]
and hence that the polynomial relation
\[
\sum_{1 \leq t_1,\ldots,t_{p-1} \leq m} X_{i,t_1}(r) \cdot X_{t_1,t_2}(r) \cdots X_{t_{p-1},j}(r) = -\left( \sum_{t=1}^n Y_{i,m+t} \cdot Y_{m+t,j} \right)
\]
is an element of $\ker(\phi)$. The other polynomial relations defining $\Vr(\GLmn)$ are similarly verified to be elements of $\ker(\phi)$.
\end{proof}

\subsection{The homomorphism \texorpdfstring{$\psi_{r;f,\eta}$}{psi-r;f,eta}}

Let $r \geq 1$, let $0 \neq f \in k[T]$ be an inseparable $p$-polynomial, and let $\eta \in k$. Recall from Definition \ref{definition:evensubschemes} that, given an algebraic $k$-supergroup scheme $G$, $\Vrfeta(G)$ denotes the underlying purely even subscheme of $\bsVrfeta(G)$. Set $A_G = k[\Vrfeta(G)]$, the (purely even) coordinate algebra of $\Vrfeta(G)$. Then base change to $A_G$ defines a homomorphism of graded superalgebras $\iota: \Hbul(G,k) \rightarrow \Hbul(G,k) \otimes_k A_G = \Hbul(G \otimes_k A_G,A_G)$, $z \mapsto z \otimes 1$. Next, the universal purely even supergroup homomorphism $u_G: \Mrfeta \otimes_k A_G \rightarrow G \otimes_k A_G$ of Definition \ref{def:universalhom} induces a homomorphism of graded $A_G$-superalgebras $u_G^*: \Hbul(G \otimes_k A_G,A_G) \rightarrow \Hbul(\Mrfeta \otimes_k A_G,A_G)$. Finally, recall the identification of $\Hbul(\Mrfeta,k)$ from Proposition \ref{prop:Mrcohomology}. The map $\Hbul(\Mrfeta,k) \rightarrow k$ that sends the generators $y$ and $x_r$ (resp.\ $x_{r-1}$ if $r \geq 2$ and $\eta \neq 0$) of $\Hbul(\Mrfeta,k)$ each to $1$ but that sends the other generators to $0$ is an algebra homomorphism (though not a super\-algebra homomorphism). Extending scalars, one gets a homomorphism
\[
\ve: \Hbul(\Mrfeta \otimes_k A_G,A_G) = \Hbul(\Mrfeta,k) \otimes_k A_G \rightarrow k \otimes_k A_G = A_G.
\]
Now define $\psi_{r;f,\eta}: H(G,k) \rightarrow A_G$ to be the composite algebra homomorphism
\begin{equation} \label{eq:psirfeta}
\psi_{r;f,\eta}: H(G,k) \stackrel{\iota}{\longrightarrow} \Hbul(G \otimes_k A_G,A_G) \stackrel{u_G^*}{\longrightarrow} \Hbul(\Mrfeta \otimes_k A_G,A_G) \stackrel{\ve}{\longrightarrow} A_G.
\end{equation}
At the level of Hochschild complexes, the first arrow in \eqref{eq:psirfeta} is induced by the base change map $k[G] \rightarrow k[G] \otimes_k A_G = A_G[G]$, $z \mapsto z \otimes 1$, and the second arrow is induced by the comorphism $u_G^*: A_G[G] \rightarrow A_G[\Mrfeta]$, i.e., the map of coordinate algebras corresponding to $u_G$.

\begin{lemma}
Let $r \geq 1$, let $0 \neq f \in k[T]$ be an inseparable $p$-polynomial, and let $\eta \in k$. Let $G$ be an algebraic $k$-supergroup scheme. Then the homomorphism of commutative $k$-algebras
\[
\psi_{r;f,\eta}: H(G,k) \rightarrow k[\Vrfeta(G)],
\]
is natural with respect to $G$.
\end{lemma}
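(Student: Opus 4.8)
The plan is to verify directly that the relevant naturality square commutes, working from the construction of $\psi_{r;f,\eta}$ as the composite \eqref{eq:psirfeta} of the base change map $\iota$, the pullback $u_G^*$ along the universal purely even homomorphism $u_G$, and the evaluation homomorphism $\ve$. Fix a homomorphism $\theta \colon G \to G'$ of algebraic $k$-supergroup schemes. By Theorem \ref{theorem:Homsuperscheme}(2), together with the functoriality of passing to underlying even subschemes, $\theta$ induces a morphism $\Vrfeta(\theta) \colon \Vrfeta(G) \to \Vrfeta(G')$; write $\phi \colon A_{G'} \to A_G$ for its comorphism, where $A_G = k[\Vrfeta(G)]$ and $A_{G'} = k[\Vrfeta(G')]$. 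Since $\theta^* \colon \Hbul(G',k) \to \Hbul(G,k)$ preserves both the cohomological degree and the internal superdegree, it restricts to a homomorphism $\theta^* \colon H(G',k) \to H(G,k)$, and the assertion to be proved is that $\phi \circ \psi_{r;f,\eta}^{G'} = \psi_{r;f,\eta}^{G} \circ \theta^*$ as maps $H(G',k) \to A_G$.

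The key input will be the compatibility of the two universal homomorphisms,
\[
(\theta \otimes_k A_G) \circ u_G = u_{G'} \otimes_\phi A_G \colon \Mrfeta \otimes_k A_G \longrightarrow G' \otimes_k A_G.
\]
To see this, recall that $u_G$ corresponds under the universal property of Definition \ref{def:universalhom}(2) to the identity $A_G$-point $\id \in \Vrfeta(G)(A_G) = \bfHom(\Mrfeta,G)(A_G)$. On one hand, by the construction of $\bfHom(\Mrfeta,\theta)$ (composition with $\theta$ after base change), the image of this point under $\bfHom(\Mrfeta,\theta)(A_G)$ is $(\theta \otimes_k A_G) \circ u_G$. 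On the other hand, $\Vrfeta(\theta)$ is by definition the morphism induced by composition with $\theta$, so that same image, viewed as an element of $\Vrfeta(G')(A_G) = \Hom_{\salg}(A_{G'},A_G)$, is exactly $\phi$; and the homomorphism classified by the $A_G$-point $\phi$ is, by the universal property of $u_{G'}$, precisely $u_{G'} \otimes_\phi A_G$. Since $\Vrfeta = [\bfHom(\Mrfeta,-)]_\ev$ agrees with $\bfHom(\Mrfeta,-)$ on purely even algebra points, these two descriptions of the image coincide, giving the displayed identity.

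Granting this, the conclusion will be a diagram chase resting on three pieces of functoriality: (i) the evaluation map $\ve$ of \eqref{eq:psirfeta} is obtained by extending scalars from a fixed algebra homomorphism $\Hbul(\Mrfeta,k) \to k$ that depends only on $r$, $f$, and $\eta$, hence commutes with base change along $\phi$; (ii) the base change map $\iota \colon \Hbul(-,k) \to \Hbul(- \otimes_k A,A)$ is natural; and (iii) pullback in cohomology is functorial and compatible with base change, so that $(u_{G'} \otimes_\phi A_G)^* = u_G^* \circ (\theta \otimes_k A_G)^*$ by the displayed identity, while base changing $z \otimes 1 \in \Hbul(G' \otimes_k A_{G'},A_{G'})$ along $\phi$ and then applying $(u_{G'}\otimes_\phi A_G)^*$ agrees with first applying $u_{G'}^*$ and then base changing. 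Starting from $z \in H(G',k)$ and unwinding, one then gets
\[
\phi\bigl(\psi_{r;f,\eta}^{G'}(z)\bigr) = \ve\bigl((u_{G'} \otimes_\phi A_G)^*(z \otimes 1)\bigr) = \ve\bigl(u_G^*((\theta \otimes_k A_G)^*(z \otimes 1))\bigr) = \ve\bigl(u_G^*(\theta^*(z) \otimes 1)\bigr) = \psi_{r;f,\eta}^{G}(\theta^*(z)),
\]
which is the required identity.

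The only real obstacle will be the bookkeeping: one must keep careful track of the three coexisting base-change operations (over $k$, over $A_G$, and over $A_{G'}$) and of the variance of each of the three constituents of \eqref{eq:psirfeta}. No idea beyond the universal property of $u_G$ is needed; everything else reduces to the naturality of the Hochschild complex construction and of base change, as already recorded in the paragraph following \eqref{eq:psirfeta}.
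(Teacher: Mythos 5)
Your proposal is correct and follows essentially the same route as the paper's proof, which likewise reduces the claim to the commutativity of a three-square diagram built from $\iota$, $u_G^*$, and $\ve$, and then declares the verification to the reader. Where the paper is terse ("one can check commutativity of the diagram"), you have correctly identified and justified the one nontrivial ingredient, namely the compatibility $(\theta \otimes_k A_G) \circ u_G = u_{G'} \otimes_\phi A_G$ between the universal homomorphisms, which you derive cleanly from the universal property of Definition~\ref{def:universalhom} together with the Yoneda description of $\phi$ as the image of the identity point. The remaining pieces (i)--(iii) are routine base-change naturality, exactly as you say, and the final diagram chase is right (modulo the harmless reversal of direction in your choice $\theta\colon G \to G'$ versus the paper's $\phi\colon G' \to G$). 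In short: same proof, with the crucial step made explicit rather than left implicit.
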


\begin{proof}
Set $\psi = \psi_{r;f,\eta}$. Let $\phi: G' \rightarrow G$ be a homomorphism of algebraic $k$-supergroup schemes, let $\phi_{A_G} : G' \otimes_k A_G \rightarrow G \otimes_k A_G$ be the homomorphism of $A_G$-supergroup schemes obtained from $\phi$ via base change, and let $\phi^*: k[\Vrfeta(G)] = A_G \rightarrow A_{G'} := k[\Vrfeta(G')]$ be the algebra homomorphism induced by $\phi$. Then $\phi_{A_G}$ and $\phi^*$ induce the composite map
\[
\Hbul(G \otimes_k A_G,A_G) \stackrel{(\phi_{A_G})^*}{\longrightarrow} \Hbul(G' \otimes_k A_G,A_G) \stackrel{\phi^*}{\longrightarrow} \Hbul(G' \otimes_k A_{G'},A_{G'}),
\]
and using this composite map one can check commutativity of the diagram
\[
\xymatrix{
H(G,k) \ar@{->}[r]^(.35){\iota} \ar@{->}[d]^{\phi^*} & \Hbul(G \otimes_k A_G, A_G) \ar@{->}[r]^(.45){u_G^*} \ar@{->}[d] & \Hbul(\Mrfeta \otimes_k A_G,A_G) \ar@{->}[r] \ar@{->}[d]^{\phi^*} & A_G \ar@{->}[d]^{\phi^*} \\
H(G',k) \ar@{->}[r]^(.35){\iota} & \Hbul(G' \otimes_k A_{G'}, A_{G'}) \ar@{->}[r]^(.45){u_{G'}^*} & \Hbul(\Mrfeta \otimes_k A_{G'},A_{G'}) \ar@{->}[r] & A_{G'}.
}
\]
The commutativity of the diagram implies the naturality of $\psi$ with respect to $G$.
\end{proof}

The algebra $H(G,k)$ is $\Z$-graded via the cohomological grading, while for $r,s \geq 1$, the algebra $k[\Vrs(G)]$ is $\Z[\frac{p^r}{2}]$-graded by Corollary \ref{cor:VrsGgraded}.

\begin{proposition}\label{P:grading}
Let $r,s \geq 1$, and set $\psi_{r;s} = \psi_{r;T^{p^s},0}$. Then
\[
\psi_{r;s} : H(G,k) \rightarrow k[\Vrs(G)]
\]
is a homomorphism of graded $k$-algebras that multiplies degrees by $\frac{p^r}{2}$.
\end{proposition}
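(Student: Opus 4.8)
The plan is to combine the universal property of $u_G$ (Definition \ref{def:universalhom}) with the monoid action of $\Spec(B)$, $B = k[x,y]/\subgrp{x^{p^r}-y^2}$, that induces the grading on $k[\Vrs(G)]$ in Corollary \ref{cor:VrsGgraded}; the crucial observation will be that the map $\ve$ appearing in the definition \eqref{eq:psirfeta} of $\psi_{r;s}$, though not graded, extracts exactly the $y^n$-component of a cohomology class of cohomological degree $n$.

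It suffices to treat a homogeneous $z \in H(G,k)$ of cohomological degree $n$, the general case following by decomposing into homogeneous components. Write $A_G = k[\Vrs(G)]$. By definition $\psi_{r;s}(z) = \ve(u_G^*(z \otimes 1))$, where $u_G^* \colon \Hbul(G,k) \otimes_k A_G \to \Hbul(\Mrs,k) \otimes_k A_G$ is a homomorphism of graded (by cohomological degree) $A_G$-algebras, and $\ve$ is the $A_G$-linear extension of the map $\Hbul(\Mrs,k) \to k$ sending $y, x_r \mapsto 1$ and the remaining generators $x_1,\dots,x_{r-1},w_s,\lambda_1,\dots,\lambda_r$ of Proposition \ref{prop:Mrcohomology} to $0$. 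Expanding $u_G^*(z \otimes 1) = \sum_\alpha h_\alpha \otimes c_\alpha$ with the $h_\alpha$ ranging over the standard $k$-basis of $\Hbul(\Mrs,k)$ and $c_\alpha \in A_G$: since $u_G^*$ preserves cohomological degree, every $h_\alpha$ that occurs has cohomological degree $n$, and among these basis monomials the only one not annihilated by $\ve$ is $y^n$. Hence $\psi_{r;s}(z)$ equals the coefficient of $y^n$ in $u_G^*(z \otimes 1)$.

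Next I would track how this coefficient transforms under the action. Put $C = A_G \otimes_k B$, let $m^* \colon A_G \to C$ be the comorphism of the action $m \colon \Vrs(G) \times \Spec(B) \to \Vrs(G)$, and let $\lambda \colon A_G \to C$ be $f \mapsto f \otimes 1$. By the universal property of $u_G$ together with the description of $m$ as composition with rescaling endomorphisms, the base change of $u_G$ along $m^*$ equals the base change of $u_G$ along $\lambda$ postcomposed with the rescaling endomorphism $\Sigma \colon \Mrs \otimes_k C \to \Mrs \otimes_k C$ whose comorphism sends $\tau \mapsto \tau \cdot (1 \otimes y)$, $\theta \mapsto \theta \cdot (1 \otimes x)$, and $\sigma_i \mapsto \sigma_i \cdot (1 \otimes x)^{ip^{r-1}}$. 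Passing to cohomology and using $\Hbul(\Mrs \otimes_k C, C) = \Hbul(\Mrs,k) \otimes_k C$, the induced map $\Sigma^*$ is $C$-multiplicative and sends a basis monomial $h \otimes 1$ of internal $\Z$-degree $d$ (in the grading of Remark \ref{remark:immediateMrs}\eqref{item:Zgrading}) to $h \otimes 1 \otimes b_h$, where $b_h \in B$ is the element of $\Z[\frac{p^r}{2}]$-degree $d/2$; in particular $\Sigma^*(y^n \otimes 1) = y^n \otimes 1 \otimes y^n$. One then checks $m^* \circ \ve = (\ve \otimes \id_B) \circ (\id \otimes m^*)$, and that $(\id \otimes m^*)(u_G^*(z \otimes 1))$ is $\Sigma^*$ applied to the $\lambda$-base change of $u_G^*(z \otimes 1)$; combining these with the previous paragraph yields $m^*(\psi_{r;s}(z)) = \psi_{r;s}(z) \otimes y^n$. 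By Lemma \ref{lemma:Zpr2grading} this says precisely that $\psi_{r;s}(z)$ is homogeneous of $\Z[\frac{p^r}{2}]$-degree $\frac{np^r}{2}$. Since $\psi_{r;s}$ is already an algebra homomorphism (as established above) and $n \mapsto \frac{np^r}{2}$ is additive, this gives the proposition.

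The only delicate step is the compatibility identity relating the base change of $u_G$ along $m^*$ to its base change along $\lambda$ composed with $\Sigma$; it is forced by the universal property of $u_G$ and the definition of the $\Spec(B)$-action as composition with rescaling homomorphisms, but it must be unwound carefully through the two base changes. Everything else is bookkeeping with the explicit generators of $\Hbul(\Mrs,k)$ and the observation that $\ve$, although not graded, isolates the $y^n$-component of a class in cohomological degree $n$.
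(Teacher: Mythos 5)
Your argument is correct and very close in spirit to the paper's: both rest on the universal property of $u_G$ together with the explicit scaling endomorphisms $\rho_{(\mu,a)}$ of $\Mrs$ parametrized by $\Spec(B)$. The genuine difference is that you work with the single universal $C$-point $(y,x)$ of $\Spec(B)$ (with $C = A_G \otimes B$) and directly compute $m^*(\psi_{r;s}(z)) = \psi_{r;s}(z) \otimes y^n$, whereas the paper evaluates at all $k$-rational points $(\mu,a)$ with $\mu^2 = a^{p^r}$, deduces that each individual coefficient $f_{\bs{i},\bs{j},u,v}(z)$ is homogeneous via a Vandermonde-type ``infinitely many values'' argument, and for that reason first extends scalars to $\ol{k}$. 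Your generic argument dispenses with the passage to the algebraic closure entirely, which is a modest gain in cleanliness. The key compatibility $u_G \otimes_{m^*} C = (u_G \otimes_\lambda C) \circ \Sigma$ that you flag as the delicate step is indeed exactly what the paper's identity $(\phi^* \otimes 1) \circ u_G^* \circ \iota = (1 \otimes \varphi) \circ u_G^* \circ \iota$ expresses pointwise, and your justification (that the point $m^* \in \Vrs(G)(C)$ is the image of $(\lambda,\iota_B)$ under the action, which by construction of \eqref{eq:HomMonoidAction} is composition with $\Sigma$) is sound.

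One inaccuracy worth repairing: the claim that among basis monomials of cohomological degree $n$, ``the only one not annihilated by $\ve$ is $y^n$'' is only literally correct for $s \geq 2$, where $x_r = y^2$ in $\Hbul(\Mrs,k)$ collapses all $x_r^{i_r}y^v$ with $2i_r + v = n$ to a single basis element. For $s = 1$ the cohomology ring is $k[x_1,\dots,x_r,y] \gotimes \Lambda(\lambda_1,\dots,\lambda_r)$ with \emph{no} relation between $x_r$ and $y^2$, and all of the distinct basis monomials $x_r^{i_r}y^v$ with $2i_r + v = n$ survive $\ve$, so $\psi_{r;s}(z)$ is the sum of their coefficients rather than a single one. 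This does not damage the argument: each such monomial has the same internal $\Z$-degree $np^r$, hence under $\Sigma^*$ gets multiplied by the same element $x^{p^r i_r}y^v = y^{2i_r+v} = y^n$ of $B$, so the identity $m^*(\psi_{r;s}(z)) = \psi_{r;s}(z) \otimes y^n$ continues to hold. You should simply replace the phrase ``the coefficient of $y^n$'' by ``the sum of the coefficients of the basis monomials $x_r^{i_r}y^v$ with $2i_r + v = n$,'' noting that these all transform identically under $\Sigma^*$, and the rest of the argument goes through unchanged.
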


\begin{proof}
Set $\psi = \psi_{r;s}$. Extending scalars if necessary, we may assume that the field $k$ is algebraically closed. Let $z \in H(G,k)$ be of cohomological degree $n$ (hence of $\Z_2$-degree $\ol{n}$). Then
\begin{equation} \label{eq:ugstar}
(u_G^* \circ \iota)(z) = \sum_{\bs{i},\bs{j},u,v} x^{\bs{i}} \lambda^{\bs{j}} w^u y^v \otimes f_{\bs{i},\bs{j},u,v}(z) \in \opH^n(\Mrs,k) \otimes_k A_G
\end{equation}
for some $f_{\bs{i},\bs{j},u,v}(z) \in A_G$. Here $x^{\bs{i}} = x_1^{i_1} \cdots x_r^{i_r}$, $\lambda^{\bs{j}} = \lambda_1^{j_1} \cdots \lambda_r^{j_r}$, and the indices run over all nonnegative values such that $2u+v + \sum_{\ell=0}^r (2i_\ell+j_\ell) = n$. By imposing the additional restrictions $0 \leq j_1,\ldots,j_r \leq 1$, and also $0 \leq v \leq 1$ if $s \geq 2$, the set $x^{\bs{i}} \lambda^{\bs{j}} w^u y^v$ becomes a basis for $\Hbul(\Mrs,k)$, and $f_{\bs{i},\bs{j},u,v}(z)$ becomes a well-defined function of $z$. Then $\psi(z)$ is the sum of the coefficients of the terms of the form $x_r^{i_r} y^v$ with $2i_r + v = n$.

Now choose $\mu,a \in k$ such that $\mu^2 = a^{p^r}$, and let $\phi = \phi_{(\mu,a)} \in \Vrs(\Mrs)(k) = \bfHom(\Mrs,\Mrs)(k)$ be the unique homomorphism whose comorphism $\phi^*: k[\Mrs] \rightarrow k[\Mrs]$ satisfies $\phi^*(\tau) = \tau \cdot \mu$, $\phi^*(\theta) = \theta \cdot a$, and $\phi^*(\sigma_i) = \sigma_i \cdot a^{p^{r-1+i}}$ for $i \geq 0$. Then it follows from the explicit description of the generators for $\Hbul(\Mrs,k)$ that
\[
(\phi^* \otimes 1) \left( (u_G^* \circ \iota)(z) \right) = \sum_{\bs{i},\bs{j},u,v} \mu^v a^{\eta(\bs{i},\bs{j},u)} x^{\bs{i}} \lambda^{\bs{j}} w^u y^v \otimes f_{\bs{i},\bs{j},u,v}(z),
\]
where $\eta(\bs{i},\bs{j},u) := p^s u + \sum_{\ell=1}^r (p^{\ell-1} j_\ell + p^\ell i_\ell)$. Next let $\phi \otimes_k A_G: \Mrs \otimes_k A_G \rightarrow \Mrs \otimes_k A_G$ be the homomorphism obtained from $\phi$ via base change to $A_G$. Then by the universal property of $u_G$, the composite homomorphism $u_G \circ (\phi \otimes_k A_G) \in \bfHom(\Mrs,G)(A_G)$ can be written in the form $u_G \otimes_\varphi A_G$ for some $\varphi \in \Hom_{\salg}(A_G,A_G)$. Specifically, $\varphi$ is the unique homomorphism such that
\[
\varphi(z) = \begin{cases}
a^i \cdot z & \text{if the $\Z[\frac{p^r}{2}]$-degree of $z$ is $i$ and $i \in \N$,} \\
a^i \mu \cdot z & \text{if the $\Z[\frac{p^r}{2}]$-degree of $z$ is $i+\frac{p^r}{2}$ and $i \in \N$.}
\end{cases}
\]
This description of $\varphi$ can be checked first when $G = \GLmn$ (cf.\ the description of the $\Z[\frac{p^r}{2}]$-grading on $A_{\GLmn}$ in the proof of Lemma \ref{lemma:Hom(Mrs,G)graded}), and can then be deduced for $G$ arbitrary by choosing an embedding $G \hookrightarrow \GLmn$. Now the identity
\[
(\phi^* \otimes 1) \circ u_G^* \circ \iota = (u_G \circ (\phi \otimes_k A_G))^* = (1 \otimes \varphi) \circ (u_G^* \circ \iota)
\]
implies that
\[
\sum_{\bs{i},\bs{j},u,v} \mu^v a^{\eta(\bs{i},\bs{j},u)} x^{\bs{i}} \lambda^{\bs{j}} w^u y^v \otimes f_{\bs{i},\bs{j},u,v}(z) = \sum_{\bs{i},\bs{j},u,v} x^{\bs{i}} \lambda^{\bs{j}} w^u y^v \otimes \varphi \left( f_{\bs{i},\bs{j},u,v}(z) \right).
\]
Since this identity holds for all the infinitely many different pairs $\mu,a \in k$ such that $\mu^2 = a^{p^r}$ (infinitely many because $k = \ol{k}$), we conclude that $f_{\bs{i},\bs{j},u,v}(z)$ is homogeneous of $\Z[\frac{p^r}{2}]$-degree
\[ \textstyle
v \cdot \frac{p^r}{2} + \eta(\bs{i},\bs{j},u) = v \cdot \frac{p^r}{2} + p^s u + \sum_{\ell = 1}^r (p^{\ell-1}j_\ell + p^\ell i_\ell).
\]
In particular, if $2 i_r + v = n$, then the coefficient of $x_r^{i_r} y^v$ in \eqref{eq:ugstar} is of degree $n \cdot \frac{p^r}{2}$.
\end{proof}

\begin{theorem} \label{theorem:Theta}
Let $r \geq 1$, let $0 \neq f \in k[T]$ be an inseparable $p$-polynomial, and let $\eta \in k$. Then the composite homomorphism
\[
k[\Vr(\GLmn)] \stackrel{\ol{\phi}}{\longrightarrow} H(\GLmnr,k) \stackrel{\psi_{r;f,\eta}}{\longrightarrow} k[\Vrfeta(\GLmn)]
\]
is equal to the composition of the $r$-th Frobenius morphism on $k[\Vr(\GLmn)]$, i.e., the algebra map that sends the defining algebra generators to their $p^r$-th powers, and the canonical quotient map $k[\Vr(\GLmn)] \twoheadrightarrow k[\Vrfeta(\GLmn)]$. In particular, if $k$ is perfect then $\psi_{r;f,\eta} \circ \ol{\phi}$ is surjective onto $p^r$-th powers. Letting
\[
\Psi_{r;f,\eta} : \Vrfeta(\GLmn) \rightarrow \abs{\GLmnr} \quad \text{and} \quad \Phi : \abs{\GLmnr} \rightarrow \Vr(\GLmn)
\]
denote the morphisms of schemes induced by $\psi_{r;f,\eta}$ and $\ol{\phi}$, respectively, the composite morphism
\[
\Theta_{r;f,\eta} : \Vrfeta(\GLmn) \stackrel{\Psi_{r;f,\eta}}{\longrightarrow} \abs{\GLmnr} \stackrel{\Phi}{\longrightarrow} \Vr(\GLmn)
\]
is equal to the composite of the inclusion $\Vrfeta(\GLmn) \hookrightarrow \Vr(\GLmn)$ and the $r$-th Frobenius twist morphism on the scheme $\Vr(\GLmn)$ (defined over the prime field $\Fp$).
\end{theorem}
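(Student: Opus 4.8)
The plan is to check the claimed identity of algebra homomorphisms $\psi_{r;f,\eta} \circ \ol{\phi}$ on a generating set for $k[\Vr(\GLmn)]$ and then dualize. Recall from the proof of Proposition~\ref{proposition:phibar} that $k[\Vr(\GLmn)]$ is generated by the coordinate functions $X_{ij}(\ell)$ ($1 \le \ell \le r$) and $Y_{ij}$ of the ambient scheme \eqref{eq:Vrfetaambientscheme}, and that $\ol{\phi}$ carries $X_{ij}(\ell)$ (up to the signs of Remark~\ref{remark:matrixtohom}) to the $(i,j)$-entry of $\bse_\ell^{(r-\ell)}(\GLmnr,\kmn)$ or of $(\bse_\ell^{(r-\ell)})^\Pi(\GLmnr,\kmn)$, and $Y_{ij}$ to the $(i,j)$-entry of $\bsc_r(\GLmnr,\kmn)$ or of $\bsc_r^\Pi(\GLmnr,\kmn)$. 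So it suffices to compute $\psi_{r;f,\eta}$ on these characteristic classes. Writing $A = k[\Vrfeta(\GLmn)]$, I would first unwind the definition \eqref{eq:psirfeta}: by the naturality of characteristic classes under base change (Proposition~\ref{prop:charclassproperties}\eqref{item:basechange}) together with the universal property of the homomorphism $u_{\GLmnr}$ of Definition~\ref{def:universalhom} (legitimate since $\Mrfeta$ is infinitesimal of height $r$, so that $\bfHom(\Mrfeta,\GLmnr) = \bfHom(\Mrfeta,\GLmn)$ and hence $\Vrfeta(\GLmnr) = \Vrfeta(\GLmn)$), the composite $\ve \circ u_{\GLmnr}^* \circ \iota$ sends the $(i,j)$-entry of $z(\GLmnr,\kmn)$, for any $z \in \Ext_{\bsp}^\bullet(\bsir,\bsir)$, to the $(i,j)$-entry of the matrix obtained by applying $\ve$ entrywise to $z(\Mrfeta \otimes_k A, \Vuab)$, where $(\ualpha|\beta)$ is now the universal tuple in $\bsVrfeta(\GLmn)(A)$; in particular the matrices $\dalpha_{\ell-1}^{(r)}$, $\ddalpha_{\ell-1}^{(r)}$ have $(i,j)$-entry $X_{ij}(\ell)^{p^r}$, and $\dbeta^{(r)}$, $\ddbeta^{(r)}$ have $(i,j)$-entry $\pm Y_{ij}^{p^r}$.

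Next I would feed in the explicit formulas for these classes: Theorem~\ref{theorem:charclassesVuab} when $\eta = 0$, and when $\eta \neq 0$ the analogues obtained from Corollary~\ref{corollary:characlassesMrfp}, Proposition~\ref{proposition:erMrfeta} and Lemma~\ref{lemma:crMonefeta} via the algebra identification $k\Mrfeta \cong k\M_{r-1;f^p}$ of Remark~\ref{remark:immediateMrf}\eqref{item:Mrfetaiso} (which is also what determines the precise form of $\ve$ in that case). The crucial point is that $\ve$ annihilates every monomial in those formulas that involves a cohomology generator other than $x_r$ (resp.\ $x_{r-1}$ when $\eta \neq 0$) or $y$; since the $\bse$-classes involve no $y$ and the $\bsc$-classes contribute only the power $y^{p^r}$, applying $\ve$ extracts exactly the coefficient of $x_r^{\,p^{\ell-1}}$ in $\bse_r(p^{\ell-1})(\Mrf \otimes_k A,\Vuab)$ (resp.\ of $x_{r-1}^{\,p^{\ell-1}}$ when $\eta \neq 0$) and the coefficient of $y^{p^r}$ in $\bsc_r(\Mrf \otimes_k A,\Vuab)$. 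By Corollary~\ref{corollary:coefficient} these coefficients are $\dalpha_{\ell-1}^{(r)}$, $\ddalpha_{\ell-1}^{(r)}$, $\dbeta^{(r)}$, $\ddbeta^{(r)}$ respectively; here one uses that $\ve(w_s) = 0$, which in the delicate case $s = 1$ requires the observation (via Lemma~\ref{lemma:Fandqoncohomology} and Proposition~\ref{prop:Mrcohomology}) that after pullback along \eqref{eq:factorquotient} one has $w_1 = x_r - y^2$ in $\Hbul(\Mrf,k)$, so that $\ve(w_1) = 1 - 1 = 0$ and the $P_f(\dalpha_{r-1}^{(r)})$-terms of \eqref{eq:erjVuab} drop out. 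Translating back through the matrix identifications of Remarks~\ref{remark:matrixtohom} and~\ref{remark:Extmatrices}, this gives $(\psi_{r;f,\eta}\circ\ol{\phi})(X_{ij}(\ell)) = X_{ij}(\ell)^{p^r}$ and $(\psi_{r;f,\eta}\circ\ol{\phi})(Y_{ij}) = Y_{ij}^{p^r}$ in $A = k[\Vrfeta(\GLmn)]$; that is, $\psi_{r;f,\eta}\circ\ol{\phi}$ equals the $p^r$-power map on the generators of $k[\Vr(\GLmn)]$ composed with the canonical surjection $k[\Vr(\GLmn)] \twoheadrightarrow k[\Vrfeta(\GLmn)]$.

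Finally, since the defining equations of $\Vr(\GLmn)$ have coefficients in $\Fp$, the $p^r$-power map on $k[\Vr(\GLmn)]$ is precisely the comorphism of the $r$-th Frobenius twist morphism of $\Vr(\GLmn)$ over $\Fp$, while the surjection above is the comorphism of the closed immersion $\Vrfeta(\GLmn) \hookrightarrow \Vr(\GLmn)$. Dualizing then identifies $\Theta_{r;f,\eta} = \Phi \circ \Psi_{r;f,\eta}$ with the composite of this inclusion and the $r$-th Frobenius twist morphism, and the surjectivity of $\psi_{r;f,\eta}\circ\ol{\phi}$ onto $p^r$-th powers when $k$ is perfect is then immediate. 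The hard part will be the bookkeeping in the second paragraph: carrying the $\pm$ signs of Remark~\ref{remark:matrixtohom} through the identification of $z(\GLmnr,\kmn)$ with a matrix over $\Hbul(\GLmnr,k)$, verifying that $\ve$ really does collapse the elaborate sums of Theorem~\ref{theorem:charclassesVuab} (and of Corollary~\ref{corollary:characlassesMrfp}) to the single claimed coefficient, and --- in the $s=1$ and $\eta \neq 0$ regimes --- pinning down correctly which cohomology generator is not killed by $\ve$, using the computations of Proposition~\ref{prop:Mrcohomology} and Lemma~\ref{lemma:Fandqoncohomology}.
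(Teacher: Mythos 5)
Your proposal is correct and follows essentially the same route as the paper: evaluate $\psi_{r;f,\eta}\circ\ol{\phi}$ on the coordinate generators $X_{ij}(\ell)$, $Y_{ij}$ of $k[\Vr(\GLmn)]$, realize the images as characteristic classes of the universal representation $\Vuab$ over $A = k[\Vrfeta(\GLmn)]$, apply $\ve$ to the explicit formulas of Theorem~\ref{theorem:charclassesVuab} (resp. Proposition~\ref{proposition:erMrfeta}, Lemma~\ref{lemma:crMonefeta}, Corollary~\ref{corollary:characlassesMrfp} for $\eta \neq 0$), and read off the surviving coefficient via Corollary~\ref{corollary:coefficient}. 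One small improvement in your write-up: you state explicitly that after pulling back along \eqref{eq:factorquotient} the relation is $w_1 = x_r - y^2$ in $\Hbul(\Mrf,k)$ (so $\ve(w_1)=0$ for all $r$, not only $r=1$), which pins down more carefully why the $P_f$-terms vanish in the case $s=1$, $r>1$ --- the paper's proof only addresses the case $r=s=1$ explicitly and leaves the general case implicit.
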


\begin{proof}
Set $G = \GLmnr$, set $A = k[\Vrfeta(G)]$, and let $\set{X_{ij}(\ell),Y_{ij}: 1 \leq \ell \leq r}$ be the coordinate functions defined in the proof of Proposition \ref{proposition:phibar}. Then $\set{X_{ij}(\ell),Y_{ij}: 1 \leq \ell \leq r}$ identifies with a set of algebra generators for $A$. Let $u_G: \Mrfeta \otimes_k A \rightarrow G \otimes_k A$ be the universal purely even supergroup homomorphism from $\Mrfeta$ to $G$. Then $u_G = \rho_{(\ualpha|\beta)}$, where $(\ualpha|\beta) \in \bsVrfeta(G)(A) = \Vrfeta(G)(A)$ is the universal tuple defined in the second paragraph of the proof of Theorem \ref{theorem:Homsuperscheme}. More precisely, for $1 \leq \ell \leq r$ the $ij$-entry of $\alpha_{\ell-1}$ is equal to the image in $A$ of $X_{ij}(\ell)$, and the $ij$-entry of $\beta$ is equal to the image in $A$ of $Y_{ij}$.

Viewing the characteristic classes (\ref{eq:eicharclasshom}--\ref{eq:crPicharclasshom}) as linear maps (cf.\ Remark \ref{remark:matrixtohom}), one has
\begin{gather*}
\phi(X_{ij}(\ell)) = \bse_\ell^{(r-\ell)}(G,\kmn)(X_{ij}(\ell)) + (\bse_\ell^{(r-\ell)})^\Pi(G,\kmn)(X_{ij}(\ell)), \\
\text{and} \quad \phi(Y_{ij}) = \bsc_r(G,\kmn)(Y_{ij}) + \bsc_r^\Pi(G,\kmn)(Y_{ij})
\end{gather*}
by the definition of $\phi$. Then
\begin{gather*}
(u_G^* \circ \phi)(X_{ij}(\ell)) = \bse_\ell^{(r-\ell)}(\Mrfeta \otimes_k A,\Vuab)(X_{ij}(\ell)) + (\bse_\ell^{(r-\ell)})^\Pi(\Mrfeta \otimes_k A,\Vuab)(X_{ij}(\ell)), \\
\text{and} \quad (u_G^* \circ \phi)(Y_{ij}) = \bsc_r(\Mrfeta \otimes_k A,\Vuab)(Y_{ij}) + \bsc_r^\Pi(\Mrfeta \otimes_k A,\Vuab)(Y_{ij}).
\end{gather*}
Set $\psi = \psi_{r;f,\eta}$. First suppose $\eta = 0$. Then it follows that $(\psi \circ \phi)(X_{ij}(\ell))$ is equal to the sum of the $ij$-entries of the coefficients of $x_r^{p^{\ell-1}}$ in \eqref{eq:erjVuab} and \eqref{eq:erjPiVuab}, and $(\psi \circ \phi)(Y_{ij})$ is equal to the sum of the $ij$-entries of the coefficients of $y^{p^r}$ in \eqref{eq:crVuab} and \eqref{eq:crPiVuab}. More precisely, in the case $r = s=1$ we ignore the fact that $w_1 = x_1 - y^2$ and instead consider $w_1$ as an algebraically independent indeterminate (as in Corollary \ref{corollary:coefficient}) when calculating the coefficient of $x_1^{p^{\ell-1}}$; this does no harm because $x_1-y^2$ is in the kernel of the algebra homomorphism $\ve_A: \Hbul(\M_{1;1} \otimes_k A,A) \rightarrow A$. Now identifying $\Matmn(A)^{(r)}$ with $\Matmn(A)$ as in Remark \ref{remark:Extmatrices}, one gets by Corollary \ref{corollary:coefficient} that $(\psi \circ \phi)(X_{ij}(\ell))$ is equal to the image in $A$ of $X_{ij}(\ell)^{p^r}$, and $(\psi \circ \phi)(Y_{ij})$ is equal to the image in $A$ of $Y_{ij}^{p^r}$. The argument for the case $\eta \neq 0$ is entirely similar, using now the calculations of Proposition \ref{proposition:erMrfeta}, Lemma \ref{lemma:crMonefeta}, and Corollary \ref{corollary:characlassesMrfp} instead of Theorem \ref{theorem:charclassesVuab}.
\end{proof}

\begin{corollary} \label{corollary:VrGunion}
Suppose $k$ is algebraically closed. Then the finite morphism of schemes
\[
\Phi: \abs{\GLmnr} \rightarrow \Vr(\GLmn)
\]
induces a surjective morphism of varieties $\Phi(k): \abs{\GLmnr} \rightarrow \Vr(\GLmn)(k)$. More precisely,
\[
\Vr(\GLmn)(k) = \bigcup_{f,\eta} \im ( \Theta_{r;f,\eta} ) = \bigcup_{f,\eta} \Vrfeta(\GLmn)(k)
\]
where the union is taken over all inseparable $p$-polynomials $0 \neq f \in k[T]$ and all $\eta \in k$ (though it suffices to consider only $\eta = 0$).
\end{corollary}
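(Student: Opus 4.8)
The plan is to deduce the corollary almost formally from Theorem~\ref{theorem:Theta}, which has already carried out all of the geometric work; the only additional ingredients are an elementary finite-dimensionality argument and the bijectivity of Frobenius over an algebraically closed field. First I would record that $\Phi$ is a finite morphism of finite-type $k$-schemes: by the main finiteness theorem of \cite{Drupieski:2016}, $\Hbul(\GLmnr,k)$ — and hence its subalgebra $H(\GLmnr,k)$, which is a submodule over the Noetherian ring $\im(\phi)$ — is module-finite over the image of $\phi$, and that image lies in $H(\GLmnr,k)$ and coincides with the image of $\ol\phi$ (since $\phi$ factors through $k[\Vr(\GLmn)]$ by Proposition~\ref{proposition:phibar}). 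Consequently $\Phi(k)$ is a finite, in particular finite-to-one, morphism of affine varieties, and it suffices to prove the two displayed set equalities; surjectivity of $\Phi(k)$ will then be immediate, because $\Theta_{r;f,\eta}=\Phi\circ\Psi_{r;f,\eta}$ forces $\im(\Theta_{r;f,\eta}(k))\subseteq\im(\Phi(k))$ for every $f$ and $\eta$.

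Second, I would establish the equality $\Vr(\GLmn)(k)=\bigcup_{f,\eta}\Vrfeta(\GLmn)(k)$, together with the remark that only $\eta=0$ is needed. Since $\bsVrfeta(\GLmn)$ is a subsuperfunctor of $\bsVr(\GLmn)$, passing to underlying even subschemes makes $\Vrfeta(\GLmn)$ a closed subscheme of $\Vr(\GLmn)$, which gives the inclusion ``$\supseteq$''. For ``$\subseteq$'', given $(\alpha_0,\dots,\alpha_{r-1},\beta)\in\Vr(\GLmn)(k)$ I would observe that the infinitely many matrices $\alpha_{r-1}^{p},\alpha_{r-1}^{p^2},\alpha_{r-1}^{p^3},\dots$ all lie in the finite-dimensional algebra $\Matmn(k)$, hence satisfy a nontrivial $k$-linear relation $\sum_{i=1}^{t}a_i\alpha_{r-1}^{p^i}=0$ with $a_t\neq 0$; rescaling so that $a_t=1$, the $p$-polynomial $f=\sum_{i=1}^{t}a_iT^{p^i}$ has no linear term, so it is a nonzero inseparable $p$-polynomial with $f(\alpha_{r-1})=0$, and the given tuple then lies in $\Vrf(\GLmn)(k)$ by Definition~\ref{definition:VrGLmn}. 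This is the one place where a small computation is needed, but it is entirely routine.

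Third, I would compute the union of the images $\im(\Theta_{r;f,\eta}(k))$. The scheme $\Vr(\GLmn)$ is cut out by equations with coefficients in $\Fp$, and because $k$ is algebraically closed (hence perfect) the $r$-th Frobenius twist morphism on it induces a \emph{bijection} $\vartheta\colon\Vr(\GLmn)(k)\to\Vr(\GLmn)(k)$ — concretely the map raising every entry of each matrix in a tuple to the $p^r$-th power. (It automatically preserves the defining relations, being the $k$-points of a morphism of schemes, and it is bijective because $c\mapsto c^{p^r}$ is a bijection of $k$.) Theorem~\ref{theorem:Theta} identifies $\Theta_{r;f,\eta}(k)$ with the inclusion $\Vrfeta(\GLmn)(k)\hookrightarrow\Vr(\GLmn)(k)$ followed by $\vartheta$, so $\im(\Theta_{r;f,\eta}(k))=\vartheta(\Vrfeta(\GLmn)(k))$; taking the union over $f$ and $\eta$ and using that $\vartheta$ is a bijection together with the equality from the previous step yields $\bigcup_{f,\eta}\im(\Theta_{r;f,\eta}(k))=\vartheta(\Vr(\GLmn)(k))=\Vr(\GLmn)(k)$. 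Combined with the first paragraph this gives the surjectivity of $\Phi(k)$.

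I do not expect a genuine obstacle here: all of the substantive content is already packaged in Theorem~\ref{theorem:Theta}, and ultimately in the characteristic-class calculations of Section~\ref{section:charclasses} (in particular Theorem~\ref{theorem:charclassesVuab} and Corollary~\ref{corollary:coefficient}). The only matters requiring any care are the Frobenius-twist bookkeeping over a perfect field and the (trivial) existence of an inseparable annihilating $p$-polynomial, and neither is an obstruction.
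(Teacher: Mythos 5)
Your argument is correct and is essentially the paper's own proof, just reorganized: the paper takes an arbitrary $(\wt{\ualpha}|\wt{\beta}) \in \Vr(\GLmn)(k)$, lifts it along the $p^r$-th power Frobenius (using algebraic closure), and then invokes finite-dimensionality of $\glzero$ to produce an inseparable $p$-polynomial $f$ annihilating $\alpha_{r-1}$, concluding via Theorem \ref{theorem:Theta}; you instead isolate the equality $\Vr(\GLmn)(k)=\bigcup_f\Vrf(\GLmn)(k)$ first and then apply the bijectivity of the Frobenius twist on $k$-points. The ingredients (algebraic closedness, annihilating $p$-polynomials, Theorem \ref{theorem:Theta}) and their roles are identical, and both observe that $\eta=0$ suffices.
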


\begin{proof}
Let $(\wt{\ualpha}|\wt{\beta}) := (\wt{\alpha}_0,\ldots,\wt{\alpha}_{r-1},\wt{\beta}) \in \Vr(\GLmn)(k)$. Since $k$ is algebraically closed, there exists $(\ualpha|\beta) := (\alpha_0,\ldots,\alpha_{r-1},\beta) \in \Vr(\GLmn)(k)$ such that $(\wt{\ualpha}|\wt{\beta})$ is obtained from $(\ualpha|\beta)$ by raising the individual coordinate entries of each each matrix $\alpha_0,\ldots,\alpha_{r-1},\beta$ to the $p^r$-th power. Then $(\wt{\ualpha}|\wt{\beta})$ is the image of $(\ualpha|\beta)$ under the $r$-th Frobenius twist morphism on the scheme $\Vr(\GLmn)$. Next, since $\glzero$ is a finite-dimensional $k$-vector space, the matrix $\alpha_{r-1} \in \glzero$ generates a finite-dimensional restricted Lie subalgebra of $\glzero$. Then there exist integers $1 \leq s \leq t$ such that the matrices $\alpha_{r-1}^{p^s},\alpha_{r-1}^{p^{s+1}},\ldots,\alpha_{r-1}^{p^t}$ are linearly dependent, and hence there exists an inseparable $p$-polynomial $0 \neq f \in k[T]$ such that $f(\alpha_{r-1}) =0$. Then $(\ualpha|\beta) \in \Vrf(\GLmn)(k)$, and hence $(\wt{\ualpha}|\wt{\beta})$ is in the image of the composite morphism $\Theta_{r;f}(k): \Vrf(\GLmn)(k) \rightarrow \Vr(\GLmn)(k)$.
\end{proof}

\begin{corollary}
The kernel of $\ol{\phi}: k[\Vr(\GLmn)] \rightarrow H(\GLmn,k)$ is nilpotent.
\end{corollary}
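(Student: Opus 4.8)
The plan is to deduce this from the surjectivity statement of Corollary~\ref{corollary:VrGunion} together with the fact that $k[\Vr(\GLmn)]$ is a finitely generated, hence Jacobson, $k$-algebra. First I would reduce to the case that $k$ is algebraically closed: for an algebraic closure $\ol{k}$, flat base change for Hochschild cohomology and the base-change compatibility of characteristic classes (Proposition~\ref{prop:charclassproperties}\eqref{item:basechange}) identify the homomorphism $\ol{\phi}\otimes_k\ol{k}$ with the analogous map $\ol{\phi}$ constructed over $\ol{k}$, while $\ker(\ol{\phi})\otimes_k\ol{k}=\ker(\ol{\phi}\otimes_k\ol{k})$ by flatness; so if the kernel is nilpotent of some index $N$ over $\ol{k}$, then $(\ker\ol{\phi})^N$ dies after the faithfully flat extension $-\otimes_k\ol{k}$ and is therefore already zero over $k$.

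With $k$ assumed algebraically closed, write $R=k[\Vr(\GLmn)]$; this is a finitely generated $k$-algebra (Theorem~\ref{theorem:Homsuperscheme}, Definition~\ref{definition:evensubschemes}), hence Jacobson, so an element of $R$ is nilpotent exactly when it vanishes at every $k$-point of $\Vr(\GLmn)$. The heart of the argument is then: given $z\in\ker(\ol{\phi})$ and any $v\in\Vr(\GLmn)(k)$, Corollary~\ref{corollary:VrGunion} supplies an inseparable $p$-polynomial $0\neq f\in k[T]$, a scalar $\eta\in k$, and a point $v'\in\Vrfeta(\GLmn)(k)$ with $v=\Theta_{r;f,\eta}(v')$; since $\Theta_{r;f,\eta}=\Phi\circ\Psi_{r;f,\eta}$ is, by Theorem~\ref{theorem:Theta}, induced by the algebra homomorphism $\psi_{r;f,\eta}\circ\ol{\phi}$, one gets $z(v)=\bigl(\psi_{r;f,\eta}(\ol{\phi}(z))\bigr)(v')=\psi_{r;f,\eta}(0)(v')=0$. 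Thus every $z\in\ker(\ol{\phi})$ vanishes identically on $\Vr(\GLmn)(k)$ and so lies in the nilradical of $R$. Since $R$ is Noetherian its nilradical is nilpotent, say of index $N$, and then $\ker(\ol{\phi})^N$ lies in the $N$-th power of the nilradical, which is $0$.

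I do not expect any serious obstacle here: the substantive input — surjectivity of $\Phi$ on $k$-points, equivalently the stratification $\Vr(\GLmn)(k)=\bigcup_{f,\eta}\Vrfeta(\GLmn)(k)$ — has already been established in Corollary~\ref{corollary:VrGunion}, and everything else is formal. The one place to be a little careful is the opening reduction to algebraically closed $k$, which requires recording that Hochschild cohomology, the subalgebra $H(-,k)$ (a graded summand, hence a construction stable under base change), and the map $\ol{\phi}$ (built from the characteristic classes) all commute with flat scalar extension.
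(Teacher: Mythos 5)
Your proof is correct and follows the same route as the paper: reduce to $k$ algebraically closed, use Corollary \ref{corollary:VrGunion} together with the identification $\Theta_{r;f,\eta}^*=\psi_{r;f,\eta}\circ\ol{\phi}$ from Theorem \ref{theorem:Theta} to conclude that every element of $\ker(\ol{\phi})$ vanishes on all $k$-points of $\Vr(\GLmn)$, and hence is nilpotent. You are slightly more careful than the paper's own proof in two respects: you spell out the flat-base-change justification for the reduction to $\ol{k}$, and you supply the Noetherianity observation needed to pass from ``every element of the kernel is nilpotent'' to ``the kernel is a nilpotent ideal,'' the latter being what the corollary actually asserts.
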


\begin{proof}
Extending scalars first if necessary, we may assume that $k$ is algebraically closed. Let $g \in \ker(\ol{\phi})$. Then $(\psi_{r;f} \circ \ol{\phi})(g) = 0$ for all inseparable $p$-polynomials $0 \neq f \in k[T]$. Since $\Vr(\GLmn)(k) = \bigcup_f \Vrf(\GLmn)(k)$ by Corollary \ref{corollary:VrGunion}, this implies that $g$ defines the zero function on the variety $\Vr(\GLmn)(k)$, and hence that $g$ is nilpotent as an element of $k[\Vr(\GLmn)]$.
\end{proof}

\appendix

\section{Base change for strict polynomial superfunctors} \label{section:basechange}

In \cite[\S2]{Drupieski:2016} the first author defined the category $\bsp = \bsp_k$ of strict polynomial superfunctors over a field $k$. (The general assumption of perfectness in \cite{Drupieski:2016} only served to simplify the discussion of Frobenius twists in \cite[\S2.7]{Drupieski:2016}.) As in \cite[\S2]{Suslin:1997}, the theory can be generalized to the context of more general coefficient rings. This section of the paper can be viewed simultaneously as a `superization' of \cite[\S2]{Suslin:1997} and as a translation from the original treatment of strict polynomial functors given by Friedlander and Suslin to the modern treatment following the exposition of Pirashvili \cite{Pirashvili:2003}. 

Throughout Appendix \ref{section:basechange}, let $A$ be a commutative superring of characteristic $p \neq 2$. In this section we will often denote the tensor product of $A$-supermodules simply by $V \otimes W$ instead of $V \otimes_A W$, except when confusion is likely. For a detailed treatment of the theory of commutative superrings and their supermodules, we refer the reader to thesis of Westra \cite{Westra:2009}.

\subsection{Strict polynomial superfunctors over commutative superrings}

Define $\bsv_A$ to be the full subcategory of $\smod_A$ having as objects the $A$-supermodules that are finitely-generated and projective in $\fsmod_A = (\smod_A)_\ev$. Thus, each object in $\bsv_A$ is a direct summand of some finitely-generated free $A$-supermodule $\Amn$, which admits a homogeneous basis $e_1,\ldots,e_{m+n}$ such that $\ol{e_i} = \zero$ if $1 \leq i \leq m$ and $\ol{e_i} = \one$ if $m+1 \leq i \leq m+n$. The category $\bsv_A$ is closed under tensor products (over $A$) and under the operation of taking $A$-linear duals: $V \mapsto V^\# := \Hom_A(V,A)$.

Now let $V \in \fsmod_A$ and let $n \in \N$. The symmetric group $\fS_n$ acts on $V^{\otimes n}$ on the right via super place permutations, making $V^{\otimes n}$ a right module for the group superalgebra $A\fS_n := \Z\fS_n \otimes_\Z A$. Set $\bsg^n(V) = \bsg_A^n(V) = (V^{\otimes n})^{\fS_n}$. Then $\bsg^n: V \mapsto \bsg^n(V)$ is an endofunctor on $\fsmod_A$. If $V \in \bsv_A$ is free, then $\bsg^n(V)$ is free and admits a basis of the form described in \cite[2.3.6]{Drupieski:2016}. The functor $\bsg^n$ also satisfies the following bi-functorial exponential formula for each $U,V \in \bsv_A$ (cf.\ \cite[\S2.5]{Drupieski:2016}):
\begin{equation} \label{eq:exponential} \textstyle
\bsg^n(U \oplus V) \cong \bigoplus_{i+j=n} \bsg^i(U) \otimes \bsg^j(V).
\end{equation}
The exponential formula can be verified first when $U$ and $V$ are free by working with bases of the type described in \cite[(2.3.4)]{Drupieski:2016}, and can then be verified for $U$ and $V$ arbitrary via naturality. (In fact, the exponential formula is an isomorphism of strict polynomial bisuperfunctors over $A$, but it is enough for us to consider it as an isomorphism of bifunctors on the category $\fsmod_A$.)

Let $U,V \in \fsmod_A$. Repeated application of the supertwist map defines an isomorphism $(U^{\otimes n}) \otimes (V^{\otimes n}) \cong (U \otimes V)^{\otimes n}$, which is an isomorphism of $\fS_n$-modules if we consider $(U^{\otimes n}) \otimes (V^{\otimes n})$ as a right $\fS_n$-module via the diagonal map $\fS_n \rightarrow \fS_n \times \fS_n$. Then it follows that $\bsg^n(U) \otimes \bsg^n(V)$ is naturally an $A$-subsupermodule of $\bsg^n(U \otimes V)$. In particular, if $\phi: U \otimes V \rightarrow W$ is an even $A$-linear map, then there is a naturally defined induced even $A$-linear map
\begin{equation} \label{eq:bsgphi}
\bsg^n(\phi): \bsg^n(U) \otimes \bsg^n(V) \rightarrow \bsg^n(W).
\end{equation}
Now define $\bsg^n(\bsv_A)$ to be the category whose objects are the same as those of $\bsv_A$, whose sets of morphisms are defined by $\Hom_{\bsg^n(\bsv_A)}(V,W) := \bsg^n \left( \Hom_A(V,W) \right)$, and in which composition of morphisms is induced as in \eqref{eq:bsgphi} by the composition of linear maps in $\bsv_A$.

\begin{remark} \label{remark:homsummand}
If $\phi \in \Hom_A(V,W)_{\zero}$, then $\phi^{\otimes n} \in \bsg^n \Hom_A(V,W)$. In particular, if $X,Y \in \bsv_A$ are free $A$-supermodules having $V$ and $W$ as direct summands, respectively, then the (even) inclusion and projection maps defining $V$ and $W$ as summands of $X$ and $Y$ can be used to show that $\Hom_{\bsg^n(\bsv_A)}(V,W)$ is a direct summand of the free $A$-supermodule $\Hom_{\bsg^n(\bsv_A)}(X,Y) = \bsg^n \Hom_A(X,Y)$. Thus, the category $\bsg^n(\bsv_A)$ is enriched over $\bsv_A$.
\end{remark}

Recall that the external tensor product $\phi \boxtimes \psi: V \otimes V' \rightarrow W \otimes W'$ of maps $\phi \in \Hom_A(V,V')$ and $\psi \in \Hom_A(W,W')$ is defined by $(\phi \boxtimes \psi)(v \otimes v') = (-1)^{\ol{v} \cdot \ol{\psi}} \phi(v) \otimes \psi(v')$. Composition of morphisms in $\bsg^n(\bsv_A)$ can then be interpreted via the following lemma:

\begin{lemma}
The external tensor product operation induces for each $V,W \in \bsv_A$ an isomorphism
\[
\bsg^n \Hom_A(V,W) \cong \Hom_{A\fS_n}(V^{\otimes n},W^{\otimes n})
\]
that is compatible with composition of morphisms in $\bsg^n(\bsv_A)$.
\end{lemma}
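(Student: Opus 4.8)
The statement to prove is an analogue of \cite[Lemma 2.2]{Suslin:1997} in the super setting, so the plan is to follow Suslin--Friedlander--Bendel's argument while keeping careful track of the Koszul signs introduced by super place permutations. First I would set up the natural map explicitly. Given $V,W \in \bsv_A$, fix homogeneous bases after embedding $V$ and $W$ as direct summands of free $A$-supermodules, and recall that $\Hom_A(V,W)$ is canonically $W \otimes_A V^\#$ (up to the usual super sign in the pairing). Then $\bsg^n \Hom_A(V,W) = \left( (W \otimes V^\#)^{\otimes n} \right)^{\fS_n}$, and repeated use of the supertwist map gives an $\fS_n$-equivariant identification $(W \otimes V^\#)^{\otimes n} \cong W^{\otimes n} \otimes (V^\#)^{\otimes n} \cong W^{\otimes n} \otimes (V^{\otimes n})^\#$, where $\fS_n$ acts diagonally (via super place permutations) on the left and by place permutations on each tensor power on the right. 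Taking $\fS_n$-invariants and using the standard identification $\Hom_A(X,Y)^{\fS_n} = \Hom_{A\fS_n}(X,Y)$ for the right $\fS_n$-modules $X = V^{\otimes n}$, $Y = W^{\otimes n}$ (valid since $V^{\otimes n}$ is finitely generated projective over $A$, so $\Hom_A(V^{\otimes n}, W^{\otimes n}) = W^{\otimes n}\otimes_A (V^{\otimes n})^\#$ as $A\fS_n$-modules), yields the desired isomorphism $\bsg^n\Hom_A(V,W) \cong \Hom_{A\fS_n}(V^{\otimes n},W^{\otimes n})$. The fact that this is the external-tensor-product map amounts to unwinding: the image of $\phi_1 \boxtimes \cdots \boxtimes \phi_n$ (symmetrized) is the map $v_1 \otimes \cdots \otimes v_n \mapsto (\text{sign}) \, \phi_1(v_1) \otimes \cdots \otimes \phi_n(v_n)$, which is $A\fS_n$-linear precisely because of the diagonal action.

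Second, I would check compatibility with composition. This is the crux: one must verify that under the identification, the composition law on $\bsg^n(\bsv_A)$ from \eqref{eq:bsgphi} corresponds to ordinary composition in $\Hom_{A\fS_n}(-,-)$. Because $\bsg^n$ is enriched over $\bsv_A$ (Remark \ref{remark:homsummand}) and all the constructions are natural, it suffices to check this after embedding $V,W$, and any intermediate object, as summands of free supermodules, and there it suffices to check on the generating tensor-monomial elements $\phi^{\otimes n}$ with $\phi$ even (since $\bsg^n\Hom_A(V,W)$ is spanned over $A$ by such elements together with their symmetrizations once one allows odd $\phi$ — here one uses $p\neq 2$). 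For $\phi \in \Hom_A(V,W)_{\zero}$ and $\psi \in \Hom_A(W,X)_{\zero}$, the induced map \eqref{eq:bsgphi} sends $\phi^{\otimes n} \otimes \psi^{\otimes n}$ to $(\psi\circ\phi)^{\otimes n}$ with no sign, which matches composition of $\phi^{\otimes n}, \psi^{\otimes n} \in \Hom_{A\fS_n}$. The sign bookkeeping for general (not necessarily even) homogeneous $\phi,\psi$ is forced by $A$-bilinearity of composition, which is itself recorded in Section \ref{subsection:commsuperrings}.

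The main obstacle I anticipate is the sign verification in the passage $(W\otimes V^\#)^{\otimes n} \cong W^{\otimes n}\otimes(V^{\otimes n})^\#$: the supertwist maps reorder $n$ copies of $W$ past $n$ copies of $V^\#$, and one must confirm that the resulting total sign is exactly compatible with (i) the super place-permutation action of $\fS_n$ on each side, and (ii) the evaluation pairing convention $(V^{\otimes n})^\# = (V^\#)^{\otimes n}$ with its own Koszul sign. This is essentially the same computation that underlies the identification in \eqref{eq:Matmnbasechange} and Remark \ref{remark:matrixtohom}, and once those conventions are fixed consistently the signs cancel; but it is the only place where anything could genuinely go wrong, so I would carry it out carefully on a homogeneous basis rather than appeal to naturality alone. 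Everything else --- $\fS_n$-equivariance, finiteness/projectivity hypotheses, reduction to the free case, and reduction to monomial generators --- is routine and parallels \cite[\S2]{Suslin:1997}.
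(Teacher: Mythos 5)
Your overall strategy is the same as the paper's: establish the external tensor product isomorphism $\boxtimes: \Hom_A(V,W)\otimes\Hom_A(X,Y)\xrightarrow{\sim}\Hom_A(V\otimes X, W\otimes Y)$ (on free objects, then transfer by naturality), iterate to obtain $\boxtimes^n: \Hom_A(V,W)^{\otimes n}\to\Hom_A(V^{\otimes n},W^{\otimes n})$, verify $\fS_n$-equivariance, and pass to $\fS_n$-fixed points. Phrasing the first step through $\Hom_A(V,W)\cong W\otimes V^\#$ instead of using $\boxtimes$ directly is only a cosmetic difference.

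Where you go astray is in the proposed reduction to the ``generators'' $\phi^{\otimes n}$ with $\phi$ even when verifying compatibility with composition. That reduction is both unnecessary and false: $\bsg^n(M)$ need \emph{not} be spanned over $A$ by elements of the form $\phi^{\otimes n}$. Already for $A=\mathbb{F}_3$, a rank-two free purely even module $M$ with basis $e_1,e_2$, and $n=4$, the module $\bsg^4(M)$ has basis $\gamma_i(e_1)\gamma_{4-i}(e_2)$ for $i=0,\ldots,4$, while for $v=ae_1+be_2$ the coefficients of $\gamma_3(e_1)\gamma_1(e_2)$ and $\gamma_1(e_1)\gamma_3(e_2)$ in $v^{\otimes 4}$ are $a^3b$ and $ab^3$, which coincide over $\mathbb{F}_3$ by Fermat's little theorem; so all $v^{\otimes 4}$ lie in a proper $\mathbb{F}_3$-submodule. (The parenthetical about ``symmetrizations once one allows odd $\phi$'' does not help: the obstruction already occurs in the purely even setting and has nothing to do with $p\neq 2$.) Fortunately no such reduction is needed. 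The composition law in $\bsg^n(\bsv_A)$ is, by definition, $\bsg^n$ applied to the $A$-bilinear composition of linear maps; that is, it is the restriction to $\fS_n$-invariants of the evident map $\Hom_A(W,X)^{\otimes n}\otimes\Hom_A(V,W)^{\otimes n}\to\Hom_A(V,X)^{\otimes n}$, and $\boxtimes^n$ intertwines this with ordinary composition of $A$-linear maps on the nose, elementary tensor by elementary tensor. Compatibility of compositions is therefore automatic once the $\fS_n$-equivariant isomorphism is in place, which is presumably why the paper's own proof leaves it unspoken.
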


\begin{proof}
First let $V,W \in \bsv_A$, and let $X,Y \in \smod_A$. Then the external tensor product operation defines an isomorphism
\begin{equation} \label{eq:boxtimes}
\boxtimes: \Hom_A(V,W) \otimes \Hom_A(X,Y) \simrightarrow \Hom_A(V \otimes X,W \otimes Y).
\end{equation}
The reader can check that \eqref{eq:boxtimes} is an isomorphism by first verifying the case when $V,W \in \bsv_A$ are free, and then using naturality to deduce the case when $V,W \in \bsv_A$ are arbitrary. Next, let $\boxtimes^n : \Hom_A(V,W)^{\otimes n} \rightarrow \Hom_A(V^{\otimes n},W^{\otimes n})$ be the isomorphism obtained inductively from \eqref{eq:boxtimes}. The reader can check that $\boxtimes^n$ is $\fS_n$-equivariant, with $\fS_n$ acting on $\Hom_A(V,W)^{\otimes n}$ on the right by super place permutations, and acting on $\Hom_A(V^{\otimes n},W^{\otimes n})$ on the right by $(\phi.\sigma)(z) = [\phi(z.\sigma^{-1})].\sigma$. Taking $\fS_n$-fixed points, the result follows.
\end{proof}

\begin{definition}[Strict polynomial superfunctor]
A homogeneous degree-$n$ strict polynomial superfunctor over $A$ is an even $A$-linear functor $T: \bsg^n(\bsv_A) \rightarrow \bsv_A$, i.e., it is a covariant functor $T: \bsg^n(\bsv_A) \rightarrow \bsv_A$ such that for each $V,W \in \bsv_A$, the function $T_{V,W}: \bsg^n \Hom_A(V,W) \rightarrow \Hom_A(T(V),T(W))$ is an even $A$-linear map. A homomorphism $\eta: S \rightarrow T$ of homogeneous degree-$n$ strict polynomial superfunctors consists for each $V \in \bsv_A$ of an $A$-linear map $\eta(V) \in \Hom_A(S(V),T(V))$ such that for each $\phi \in \Hom_{\bsg^n(\bsv_A)}(V,W)$,
\[
\eta(W) \circ S(\phi) = (-1)^{\ol{\eta} \cdot \ol{\phi}} T(\phi) \circ \eta(V).
\]
We denote by $\bsp_{n,A}$ the category whose objects are the homogeneous degree-$n$ strict polynomial superfunctors over $A$ and whose morphisms are the homomorphisms between those functors, and we denote by $\bsp_A$ the category $\prod_{n \in \N} \bsp_{n,A}$ of arbitrary strict polynomial superfunctors over $A$.
\end{definition}

\begin{example}[The parity change functor $\Pi$] \label{example:paritychange}
The parity change functor $\Pi \in \bsp_{1,A}$ acts on an object $V \in \bsv_A$ by reversing the $\Z_2$-grading of $V$ but leaving the right $A$-supermodule structure of $V$ unchanged. On morphisms, $\Pi$ sends a map $\phi \in \Hom_A(V,W)$ to the same map between the underlying right $A$-supermodules. Thus, if $v^\pi$ denotes the element $v \in V$ considered as an element of $\Pi(V)$, then the right action of $a \in A$ is given by $v^\pi.a = (v.a)^\pi$, while the left action is given by $a.v^\pi = (-1)^{\ol{a}}(a.v)^\pi$. The map $v \mapsto v^\pi$ defines an odd isomorphism $V \simeq \Pi(V)$.
\end{example}

\begin{remark} \label{remark:paritychange}
If $R$ is a not necessarily commutative superalgebra (e.g., $R = k\Mrs$, which is commutative in the ordinary sense but not in the sense of superalgebras), and if $V$ is an $R$-super\-module, then we define the action of $R$ on $\Pi(V)$ via the same sign conventions as in Example \ref{example:paritychange}. Thus, if $V$ is a right $R$-supermodule, then the actions of $R$ on the underlying sets of $V$ and $\Pi(V)$ are the same, while if $V$ is a left $R$-supermodule, the left action of $R$ on $\Pi(V)$ is obtained by twisting the action on $V$ by the automorphism $r \mapsto (-1)^{\ol{r}}r$ of $R$.

Note that if $V$ is an $R$-supermodule and $v \in V$ is odd, then there exists a surjective even $R$-supermodule homomorphism $\Pi(R) \twoheadrightarrow R.v$, $r^\pi \mapsto (-1)^{\ol{r}} r.v$, where $R.v$ denotes the $R$-submodule of $V$ generated by $v$. From this it immediately follows that any finitely-generated $R$-supermodule is a quotient via an even $R$-supermodule homomorphism of a finite direct sum of copies of the free rank-one $R$-supermodules $R$ and $\Pi(R)$.
\end{remark}

\begin{example}
Given $F \in \bsp_{n,A}$, the dual functor $F^\# \in \bsp_{n,A}$ is defined on objects by $F^\#(V) = F(V^\#)^\#$. There is a natural identification $F \cong F^{\#\#}$, and the assignment $F \mapsto F^\#$ defines a (super) anti-equivalence on $\bsp_{n,A}$. For more details, see \cite[\S2.2]{Drupieski:2016}.
\end{example}

\begin{example} \label{example:classicalfunctors}
The symmetric, exterior, divided, and alter\-nating power functors defined in \cite[\S2.3]{Drupieski:2016} admit immediate generalizations to $\bsp_{n,A}$, which we denote by $\bss^n = \bss_A^n$, $\bsl^n = \bsl_A^n$, $\bsg^n = \bsg_A^n$, and $\bsa^n = \bsa_A^n$, respectively. If $\bs{X}$ is one of these functors, and if $V \in \bsv_A$ is free, then $\bs{X}(V)$ is free as well, with a basis of the form described in \cite[\S2.3]{Drupieski:2016}. Each of these functors also satisfies an exponential formula of the form \eqref{eq:exponential}. The reader can check that the duality isomorphisms $\bss \cong \bsg^\#$ and $\bsl \cong \bsa^\#$ of \cite[\S2.6]{Drupieski:2016} also generalize to the context of strict polynomial superfunctors over $A$; the duality isomorphisms can be checked first when $V \in \bsv_A$ is free by using bases, and then for $V \in \bsv_A$ arbitrary by using functoriality.
\end{example}

\begin{remark} \label{remark:GLVstructure}
Given $V \in \smod_A$, write $GL(V)$ for the group of even $A$-linear automorphisms of $V$. Now let $T \in \bsp_{n,A}$ and $V \in \bsv_A$. If $g \in GL(V) \subset \Hom_A(V,V)_{\zero}$, then $g^{\otimes n} \in \bsg^n \Hom_A(V,V)$, and hence $T_{V,V}(g^{\otimes n})$ is an invertible element in $\Hom_A(T(V),T(V))$. Thus, $T(V)$ is naturally equipped with the structure of a $GL(V)$-supermodule.
\end{remark}

The category $\bsp_A$ is not abelian, though the underlying even subcategory $(\bsp_A)_\ev$, having the same objects but only the even homomorphisms as morphisms, is an exact category in the sense of Quillen, with the admissible short exact sequences $0 \rightarrow T' \rightarrow T \rightarrow T'' \rightarrow 0$ being those that are exact when evaluated on any $V \in \bsv_A$. Now for $V \in \bsv_A$, set $\bsg^{n,V} = \bsg^n \Hom_A(V,-)$. It follows from Remark \ref{remark:homsummand} that $\bsg^{n,V} \in \bsp_{n,A}$. By Yoneda's Lemma, there exists for each $T \in \bsp_{n,A}$ a natural isomorphism $\Hom_{\bsp_A}(\bsg^{n,V},T) \cong T(V)$. This implies that $\bsg^{n,V}$ is projective in $(\bsp_{n,A})_\ev$.

\begin{proposition}
Let $T \in \bsp_{n,A}$, and set $V = A^{n|n}$. Then for all $W \in \bsv_A$, the canonical map
\[
\bsg^n \left( \Hom_A(V,W) \right) \otimes T(V) \rightarrow T(W)
\]
induced by the function $T_{V,W}: \bsg^n \Hom_A(V,W) \rightarrow \Hom_A(T(V),T(W))$ and by evaluation on $T(V)$ is a surjection. In particular, $\bsg^{n,V} \oplus (\Pi \circ \bsg^{n,V})$ is a projective generator in $(\bsp_{n,A})_\ev$.
\end{proposition}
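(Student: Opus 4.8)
The plan is to adapt the argument of \cite[Theorem 2.10]{Suslin:1997} to the super setting. Let $T \in \bsp_{n,A}$ and set $V = A^{n|n}$. The key point is that $V$ is ``large enough'': any $W \in \bsv_A$ is a direct summand (via even inclusion and projection maps) of a finitely-generated free $A$-supermodule $A^{m|m}$ for some $m \geq n$, and $A^{m|m}$ in turn decomposes as $A^{m|m} \cong V^{\oplus (m-n)} \oplus A^{n|0} \oplus A^{0|n}$, which exhibits $A^{m|m}$ as a quotient (even a summand) of a direct sum of copies of $V$ and $\Pi(V)$ via even $A$-linear maps. More directly, one checks that for $m \geq n$ there is an even surjection $V \twoheadrightarrow A^{m|m}$ is impossible by rank, so instead I will use that the identity map on $W$ factors through a finite direct sum of copies of $V$: since $\dim$-wise both the even and odd parts of $W$ have at most $m$ generators with $m$ possibly larger than $n$, but each individual generator ``fits'' into $V = A^{n|n}$, one gets even maps $i \colon W \to V^{\oplus N}$ and $r \colon V^{\oplus N} \to W$ with $r \circ i = \id_W$ for suitable $N$. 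The cleanest formulation: because $W$ is a summand of some $A^{m|m}$ and $A^{m|m}$ is a summand of $V^{\oplus m}$ (pad with zeros since $m \geq n$ after possibly enlarging, using $A^{n|n} = V$ and direct sums), $W$ is a summand of $V^{\oplus N}$ via even maps.

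First I would record the factorization: choose even $A$-linear maps $i \in \Hom_A(W, V^{\oplus N})_{\zero}$ and $r \in \Hom_A(V^{\oplus N}, W)_{\zero}$ with $r \circ i = \id_W$. Then $i^{\otimes n} \in \bsg^n\Hom_A(W, V^{\oplus N})$ and $r^{\otimes n} \in \bsg^n \Hom_A(V^{\oplus N}, W)$, and by functoriality of $T$ together with the compatibility of $\bsg^n$ with composition (as in \eqref{eq:bsgphi}), we get $T_{V^{\oplus N}, W}(r^{\otimes n}) \circ T_{W, V^{\oplus N}}(i^{\otimes n}) = T(\id_W) = \id_{T(W)}$. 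Hence $T(W)$ is a direct summand of $T(V^{\oplus N})$, and the map $T(W) \to T(V^{\oplus N}) \to T(W)$ through which it splits factors through the evaluation at elements of $\bsg^n\Hom_A(V, W)$ and $\bsg^n\Hom_A(V^{\oplus N}, W)$ once we decompose via the exponential formula. Next I would apply the exponential isomorphism \eqref{eq:exponential} for $\bsg^n$: writing $V^{\oplus N} = V \oplus \cdots \oplus V$, we get $\bsg^n\Hom_A(V^{\oplus N}, W) \cong \bsg^n\big(\bigoplus_{\ell=1}^N \Hom_A(V,W)\big) \cong \bigoplus \bsg^{a_1}\Hom_A(V,W) \otimes \cdots \otimes \bsg^{a_N}\Hom_A(V,W)$ over $a_1 + \cdots + a_N = n$. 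Composing with the structure maps of $T$ and using that $T$ is degree-$n$ (so $T(V^{\oplus N})$ is built from $T(V)$ via the various $\bsg^{a}$-pieces), one traces through that the image of $T(W)$ inside $T(V^{\oplus N})$ lands in the span of the images of $\bsg^n\Hom_A(V,W) \otimes T(V) \to T(W)$. This shows the canonical map $\bsg^n\Hom_A(V,W) \otimes T(V) \to T(W)$ is surjective.

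For the ``in particular'' statement: surjectivity of the canonical map is equivalent to saying $\bsg^{n,V} = \bsg^n\Hom_A(V,-)$ is a generator of $(\bsp_{n,A})_{\ev}$, i.e., every $T$ admits an even epimorphism from a direct sum of copies of $\bsg^{n,V}$. The subtlety is that to cover \emph{odd} homomorphisms out of $T$ — equivalently, to get a projective generator rather than just a projective object and a generator separately — one needs $\Pi \circ \bsg^{n,V}$ as well, because $\Hom_{\bsp_A}(\bsg^{n,V}, T) \cong T(V)$ picks out $T(V)$ but the full (not-necessarily-even) Hom into $T$ sees $T(V)$ in both parities; precomposing with $\Pi$ swaps the parity, so $\bsg^{n,V} \oplus (\Pi \circ \bsg^{n,V})$ together see all of $T(V)$ including the ability to hit odd elements. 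Concretely I would argue: given $T \in \bsp_{n,A}$, pick homogeneous $A$-module generators of the $A$-supermodule $T(V)$; each even generator gives an even map $\bsg^{n,V} \to T$ by Yoneda, each odd generator gives an even map $\Pi \circ \bsg^{n,V} \to T$ (using Remark \ref{remark:paritychange}: an odd element $t \in T(V)$ corresponds to an even homomorphism $\Pi(\bsg^{n,V}) \to T$); the sum of these maps is an even epimorphism onto $T$ precisely because of the surjectivity just established applied to each $W$. Since $\bsg^{n,V}$ and hence $\Pi \circ \bsg^{n,V}$ are projective in $(\bsp_{n,A})_{\ev}$ (by Yoneda, as recalled just before the proposition — projectivity is insensitive to applying $\Pi$ since $\Pi$ is an even autoequivalence of $\fsmod_A$ composed appropriately), the direct sum $\bsg^{n,V} \oplus (\Pi \circ \bsg^{n,V})$ is a projective generator.

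\textbf{Main obstacle.} The genuinely delicate step is the bookkeeping in the middle paragraph: verifying carefully that the image of $T(W)$ inside $T(V^{\oplus N})$, when the latter is decomposed via the exponential formula for $T$ (which follows from the exponential formula \eqref{eq:exponential} for $\bsg^n$ applied inside the structure maps $T_{V^{\oplus N}, V^{\oplus N}}$), actually lands in the single Yoneda summand corresponding to $\bsg^n\Hom_A(V,W) \otimes T(V)$ and not in genuinely ``mixed'' pieces. In the classical case this is \cite[Theorem 2.10]{Suslin:1997} and the argument is a direct diagram chase using that $V = k^n$ (or $A^n$) receives every $W$ of dimension $\leq n$ as a \emph{quotient}; in the super case the analogous universal object is $V = A^{n|n}$, which receives every $W \in \bsv_A$ as an even retract, and one must be slightly careful that all the maps $i, r$ can be chosen even (which is why $V$ must be $A^{n|n}$ and not merely $A^{n|0}$ or $A^{0|n}$). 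Apart from this, everything is a routine transcription of Suslin–Friedlander–Bendel's argument, keeping track of $\pm$ signs via the supertwist and of the parity-change functor $\Pi$ where the classical argument uses only plain free modules. I expect no sign issues beyond those already handled in \cite[\S2]{Drupieski:2016}.
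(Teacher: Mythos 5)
Your approach is genuinely different from the paper's, and the step you flag as the "main obstacle" is in fact a gap that the argument cannot cross as written. The paper does not factor $W$ through $V^{\oplus N}$; it instead invokes \cite[Proposition A.1]{Touze:2013} to reduce the proposition to the surjectivity of the composition map
\[
\Hom_{\bsg^n(\bsv_A)}(V,Y) \otimes \Hom_{\bsg^n(\bsv_A)}(X,V) \longrightarrow \Hom_{\bsg^n(\bsv_A)}(X,Y)
\]
for all $X,Y \in \bsv_A$, and establishes that surjectivity for free $X,Y$ by an explicit basis computation along the lines of \cite[Lemma 2.3]{Touze:2010b} or \cite[Theorem 4.2]{Axtell:2013}, extending to arbitrary $X,Y$ by naturality. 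This basis computation is the actual content; the retract argument does not substitute for it.

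Concretely, here is why your middle paragraph fails. From $\id_W = r \circ i$ with $i \colon W \to V^{\oplus N}$, $r \colon V^{\oplus N} \to W$ even, you obtain $(\id_W)^{\otimes n} = r^{\otimes n} \circ i^{\otimes n}$, a factorization through $\bsg^n \Hom_A(V^{\oplus N},W) \otimes \bsg^n \Hom_A(W,V^{\oplus N})$. What the proposition needs is a factorization through the single object $V = A^{n|n}$, i.e.\ an expression $(\id_W)^{\otimes n} = \sum_i g_i \circ f_i$ with $g_i \in \bsg^n\Hom_A(V,W)$ and $f_i \in \bsg^n\Hom_A(W,V)$ (then $w = \sum_i T(g_i)\bigl(T(f_i)(w)\bigr)$ exhibits every $w \in T(W)$ as lying in the image of the canonical map). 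Decomposing $(\id_{V^{\oplus N}})^{\otimes n} = \sum_{\ul{a}} p_{\ul{a}}$ by the exponential formula \eqref{eq:exponential}, the components $r^{\otimes n} \circ p_{\ul{a}} \circ i^{\otimes n}$ with mixed multi-index $\ul{a}$ (several $a_j > 0$) do \emph{not} vanish in general and do \emph{not} factor through $V$: the $\fS_n$-summand of $(V^{\oplus N})^{\otimes n}$ cut out by $p_{\ul{a}}$ is an induced module, not a sum of copies of $V^{\otimes n}$. (Already for $T = \bss^2$, $V = A^{2|2}$, $W = A^{4|0}$ and the obvious even $i,r$ through $V^{\oplus 2}$, the $p_{(1,1)}$-component of $(\id_W)^{\otimes 2}$ is nonzero on symmetric tensors mixing the two copies.) Relatedly, your assertion that ``$T(V^{\oplus N})$ is built from $T(V)$ via the various $\bsg^a$-pieces'' is not correct for a general $T \in \bsp_{n,A}$: the exponential formula decomposes $\bsg^n$, not $T$; the multi-homogeneous summands of $T(V^{\oplus N})$ are cross-effects of $T$, which are new functors and not assemblages of $T(V)$. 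Your first paragraph (that $W$ is an even retract of $V^{\oplus N}$) and your third paragraph (why both $\bsg^{n,V}$ and $\Pi \circ \bsg^{n,V}$ are needed, via Remark~\ref{remark:paritychange}) are sound, but the central surjectivity claim is not established without the explicit analysis the paper outsources to Touz\'e and Axtell.
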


\begin{proof}
It suffices by \cite[Proposition A.1]{Touze:2013} (or rather, by its evident generalization to a super\-com\-mutative coefficient ring) to show for each $X,Y \in \bsv_A$ that composition of morphisms in $\bsg^n(\bsv_A)$ induces a surjective map
\begin{equation} \label{eq:composeonto}
\Hom_{\bsg^n(\bsv_A)}(V,Y) \otimes \Hom_{\bsg^n(\bsv_A)}(X,V) \rightarrow \Hom_{\bsg^n(\bsv_A)}(X,Y).
\end{equation}
This can be checked first when $X,Y \in \bsv_A$ are free by arguing as in the proof of \cite[Lemma 2.3]{Touze:2010b} or \cite[Theorem 4.2]{Axtell:2013} using bases. From that case, the result can then be deduced using naturality when $X,Y \in \bsv_A$ are arbitrary.
\end{proof}

By virtue of the preceding proposition, the exact category $(\bsp_A)_\ev$ has enough projectives. This means that we can do homological algebra in $\bsp_A$, and can define the extension groups $\Ext_{\bsp_A}^\bullet(T,T')$ for each $T,T' \in \bsp_A$; cf.\ \cite[\S3.2]{Drupieski:2016}.

\subsection{Base change}

Let $A'$ be a commutative $A$-superalgebra. Given $V \in \smod_A$, write $V_{A'}$ for the $A'$-supermodule $V \otimes_A A'$ obtained via base change from $V$. Base change is left adjoint to the forgetful functor from $A'$-supermodules to $A$-supermodules, i.e., if $W' \in \smod_{A'}$, then there is a canonical identification $\Hom_A(V,W') = \Hom_{A'}(V_{A'},W')$. In particular, $\Hom_{A'}(A',A') = \Hom_A(A,A')  \cong A'$. Now for $V,W \in \bsv_A$, define
\begin{equation} \label{eq:basechangeiso}
\nu_{A'}: \Hom_A(V,W) \otimes_A A' \rightarrow \Hom_A(V, W \otimes_A A') = \Hom_{A'}(V_{A'},W_{A'})
\end{equation}
by $\nu_{A'}(\phi \otimes a')(v) = (-1)^{\ol{a}' \cdot \ol{v}} \phi(v) \otimes a'$. Identifying $A'$ with $\Hom_A(A,A')$ as a left $A$-supermodule, \eqref{eq:basechangeiso} is an isomorphism by \eqref{eq:boxtimes}. More generally, base change induces an isomorphism
\begin{equation} \label{eq:basechangebsg}
\Hom_{\bsg^n(\bsv_A)}(V,W)_{A'} \cong \Hom_{\bsg^n(\bsv_{A'})}(V_{A'},W_{A'})
\end{equation}
that is compatible with the composition of morphisms. To see this, first observe that if $V \in \bsv_A$, then $\bsg_A^n(V)_{A'} \cong \bsg_{A'}^n(V_{A'})$; this can be verified first when $V$ is free by using bases as in \cite[2.3.6]{Drupieski:2016}, and then for arbitrary $V$ by using the exponential property of $\bsg^n$. Next, another application of \eqref{eq:boxtimes} shows for $W \in \bsv_A$ that $\Hom_A(V,W) \cong W \otimes_A \Hom_A(V,A) = W \otimes_A V^\#$, and hence that $\Hom_A(V,W) \in \bsv_A$ whenever $V,W \in \bsv_A$. Together these observations imply \eqref{eq:basechangebsg}. From now on we will identify the two sides of \eqref{eq:basechangeiso} and \eqref{eq:basechangebsg}, respectively.

Now given $V' \in \bsv_{A'}$, define $\bsv_A(V')$ to be the category whose objects consist of all pairs $(V,\phi)$ such that $V \in \bsv_A$ and $\phi \in \Hom_{\bsg^n(\bsv_{A'})}(V_{A'},V')_{\zero}$, and whose morphisms are defined by
\[
\Hom_{\bsv_A(V')}((V_1,\phi_1),(V_2,\phi_2)) = \set{ \psi \in [\Hom_{\bsg^n(\bsv_A)}(V_1,V_2)]_{A'}: \phi_2 \circ \psi = \phi_1}.
\]
Note that each morphism $\psi: (V_1,\phi_1) \rightarrow (V_2,\phi_2)$ must be even because $\phi_1$ and $\phi_2$ are both even by definition. The category $\bsv_A(V')$ forms a directed system, since for any pair $(V_1,\phi_1)$ and $(V_2,\phi_2)$ of objects in $\bsv_A(V')$, there exist morphisms from the pair into $(V_1 \oplus V_2,\phi_1 \oplus \phi_2)$, where $\phi_1 \oplus \phi_2$ is defined as follows: The direct sum decomposition $(V_1 \oplus V_2)_{A'} = (V_1)_{A'} \oplus (V_2)_{A'}$ and the exponential property for $\bsg_{A'}^n$ give rise to a direct sum decomposition
\[
\Hom_{\bsg^n(\bsv_{A'})}((V_1 \oplus V_2)_{A'},V') \cong \bigoplus_{i+j=n} \Hom_{\bsg^i(\bsv_{A'})}((V_1)_{A'},V') \otimes \Hom_{\bsg^j(\bsv_{A'})}((V_2)_{A'},V').
\]
Then $\phi_1 \oplus \phi_2 \in \Hom_{\bsg^n(\bsv_{A'})}((V_1 \oplus V_2)_{A'},V')$ is the evident function induced by $\phi_1$ and $\phi_2$ that vanishes on all summands except $i=n$ and $i=0$. Now the morphism $(V_1,\phi_1) \rightarrow (V_1 \oplus V_2,\phi_1 \oplus \phi_2)$ is simply $(\iota_1^{\otimes n})_{A'}$, where $\iota_1: V_1 \rightarrow V_1 \oplus V_2$ is the inclusion, and similarly for $(V_2,\phi_2)$.

If $T \in \bsp_{n,A}$, then $(T_{V_1,V_2})_{A'} := T_{V_1,V_2} \otimes_A A'$ defines an even $A'$-linear map
\[
(T_{V_1,V_2})_{A'}: [\Hom_{\bsg^n(\bsv_A)}(V_1,V_2)]_{A'} \rightarrow [\Hom_A(T(V_1),T(V_2))]_{A'} = \Hom_{A'}(T(V_1)_{A'},T(V_2)_{A'}).
\]
Then the assignment $(V,\phi) \mapsto T(V)_{A'}$ gives rise to a functor $\bsv_A(V') \rightarrow \smod_{A'}$, which we denote by $T(-)_{A'}$. In fact, $T(-)_{A'}$ has image in the underlying even subcategory $\fsmod_{A'}$ of $\smod_{A'}$, since all homomorphisms in $\bsv_A(V')$ are even. This ensures that the direct limit
\begin{equation} \label{eq:TAprime}
T_{A'}(V') := \varinjlim_{(V,\phi) \in \bsv_A(V')} T(V)_{A'}
\end{equation}
is an $A'$-supermodule. Concretely, $T_{A'}(V')$ identifies with the quotient
\begin{multline} \label{eq:directlimitquotient}
\big(\bigoplus_{(V,\phi)} T(V)_{A'} \big) \big/ \big \langle z - [(T_{V_1,V_2})_{A'}(\psi)(z)] : z \in T(V_1)_{A'}, \psi \in \Hom_{\bsv_A(V')}((V_1,\phi_1),(V_2,\phi_2)) \big \rangle,
\end{multline}
where the direct sum is taken over all objects $(V,\phi) \in \bsv_A(V')$. In particular, since each morphism in $\bsv_A(V')$ is even, the submodule of relations is generated by the relations with $z$ homogeneous, and hence is a subsupermodule of the direct sum.

\begin{remark} \label{remark:finalobject}
If $V \in \bsv_A$, then $(V,1_{V_{A'}})$ is a final object in $\bsv_A(V_{A'})$, hence $T_{A'}(V_{A'}) = T(V)_{A'}$.
\end{remark}

\begin{remark} \label{remark:projectiveimage}
Let $W' \in \bsv_{A'}$, and let $\theta \in \Hom_{A'}(V',W')_{\zero}$. Then $\theta^{\otimes n} \in \Hom_{\bsg^n(\bsv_{A'})}(V',W')$, and the assignment $(V,\phi) \mapsto (V,\theta^{\otimes n} \circ \phi)$ evidently defines a functor $\theta: \bsv_A(V') \rightarrow \bsv_A(W')$. Passing to the directed system of $A$-supermodules $[T(V)_{A'}]_{(V,\phi) \in \bsv_A(V')}$, the functor $\theta$ induces an even $A'$-supermodule homomorphism $T_{A'}(\theta): T_{A'}(V') \rightarrow T_{A'}(W')$. In terms of the direct sum in \eqref{eq:directlimitquotient}, the map $T_{A'}(\theta)$ is induced by sending the summand $T(V)_{A'}$ corresponding to $(V,\phi) \in \bsv_A(V')$ identically onto the summand $T(V)_{A'}$ corresponding to $(V,\theta^{\otimes n} \circ \phi) \in \bsv_A(W')$. From this description it immediately follows that $T_{A'}$ defines a functor $T_{A'}: (\bsv_{A'})_\ev \rightarrow \fsmod_{A'}$.

Now let $X' \in \bsv_{A'}$ be a free $A'$-supermodule having $V'$ as a direct summand, and let $\iota: V' \hookrightarrow X'$ and $\pi: X' \twoheadrightarrow V'$ be the corresponding even inclusion and projection maps. Then $T_{A'}(\pi) \circ T_{A'}(\iota)= 1_{T_{A'}(V')}$, and hence $T_{A'}(V')$ is direct summand of $T_{A'}(X')$. Since $X'$ is free, $X' = X_{A'}$ for some free $A$-supermodule $X \in \bsv_A$. Then $T_{A'}(X') = T_{A'}(X_{A'}) = T(X)_{A'}$ by Remark \ref{remark:finalobject}. In particular, since base change takes objects in $\bsv_A$ to objects in $\bsv_{A'}$, we can conclude that $T_{A'}(V') \in \bsv_{A'}$, and hence that $T_{A'}$ defines a functor $T_{A'}: (\bsv_{A'})_\ev \rightarrow (\bsv_{A'})_\ev$.
\end{remark}

\begin{proposition}
Let $A'$ be a commutative $A$-superalgebra, let $T \in \bsp_{n,A}$, and let $T_{A'}$ be as defined in \eqref{eq:TAprime}. Then $T_{A'}$ admits the structure of a homogeneous degree-$n$ strict polynomial superfunctor over $A'$. With this structure, the base change functor $\bsp_{n,A} \rightarrow \bsp_{n,A'}$, $T \mapsto T_{A'}$, is left adjoint to the restriction functor $\bsp_{n,A'} \rightarrow \bsp_{n,A}:T' \mapsto T'(- \otimes_A A')$.
\end{proposition}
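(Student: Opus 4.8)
The plan is to mimic the classical argument in \cite[\S2]{Suslin:1997} for base change of strict polynomial functors, making the requisite modifications for the super setting. First I would verify that $T_{A'}$ is a well-defined homogeneous degree-$n$ strict polynomial superfunctor over $A'$. By Remark \ref{remark:projectiveimage} we already know $T_{A'}$ defines a functor $(\bsv_{A'})_\ev \to (\bsv_{A'})_\ev$, so the remaining point is to extend the action on morphisms from $\bsg^0$ (i.e.\ even maps) to all of $\bsg^n(\bsv_{A'})$, and to check this extension is even $A'$-linear. The key observation is that for $V' \in \bsv_{A'}$ and $W' \in \bsv_{A'}$, the directed system $\bsv_A(V')$ can be used to compute $T_{A'}(V')$, and for any object $(V,\phi) \in \bsv_A(V')$ the structure map $(T_{V,V})_{A'}$ together with the functor $\bsg^n(\bsv_A) \to \bsg^n(\bsv_{A'})$ of \eqref{eq:basechangebsg} furnishes, after passing to the limit, an even $A'$-linear map $(T_{A'})_{V',W'}: \bsg^n_{A'}\Hom_{A'}(V',W') \to \Hom_{A'}(T_{A'}(V'),T_{A'}(W'))$. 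Concretely, given $\gamma \in \bsg^n_{A'}\Hom_{A'}(V',W')$, one can choose $(V,\phi) \in \bsv_A(V')$ and a free module $X' \in \bsv_{A'}$ with $W'$ a summand, write $\gamma$ through the summand structure, and reduce to the case $\gamma = \theta^{\otimes n}$ for $\theta$ even (using the exponential formula \eqref{eq:exponential} to split $\bsg^n$ of a direct sum), where $(T_{A'})_{V',W'}(\theta^{\otimes n}) = T_{A'}(\theta)$ is the map of Remark \ref{remark:projectiveimage}. One then checks this is independent of the choices and compatible with composition, using the compatibility of \eqref{eq:basechangebsg} with composition of morphisms in $\bsg^n(\bsv_A)$; the verification that the resulting assignment is additive and $A'$-linear in $\gamma$ is routine from the construction of the direct limit as the quotient \eqref{eq:directlimitquotient}.

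Next I would establish the adjunction. The restriction functor $\res: \bsp_{n,A'} \to \bsp_{n,A}$ sends $T' \mapsto T'(-\otimes_A A')$, which is well-defined because base change $\bsv_A \to \bsv_{A'}$ is even $A$-linear on morphisms after composing with the inclusion $\Hom_A \hookrightarrow (\Hom_A)_{A'} = \Hom_{A'}$, and one needs to note that $\bsg^n_A \Hom_A(V,W) \to \bsg^n_{A'}\Hom_{A'}(V_{A'},W_{A'})$ is the natural map induced by base change composed with \eqref{eq:basechangebsg}. To produce the adjunction isomorphism $\Hom_{\bsp_{n,A'}}(T_{A'},T') \cong \Hom_{\bsp_{n,A}}(T,\res(T'))$, the cleanest route is via Yoneda and the universal property of $\bsg^{n,V}$. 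Indeed, since $(\bsp_{n,A})_\ev$ has the projective generator $\bsg^{n,V} \oplus (\Pi \circ \bsg^{n,V})$ with $V = A^{n|n}$, it suffices to identify $(\bsg^{n,V})_{A'}$ with $\bsg^{n,V_{A'}}_{A'}$ over $A'$ and check the adjunction on representable functors, where both sides reduce to $T'(V_{A'})$ by Yoneda's Lemma and Remark \ref{remark:finalobject}. More precisely: for $W' \in \bsv_{A'}$, $(\bsg^{n,V})_{A'}(W') = \varinjlim_{(U,\phi) \in \bsv_A(W')} \bsg^n_A\Hom_A(V,U)_{A'}$, and using that each $(U,\phi)$ maps to the object obtained by base change one shows this limit is $\bsg^n_{A'}\Hom_{A'}(V_{A'},W') = \bsg^{n,V_{A'}}_{A'}(W')$. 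Then for arbitrary $T \in \bsp_{n,A}$, one writes a presentation of $T$ by (even) representables and their parity shifts and uses five-lemma-type bookkeeping together with the fact that base change of direct limits and direct sums is again such, noting parity shifts are handled by Remark \ref{remark:paritychange} and Example \ref{example:paritychange}.

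The unit of the adjunction is the natural transformation $\eta_T: T \to \res(T_{A'})$ given for $V \in \bsv_A$ by the composite $T(V) \to T(V)_{A'} = T_{A'}(V_{A'})$ (Remark \ref{remark:finalobject}), and I would verify that $(T_{A'},\eta_T)$ satisfies the universal property: every even natural transformation $T \to \res(T')$ factors uniquely through $\eta_T$. Given such a transformation, the value at $V = A^{n|n}$ determines an $A'$-linear map $T(V)_{A'} \to T'(V_{A'})$, and one checks using the surjectivity result (the proposition just before this one, that $\bsg^{n,V} \oplus (\Pi\circ\bsg^{n,V})$ is a projective generator, equivalently the surjectivity of the canonical evaluation map) that this extends uniquely to a natural transformation $T_{A'} \to T'$; the extension is natural because of the compatibility of the structure maps $(T_{U_1,U_2})_{A'}$ with the morphisms in $\bsv_A(W')$. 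I expect the main obstacle to be the careful handling of the parity-change functor and the $\pm$ signs in the super place-permutation action throughout these identifications --- in particular, ensuring that the isomorphism $(\bsg^{n,V})_{A'} \cong \bsg^{n,V_{A'}}_{A'}$ and the adjunction isomorphism are genuinely \emph{even} $A'$-linear and natural, rather than merely natural up to sign. Once the representable case is pinned down with correct signs, the passage to general $T$ via projective resolutions in $(\bsp_{n,A})_\ev$ is formal, exactly as in \cite[\S2]{Suslin:1997}.
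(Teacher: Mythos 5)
There is a genuine gap in the way you propose to equip $T_{A'}$ with its structure morphism. You suggest reducing a general $\gamma \in \bsg^n_{A'}\Hom_{A'}(V',W')$, via the exponential formula, to the case $\gamma = \theta^{\otimes n}$ with $\theta$ even, and then setting $(T_{A'})_{V',W'}(\theta^{\otimes n}) = T_{A'}(\theta)$ as in Remark \ref{remark:projectiveimage}. This does not work: the elements $\theta^{\otimes n}$ with $\theta$ even do not span $\bsg^n\Hom_{A'}(V',W')$ in general. Dually, over a finite field (e.g.\ $A' = \Fp$ with $n = p^r > p$) there are plenty of nonzero polynomials in $S^n(\Hom_{A'}(V',W')^\#)$ that vanish at every $A'$-rational point, so the pairing of $\bsg^n$ with $S^n$ shows the $n$-th tensor powers span a proper subspace. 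The exponential formula lets you decompose $\bsg^n$ of a direct sum but does not produce a generating set of the form $\theta^{\otimes n}$, so you cannot extend by $A'$-linearity from these; Remark \ref{remark:projectiveimage} only yields the structure of an ordinary (even) functor on $\bsv_{A'}$, not a strict polynomial one. The paper avoids this by choosing free $A$-forms $X,Y$ of free covers $X',Y'$ of $V',W'$ and defining $(T_{A'})_{V',W'}$ directly as the composite through $\Hom_{\bsg^n(\bsv_A)}(X,Y)_{A'} \xrightarrow{(T_{X,Y})_{A'}} \Hom_A(T(X),T(Y))_{A'}$, pre- and post-composing with the inclusion/projection maps, and then verifies independence of the choices; no reduction to rank-one tensors is needed.

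For the adjunction, the Yoneda-and-presentations route you outline is a legitimate alternative to what the paper does, which is a direct bijection of $\Hom$-sets: given $\eta' \in \Hom_{\bsp_{A'}}(T_{A'},T')$, restrict to objects of the form $V_{A'}$ and use $T_{A'}(V_{A'}) = T(V)_{A'}$ plus the module-level adjunction to get $\eta(V): T(V) \to T'(V_{A'})$; given $\eta$, use the directed colimit defining $T_{A'}(V')$ to assemble $T(V)_{A'} \to T'(V_{A'}) \to T'(V')$. Your closing sketch of the unit $T(V) \to T_{A'}(V_{A'})$ and the universal property is in fact essentially this second construction. But be careful of circularity if you take the Yoneda route: the identification $(\bsg^{n,V})_{A'} \cong \bsg^{n,V_{A'}}_{A'}$ cannot invoke Corollary \ref{corollary:agreeonfree}, and the exactness of base change you want for the five-lemma step is itself proved in the paper only after this proposition, so both would need to be established from scratch (e.g.\ directly from the colimit description and Remark \ref{remark:finalobject}).
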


\begin{proof}
We observed in Remark \ref{remark:projectiveimage} that $T_{A'}(V') \in \bsv_{A'}$ for each $V' \in \bsv_{A'}$, so it suffices to exhibit the appropriate strict polynomial structure on $T_{A'}$. Let $V',W' \in \bsv_{A'}$, and choose free supermodules $X',Y' \in \bsv_{A'}$ having $V'$ and $W'$ as direct summands, respectively. Write $X' = X_{A'}$ and $Y' = Y_{A'}$ for some free $A$-supermodules $X,Y \in \bsv_A$. Then as in Remarks \ref{remark:homsummand} and \ref{remark:projectiveimage}, the various even inclusion and projection maps defining $V'$ and $W'$ as summands of $X'$ and $Y'$ can be used to define the first and last arrows in the following composition of even $A'$-supermodule homomorphisms:
\begin{multline} \label{eq:definepolynomialmap}
\Hom_{\bsg^n(\bsv_{A'})}(V',W') \rightarrow \Hom_{\bsg^n(\bsv_{A'})}(X',Y') \\
= \Hom_{\bsg^n(\bsv_A)}(X,Y)_{A'}
\stackrel{(T_{X,Y})_{A'}}{\longrightarrow}\Hom_A(T(X),T(Y))_{A'} \\
= \Hom_{A'}(T_{A'}(X'),T_{A'}(Y')) \rightarrow \Hom_{A'}(T_{A'}(V'),T_{A'}(W')).
\end{multline}
We will take \eqref{eq:definepolynomialmap} to be the definition of the structure morphism $(T_{A'})_{V',W'}$, once we show that it is independent of the particular choices involving $X'$ and $Y'$, and that is is compatible with the composition of morphisms in $\bsg^n(\bsv_{A'})$.

Suppose $U',Z' \in \bsv_{A'}$ are some other free modules having $V'$ and $W'$ as direct summands, and let $U,Z \in \bsv_A$ be $A$-forms for $U'$ and $Z'$. Write $X' = V' \oplus X''$ and $U' = V' \oplus U''$ for some $U'',X'' \in \bsv_{A'}$, and then define the map $\alpha_{U',X'} \in \Hom_{A'}(U',X')$ to be the identity on $V'$ and zero on $U''$. Let $\beta_{Y',Z'} \in \Hom_A(Y',Z')$ be defined analogously. Since $\Hom_{A'}(U',X') = \Hom_A(U,X)_{A'}$, $\alpha_{U',X'}$ can be expressed as an $A'$-linear combination of elements of $\Hom_A(U,X)$, which by abuse of notation we will denote $(\alpha_{U,X})_{A'}$. Similarly, we can write $\beta_{Y',Z'}$ in the form $(\beta_{Y,Z})_{A'}$. Now using these maps, one can construct a two-row commutative diagram whose top row is the sequence of maps in \eqref{eq:definepolynomialmap}, whose bottom row is the corresponding sequence constructed using $U'$ and $Z'$ instead of $X'$ and $Y'$, and whose outermost vertical maps are the identity maps. Then by the commutativity of the diagram, $(T_{A'})_{V',W'}$ is well-defined. The compatibility between composition of morphisms and the base change isomorphism \eqref{eq:basechangebsg} also allows one to show that $T_{A'}(\phi) \circ T_{A'}(\phi') = T_{A'}(\phi \circ \phi')$ whenever $\phi$ and $\phi'$ are composable morphisms in $\bsg^n(\bsv_{A'})$. Thus, $T_{A'}$ admits the structure of a degree-$n$ strict polynomial superfunctor over $A'$.

To verify the last assertion of the proposition, let $T' \in \bsp_{n,A'}$ and let $\eta' \in \Hom_{\bsp_{A'}}(T_{A'},T')$. Then for each $V \in \bsv_A$ one gets the corresponding map $\eta'(V_{A'}): T_{A'}(V_{A'}) \rightarrow T'(V_{A'})$. But $T_{A'}(V_{A'}) = T(V)_{A'}$ by Remark \ref{remark:finalobject}, so
\[
\eta'(V_{A'}) \in \Hom_{A'}(T(A)_{A'},T'(V_{A'})) = \Hom_A(T(V),T'(V_{A'})).
\]
Then the fact that $\eta'$ was a homomorphism in $\bsp_{n,A'}$ implies that the family of induced maps $\eta(V): T(V) \rightarrow T'(V_{A'})$ defines a homomorphism $\eta: T \rightarrow T'(-\otimes_A A')$ in $\bsp_A$.

Conversely, let $\eta \in \Hom_{\bsp_A}(T,T'(-\otimes_A A'))$, and let $V' \in \bsv_{A'}$. Then for each object $(V,\phi)$ in the category $\bsv_A(V')$ we get the composite $A$-supermodule homomorphism
\[
T(V) \stackrel{\eta(V)}{\longrightarrow} T'(V_{A'}) \stackrel{T'(\phi)}{\longrightarrow} T'(V'),
\]
and hence an $A'$-supermodule homomorphism $\eta'(V): T(V)_{A'} \rightarrow T'(V')$. Varying over all $(V,\phi)$, it follows that the $\eta'(V)$ induce a unique $A'$-supermodule homomorphism $\eta'(V'): T_{A'}(V') \rightarrow T'(V')$. The reader can then check that the family of maps $\eta'(V')$ for $V' \in \bsv_{A'}$ defines a homomorphism $\eta': T_{A'} \rightarrow T'$ in $\bsp_{A'}$. The two constructions $\eta' \mapsto \eta$ and $\eta \mapsto \eta'$ described in this paragraph and the preceding are mutually inverse to each other, so this proves the last assertion of the proposition.
\end{proof}

An immediate consequence of base change being left adjoint to restriction is:

\begin{corollary} \label{corollary:agreeonfree}
Let $T \in \bsp_{n,A}$. If $T(-) \otimes_A A' \cong T'(-\otimes_A A')$ as strict polynomial superfunctors in $\bsp_{n,A}$ for some $T' \in \bsp_{n,A'}$, then $T_{A'} \cong T'$.
\end{corollary}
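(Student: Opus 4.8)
The plan is to deduce the corollary directly from the universal property established in the preceding proposition, namely that base change $T \mapsto T_{A'}$ is left adjoint to the restriction functor $T' \mapsto T'(-\otimes_A A')$. The hypothesis is that there exists an isomorphism $\theta: T(-)\otimes_A A' \simrightarrow T'(-\otimes_A A')$ in $\bsp_{n,A}$; I want to promote this to an isomorphism $T_{A'} \cong T'$ in $\bsp_{n,A'}$. The key observation is that $\theta$ is precisely a morphism $T \to T'(-\otimes_A A')$ in $\bsp_{n,A}$ (here I use that $T(V)\otimes_A A'$ is, by Remark \ref{remark:finalobject}, the value $T_{A'}(V_{A'})$, so $\theta(V)$ is an $A$-linear map $T(V)\to T'(V_{A'})$ exactly of the shape appearing in the adjunction), and hence under the adjunction bijection it corresponds to a morphism $\widehat{\theta}: T_{A'} \to T'$ in $\bsp_{n,A'}$.

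First I would unwind what $\widehat{\theta}$ is on a free object $V_{A'}$ for $V \in \bsv_A$: tracing through the construction in the proof of the proposition, $\widehat{\theta}(V_{A'})$ is obtained from the composite $T(V)\xrightarrow{\theta(V)} T'(V_{A'}) \xrightarrow{T'(1)} T'(V_{A'})$, i.e.\ it is just $\theta(V)$ itself (extended $A'$-linearly), which is an isomorphism by hypothesis. Next I would argue that $\widehat{\theta}(V')$ is an isomorphism for arbitrary $V' \in \bsv_{A'}$, not merely for free ones: since $V'$ is a direct summand of some free $A'$-supermodule $X' = X_{A'}$ via even maps $\iota: V' \hookrightarrow X'$, $\pi: X' \twoheadrightarrow V'$ with $\pi\iota = 1_{V'}$, naturality of $\widehat{\theta}$ gives a commuting square relating $\widehat{\theta}(V')$ and $\widehat{\theta}(X')$; because $\widehat{\theta}(X')$ is an isomorphism and $\widehat{\theta}(V')$ is a retract of it (being compatible with the split idempotent $\iota\pi$), $\widehat{\theta}(V')$ is an isomorphism as well. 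A morphism in $\bsp_{n,A'}$ that is an isomorphism on every object is an isomorphism of strict polynomial superfunctors, so $\widehat{\theta}: T_{A'}\simrightarrow T'$ as desired.

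There is essentially no hard analytic or combinatorial content here; the only points that need care are purely formal. The main (mild) obstacle will be making sure the adjunction bijection is applied correctly — in particular confirming that the isomorphism $\theta$ really does land in the hom-set $\Hom_{\bsp_A}(T, T'(-\otimes_A A'))$ to which the adjunction applies, which relies on the identification $T_{A'}(V_{A'}) = T(V)_{A'}$ of Remark \ref{remark:finalobject} and on $\theta$ being natural in $V \in \bsv_A$. A secondary routine point is the retract argument for non-free $V'$, which uses only that $\bsv_{A'}$ objects are even direct summands of free modules and that $\widehat{\theta}$ is a natural transformation; alternatively one may simply invoke that a natural transformation between functors valued in $(\bsv_{A'})_\ev$ which is invertible on a set of projective generators (here $\bsg^{n,X}_{A'}$ and their parity shifts, or just all free objects) is invertible, but the direct retract argument is cleaner and self-contained. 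I would keep the write-up to a short paragraph, citing the proposition's adjunction and Remark \ref{remark:finalobject}, and noting the retract step.
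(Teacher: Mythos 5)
Your proposal is correct and matches the paper's (one-line) justification, which simply asserts the corollary as an immediate consequence of the adjunction between base change and restriction; your write-up fills in exactly those details. The only phrasing to tighten: $\theta(V)$ is an $A'$-linear map $T(V)\otimes_A A' \to T'(V_{A'})$, and what you feed into the adjunction is its restriction along $T(V)\to T(V)\otimes_A A'$ (equivalently, its image under the tensor-hom bijection), not $\theta(V)$ itself; with that clarification the rest of the argument — evaluation on free objects via Remark~\ref{remark:finalobject}, then the retract step for arbitrary $V'\in\bsv_{A'}$ — goes through as you describe.
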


\begin{remark}
The notation $T \mapsto T_{A'}$ for the base change operation is potentially ambiguous when it is applied to one of the classical exponential superfunctors $\bss$, $\bsl$, $\bsg$, or $\bsa$, since we sometimes used the subscript $A$ to emphasize the ambient coefficient superring. However, each of these functors is defined over the prime ring of $A$ (i.e., over $\Z$ or $\Fp$ for some prime $p$, depending on the characteristic of $A$), and then one can use Corollary \ref{corollary:agreeonfree} to check that the functor over $A$ identifies with the base change of the corresponding functor over the prime ring.
\end{remark}

\begin{remark}
The proposition defines the base change $T_{A'} \in \bsp_{n,A'}$ of a homogenous strict polynomial superfunctor $T \in \bsp_{n,A}$. The base change of a non-homogeneous strict polynomial superfunctor $T = \bigoplus_{n \in \N} T^n \in \prod_{n \in \N} \bsp_{n,A} = \bsp_A$ is then defined componentwise.
\end{remark}

\begin{proposition}
Let $A'$ be a commutative superalgebra. The base change functor $\bsp_{n,A} \rightarrow \bsp_{n,A'}$ that sends $T$ to $T_{A'}$ is exact and maps projectives in $(\bsp_{n,A})_{\ev}$ to projectives in $(\bsp_{n,A'})_{\ev}$. For each $S,T \in \bsp_{n,A}$, base change induces an isomorphism
\[
\Hom_{\bsp_A}(S,T) \otimes_A A' \cong \Hom_{\bsp_{A'}}(S_{A'},T_{A'}).
\]
\end{proposition}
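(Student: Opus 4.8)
The plan is to deduce the first two assertions formally and to prove the $\Hom$-isomorphism by reducing to representable functors via Yoneda's Lemma. Recall that the admissible short exact sequences in $(\bsp_{n,A})_\ev$ are precisely those that become short exact sequences of $A$-supermodules upon evaluation at every $V \in \bsv_A$. Given such a sequence $0 \to T' \to T \to T'' \to 0$, evaluation at $V$ produces $0 \to T'(V) \to T(V) \to T''(V) \to 0$ with $T''(V) \in \bsv_A$ projective in $\fsmod_A$; this sequence therefore splits over $A$ and remains exact after applying $-\otimes_A A'$. Since $T_{A'}(V')$ is, for each $V' \in \bsv_{A'}$, the directed colimit $\varinjlim_{(V,\phi)\in\bsv_A(V')} T(V)\otimes_A A'$, and the three functors give compatible directed systems over the single category $\bsv_A(V')$, exactness of directed colimits gives exactness of $0 \to T'_{A'}(V') \to T_{A'}(V') \to T''_{A'}(V') \to 0$; as $V'$ was arbitrary, base change is exact. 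Preservation of projectivity is then immediate: base change is left adjoint to the restriction functor $T' \mapsto T'(-\otimes_A A')$, which is exact (evaluating an admissible $A'$-sequence at $V\otimes_A A' \in \bsv_{A'}$ is exact), and a left adjoint of an exact functor carries projectives to projectives. One may also argue concretely, using $(\bsg^{n,V})_{A'} \cong \bsg_{A'}^{n,V_{A'}}$ from Corollary \ref{corollary:agreeonfree}, the fact that base change commutes with direct sums and with $\Pi$, and the fact that every projective in $(\bsp_{n,A})_\ev$ is a summand of a direct sum of copies of $\bsg^{n,V}$ and $\Pi\circ\bsg^{n,V}$.

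For the $\Hom$-isomorphism, functoriality of base change (or the adjunction) supplies a natural $A'$-supermodule map $\Hom_{\bsp_A}(S,T)\otimes_A A' \to \Hom_{\bsp_{A'}}(S_{A'},T_{A'})$, and the first step is to verify it is an isomorphism when $S = \bsg^{n,V}$: its source is $T(V)\otimes_A A'$ by Yoneda's Lemma, while its target is $\Hom_{\bsp_{A'}}(\bsg_{A'}^{n,V_{A'}},T_{A'}) \cong T_{A'}(V_{A'}) = T(V)\otimes_A A'$ using Corollary \ref{corollary:agreeonfree}, Yoneda's Lemma over $A'$, and Remark \ref{remark:finalobject}; the case $S = \Pi\circ\bsg^{n,V}$ is handled the same way. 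For general $S$ I would choose a presentation $P_1 \to P_0 \to S \to 0$ in $(\bsp_{n,A})_\ev$ in which $P_0$ and $P_1$ are \emph{finite} direct sums of functors of the form $\bsg^{n,A^{n|n}}$ and $\Pi\circ\bsg^{n,A^{n|n}}$. Such a finite presentation exists because, via the projective generator $\bsg^{n,A^{n|n}}\oplus(\Pi\circ\bsg^{n,A^{n|n}})$ of $(\bsp_{n,A})_\ev$, the category $\bsp_{n,A}$ is equivalent to the category of right modules over the ``super Schur algebra'' $\bsg^n\Hom_A(A^{n|n},A^{n|n})$, which is an $A$-algebra finitely generated and projective as an $A$-module, and under this equivalence $S$ corresponds to a module that is finitely generated and projective over $A$, hence finitely presented over the super Schur algebra (its first syzygy is an $A$-module direct summand of a finitely generated free module, so it is finitely generated over $A$ and a fortiori over the super Schur algebra). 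Applying $\Hom_{\bsp_A}(-,T)$ to the presentation, tensoring with $A'$, and comparing with the left-exact sequence obtained by applying $\Hom_{\bsp_{A'}}(-,T_{A'})$ to the base-changed presentation $(P_1)_{A'}\to(P_0)_{A'}\to S_{A'}\to 0$ (exact because base change is right exact), the representable case applied to $P_0$ and $P_1$ together with a diagram chase finishes the argument.

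The main obstacle will be this last step: one must show that the natural map identifies $\Hom_{\bsp_A}(S,T)\otimes_A A'$ with $\ker\bigl(\Hom_{\bsp_A}(P_0,T)\otimes_A A' \to \Hom_{\bsp_A}(P_1,T)\otimes_A A'\bigr)$, and it is exactly here that the \emph{finiteness} of the presentation is used — so that the $\Hom$-groups out of $P_0$ and $P_1$ are finite direct sums of copies of $T(A^{n|n})$ and $\Pi(T(A^{n|n}))$, which commute with $-\otimes_A A'$ — together with the projectivity over $A$ of these evaluated functors. This bookkeeping is a direct transcription of the proof of \cite[Corollary 2.7]{Suslin:1997}, and I would carry it out by following that argument step by step.
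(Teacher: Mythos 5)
Your proposal is correct and follows essentially the same route as the paper, which simply invokes \cite[Proposition 2.6]{Suslin:1997} and declares the super case ``essentially the same.'' You are in effect writing out the SFB argument with the super-specific adjustments (the $\Pi$-twisted generator, even vs.\ odd morphisms, the super Schur algebra $\bsg^n\Hom_A(A^{n|n},A^{n|n})$), and these adjustments are the right ones: the exactness argument via filteredness of $\bsv_A(V')$ and splitting of the evaluated sequences, the adjunction (or generator) argument for projectives, Yoneda for representables, and reduction to a finite presentation by $\bsg^{n,A^{n|n}}\oplus\Pi\circ\bsg^{n,A^{n|n}}$ for the $\Hom$-isomorphism all mirror SFB's proof.
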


\begin{proof}
The proof is essentially the same as the proof of \cite[Proposition 2.6]{Suslin:1997}.
\end{proof}

\begin{corollary} \label{cor:functorbasechange}
For any $S,T \in \bsp_A$, there exists a canonical homomorphism
\[
\Ext_{\bsp_A}^\bullet(S,T) \otimes_A A' \rightarrow \Ext_{\bsp_{A'}}^\bullet(S_{A'},T_{A'}),
\]
which is an isomorphism if $A'$ is flat over $A$.
\end{corollary}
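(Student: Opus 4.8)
The plan is to deduce Corollary \ref{cor:functorbasechange} from the preceding proposition exactly as in \cite[Corollary 2.7]{Suslin:1997}, working in the exact category $(\bsp_A)_\ev$, where extension groups are defined via Yoneda or via projective resolutions. First I would fix $S, T \in \bsp_A$ and choose a resolution $P_\bullet \rightarrow S$ of $S$ by projective objects in $(\bsp_A)_\ev$; such a resolution exists because $(\bsp_A)_\ev$ has enough projectives, as noted after the projective-generator proposition. Then $\Ext_{\bsp_A}^\bullet(S,T)$ is computed as the cohomology of the complex $\Hom_{\bsp_A}(P_\bullet, T)$. By the exactness of base change and the fact that it preserves projectives, $(P_\bullet)_{A'} \rightarrow S_{A'}$ is a projective resolution of $S_{A'}$ in $(\bsp_{A'})_\ev$, so $\Ext_{\bsp_{A'}}^\bullet(S_{A'}, T_{A'})$ is the cohomology of $\Hom_{\bsp_{A'}}((P_\bullet)_{A'}, T_{A'})$.

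Next I would use the Hom-isomorphism from the preceding proposition, namely $\Hom_{\bsp_A}(P_i, T) \otimes_A A' \cong \Hom_{\bsp_{A'}}((P_i)_{A'}, T_{A'})$, to obtain a map of complexes $\Hom_{\bsp_A}(P_\bullet, T) \otimes_A A' \rightarrow \Hom_{\bsp_{A'}}((P_\bullet)_{A'}, T_{A'})$ which is in fact an isomorphism of complexes. Taking cohomology, and using the natural map from the cohomology of $C^\bullet \otimes_A A'$ to the cohomology of $C^\bullet$ tensored up (i.e., the canonical comparison map $\opH^\bullet(C^\bullet) \otimes_A A' \rightarrow \opH^\bullet(C^\bullet \otimes_A A')$ for a complex $C^\bullet$ of $A$-modules), one obtains the desired canonical homomorphism
\[
\Ext_{\bsp_A}^\bullet(S,T) \otimes_A A' \rightarrow \Ext_{\bsp_{A'}}^\bullet(S_{A'}, T_{A'}).
\]
One should check this map is independent of the chosen resolution and natural in $S$ and $T$; this is routine since any two projective resolutions are chain-homotopy equivalent and base change preserves chain homotopies.

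Finally, for the flatness claim: when $A'$ is flat over $A$, the canonical comparison map $\opH^\bullet(C^\bullet) \otimes_A A' \rightarrow \opH^\bullet(C^\bullet \otimes_A A')$ is an isomorphism for every complex $C^\bullet$ of $A$-modules, because tensoring with a flat module is exact and hence commutes with the formation of kernels, images, and cohomology. Applying this to $C^\bullet = \Hom_{\bsp_A}(P_\bullet, T)$ gives the isomorphism. I do not anticipate a serious obstacle here: every ingredient — enough projectives, exactness and projective-preservation of base change, the Hom base-change isomorphism — has already been established in the excerpt, and the remaining argument is the standard homological-algebra passage from a Hom-level statement to an Ext-level statement. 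The only mild subtlety is that $\bsp_A$ itself is not abelian, so one must be slightly careful to phrase everything inside the exact category $(\bsp_A)_\ev$ (where admissible exact sequences and projective resolutions make sense) rather than in $\bsp_A$; but this is exactly the framework already set up in the paper, so it causes no difficulty.
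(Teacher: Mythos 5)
Your proof is correct and follows exactly the intended route: the corollary is stated without proof precisely because, once one has (i) enough projectives in $(\bsp_A)_\ev$, (ii) exactness of base change, (iii) preservation of projectives, and (iv) the degree-wise Hom base-change isomorphism $\Hom_{\bsp_A}(S,T) \otimes_A A' \cong \Hom_{\bsp_{A'}}(S_{A'},T_{A'})$, the passage to Ext is the standard ``resolve, base change, compare cohomology'' argument, exactly as for \cite[Corollary 2.7]{Suslin:1997}. Your identification of the comparison map $\opH^\bullet(C^\bullet) \otimes_A A' \to \opH^\bullet(C^\bullet \otimes_A A')$ as the source of the canonical homomorphism, and the observation that flatness makes it an isomorphism, are both correct, and your remark about working inside the exact subcategory $(\bsp_A)_\ev$ rather than $\bsp_A$ is the right caveat.
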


\subsection{Frobenius twists}

Now suppose $A$ is a commutative superalgebra of characteristic $p \geq 3$. Since $A$ is commutative, the Frobenius (i.e., $p$-th power) map $\varphi: A \rightarrow A$ satisfies $\varphi(a_0 + a_1) = a_0^p$ for $a_0 \in A_{\zero}$ and $a_1 \in A_{\one}$. It follows that $\varphi$ is an $\Fp$-superalgebra homomorphism. Now for $V \in \smod_A$ and $r \in \N$, set $V^{(r)} = V \otimes_{\varphi^r} A$, the base change of $V$ with respect to the $r$-th iterate of $\varphi$. Given $v \in V$, write $v^{(r)}$ for the element $v \otimes_{\varphi^r} 1$ of $V \otimes_{\varphi^r} A$. Thus, if $V \in \bsv_A$ is free with homogeneous basis $v_1,\ldots,v_n$, then $V^{(r)}$ is free with basis $v_1^{(r)},\ldots,v_n^{(r)}$. Since the assignment $\bsir: V \mapsto V^{(r)}$ is clearly additive, this implies that $\bsir$ defines a functor $\bsir: \bsv_A \rightarrow \bsv_A$, which satisfies the following properties for $V,W \in \bsv_A$:
	\begin{itemize}
	\item $(V \otimes_A W)^{(r)} = V^{(r)} \otimes_A W^{(r)}$,
	\item $\Hom_A(V,W)^{(r)} \cong \Hom_A(V^{(r)},W^{(r)})$, and
	\item $(V^\#)^{(r)} \cong (V^{(r)})^\#$.
	\end{itemize}
Explicitly, the isomorphism $\Phi: \Hom_A(V,W)^{(r)} \simrightarrow \Hom_A(V^{(r)},W^{(r)})$ is defined by $\Phi(\phi^{(r)})(v^{(r)}) = \phi(v)^{(r)}$. Then taking $W = A$, and identifying $A^{(r)}$ with $A$ via the map $a \otimes_{\varphi^r} a' \mapsto a^{p^r}a'$, the isomorphism $\Theta: (V^\#)^{(r)} \simrightarrow (V^{(r)})^\# = \Hom_A(V^{(r)},A)$ is defined by $\Theta(\phi^{(r)})(v^{(r)}) = [\phi(v)]^{p^r}$.

Recall from Example \ref{example:classicalfunctors} the symmetric superalgebra functor $\bss  = \bigoplus_{n \in \N} \bss^n \in \bsp_A$. Then for each $V \in \bsv$, $\bss(V)$ is a commutative $A$-superalgebra, and the function $\varphi^{r}(V): \bss(V)^{(r)} \rightarrow \bss(V)$ defined by $s \otimes_{\varphi^r} a \mapsto s^{p^r} \cdot a$ is an even $A$-linear map. Via the duality isomorphism $\bsg \cong \bss^\#$, the family of maps $\varphi^{r}(V)$ for $V \in \bsv_A$ corresponds to a family of even $A$-linear maps
\[
(\varphi^r)^\#(V): \bsg(V) \rightarrow \bsg(V)^{(r)} \text{ for $V \in \bsv_A$.}
\]
Then $\bsir$ admits the structure of a homogeneous degree-$p^r$ strict polynomial superfunctor over $A$, with the action of $\bsir$ on morphisms defined by
\begin{equation} \label{eq:IrVW}
\bsir_{V,W} := (\varphi^r)^\#(\Hom_A(V,W)): \bsg^{p^r} \Hom_A(V,W) \rightarrow \Hom_A(V,W)^{(r)} = \Hom_A(V^{(r)},W^{(r)})
\end{equation}
Since for each $V \in \bsv_A$ the map $\varphi^r(V)$ has its image in the subalgebra $\bss(\Vzero)$ of $\bss(V)$, it follows for each $V,W \in \bsv_A$ that \eqref{eq:IrVW} has its image in the even subspace $\Hom_A(V^{(r)},W^{(r)})_{\zero}$. If $V \in \bsv_A$ is free, then $\varphi^r(V)$ admits an explicit description on the generators of $\bsg(V)$ as in \cite[(2.7.2)]{Drupieski:2016}.

\begin{remark}[Purely even superrings] \label{remark:Frobeniuspurelyeven}
Suppose $A$ is a purely even commu\-tative $\Fp$-superalgebra, i.e., a commutative $\Fp$-algebra in the usual sense. Then the decom\-position $V = \Vzero \oplus \Vone$ of an $A$-supermodule $V$ into its even and odd subspaces is also an $A$-module direct sum decomposition of $V$. The decomposition $V = \Vzero \oplus \Vone$ is not functorial (for example, the projection $V \mapsto \Vzero$ is not compatible with the composition of odd isomorphisms $V \simeq \Pi(V) \simeq V$), and hence does not lift to a direct sum decomposition of the identity functor $\bsi \in \bsp_{1,A}$. On the other hand, since for $r \geq 1$ the map \eqref{eq:IrVW} has its image in the subspace
\[
\Hom_A(V^{(r)},W^{(r)})_{\zero} = \Hom_A(V_{\zero}^{(r)},W_{\zero}^{(r)}) \oplus \Hom_A(V_{\one}^{(r)},W_{\one}^{(r)}),
\]
it follows that the Frobenius twist $\bsir \in \bsp_{p^r,A}$ admits a direct sum decomposition
\[
\bsir = \bsi_{\zero}^{(r)} \oplus \bsi_{\one}^{(r)}
\]
such that $\bsirzero(V) = \Vzeror$ and $\bsirone(V) = \Voner$ for each $V \in \bsv_A$. Additionally, the functors $\bsirzero$, $\bsirone$, and $\Pi$ are related by the identity $\bsirone = \Pi \circ \bsirzero \circ \Pi$. When it is necessary to emphasize the particular base ring, we will denote $\bsirzero$ and $\bsirone$ by ${\bsi_{0,A}}^{(r)}$ and ${\bsi_{1,A}}^{(r)}$, respectively.
\end{remark}

\begin{remark}[Base change of Frobenius twists] \label{remark:Frobeniusbasechange}
Let $A \in \csalg_k$, and let $({\bsi_{0,k}}^{(r)})_A \in \bsp_{p^r,A}$ be the functor obtained from ${\bsi_{0,k}}^{(r)} \in \bsp_{p^r,k}$ via base change to $A$. Then
\[
(\bsir_{0,k})_A(\Amn) = (\bsir_{0,k})_A(\kmn \otimes_k A) = \bsir_{0,k}(\kmn) \otimes_k A = (k^{m|0})^{(r)} \otimes_k A,
\]
and $(A^{m|0})^{(r)} = (k^{m|0} \otimes_k A)^{(r)}$ identifies with $(k^{m|0})^{(r)} \otimes_k A$ via $(v \otimes a)^{(r)} \mapsto v^{(r)} \otimes a^{p^r}$. Then
	\[
	({\bsi_{0,k}}^{(r)})_A(\Amn) = (A^{m|0})^{(r)}, \quad \text{and similarly,} \quad	({\bsi_{1,k}}^{(r)})_A(\Amn) = (A^{0|n})^{(r)}.
	\]
Now for $A$ purely even, one can use Corollary \ref{corollary:agreeonfree} to check that $({\bsi_{0,k}}^{(r)})_A$ identifies with ${\bsi_{0,A}}^{(r)}$, and similarly to check that $({\bsi_{1,k}}^{(r)})_A$ identifies with ${\bsi_{1,A}}^{(r)}$.
\end{remark}

\makeatletter
\renewcommand*{\@biblabel}[1]{\hfill#1.}
\makeatother

\bibliographystyle{eprintamsplain}
\bibliography{graded-analogues-of-one-parameter-subgroups}

\providecommand{\bysame}{\leavevmode\hbox to3em{\hrulefill}\thinspace}
\providecommand{\MR}{\relax\ifhmode\unskip\space\fi MR }
\providecommand{\MRhref}[2]{%
  \href{http://www.ams.org/mathscinet-getitem?mr=#1}{#2}
}
\providecommand{\href}[2]{#2}
\begin{thebibliography}{10}

\bibitem{Avrunin:1982}
G.~S. Avrunin and L.~L. Scott, \href{http://dx.doi.org/10.1007/BF01389395}
  {\emph{Quillen stratification for modules}}, Invent. Math. \textbf{66}
  (1982), no.~2, 277--286.

\bibitem{Axtell:2013}
J.~Axtell, \href{http://dx.doi.org/10.1090/S1088-4165-2013-00445-4} {\emph{Spin
  polynomial functors and representations of {S}chur superalgebras}},
  Represent. Theory \textbf{17} (2013), 584--609.

\bibitem{Carlson:1983}
J.~F. Carlson, \href{http://dx.doi.org/10.1016/0021-8693(83)90121-7} {\emph{The
  varieties and the cohomology ring of a module}}, J. Algebra \textbf{85}
  (1983), no.~1, 104--143.

\bibitem{Drupieski:2013b}
C.~M. Drupieski, \href{http://dx.doi.org/10.1090/S1088-4165-2013-00440-5}
  {\emph{Cohomological finite generation for restricted {L}ie superalgebras and
  finite supergroup schemes}}, Represent. Theory \textbf{17} (2013), 469--507.

\bibitem{Drupieski:2016}
\bysame, \href{http://dx.doi.org/10.1016/j.aim.2015.11.017}
  {\emph{Cohomological finite generation for finite supergroup schemes}}, Adv.
  Math. \textbf{288} (2016), 1360--1432.

\bibitem{Drupieski:2017}
\bysame, \href{http://dx.doi.org/10.1016/j.aim.2017.02.025} {\emph{Corrigendum
  to ``{C}ohomological finite generation for finite supergroup schemes''
  [{A}dv.\ {M}ath.\ 288 (2016) 1360--1432]}}, Adv. Math. \textbf{311} (2017),
  935--937.

\bibitem{Drupieski:2016a}
C.~M. Drupieski and J.~R. Kujawa, \emph{On support varieties for {L}ie
  superalgebras and finite supergroup schemes}, preprint, 2016, \href
  {http://arxiv.org/abs/1601.04565} {\path{arXiv:1601.04565}}.

\bibitem{Drupieski:2017b}
\bysame, \emph{On the cohomological spectrum and support varieties for
  infinitesimal unipotent supergroup schemes}, preprint, 2017, \href
  {http://arxiv.org/abs/1712.05434} {\path{arXiv:1712.05434}}.

\bibitem{Friedlander:2015}
E.~M. Friedlander, \href{http://dx.doi.org/10.1112/S0010437X14007726}
  {\emph{Support varieties for rational representations}}, Compos. Math.
  \textbf{151} (2015), no.~4, 765--792.

\bibitem{Friedlander:2018}
\bysame, \href{https://doi.org/10.1016/j.aim.2017.10.023} {\emph{Filtrations,
  1-parameter subgroups, and rational injectivity}}, Adv. Math. \textbf{323}
  (2018), 84--113.

\bibitem{Friedlander:1986a}
E.~M. Friedlander and B.~J. Parshall, \href{http://dx.doi.org/10.2307/2374473}
  {\emph{Cohomology of {L}ie algebras and algebraic groups}}, Amer. J. Math.
  \textbf{108} (1986), no.~1, 235--253 (1986).

\bibitem{Friedlander:1986b}
\bysame, \href{http://dx.doi.org/10.1007/BF01389268} {\emph{Support varieties
  for restricted {L}ie algebras}}, Invent. Math. \textbf{86} (1986), no.~3,
  553--562.

\bibitem{Friedlander:1987}
\bysame, \href{http://dx.doi.org/10.1016/0021-8693(87)90161-X} {\emph{Geometry
  of {$p$}-unipotent {L}ie algebras}}, J. Algebra \textbf{109} (1987), no.~1,
  25--45.

\bibitem{Friedlander:2005}
E.~M. Friedlander and J.~Pevtsova,
  \href{http://muse.jhu.edu/journals/american_journal_of_mathematics/v127/127.2friedlander.pdf}
  {\emph{Representation-theoretic support spaces for finite group schemes}},
  Amer. J. Math. \textbf{127} (2005), no.~2, 379--420.

\bibitem{Friedlander:2007}
\bysame, \href{http://dx.doi.org/10.1215/S0012-7094-07-13923-1}
  {\emph{{$\Pi$}-supports for modules for finite group schemes}}, Duke Math. J.
  \textbf{139} (2007), no.~2, 317--368.

\bibitem{Friedlander:1997}
E.~M. Friedlander and A.~Suslin, \href{http://dx.doi.org/10.1007/s002220050119}
  {\emph{Cohomology of finite group schemes over a field}}, Invent. Math.
  \textbf{127} (1997), no.~2, 209--270.

\bibitem{Hardesty:2017}
W.~D. Hardesty, D.~K. Nakano, and P.~Sobaje,
  \href{http://dx.doi.org/10.1112/blms.12070} {\emph{On the existence of mock
  injective modules for algebraic groups}}, Bull. Lond. Math. Soc. \textbf{49}
  (2017), no.~5, 806--817.

\bibitem{Jantzen:2003}
J.~C. Jantzen, \emph{Representations of algebraic groups}, second ed.,
  Mathematical Surveys and Monographs, vol. 107, American Mathematical Society,
  Providence, RI, 2003.

\bibitem{Milnor:1965}
J.~W. Milnor and J.~C. Moore, \emph{On the structure of {H}opf algebras}, Ann.
  of Math. (2) \textbf{81} (1965), 211--264.

\bibitem{Nakano:1998}
D.~K. Nakano and J.~H. Palmieri, \href{http://dx.doi.org/10.1007/PL00004398}
  {\emph{Support varieties for the {S}teenrod algebra}}, Math. Z. \textbf{227}
  (1998), no.~4, 663--684.

\bibitem{Palmieri:1997}
J.~H. Palmieri, \href{https://doi.org/10.1006/jabr.1996.6812} {\emph{A note on
  the cohomology of finite-dimensional cocommutative {H}opf algebras}}, J.
  Algebra \textbf{188} (1997), no.~1, 203--215.

\bibitem{Pirashvili:2003}
T.~Pirashvili, \emph{Introduction to functor homology}, Rational
  representations, the {S}teenrod algebra and functor homology, Panor.
  Synth\`eses, vol.~16, Soc. Math. France, Paris, 2003, pp.~1--26.

\bibitem{Priddy:1970}
S.~B. Priddy, \emph{Koszul resolutions}, Trans. Amer. Math. Soc. \textbf{152}
  (1970), 39--60.

\bibitem{Quillen:1971}
D.~Quillen, \emph{The spectrum of an equivariant cohomology ring. {I}, {II}},
  Ann. of Math. (2) \textbf{94} (1971), 549--572; ibid. (2) 94 (1971),
  573--602.

\bibitem{Suslin:1997}
A.~Suslin, E.~M. Friedlander, and C.~P. Bendel,
  \href{http://dx.doi.org/10.1090/S0894-0347-97-00240-3} {\emph{Infinitesimal
  {$1$}-parameter subgroups and cohomology}}, J. Amer. Math. Soc. \textbf{10}
  (1997), no.~3, 693--728.

\bibitem{Suslin:1997a}
\bysame, \href{http://dx.doi.org/10.1090/S0894-0347-97-00239-7} {\emph{Support
  varieties for infinitesimal group schemes}}, J. Amer. Math. Soc. \textbf{10}
  (1997), no.~3, 729--759.

\bibitem{Touze:2010b}
A.~Touz{\'e}, \href{http://dx.doi.org/10.1016/j.aim.2010.02.014}
  {\emph{Cohomology of classical algebraic groups from the functorial
  viewpoint}}, Adv. Math. \textbf{225} (2010), no.~1, 33--68.

\bibitem{Touze:2013}
\bysame, \href{http://dx.doi.org/10.1016/j.jpaa.2012.12.007} {\emph{{R}ingel
  duality and derivatives of non-additive functors}}, J. Pure Appl. Algebra
  \textbf{217} (2013), no.~9, 1642--1673.

\bibitem{Westra:2009}
D.~B. Westra, \href{http://www.mat.univie.ac.at/~michor/westra_diss.pdf}
  {\emph{Superrings and supergroups}}, Ph.D. thesis, Universitat Wien, October
  2009.

\bibitem{Wilkerson:1981}
C.~Wilkerson, \emph{The cohomology algebras of finite-dimensional {H}opf
  algebras}, Trans. Amer. Math. Soc. \textbf{264} (1981), no.~1, 137--150.

\end{thebibliography}

\end{document}